\providecommand{\algorithmname}{Algorithm}
  \theoremstyle{definition}
  \newtheorem*{condition*}{\protect\conditionname}
\theoremstyle{plain}
\newtheorem{thm}{\protect\theoremname}[section]
  \theoremstyle{plain}
  \newtheorem{prop}[thm]{\protect\propositionname}
  \theoremstyle{definition}
  \newtheorem{defn}[thm]{\protect\definitionname}
  \theoremstyle{plain}
  \newtheorem{lem}[thm]{\protect\lemmaname}
  \theoremstyle{plain}
  \newtheorem{cor}[thm]{\protect\corollaryname}
  \theoremstyle{plain}
  \newtheorem*{assumption*}{\protect\assumptionname}
  \providecommand{\assumptionname}{Assumption}
  \providecommand{\conditionname}{Condition}
  \providecommand{\corollaryname}{Corollary}
  \providecommand{\definitionname}{Definition}
  \providecommand{\lemmaname}{Lemma}
  \providecommand{\propositionname}{Proposition}
\providecommand{\theoremname}{Theorem}
\begin{document}

\title{Existence of a calibrated regime switching local volatility model
and new fake Brownian motions}

\author{B. Jourdain$^{*}$, A. Zhou\thanks{Université Paris-Est, Cermics (ENPC), INRIA, F-77455, Marne-la-Vallée,
France. E-mails : benjamin.jourdain@enpc.fr, alexandre.zhou@enpc.fr
- This research benefited from the support of the \textquotedblleft Chaire
Risques Financiers\textquotedblright , Fondation du Risque, and the
French National Research Agency under the program ANR-12-BS01-0019
(STAB).}}
\maketitle
\begin{abstract}
By Gyongy's theorem, a local and stochastic volatility (LSV) model
is calibrated to the market prices of all European call options with
positive maturities and strikes if its local volatility function is
equal to the ratio of the Dupire local volatility function over the
root conditional mean square of the stochastic volatility factor given
the spot value. This leads to a SDE nonlinear in the sense of McKean.
Particle methods based on a kernel approximation of the conditional
expectation, as presented in \cite{GuyonLabordere}, provide an efficient
calibration procedure even if some calibration errors may appear when
the range of the stochastic volatility factor is very large. But so
far, no global existence result is available for the SDE nonlinear
in the sense of McKean. In the particular case where the local volatility
function is equal to the inverse of the root conditional mean square
of the stochastic volatility factor multiplied by the spot value given
this value and the interest rate is zero, the solution to the SDE
is a fake Brownian motion. When the stochastic volatility factor is
a constant (over time) random variable taking finitely many values
and the range of its square is not too large, we prove existence to
the associated Fokker-Planck equation. Thanks to \cite{Figalli},
we then deduce existence of a new class of fake Brownian motions.
We then extend these results to the special case of the LSV model
called regime switching local volatility, where the stochastic volatility
factor is a jump process taking finitely many values and with jump
intensities depending on the spot level. Under the same condition
on the range of its square, we prove existence to the associated Fokker-Planck
PDE. Finally, we deduce existence of the calibrated model by extending
the results in \cite{Figalli}.
\end{abstract}
$\textbf{Keywords}$: local and stochastic volatility models, calibration,
Dupire's local volatility, Fokker-Planck systems, diffusions nonlinear
in the sense of McKean.

\section{Introduction}

The notion of $\textit{fake}$ Brownian motion was introduced by Oleszkiewicz
\cite{Oleszkiewicz} to describe a martingale $\left(X_{t}\right)_{t\geq0}$
such that for any $t\geq0$, $X_{t}\sim\mathcal{N}(0,t)$, but the
process $\left(X_{t}\right)_{t\geq0}$ does not have the same distribution
as the Brownian motion $\left(W_{t}\right)_{t\geq0}$. Madan and Yor
\cite{MadanYor}, Hamza and Klebaner \cite{HamzaKlebaner}, and more
recently Henry-Labordère, Tan and Touzi \cite{HLabordereTanTouzi}
provided construction of discontinuous fake Brownian motions. The
question of the existence of continuous fake Brownian motion was positively
answered by Albin \cite{Albin}, using products of Bessel processes,
and Donati-Martin, Baker and Yor \cite{Baker2011} extended that result
by exhibiting a sequence of continuous martingales with Brownian marginals
and scaling property. Oleszkiewicz gave a simpler example of a continuous
fake Brownian motion, based on the Box Muller transform. Then, Hobson
\cite{Hobson} proved existence of a continous fake exponential Brownian
motion by mixing diffusion processes. More generally, Fan, Hamza and
Klebaner \cite{fan2015} and Hirsch, Profeta, Roynette and Yor \cite{Hirsch2011}
provided construction of self-similar martingales with given marginal
distributions.

One reason for raising interest in the search of processes that have
marginal distributions matching given ones comes from mathematical
finance, where calibration to the market prices of European call options
is a major concern. According to Breeden and Litzenberger \cite{BreedenLitzenberger},
the knowledge of the market prices of those options for a continuum
of strikes and maturities is equivalent to the knowledge of the marginal
distributions of the underlying asset under the pricing measure. Thus,
to be consistent with the market prices, a calibrated model must have
marginal distributions that coincide with those given by the market.
More specifically, in this paper, we address the question of existence
of a special class of calibrated local and stochastic volatility (LSV)
models.

LSV models, introduced by Lipton \cite{Lipton:2002} and by Piterbarg
\cite{Piterbarg}, can be interpreted as an extension of Dupire's
local volatility (LV) model, described in \cite{Dupire}, to get more
consistency with real markets. A typical LSV model has the dynamics
\[
dS_{t}=rS_{t}+f(Y_{t})\sigma(t,S_{t})S_{t}dW_{t}
\]
for the stock under the risk-neutral probability measure, where $\left(Y_{t}\right)_{t\geq0}$
is a stochastic process which may be correlated with $\left(S_{t}\right)_{t\geq0}$
and $r$ is the risk free rate. Meanwhile, according to \cite{Dupire},
given the European call option prices $C(t,K)$ for all positive maturities
$t$ and strikes $K$, the process $\left(S_{t}^{D}\right)_{t\geq0}$,
which follows the dynamics 
\[
dS_{t}^{D}=rS_{t}^{D}dt+\sigma_{Dup}\left(t,S_{t}^{D}\right)S_{t}^{D}dW_{t},
\]
where $\sigma_{Dup}(t,K):=\sqrt{2\frac{\partial_{t}C(t,K)+rK\partial_{K}C(t,K)}{K^{2}\partial_{KK}^{2}C(t,K)}}$
is Dupire's local volatility function, is calibrated to the European
option prices, that is, 
\[
\forall t,K>0,\ C(t,K)=\mathbb{E}\left[e^{-rt}\left(S_{t}^{D}-K\right)^{+}\right].
\]
Under mild assumptions, Gyongy's theorem in \cite{Gyongy} gives that
the choice $\sigma(t,x)=\frac{\sigma_{Dup}(t,x)}{\sqrt{\mathbb{E}\left[f^{2}(Y_{t})|S_{t}=x\right]}}$
ensures that for any $t\geq0$, $S_{t}$ has the same law as $S_{t}^{D}$.
This leads to the following SDE
\[
dS_{t}=rS_{t}dt+\frac{f(Y_{t})}{\sqrt{\mathbb{E}[f^{2}(Y_{t})|S_{t}]}}\sigma_{Dup}(t,S_{t})S_{t}dW_{t},
\]
which is nonlinear in the sense of McKean, because the diffusion term
depends on the law of $(S_{t},Y_{t})$ through the conditional expectation
in the denominator. Getting existence and uniqueness to this kind
of SDE is still an open problem, although some local in time existence
results have been found by Abergel and Tachet \cite{AbergelTachet}
by a perturbative approach. From a numerical point of view, particle
methods based on a kernel approximation of the conditional expectation,
as presented by Guyon and Henry-Labordère \cite{GuyonLabordere},
provide an efficient calibation procedure even if some calibration
errors may appear when the range of the stochastic volatility process
$\left(f(Y_{t})\right)_{t\ge0}$ is very large. 

Recently, advances have been made for analogous models. Alfonsi, Labart
and Lelong \cite{Alfonsi} proved existence and uniqueness of stochastic
local intensity models calibrated to CDO tranche prices, where the
discrete loss process makes the conditional expectation simpler to
handle. Moreover, Guennoun and Henry-Labordère \cite{GuennounLabordere}
showed that in a local volatility model enhanced with jumps, the particle
method applied to a well defined nonlinear McKean SDE with a regularized
volatility function gives call prices that converge to the market
prices as the regularization parameter goes to $0$, for all strikes
and maturities.

In this paper, we first prove existence of a simplified version of
the LSV model. More precisely, we set $r=\frac{1}{2}$, $\sigma_{Dup}\equiv1$,
and consider the dynamics of the asset's log-price, where for simplicity
we also neglect the drift term as its conditional expectation given
the spot is equal to $0$. Moreover, let $d\geq2$ and $Y$ be a random
variable (constant in time) which takes values in $\mathcal{Y}:=\{1,...,d\}$.
We assume that for $i\in\{1,...,d\},\ \alpha_{i}:=\mathbb{P}(Y=i)>0$.
Given a positive function $f$ on $\mathcal{Y}$ and a probability
measure $\mu$ on $\mathbb{R}$, the SDE that we study is thus:

\begin{eqnarray}
dX_{t} & = & \frac{f(Y)}{\sqrt{\mathbb{E}\left[f^{2}(Y)|X_{t}\right]}}dW_{t},\label{eq:ToySDE}\\
X_{0} & \sim & \mu.\nonumber 
\end{eqnarray}
We also suppose that $X_{0}$, $Y$ and $(W_{t})_{t\geq0}$ are independent.
Let us set $\lambda_{i}:=f^{2}(i)>0,i\in\{1,...,d\}$ and denote by
$p_{i}(t,x)$ the conditional density of $X_{t}$ given $\{Y=i\}$
multiplied by $\alpha_{i}$. It means that, for any measurable and
nonnegative function $\phi$, 
\[
\mathbb{E}\left[\phi\left(X_{t}\right)1_{\{Y=i\}}\right]=\int_{\mathbb{R}}\phi(x)p_{i}(t,x)dx.
\]
The Fokker-Planck equations derived from SDE (\ref{eq:ToySDE}) form
a partial differential system (PDS) that writes for $1\le i\leq d$:

\begin{eqnarray}
\partial_{t}p_{i} & = & \frac{1}{2}\partial_{xx}^{2}\left(\frac{\sum_{l=1}^{d}p_{l}}{\sum_{l=1}^{d}\lambda_{l}p_{l}}\lambda_{i}p_{i}\right)\ \text{in \ensuremath{(0,T)\times\mathbb{R}}}\label{eq:toypds2pde}\\
p_{i}(0) & = & \alpha_{i}\mu\ \text{in \ensuremath{\mathbb{R}}},\label{eq:toypds3init}
\end{eqnarray}
where the constant $T>0$ is the finite time horizon. We call this
PDS (FBM). Let us define $\lambda_{min}:=\underset{1\leq i\leq d}{\min}\lambda_{i}$
and $\lambda_{max}:=\underset{1\leq i\leq d}{\max}\lambda_{i}$. We
notice that if $\lambda_{min}=\lambda_{max}=\lambda$, then each $p_{i}$
is a solution to the heat equation with initial condition $\alpha_{i}\mu$.
Results proving existence and uniqueness of solutions to variational
formulations of the heat equation have already been widely developed,
e.g. in \cite{Brezis}, so we focus on the case where $\lambda_{min}<\lambda_{max}$,
that is when the function $f$ is not constant on $\mathcal{Y}$.
We also notice that if we add the equalities (\ref{eq:toypds2pde})
and (\ref{eq:toypds3init}) over the index $i\in\{1,...,d\}$, we
obtain that $\sum_{i=1}^{d}p_{i}$ satisfies the heat equation with
initial condition $\mu$, and it is well known that $\left(\sum_{i=1}^{d}p_{i}\right)(t,x)=\mu*h_{t}(x)$,
with $h$ the heat kernel defined as $h_{t}(x):=\frac{1}{\sqrt{2\pi t}}\exp\left(\frac{-x^{2}}{2t}\right)$
for $t>0$ and $x\in\mathbb{R}$. This observation can also be made
formally, using Gyongy's theorem. Indeed, as we see that 
\begin{equation}
\mathbb{E}\left[\left(\frac{f(Y)}{\sqrt{\mathbb{E}\left[f^{2}(Y)|X_{t}\right]}}\right)^{2}|X_{t}\right]=1,\label{eq:ToySDEmarginal}
\end{equation}
 then if Gyongy's theorem applies, $X_{t}$ has the same law as $X_{0}+W_{t}$,
which has the density $\mu*h_{t}$. 

We introduce the following condition on the family $\left(\lambda_{i}\right)_{1\leq i\leq d}$,
under which we will obtain existence to SDE (\ref{eq:ToySDE}). We
denote by $(e_{1},...,e_{d})$ the canonical basis of $\mathbb{R}^{d}$
and for $x=\left(x_{1},...,x_{d}\right)\in\mathbb{R}^{d}$, we define
$x^{\perp}=\{y=\left(y_{1},...,y_{d}\right)\in\mathbb{R}^{d},\sum_{i=1}^{d}x_{i}y_{i}=0\}$.
\begin{condition*}[C]
 There exists a symmetric positive definite $d\times d$ matrix $\Gamma$
with real valued coefficients such that for $1\leq k\leq d$, the
$d\times d$ matrix $\Gamma^{(k)}$ with coefficients 
\begin{equation}
\Gamma_{ij}^{(k)}=\frac{\lambda_{i}+\lambda_{j}}{2}\left(\Gamma_{ij}+\Gamma_{kk}-\Gamma_{ik}-\Gamma_{jk}\right),\ 1\leq i,j\leq d,\label{eq:Gammakij}
\end{equation}
is positive definite on the space $e_{k}^{\perp}$.
\end{condition*}
Under the Condition $\left(C\right)$, which ensures a coercivity
property that enables to establish energy estimates, existence to
SDE (\ref{eq:ToySDE}) can be proved in three steps:
\begin{enumerate}
\item For $\mu$ having a square integrable density on $\mathbb{R}$, we
define a variational formulation called $V_{L^{2}}(\mu)$ to (FBM)
and we apply Galerkin's method to show that $V_{L^{2}}(\mu)$ has
a solution.
\item For $\mu$ a probability measure on $\mathbb{R}$, we define a weaker
variational formulation called $V(\mu)$. We take advantage of the
fact that if $\left(p_{1},...,p_{d}\right)$ is a solution to $V(\mu)$
then $\sum_{i=1}^{d}p_{i}$ is solution to the heat equation with
initial condition $\mu$. This enables to get control of the explosion
rate of the $L^{2}$ norm of $p_{i},1\leq i\leq d$, as $t\rightarrow0$,
and we extend the results obtained in Step 1 to show existence to
$V(\mu)$. 
\item Thanks to the results in \cite{Figalli}, which give equivalence between
the existence of a solution to a Fokker-Planck equation and the existence
of a solution to the associated martingale problem with time marginals
given by the solution to the Fokker-Planck equation, we show that
the existence result in Step 2 implies existence of a weak solution
to SDE (\ref{eq:ToySDE}). Moreover, if $f$ is non constant on $\mathcal{Y}$
and if $X_{0}=0$, that weak solution is a continuous fake Brownian
motion. 
\end{enumerate}
To get a more realistic financial framework, we then adapt the previous
strategy to obtain existence of a class of calibrated LSV models,
where the process $\left(S_{t}\right)_{t\geq0}$ describing the underlying
asset follows the dynamics 
\begin{equation}
dS_{t}=rS_{t}dt+\frac{f(Y_{t})}{\sqrt{\mathbb{E}[f^{2}(Y_{t})|S_{t}]}}\sigma_{Dup}(t,S_{t})S_{t}dW_{t},\label{eq:LRSVsde}
\end{equation}
the process $\left(Y_{t}\right)_{t\geq0}$ taking values in $\mathcal{Y}$
and 
\[
\forall j\in\{1,...,d\}\backslash\{Y_{t}\},\mathbb{P}\left(Y_{t+dt}=j|\sigma\left(\left(S_{s},Y_{s}\right),0\leq s\leq t\right)\right)=q_{Y_{t}j}\left(\log\left(S_{t}\right)\right)dt,
\]
where the functions $\left(q_{ij}\right)_{1\leq i\neq j\leq d}$ are
non negative and bounded. The process $(S_{t},Y_{t})_{t\geq0}$ is
then a calibrated regime switching local volatility (RSLV) model. 

The paper is organized as follows. In Section \ref{sec:newClass},
we state the main results that give existence to SDE (\ref{eq:ToySDE}).
Section \ref{sec:proofsnewclass} is dedicated to the proofs of the
results in Section \ref{sec:newClass}. In Section \ref{sec:CalibrationofRSLVmain},
we state the existence of calibrated RSLV models and we prove that
result in Section \ref{sec:calibrationofRSLVproofs}. Beforehand,
we introduce some additional notation.

\subsection*{Notation}
\begin{itemize}
\item For an interval $I\subset\mathbb{R}$, we denote by $L^{2}(I)$ the
space of measurable real valued functions defined on $I$ which are
square integrable for the Lebesgue measure. For $k\geq1$, and $u,v\in\left(L^{2}(I)\right)^{k}$,
we use the notation
\[
(u,v)_{k}=\int_{I}\sum_{i=1}^{k}u_{i}(x)v_{i}(x)dx,\ |u|_{k}=(u,u)_{k}^{\frac{1}{2}}.
\]
and we define $L(I):=\left(L^{2}(I)\right)^{d}$. We also define $L:=L(\mathbb{R})$
and we denote by $L'$ its dual space. 
\item For $m\geq1$, we denote by $H^{m}(I)$ the Sobolev space of real
valued functions on $I$ that are square integrable together with
all their distribution derivatives up to the order $m$. We define
the space $H(I):=\left(H^{1}(I)\right)^{d}$, endowed with the usual
scalar product and norm
\[
\langle u,v\rangle_{d}=\int_{I}\sum_{i=1}^{d}\left(u_{i}(x)v_{i}(x)+\partial_{x}u_{i}(x)\partial_{x}v_{i}(x)\right)dx,\ ||u||_{d}=\langle u,u\rangle_{d}^{\frac{1}{2}},
\]
and we define $H:=H(\mathbb{R})$. We denote by $H'$ its dual space,
and by $\langle\cdot,\cdot\rangle$ the duality product between $H$
and $H'$.
\item For $1\leq p\leq\infty$, we denote by $W^{1,p}(\mathbb{R})$ the
Sobolev space of functions belonging to $L^{p}(\mathbb{R})$, and
that have a first order derivative in the sense of distributions that
also belongs to $L^{p}(\mathbb{R})$. 
\item For $n\geq1$, $\mathcal{O}$ an open subset of $\mathbb{R}^{n}$
and $0\leq k\leq\infty$, we denote by $C^{k}(\mathcal{O})$ the set
of functions $\mathcal{O}\rightarrow\mathbb{R}$ that are continuous
and have continuous derivatives up to the order $k$, we denote by
$C_{c}^{k}(\mathcal{O})$ the set of functions in $C^{k}(\mathcal{O})$
that have compact support on $\mathcal{O}$, and we denote by $C_{b}^{k}(\mathcal{O})$
the set of functions in $C^{k}(\mathcal{O})$ that are uniformly bounded
on $\mathcal{O}$ together with their $p$-th order derivatives, for
$p\leq k$. 
\item For $n\geq1$, we denote by $\mathcal{M}_{n}(\mathbb{R})$ the set
of $n\times n$ matrices with real-valued coefficients. We denote
by $I_{n}\in\mathcal{M}_{n}(\mathbb{R})$ the identity matrix, and
by $J_{n}\in\mathcal{M}_{n}(\mathbb{R})$ the matrix where all the
coefficients are equal to 1.
\item For $n\geq1$, we denote by $\mathcal{S}_{n}^{+}(\mathbb{R})$ (resp.
$\mathcal{S}_{n}^{++}(\mathbb{R})$) the set of $n\times n$ matrices
in $\mathcal{M}_{n}(\mathbb{R})$ which are symmetric and positive
semidefinite (resp. definite). For $S\in\mathcal{S}_{n}^{+}(\mathbb{R})$
(resp. $S\in\mathcal{S}_{n}^{++}(\mathbb{R})$) we denote by $\sqrt{S}$
the unique matrix in $\mathcal{S}_{n}^{+}(\mathbb{R})$ (resp. $\mathcal{S}_{n}^{++}(\mathbb{R})$)
such that $\sqrt{S}\sqrt{S}=S$, and we denote by $l_{min}(S)$ and
$l_{max}(S)$ respectively the smallest eigenvalue and the largest
eigenvalue of $S$.
\item Given $n,p\geq1$ and $\mathcal{A}$ a $n\times p$ matrix, we denote
its transpose by $\mathcal{A}^{*}$ and we define $||A||_{\infty}:=\max\{|\mathcal{A}_{ij}|,1\leq i\leq n,1\leq j\leq p\}$. 
\item For $y\in\mathbb{R}$, we denote its positive part by $y^{+}=\max(0,y)$
and its negative part by $y^{-}=\min(0,y)$. For $k\geq2$ and $x=\left(x_{1},...,x_{k}\right)\in\mathbb{R}^{k}$,
we denote its positive part by $x^{+}:=\left(x_{1}^{+},...,x_{k}^{+}\right)$,
and its negative part by $x^{-}:=\left(x_{1}^{-},...,x_{k}^{-}\right)$.
\item We denote by $\mathcal{P}(\mathbb{R})$ (resp. $\mathcal{P}(\mathbb{R}\times\mathcal{Y})$)
the set of probability measures on $\mathbb{R}$ (resp. $\mathbb{R}\times\mathcal{Y}$).
\item We define $\mathcal{D}:=\left(\mathbb{R}^{+}\right)^{d}\backslash(0,...,0)$. 
\item For notational simplicity, for a function $g$ defined on $\mathbb{R}^{2}$
and $t\in\mathbb{R}$, we may sometimes use the notation $g(t):=g(t,\cdot)$. 
\end{itemize}

\section{\label{sec:newClass}A new class of fake Brownian motions }

In this section, we give the main results concerning the SDE $(1)$.
We introduce a variational formulation to give sense to the PDS (FBM).
Let us assume that $p:=\left(p_{1},...,p_{d}\right)$ is a classical
solution to the PDS (FBM) such that $p$ takes values in $\mathcal{D}$
and for $t\in(0,T]$, $p(t,\cdot)\in H$. For $v:=(v_{1},...,v_{d})\in H$
and $i\in\{1,...,d\}$ we multiply (\ref{eq:toypds2pde}) by $v_{i}$
and integrate over $\mathbb{R}$. Through integration by parts, we
obtain that for $i\in\{1,...,d\}$, the following equality holds in
the classical sense and also in the sense of distributions on $(0,T)$:
\begin{equation}
\int_{\mathbb{R}}v_{i}(x)\partial_{t}p_{i}(t,x)dx=\frac{d}{dt}\int_{\mathbb{R}}v_{i}(x)p_{i}(t,x)dx=-\frac{1}{2}\int_{\mathbb{R}}\partial_{x}v_{i}\partial_{x}\left(\frac{\sum_{l=1}^{d}p_{l}(t,x)}{\sum_{l=1}^{d}\lambda_{l}p_{l}(t,x)}\lambda_{i}p_{i}(t,x)\right)dx.\label{eq:fvarindexi}
\end{equation}
The term $\partial_{x}\left(\frac{\sum_{l=1}^{d}p_{l}}{\sum_{l=1}^{d}\lambda_{l}p_{l}}\lambda_{i}p_{i}\right)$
rewrites
\begin{eqnarray*}
\partial_{x}\left(\frac{\sum_{l=1}^{d}p_{l}}{\sum_{l=1}^{d}\lambda_{l}p_{l}}\lambda_{i}p_{i}\right) & = & \left(1+\frac{\left(\sum_{l\neq i}\lambda_{l}p_{l}\right)\left(\sum_{l\neq i}(\lambda_{i}-\lambda_{l})p_{l}\right)}{\left(\sum_{l=1}^{d}\lambda_{l}p_{l}\right)^{2}}\right)\partial_{x}p_{i}+\sum_{j\neq i}\frac{\lambda_{i}p_{i}\left(\sum_{l\neq j}(\lambda_{l}-\lambda_{j})p_{l}\right)}{\left(\sum_{l=1}^{d}\lambda_{l}p_{l}\right)^{2}}\partial_{x}p_{j}.
\end{eqnarray*}
Let us introduce the function $M:\mathcal{D}\rightarrow\mathcal{M}_{d}(\mathbb{R})$
where for $\rho\in\mathcal{D}$, $M(\rho)$ is the matrix with coefficients:
\begin{eqnarray*}
M_{ii}(\rho) & = & \frac{\left(\sum_{l\neq i}\lambda_{l}\rho_{l}\right)\left(\sum_{l\neq i}(\lambda_{i}-\lambda_{l})\rho_{l}\right)}{\left(\sum_{l=1}^{d}\lambda_{l}\rho_{l}\right)^{2}},\ 1\leq i\leq d,\\
M_{ij}(\rho) & = & \frac{\lambda_{i}\rho_{i}\left(\sum_{l\neq j}(\lambda_{l}-\lambda_{j})\rho_{l}\right)}{\left(\sum_{l=1}^{d}\lambda_{l}\rho_{l}\right)^{2}},\ 1\leq i\neq j\leq d,
\end{eqnarray*}
We also define $A:\mathcal{D}\rightarrow\mathcal{M}_{d}(\mathbb{R})$,
where for $\rho\in\mathcal{D}$, 
\[
A(\rho)=\frac{1}{2}\left(I_{d}+M(\rho)\right).
\]
For $1\leq i\leq d$, we have that $\frac{1}{2}\partial_{x}\left(\frac{\sum_{l=1}^{d}p_{l}}{\sum_{l=1}^{d}\lambda_{l}p_{l}}\lambda_{i}p_{i}\right)=(A(p)\partial_{x}p)_{i}$.
Summing the equality (\ref{eq:fvarindexi}) over the index $1\leq i\leq d$,
the following equality holds in the sense of distributions on $(0,T)$,

\[
\forall v\in H,\ \frac{d}{dt}(v,p(t))_{d}+(\partial_{x}v,A(p(t))\partial_{x}p(t))_{d}=0.
\]
We denote by $p_{0}$ the measure $\mu$ multiplied by the vector
$(\alpha_{1},...,\alpha_{d})$ and introduce below a weak variational
formulation for the PDS (FBM).

\begin{equation}
\text{Find }p=(p_{1},...,p_{d})\ \text{satisfying:}\label{eq:find}
\end{equation}
\begin{equation}
p\in L_{loc}^{2}((0,T];H)\cap L_{loc}^{\infty}((0,T];L),\label{eq:pL2}
\end{equation}
\begin{equation}
p\text{ takes values in \ensuremath{\mathcal{D}}, a.e. on \ensuremath{(0,T)\times\mathbb{R}},}\label{eq:pinD}
\end{equation}

\begin{equation}
\forall v\in H,\ \frac{d}{dt}(v,p)_{d}+(\partial_{x}v,A(p)\partial_{x}p)_{d}=\text{0}\text{ in the sense of distributions on \ensuremath{(0,T)},}\label{eq:fvar}
\end{equation}

\begin{equation}
p(t,\cdot)\underset{t\rightarrow0}{\overset{\text{weakly-*}}{\rightarrow}}p_{0},\label{eq:cond0}
\end{equation}
where the last condition means that: 
\[
\forall v\in H,(v,p(t))_{d}\underset{t\rightarrow0}{\rightarrow}\sum_{i=1}^{d}\alpha_{i}\int_{\mathbb{R}}v_{i}(x)\mu(dx).
\]
We call $V(\mu)$ the problem defined by (\ref{eq:find})-(\ref{eq:cond0}).
If $p$ only satisfies (\ref{eq:pL2}), then the initial condition
(\ref{eq:cond0}) does not make sense. As we will see in the proof
of Theorem \ref{thm:FBMmuProba}, if $p$ satisfies the conditions
(\ref{eq:pL2})-(\ref{eq:fvar}), then $p$ is a.e. equal to a function
continuous from $(0,T]$ into $L$. We will always consider its continuous
representative and therefore (\ref{eq:cond0}) makes sense. The existence
results to $V(\mu)$ and to the SDE (\ref{eq:ToySDE}) are stated
in the following theorems.
\begin{thm}
\label{thm:FBMmuProba}Under Condition $\left(C\right)$, $V(\mu)$
has a solution $p\in C((0,T],L)$ such that for almost every $(t,x)$
in $(0,T]\times\mathbb{R}$, $\sum_{i=1}^{d}p_{i}(t,x)=(\mu*h_{t})(x)$.
\end{thm}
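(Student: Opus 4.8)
\emph{Plan.} This theorem is Step~2 of the strategy sketched above, so I would deduce it from an $L^2$-datum existence result (Step~1) by regularizing $\mu$ and passing to the limit. \textbf{Step 1 (square-integrable datum, via Galerkin).} For $\mu$ with a density in $L^2(\mathbb R)$ I would first solve a strong formulation $V_{L^2}(\mu)$ — a solution $p\in L^2(0,T;H)\cap C([0,T];L)$ of (\ref{eq:fvar}) with $p(0)=p_0$ and $p$ valued in $\mathcal D$ a.e. — by a Galerkin scheme on a Hilbert basis of $H$ (extending $A$ to a bounded continuous map on $\mathbb R^d$ so that the finite-dimensional ODE systems are globally solvable, and recovering $\mathcal D$-valuedness of the limit a posteriori via a maximum-principle/truncation argument together with the identity $\sum_l p_l=\mu*h_t>0$ established in Step~2). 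Note $A(\rho)=\tfrac12(I_d+M(\rho))$ is continuous and bounded on $\mathcal D$, since each $M_{ij}(\rho)$ is a ratio whose numerator is dominated by $\tfrac{\lambda_{max}}{\lambda_{min}}\sum_l\lambda_l\rho_l$. The key estimate comes from testing (\ref{eq:fvar}) against $v=\Gamma p^{(N)}$, equivalently differentiating the energy $(\Gamma p^{(N)},p^{(N)})_d$, which is equivalent to $|p^{(N)}|_d^2$ because $\Gamma\in\mathcal S_d^{++}(\mathbb R)$: this gives $\tfrac{d}{dt}(\Gamma p^{(N)},p^{(N)})_d+2(\Gamma\partial_x p^{(N)},A(p^{(N)})\partial_x p^{(N)})_d=0$, and Condition~(C) — through the matrices $\Gamma^{(k)}$ of (\ref{eq:Gammakij}) — is precisely what makes the quadratic form $(\Gamma\xi,A(\rho)\xi)$ coercive modulo the direction of $\partial_x(\sum_l p_l^{(N)})$, which is already pinned down. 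One then has uniform bounds on $p^{(N)}$ in $L^2(0,T;H)\cap L^\infty(0,T;L)$ and on $\partial_t p^{(N)}$ in $L^2(0,T;H')$ (from the equation and boundedness of $A$), extracts weak/weak-$*$ limits, and uses the Aubin--Lions lemma for strong $L^2(0,T;L)$ and a.e. convergence, which suffices to pass to the limit in the nonlinear term $A(p^{(N)})\partial_x p^{(N)}$ (an a.e.-convergent bounded matrix against a weakly convergent gradient).

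\textbf{Step 2 (general $\mu$: uniform bounds).} Set $\mu_n:=\mu*h_{1/n}$, which has a smooth $L^2$ density, and let $p^{(n)}$ solve $V_{L^2}(\mu_n)$. Testing (\ref{eq:fvar}) against $v=(\psi,\dots,\psi)$ for scalar $\psi\in H^1(\mathbb R)$ and using $\sum_i(A(p)\partial_x p)_i=\tfrac12\partial_x(\sum_l p_l)$ shows $\sum_l p^{(n)}_l$ solves the one-dimensional heat equation with datum $\mu_n$, so $\sum_l p^{(n)}_l(t,\cdot)=\mu*h_{t+1/n}$; since $p^{(n)}\geq0$, $0\leq p^{(n)}_i(t,x)\leq(\mu*h_{t+1/n})(x)$, whence $|p^{(n)}_i(t)|_{L^2}^2\leq|\mu*h_{t+1/n}|_{L^2}^2\leq C/\sqrt t$ uniformly in $n$ — the advertised control of the explosion rate of the $L^2$ norms as $t\to0$. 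Inserting $|p^{(n)}(s)|_L^2\leq C/\sqrt s$ into the energy identity of Step~1, integrated on $[s,T]$, gives for each $s\in(0,T)$ a bound $\sup_{[s,T]}|p^{(n)}(t)|_L^2+\int_s^T|\partial_x p^{(n)}(t)|_d^2\,dt\leq C(s)$, and a bound on $\partial_t p^{(n)}$ in $L^2(s,T;H')$, all uniform in $n$.

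\textbf{Step 3 (passage to the limit and initial condition).} A diagonal extraction over $s=1/k$ produces $p\in L^2_{loc}((0,T];H)\cap L^\infty_{loc}((0,T];L)$, the weak/weak-$*$ limit of $p^{(n)}$, with a.e. and strong $L^2_{loc}((0,T];L)$ convergence by Aubin--Lions on each $[s,T]$; passing to the limit in (\ref{eq:fvar}) as in Step~1 gives (\ref{eq:fvar}), (\ref{eq:pL2}) is built in, and (\ref{eq:pinD}) holds because $p\geq0$ a.e. and $\sum_l p_l(t,\cdot)=\lim_n\sum_l p^{(n)}_l(t,\cdot)=\mu*h_t>0$ — which is also the asserted identity $\sum_i p_i(t,x)=(\mu*h_t)(x)$. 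Since $p\in L^2_{loc}((0,T];H)$ with $\partial_t p\in L^2_{loc}((0,T];H')$, $p$ has a representative in $C((0,T],L)$, so (\ref{eq:cond0}) makes sense. To obtain (\ref{eq:cond0}) I would upgrade the energy estimate to its weighted form $\int_0^T\sqrt t\,|\partial_x p(t)|_d^2\,dt<\infty$, hence $\partial_x p\in L^1((0,T);L)$ by Cauchy--Schwarz with $\int_0^T t^{-1/2}dt<\infty$, so that for fixed $v\in H$ the map $t\mapsto(v,p(t))_d$ extends continuously to $t=0$; its limit is identified as $\sum_i\alpha_i\int v_i\,d\mu$ using $\sum_l p_l(t)\rightharpoonup\mu$, the pointwise bound $0\leq p_i(t)\leq\mu*h_t$ (so that each family $\{p_i(t)\,dx\}_{t\in(0,1]}$ is tight) and the conserved mass $\int p_i(t)\,dx=\alpha_i$.

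\textbf{Main obstacle.} The genuinely hard point is the energy estimate of Step~1: proving that $(\Gamma\partial_x p,A(p)\partial_x p)_d$ is coercive modulo the known heat component is exactly the analytic content of Condition~(C), and all the compactness in Steps~2--3 rests on it. A secondary difficulty is passing to the limit in the initial condition for a general probability measure $\mu$, where the $L^2$ bounds degenerate as $t\to0$ and one must work with the weighted energy bound and tightness rather than with $L^2$ estimates up to $t=0$.
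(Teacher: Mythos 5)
Your architecture coincides with the paper's (Galerkin for $L^2$ data with a regularized coefficient, nonnegativity by testing with $p^-$, the identity $\sum_ip_i=\mu*h_t$, mollification of $\mu$, the $t^{-1/2}$ blow-up of the $L^2$ norms, and an $L^1$-in-time gradient bound near $t=0$), but the key step is not closed. Condition $(C)$ does \emph{not} make $\xi\mapsto\xi^{*}\Gamma A(\rho)\xi$ coercive on $\mathbb{R}^{d}$: it only yields coercivity on $\mathbf{1}^{\perp}$ (this is exactly (\ref{eq:coercivityperp})). Your plan to use this ``modulo the direction of $\partial_x(\sum_lp_l^{(N)})$, which is already pinned down'' fails at the level where the estimate is actually needed: for the Galerkin approximation, $\sum_lp_l^{(N)}$ does \emph{not} solve the heat equation, because the constant-direction test functions $(\tilde v,\dots,\tilde v)$ are not in the span of $w_1,\dots,w_m$; and even as an a priori estimate one must absorb the cross terms between the $\mathbb{R}\mathbf{1}$- and $\mathbf{1}^{\perp}$-components of $\partial_xp$. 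The paper's resolution is precisely Proposition \ref{prop:coercivity1orthogonal} and Corollary \ref{cor:PiA}: replace $\Gamma$ by $\Pi:=J_d+\epsilon\Gamma\in\mathcal{S}_d^{++}(\mathbb{R})$, which, since $J_dM=0$, makes $\Pi A$ uniformly coercive on all of $\mathcal{D}$ for $\epsilon$ small; testing with $\Pi p$ then closes the energy estimate with no reference to the heat structure. Relatedly, $A$ admits no continuous extension to the origin of $(\mathbb{R}_+)^d$ (the entries are $0/0$ there with direction-dependent limits), so your ``bounded continuous extension to $\mathbb{R}^d$'' must be an $\epsilon$-regularization of the denominator as in $A_\epsilon$, and you must verify that this regularization preserves both the coercivity (Lemma \ref{lem:AcoerciveimpliesAepscoercive}) and the diagonal lower bound used for nonnegativity, before removing it using $\sum_l\lambda_lp_l>0$ a.e.

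Two further points. First, your identification of the initial condition is insufficient: knowing that each $p_i(t)\,dx$ is tight with conserved mass $\alpha_i$ and that $\sum_ip_i(t)\rightharpoonup\mu$ only constrains subsequential limits $\nu_i$ to satisfy $\nu_i(\mathbb{R})=\alpha_i$ and $\sum_i\nu_i=\mu$, which does not force $\nu_i=\alpha_i\mu$. The paper instead passes to the limit in the weak formulation \emph{including} the boundary term $\psi(0)(v,p_{0,\sigma_k})_d$, obtaining (\ref{eq:egaliteconditioninitiale}), and compares it with the formulation started from $t>0$, (\ref{eq:egalitepetitT}), which identifies $\lim_{t\to0}(v,p(t))_d$. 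Second, the weighted bound $\int_0^T\sqrt{t}\,|\partial_xp(t)|_d^2\,dt<\infty$ does not follow from $\int_t^T|\partial_xp|_d^2\,ds\le\gamma t^{-1/2}$ (the exponent $1/2$ is exactly borderline and produces a logarithmic divergence); one needs a weight $t^{\alpha}$ with $\alpha\in(1/2,1)$, or the dyadic argument of Lemma \ref{lem:L1dxp}, which gives $\int_0^t|\partial_xp(s)|_d\,ds\le Ct^{1/4}$ directly. This is a minor, repairable slip, unlike the coercivity and initial-condition issues above.
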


\begin{thm}
\label{thm:FBMweaksolution}Under Condition $\left(C\right)$, SDE
(\ref{eq:ToySDE}) has a weak solution, which has the same marginal
law as $\left(Z+W_{t}\right)_{t\geq0}$, where $Z$ is independent
from $\left(W_{t}\right)_{t\geq0}$ and $Z\sim\mu$.
\end{thm}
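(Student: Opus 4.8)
The plan is to deduce Theorem~\ref{thm:FBMweaksolution} from Theorem~\ref{thm:FBMmuProba} by invoking the superposition principle of \cite{Figalli}, which guarantees that whenever a family of time marginals solves a Fokker--Planck equation in the distributional sense, there exists a (weak) solution of the corresponding martingale problem whose one-dimensional time marginals coincide with that family. Concretely, I would first rewrite the SDE~(\ref{eq:ToySDE}) enlarged with the (constant) component $Y$ as a $2$-dimensional process $(X_t,Y_t)$, but since $Y$ is constant it is simpler to work on the state space $\mathbb{R}\times\mathcal{Y}$: let $p=(p_1,\dots,p_d)$ be the solution to $V(\mu)$ provided by Theorem~\ref{thm:FBMmuProba}, and define on $\mathbb{R}\times\mathcal{Y}$ the (sub-probability, in fact probability since $\sum_i p_i(t,\cdot)=\mu*h_t$ integrates to $1$) measures $m_t(dx,\{i\}):=p_i(t,x)\,dx$. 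The diffusion coefficient to be read off the equation is $a_i(t,x):=\dfrac{\lambda_i}{\sum_{l=1}^d\lambda_l p_l(t,x)/\sum_{l=1}^d p_l(t,x)}=\dfrac{\lambda_i\sum_{l}p_l(t,x)}{\sum_l\lambda_l p_l(t,x)}$, which is exactly $\left(f^2(i)\big/\mathbb{E}[f^2(Y)\mid X_t=x]\right)$ evaluated along the candidate law, so that~(\ref{eq:toypds2pde}) reads $\partial_t p_i=\tfrac12\partial_{xx}^2(a_i p_i)$. I would check that $(m_t)_{t\in(0,T]}$ is a distributional solution of the Fokker--Planck equation on $\mathbb{R}\times\mathcal{Y}$ associated with the generator $\mathcal{L}_t\varphi(x,i)=\tfrac12 a_i(t,x)\partial_{xx}^2\varphi(x,i)$, and that the integrability hypotheses of \cite{Figalli} (a local $L^1$-in-time bound on $\int a_i(t,x)\,m_t(dx,i)$, plus tightness as $t\to0$) hold; the bound $\lambda_{min}/\lambda_{max}\le a_i(t,x)\le \lambda_{max}/\lambda_{min}$, which is immediate from the definition of $a_i$ and positivity of the $p_l$'s (condition~(\ref{eq:pinD})), makes this step essentially free, and the weak-$*$ convergence~(\ref{eq:cond0}) together with $\sum_i p_i(t,\cdot)=\mu*h_t\to\mu$ handles the initial condition.

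Next I would apply the main result of \cite{Figalli} to obtain a probability measure $\mathbb{P}$ on the path space $C([0,T];\mathbb{R})\times\mathcal{Y}$ (the $\mathcal{Y}$-component being frozen) under which the canonical process $(X_t)$ satisfies the martingale problem for $\mathcal{L}_t$ and has $\mathbb{P}(X_t\in dx, Y=i)=p_i(t,x)\,dx$ for a.e.\ $t$. This means precisely that $M^\varphi_t:=\varphi(X_t)-\int_0^t\tfrac12 a_{Y}(s,X_s)\varphi''(X_s)\,ds$ is a martingale for every $\varphi\in C_c^\infty(\mathbb{R})$, for each value $i$ of the frozen variable $Y$, with $\mathbb{P}(Y=i)=\alpha_i$ (this last identity coming from integrating $p_i(t,\cdot)$, which equals $\alpha_i$ at all times since $\partial_t\int p_i=0$ by testing with $v$ constant in~(\ref{eq:fvar})). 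Taking $\varphi(x)=x$ and $\varphi(x)=x^2$ in the martingale problem and using the boundedness of $a$ shows $X$ is a square-integrable continuous martingale with $\langle X\rangle_t=\int_0^t a_Y(s,X_s)\,ds$; a standard martingale representation argument (enlarging the probability space if necessary) then produces a Brownian motion $W$ with $dX_t=\sqrt{a_Y(t,X_t)}\,dW_t$, i.e.\ $dX_t=\dfrac{f(Y)}{\sqrt{\mathbb{E}[f^2(Y)\mid X_t]}}\,dW_t$ once one identifies $\mathbb{E}[f^2(Y)\mid X_t=x]=\sum_l\lambda_l p_l(t,x)/\sum_l p_l(t,x)$ from the joint law $m_t$ — here one must be slightly careful that this conditional expectation is the one computed under $\mathbb{P}$, which holds because the time marginals of $\mathbb{P}$ are the $p_i$'s. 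The independence of $W$ from $X_0$ and $Y$ can be arranged in the representation step.

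Finally, the marginal-law statement is the easy part: by construction the law of $X_t$ under $\mathbb{P}$ has density $\sum_{i=1}^d p_i(t,x)=(\mu*h_t)(x)$ for a.e.\ $t$, hence by continuity of $t\mapsto X_t$ and of $t\mapsto\mu*h_t$ (in law) for all $t\in(0,T]$, and the initial condition gives $X_0\sim\mu$; thus $X_t\sim\mu*h_t=\mathrm{Law}(Z+W_t)$ with $Z\sim\mu$ independent of the driving Brownian motion, which is exactly the claimed marginal identity. I expect the main obstacle to be the careful verification that the hypotheses of the superposition principle of \cite{Figalli} are met in this degenerate-on-$\mathcal{Y}$ setting with only $L^2_{loc}((0,T];H)$ regularity of $p$ and a possibly non-integrable blow-up of $\|p(t)\|_L$ as $t\to0$ — one needs the $L^\infty_{loc}((0,T];L)$ bound from~(\ref{eq:pL2}) and the explicit control $\sum_i p_i=\mu*h_t$ near $t=0$ to push the argument down to $t=0$, and one must also confirm that \cite{Figalli} applies on the product space $\mathbb{R}\times\mathcal{Y}$ (or, equivalently, treat $Y$ as a parameter and patch the $d$ conditional martingale problems together with weights $\alpha_i$); the rest is a routine combination of Gyöngy-type marginal matching and martingale representation.
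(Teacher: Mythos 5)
Your proposal follows essentially the same route as the paper: convert the variational solution of $V(\mu)$ into a distributional solution of the Fokker--Planck system, apply \cite[Theorem 2.6]{Figalli} for each fixed value $y_i$ of the frozen variable $Y$, and mix the resulting laws $\mathbb{P}^{y_i}$ with weights $\alpha_i$ to solve the nonlinear martingale problem, the marginal identity $\sum_i p_i(t,\cdot)=\mu*h_t$ then being inherited from Theorem \ref{thm:FBMmuProba}. The only point you underestimate is the step you call ``essentially free'': passing from $(\partial_x v,A(p)\partial_x p)_d$ to the divergence form $\tfrac12\partial_x\bigl(\tfrac{\sum_k p_k}{\sum_k\lambda_k p_k}\lambda_i p_i\bigr)$ requires showing that this quotient actually lies in $H^1(\mathbb{R})$ with the expected derivative, which the paper establishes in Lemma \ref{lem:VFtoFigalli} via a mollification argument relying on the strict positivity of $\sum_k p_k=\mu*h_t$.
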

The proofs of Theorems \ref{thm:FBMmuProba} and \ref{thm:FBMweaksolution}
are postponed to the following section. To end this section, let us
remark that whenever the function $f$ is not constant on $\mathcal{Y}$,
the solution to the SDE (\ref{eq:ToySDE}) given by Theorem \ref{thm:FBMweaksolution}
is a continuous fake Brownian motion.

\begin{prop}
Under Condition $\left(C\right)$, if $f$ is not constant on $\mathcal{Y}$
then the solutions to SDE (\ref{eq:ToySDE}) with initial condition
$\mu=\delta_{0}$ are continuous fake Brownian motions. \end{prop}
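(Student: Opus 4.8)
### Proof Proposal

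The plan is to verify the two defining properties of a fake Brownian motion: first, that the process constructed in Theorem~\ref{thm:FBMweaksolution} has the same one-dimensional marginals as Brownian motion started at $0$, and second, that it is nonetheless \emph{not} a Brownian motion. Both properties should follow quite directly from the results already in hand, with the second being where the hypothesis that $f$ is non-constant on $\mathcal{Y}$ enters essentially.

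First I would observe that by Theorem~\ref{thm:FBMweaksolution}, with $\mu=\delta_0$ and hence $Z=0$ a.s., any weak solution $(X_t)_{t\ge 0}$ to SDE~(\ref{eq:ToySDE}) has the same marginal law as $(0+W_t)_{t\ge 0} = (W_t)_{t\ge 0}$, so $X_t \sim \mathcal{N}(0,t)$ for every $t\ge 0$. Next, I would note that $X$ is a martingale: from the SDE~(\ref{eq:ToySDE}) the drift vanishes, and the diffusion coefficient $f(Y)/\sqrt{\mathbb{E}[f^2(Y)|X_t]}$ is bounded (it lies between $\sqrt{\lambda_{\min}/\lambda_{\max}}$ and $\sqrt{\lambda_{\max}/\lambda_{\min}}$, since the conditional expectation of $f^2(Y)$ given $X_t$ is a convex combination of the $\lambda_i$), so the stochastic integral is a genuine (square-integrable, even) martingale, and $X$ has continuous paths. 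Together with the marginal identification, $X$ is a continuous martingale with Brownian marginals; it remains only to show it is not Brownian motion itself.

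For the last point I would compute the quadratic variation. Brownian motion $W$ satisfies $\langle W\rangle_t = t$ deterministically, equivalently $W_t^2 - t$ is a martingale. For our process, $d\langle X\rangle_t = \dfrac{f^2(Y)}{\mathbb{E}[f^2(Y)|X_t]}\,dt$, so $\mathbb{E}[\langle X\rangle_t] = \int_0^t \mathbb{E}\!\left[\dfrac{f^2(Y)}{\mathbb{E}[f^2(Y)|X_s]}\right]ds = \int_0^t \mathbb{E}\!\left[\dfrac{\mathbb{E}[f^2(Y)|X_s]}{\mathbb{E}[f^2(Y)|X_s]}\right]ds = t$ by the tower property — consistent with the marginals, as it must be — but $\langle X\rangle_t$ is genuinely random whenever $f$ is non-constant. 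The cleanest way to see this: conditionally on $\{Y=i\}$ we have $d\langle X\rangle_t = \lambda_i / \sum_l \lambda_l p_l(t,X_t)/\sum_l p_l(t,X_t)\, dt$, and since $\lambda_{\min}<\lambda_{\max}$ the value of this conditional speed differs across the events $\{Y=i\}$ on a set of positive measure; hence $\langle X\rangle_t$ cannot be a.s. equal to $t$, so $X$ is not a Brownian motion. Alternatively, and perhaps more robustly, I would argue by contradiction: if $(X_t)$ were a Brownian motion, then by Lévy's characterization $\langle X\rangle_t = t$ a.s., forcing $\dfrac{f^2(Y)}{\mathbb{E}[f^2(Y)|X_t]} = 1$ for Lebesgue-a.e.\ $t$, almost surely; taking conditional expectation given $X_t$ and also integrating against the indicator of $\{Y=i\}$ would then force $f^2$ to be $\mathbb{P}$-a.s.\ constant, i.e.\ $\lambda_i$ independent of $i$ among those with $\alpha_i>0$, contradicting that $f$ is non-constant on $\mathcal{Y}$ (recall all $\alpha_i>0$).

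The main obstacle is not conceptual but a matter of rigor in the last step: one must make sure the ``$\langle X\rangle_t=t$ forces the integrand to be $1$'' implication is correctly quantified (a.e.\ in $t$, a.s.), and that $X_t$ genuinely has a density so that the conditional expectations are well-defined and the event $\{\mathbb{E}[f^2(Y)|X_t]\neq \lambda_i \text{ while } Y=i\}$ has positive probability — this last fact comes from $\sum_i p_i(t,\cdot)=\mu*h_t$ being a strictly positive smooth density for $t>0$ together with $p$ taking values in $\mathcal{D}$, as guaranteed by Theorem~\ref{thm:FBMmuProba} and property~(\ref{eq:pinD}). Once these measurability and positivity details are nailed down, the contradiction argument closes immediately, and the proposition follows.
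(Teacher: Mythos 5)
Your overall strategy is the paper's: boundedness of the diffusion coefficient gives a continuous martingale, the Gaussian marginals come from Theorem \ref{thm:FBMweaksolution} (or from Gyongy's theorem), and non-Brownianity is extracted from the quadratic variation $d\langle X\rangle_t=\frac{f^2(Y)}{\mathbb{E}[f^2(Y)\mid X_t]}\,dt$. The gap is in how you close the contradiction. Both your first argument (``the conditional speed differs across the events $\{Y=i\}$ on a set of positive measure'') and your ``more robust'' second argument ultimately rest on the claim that the conditional laws of $X_t$ given $\{Y=i\}$ and $\{Y=j\}$ charge a common set of $x$'s, so that $\mathbb{E}[f^2(Y)\mid X_t=x]$ cannot simultaneously equal $\lambda_i$ and $\lambda_j$. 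The justification you offer --- strict positivity of $\sum_i p_i(t,\cdot)=\mu*h_t$ together with $p$ taking values in $\mathcal{D}$ --- does not deliver this: it only says the $p_i$ do not all vanish at once, and is perfectly compatible with the $p_i$ having pairwise disjoint supports, in which case $\mathbb{E}[f^2(Y)\mid X_t]=f^2(Y)$ could hold with $f$ non-constant and your contradiction evaporates. (Note also that the proposition concerns \emph{any} solution of the SDE, not only the one built in Theorem \ref{thm:FBMweaksolution}, so you cannot lean on properties established only for the constructed solution without comment.)

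The fix --- and the route the paper takes --- is to use the independence of $Y$ from $(W_t)_{t\geq0}$ rather than any support analysis. If a.s.\ $\langle X\rangle_t=t$ for all $t$, then a.s., $dt$-a.e., $f^2(Y)=\mathbb{E}[f^2(Y)\mid X_t]$, so $f^2(Y)$ is $\sigma(X_t)$-measurable; moreover, since $f>0$, the diffusion coefficient is then $\equiv 1$ and $X_t=W_t$. Hence $f^2(Y)$ is $\sigma(W_t)$-measurable and at the same time independent of $W_t$, so it is a.s.\ constant, i.e.\ (all $\alpha_i>0$) $f$ is constant on $\mathcal{Y}$; contraposition finishes the proof. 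Once you know $X_t=W_t$ and invoke $Y\perp W$, your density argument also closes, because then $p_i(t,\cdot)=\alpha_i h_t>0$ everywhere --- but that positivity is a consequence of the independence step, not of the positivity of the sum $\sum_i p_i$.
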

\begin{proof}
Let $X$ be a solution to SDE (\ref{eq:ToySDE}). The term $\frac{f(Y)}{\sqrt{\mathbb{E}\left[f^{2}(Y)|X_{t}\right]}}$
is bounded, so $X$ is a continuous martingale and by \cite[Theorem 4.6]{Gyongy},
for $t\in[0,T]$, $X_{t}$ has the law $\mathcal{N}(0,t)$. Let us
remark that a solution to SDE (\ref{eq:ToySDE}) with the properties
stated by Theorem \ref{thm:FBMweaksolution} satisfies the marginal
constraints without using Gyongy's theorem. We consider the quadratic
variation $\langle X\rangle$ of $X$, which satisfies $d\langle X\rangle_{t}=\frac{f^{2}(Y)}{\mathbb{E}\left[f^{2}(Y)|X_{t}\right]}dt$.
If almost surely, $\forall t\geq0$, $\langle X\rangle_{t}=t$, then
a.s., $dt$-a.e., $\frac{f^{2}(Y)}{\mathbb{E}\left[f^{2}(Y)|X_{t}\right]}=1$
and $f^{2}(Y)$ is $\sigma\left(X_{t}\right)$ measurable. As moreover
$f$ is positive, $X_{t}=W_{t}$, and there exists $t\geq0$ such
that $f^{2}(Y)$ is a measurable function of $W_{t}$. As $Y$ is
independent from $\left(W_{t}\right)_{t\geq0}$, we have that $f^{2}(Y)$
is constant. Then by contraposition, if $f$ is not constant on $\mathcal{Y}$,
we do not have that for $t>0$, $\langle X\rangle_{t}=t$, so $\left(X_{t}\right)_{t\geq0}$
is a continuous fake Brownian motion. 
\end{proof}

\section{\label{sec:proofsnewclass}Proofs of Section 2}

Condition $(C)$, developed in Subsection \ref{sub:ConditionGamma},
enables to establish a priori energy estimates of solutions to $V\left(\mu\right)$,
computed in Subsection \ref{sub:squareintegrabledensity}, where we
give a stronger variational formulation to the PDS (FBM) under the
assumption that the initial distribution $\mu$ has a square integrable
density. Under Condition $(C)$, we prove existence to that formulation.
That result is extended in Subsection \ref{sub:FBMGeneralCase} to
prove Theorem \ref{thm:FBMmuProba}. In Subsection \ref{sub:WeakSolution},
we establish a link between $V(\mu)$ and the variational formulation
in the sense of distributions defined in \cite{Figalli}, and we prove
Theorem \ref{thm:FBMweaksolution}, thanks to \cite[Theorem 2.6]{Figalli}.

\subsection{\label{sub:ConditionGamma}Condition $\left(C\right)$}

Let us introduce the notion of uniform coercivity.
\begin{defn}
Given a domain $D\subset\mathbb{R}^{d}$, a function $G:D\rightarrow\mathcal{M}_{d}(\mathbb{R})$
is uniformly coercive on $D$ with a coefficient $c>0$ if: 
\[
\forall\rho\in D,\forall\xi\in\mathbb{R}^{d},\ \xi^{*}G(\rho)\xi\geq c\xi^{*}\xi.
\]

\end{defn}
First, let us notice that the function $A$ is bounded. 
\begin{lem}
\label{lem:Abounded}Any matrix $B$ in the image $A(\mathcal{D})$
of $\mathcal{D}$ by $A$ satisfies $||B||_{\infty}\leq\frac{1}{2}\left(1+\frac{\lambda_{max}}{\lambda_{min}}\right)$. \end{lem}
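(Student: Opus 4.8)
The plan is to bound each entry of $M(\rho)$ for $\rho\in\mathcal D$ and then use $A(\rho)=\tfrac12(I_d+M(\rho))$ together with the triangle inequality. First I would record the elementary estimates $0<\sum_{l\neq k}\lambda_l\rho_l\le\sum_{l=1}^d\lambda_l\rho_l$ for any index $k$ (valid since the $\lambda_l$ are positive and $\rho\in\mathcal D$ is a nonnegative, nonzero vector), and similarly $\lambda_i\rho_i\le\sum_{l=1}^d\lambda_l\rho_l$. The only term that can take either sign is $\sum_{l\neq k}(\lambda_k-\lambda_l)\rho_l$, so I would bound its absolute value by $(\lambda_{max}-\lambda_{min})\sum_{l\neq k}\rho_l$; then dividing by the square of $\sum_{l=1}^d\lambda_l\rho_l\ge\lambda_{min}\sum_{l=1}^d\rho_l\ge\lambda_{min}\sum_{l\neq k}\rho_l$ gives a bound of the shape $\dfrac{(\lambda_{max}-\lambda_{min})\sum_{l\neq k}\rho_l}{\lambda_{min}\sum_{l\neq k}\rho_l}=\dfrac{\lambda_{max}-\lambda_{min}}{\lambda_{min}}$ for the relevant ratio.

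Concretely, for the off-diagonal entries, $|M_{ij}(\rho)|=\dfrac{\lambda_i\rho_i\,\bigl|\sum_{l\neq j}(\lambda_l-\lambda_j)\rho_l\bigr|}{\bigl(\sum_{l=1}^d\lambda_l\rho_l\bigr)^2}\le\dfrac{\lambda_i\rho_i}{\sum_{l=1}^d\lambda_l\rho_l}\cdot\dfrac{(\lambda_{max}-\lambda_{min})\sum_{l\neq j}\rho_l}{\sum_{l=1}^d\lambda_l\rho_l}\le 1\cdot\dfrac{\lambda_{max}-\lambda_{min}}{\lambda_{min}}$, where I used $\sum_{l=1}^d\lambda_l\rho_l\ge\lambda_{min}\sum_{l\neq j}\rho_l$ in the last factor (and we may assume $\sum_{l\neq j}\rho_l>0$, otherwise $\rho_i=0$ for $i\neq j$, so $M_{ij}(\rho)=0$ already). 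For the diagonal entries, $|M_{ii}(\rho)|=\dfrac{\bigl(\sum_{l\neq i}\lambda_l\rho_l\bigr)\bigl|\sum_{l\neq i}(\lambda_i-\lambda_l)\rho_l\bigr|}{\bigl(\sum_{l=1}^d\lambda_l\rho_l\bigr)^2}\le\dfrac{\sum_{l\neq i}\lambda_l\rho_l}{\sum_{l=1}^d\lambda_l\rho_l}\cdot\dfrac{(\lambda_{max}-\lambda_{min})\sum_{l\neq i}\rho_l}{\sum_{l=1}^d\lambda_l\rho_l}\le 1\cdot\dfrac{\lambda_{max}-\lambda_{min}}{\lambda_{min}}$, by the same two bounds. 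So in all cases $|M_{ij}(\rho)|\le\dfrac{\lambda_{max}-\lambda_{min}}{\lambda_{min}}=\dfrac{\lambda_{max}}{\lambda_{min}}-1$.

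Finally, since $B\in A(\mathcal D)$ means $B=\tfrac12(I_d+M(\rho))$ for some $\rho\in\mathcal D$, each entry satisfies $|B_{ij}|\le\tfrac12\bigl(|(I_d)_{ij}|+|M_{ij}(\rho)|\bigr)\le\tfrac12\bigl(1+\tfrac{\lambda_{max}}{\lambda_{min}}-1\bigr)$ when $i\neq j$, and $|B_{ii}|\le\tfrac12\bigl(1+\tfrac{\lambda_{max}}{\lambda_{min}}-1\bigr)$; in both cases $|B_{ij}|\le\tfrac12\bigl(1+\tfrac{\lambda_{max}}{\lambda_{min}}\bigr)$ (the diagonal estimate $\tfrac12(1+|M_{ii}|)\le\tfrac12(1+\tfrac{\lambda_{max}}{\lambda_{min}}-1)\le\tfrac12(1+\tfrac{\lambda_{max}}{\lambda_{min}})$ is even slightly better, but the stated uniform bound is what we need). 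Taking the maximum over $i,j$ gives $\|B\|_\infty\le\tfrac12\bigl(1+\tfrac{\lambda_{max}}{\lambda_{min}}\bigr)$, which is the claim.

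There is no real obstacle here; the only point requiring a little care is handling the degenerate sub-cases where some partial sums $\sum_{l\neq k}\rho_l$ vanish, so that one is not dividing by zero — but in each such case the numerator of the corresponding entry of $M(\rho)$ also vanishes, so the bound holds trivially. Everything else is the chain of elementary inequalities above, using only positivity of the $\lambda_l$ and $\rho\in\mathcal D\subset(\mathbb R^+)^d\setminus\{0\}$.
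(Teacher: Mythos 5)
Your proof is correct and follows essentially the same route as the paper: bound each entry of $M(\rho)$ using positivity of the $\lambda_l$ and $\rho_l$ together with $\sum_l\lambda_l\rho_l\geq\lambda_{min}\sum_{l\neq k}\rho_l$, then conclude via $A=\frac{1}{2}(I_d+M)$. The only (harmless) difference is that you bound the signed sum by $|\lambda_l-\lambda_j|\leq\lambda_{max}-\lambda_{min}$, yielding the marginally sharper intermediate estimate $||M(\rho)||_\infty\leq\frac{\lambda_{max}}{\lambda_{min}}-1$, whereas the paper bounds $M_{ij}(\rho)$ from above and below separately to get $||M(\rho)||_\infty\leq\frac{\lambda_{max}}{\lambda_{min}}$; both give the stated bound on $||B||_\infty$.
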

\begin{proof}
For $\rho\in\mathcal{D},A(\rho)=\frac{1}{2}(I_{d}+M(\rho))$. It is
sufficient to check that $||M(\rho)||_{\infty}\leq\frac{\lambda_{max}}{\lambda_{min}}$.
Since for $1\leq l\leq d$, $\lambda_{l}>0$, $\rho_{l}\geq0$ and
$\sum_{l=1}^{d}\lambda_{l}\rho_{l}>0$, we have for $1\leq i\neq j\leq d$,
\begin{eqnarray*}
-\frac{\lambda_{max}}{\lambda_{min}}\leq-\lambda_{j}\frac{\lambda_{i}\rho_{i}}{\left(\sum_{l}\lambda_{l}\rho_{l}\right)}\frac{\sum_{l\neq j}\rho_{l}}{\left(\sum_{l}\lambda_{l}\rho_{l}\right)} & \leq M_{ij}(\rho)\leq & \frac{\lambda_{i}\rho_{i}}{\left(\sum_{l}\lambda_{l}\rho_{l}\right)}\frac{\sum_{l\neq j}\lambda_{l}\rho_{l}}{\left(\sum_{l}\lambda_{l}\rho_{l}\right)}\leq1\leq\frac{\lambda_{max}}{\lambda_{min}},\\
-\frac{\lambda_{max}}{\lambda_{min}}\leq-1\leq\frac{\sum_{l\neq i}\lambda_{l}\rho_{l}\sum_{l\neq i}(-\lambda_{l})\rho_{l}}{\left(\sum_{l}\lambda_{l}\rho_{l}\right)^{2}} & \leq M_{ii}(\rho)\leq & \lambda_{i}\frac{\sum_{l\neq i}\lambda_{l}\rho_{l}}{\sum_{l}\lambda_{l}\rho_{l}}\frac{\sum_{l\neq i}\rho_{l}}{\left(\sum_{l}\lambda_{l}\rho_{l}\right)}\leq\frac{\lambda_{max}}{\lambda_{min}}.
\end{eqnarray*}

\end{proof}
The role of Condition $(C)$ is to ensure the existence of a matrix
$\Pi\in\mathcal{S}_{d}^{++}(\mathbb{R})$ such that $\Pi A$ is uniformly
coercive on $\mathcal{D}$. By a slight abuse of notation, we will
say that a matrix $\Gamma$ satisfies $(C)$ if $\Gamma\in\mathcal{S}_{d}^{++}(\mathbb{R})$,
and for $1\leq k\leq d$, the matrix $\Gamma^{(k)}$ defined by (\ref{eq:Gammakij})
is positive definite on $e_{k}^{\perp}$. We consider matrices with
the form $J_{d}+\epsilon\Gamma$, where $\Gamma\in S_{d}^{++}(\mathbb{R})$
and $\epsilon>0$, and we show that if $\Gamma$ satisfies $(C)$,
then for $\epsilon$ small enough, $\left(J_{d}+\epsilon\Gamma\right)A$
achieves the coercivity property. The intuition behind the choice
of $J_{d}$ is the observation that if $\left(p_{1},...,p_{d}\right)$
is a solution to $V(\mu)$ then $\sum_{i=1}^{d}p_{i}$ is a solution
to the heat equation. This also translates into the algebraic property
that $J_{d}M=0$. We define $\mathbf{1}:=(1,...,1)\in\mathbb{R}^{d}$.
\begin{prop}
\label{prop:coercivity1orthogonal}If $\Gamma\in\mathcal{S}_{d}^{++}\left(\mathbb{R}\right)$
satisfies Condition $(C)$, then there exists $z>0$ such that
\begin{equation}
\forall x\in\mathbf{1}^{\perp},\forall\rho\in\mathcal{D},x^{*}\Gamma A(\rho)x\geq zx^{*}x.\label{eq:coercivityperp}
\end{equation}
Moreover, for $\epsilon>0$ small enough, the function $\left(J_{d}+\epsilon\Gamma\right)A$
is uniformly coercive on $\mathcal{D}$.\end{prop}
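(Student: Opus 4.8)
The plan is to establish the two claims in sequence, the first being the real work and the second an easy perturbation argument. For the first claim, I would fix $\rho \in \mathcal{D}$ and $x \in \mathbf{1}^\perp$ and try to rewrite $x^* \Gamma A(\rho) x$ in a form where Condition $(C)$ can be applied. Recall $A(\rho) = \frac12(I_d + M(\rho))$, so $x^*\Gamma A(\rho) x = \frac12 x^* \Gamma x + \frac12 x^* \Gamma M(\rho) x$. The term $\frac12 x^*\Gamma x$ is bounded below by $\frac12 l_{min}(\Gamma)|x|^2 > 0$ uniformly, so the difficulty is that $x^*\Gamma M(\rho) x$ could be very negative. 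The natural idea is \emph{not} to bound the cross term by itself, but to absorb it: introduce the weights $\mu_l := \lambda_l \rho_l / \sum_m \lambda_m \rho_m$, which form a probability vector on $\{1,\dots,d\}$, and express $M(\rho)$ (hence $\Gamma A(\rho)$) as an average over $l$ of rank-structured pieces. The key algebraic fact the authors flagged is $J_d M = 0$, i.e. $\mathbf{1}^* M(\rho) = 0$; combined with $M(\rho)\mathbf 1 \ne 0$ in general, this suggests that on $\mathbf 1^\perp$ something cleaner happens. I would compute $(\Gamma A(\rho))$ restricted to $\mathbf 1^\perp$ by writing, for $x \in \mathbf 1^\perp$, the quantity $x^* \Gamma A(\rho) x$ as a convex combination $\sum_{k=1}^d \mu_k \, x^* \Gamma^{(k)}_{\text{eff}} x$ where each $\Gamma^{(k)}_{\text{eff}}$ is built from the matrix $\Gamma^{(k)}$ of \eqref{eq:Gammakij}; the shifts $\Gamma_{ij} + \Gamma_{kk} - \Gamma_{ik} - \Gamma_{jk}$ in \eqref{eq:Gammakij} are exactly the "second difference" of the quadratic form $x \mapsto x^*\Gamma x$ and arise naturally when one symmetrizes and uses $\sum_i x_i = 0$.

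Once $x^*\Gamma A(\rho) x$ is written as $\sum_k \mu_k\, q_k(x)$ with $q_k(x) = \tfrac12 x^* \Gamma^{(k)} x$ (up to harmless bounded correction terms handled by the $\frac12 x^*\Gamma x$ reserve), I would use that each $\Gamma^{(k)}$ is positive definite \emph{on $e_k^\perp$} — but $x$ is in $\mathbf 1^\perp$, not $e_k^\perp$. To reconcile this I would decompose $x = (x - x_k \mathbf{1}\text{-adjustment})$; more precisely, since $\sum_i x_i = 0$, write $x = x' + x_k(\mathbf 1 - d\,e_k)/\text{(something)}$... the cleaner route is: the components of $x$ with $\sum x_i = 0$ can be re-parametrised so that the piece seen by $\Gamma^{(k)}$ lands in $e_k^\perp$ after subtracting a multiple of $e_k$, because the $k$-th row and column of $\Gamma^{(k)}$ vanish by construction (indeed $\Gamma^{(k)}_{ik} \propto \Gamma_{ik} + \Gamma_{kk} - \Gamma_{ik} - \Gamma_{kk} = 0$). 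So $x^*\Gamma^{(k)}x = \tilde x^* \Gamma^{(k)} \tilde x$ where $\tilde x$ is $x$ with its $k$-th coordinate zeroed, and $\tilde x \in e_k^\perp$ trivially. Then positive definiteness of $\Gamma^{(k)}$ on $e_k^\perp$ gives $x^*\Gamma^{(k)}x \ge c_k |\tilde x|^2$; taking $c = \min_k c_k > 0$ and combining with a compactness argument over the simplex $\{\mu\}$ and the sphere in $\mathbf 1^\perp$ (the map $(\mu, x) \mapsto x^*\Gamma A x$ is continuous and strictly positive, so bounded below on the compact set) yields the uniform constant $z$.

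For the second claim, I would write $(J_d + \epsilon\Gamma)A(\rho) = J_d A(\rho) + \epsilon \Gamma A(\rho)$ and decompose an arbitrary $\xi \in \mathbb{R}^d$ as $\xi = \bar\xi\, \mathbf 1 + \xi^\perp$ with $\xi^\perp \in \mathbf 1^\perp$. Using $J_d M(\rho) = 0$ one gets $J_d A(\rho) = \tfrac12 J_d$, so $\xi^* J_d A(\rho)\xi = \tfrac12 (\mathbf 1^*\xi)^2 = \tfrac{d}{2}\bar\xi^2$, which controls the $\mathbf 1$-direction. On the orthogonal part, $\epsilon\, \xi^{\perp *}\Gamma A(\rho)\xi^\perp \ge \epsilon z |\xi^\perp|^2$ by \eqref{eq:coercivityperp}. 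The remaining mixed/cross terms $\xi^* J_d A(\rho)\xi^\perp$ and $\epsilon\, \bar\xi\,\mathbf 1^* \Gamma A(\rho)\xi^\perp$ are bounded by $C|\bar\xi|\,|\xi^\perp|$ using boundedness of $A$ (Lemma \ref{lem:Abounded}) and of $\Gamma$; a standard Young's inequality $C|\bar\xi||\xi^\perp| \le \tfrac{d}{4}\bar\xi^2 + \tfrac{C^2}{d}|\xi^\perp|^2$ absorbs them once $\epsilon$ is small enough that $\epsilon z > \tfrac{C^2}{d}$, leaving $\xi^*(J_d+\epsilon\Gamma)A(\rho)\xi \ge \tfrac{d}{4}\bar\xi^2 + (\epsilon z - \tfrac{C^2}{d})|\xi^\perp|^2 \ge c|\xi|^2$ for a uniform $c>0$. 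The main obstacle is the first claim: correctly identifying the convex-combination/second-difference identity that turns $x^*\Gamma A(\rho)x$ into an average of the forms $x^*\Gamma^{(k)}x$ and verifying that the $k$-th coordinate genuinely drops out so Condition $(C)$ applies — the bookkeeping with the weights $\mu_l$ and the $M_{ij}$ formulas is where errors are most likely.
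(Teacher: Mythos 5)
Your proposal follows essentially the same route as the paper: the convex-combination decomposition of $A(\rho)$ with weights $\lambda_k\rho_k/\sum_l\lambda_l\rho_l$, the reduction of each piece to the quadratic form $x^*\Gamma^{(k)}x$ via $\sum_i x_i=0$ and the vanishing $k$-th row and column of $\Gamma^{(k)}$, and then the $\mathbb{R}\mathbf{1}\oplus\mathbf{1}^{\perp}$ splitting with $J_dM=0$ and Young's inequality for the perturbation $J_d+\epsilon\Gamma$. The only cosmetic differences are that the identity is exact up to the bounded factor $\overline{\lambda}(\rho)\in[\lambda_{min},\lambda_{max}]$ (no ``reserve'' term is needed) and that the paper replaces your compactness argument by the explicit bound $\sum_{i\neq k}x_i^2\geq\frac{1}{d}x^*x$ from Cauchy--Schwarz.
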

\begin{proof}
For $\rho\in\mathcal{D}$, we introduce the notation $\overline{\lambda}(\rho):=\sum_{k}\lambda_{k}\frac{\rho_{k}}{\sum_{l}\rho_{l}}$
and remark that $\overline{\lambda}(\mathcal{D})=[\lambda_{min},\lambda_{max}]$.
The matrix $A(\rho)$ rewrites as the convex combination $A(\rho)=\sum_{k=1}^{d}w_{k}(\rho)A_{k}(\rho)$
with the weight $w_{k}(\rho)=\frac{\lambda_{k}\rho_{k}}{\sum_{l}\lambda_{l}\rho_{l}}$
of the matrix $A_{k}(\rho)$ which has non zero coefficients only
on the diagonal and the $k$-th row, and is defined by: 
\begin{eqnarray*}
\left(A_{k}(\rho)\right)_{ij} & = & \frac{1}{2}\left(1_{\{i=j\}}\frac{\lambda_{i}}{\overline{\lambda}(\rho)}+1_{\{i=k\}}\left(1-\frac{\lambda_{j}}{\overline{\lambda}(\rho)}\right)\right),\ 1\le i,j\leq d.
\end{eqnarray*}
We prove that for $1\leq k\leq d$, there exists $z_{k}>0$ such that
\begin{equation}
\forall x\in\mathbf{1}^{\perp},\forall\rho\in\mathcal{D},2x^{*}\overline{\lambda}(\rho)\Gamma A_{k}(\rho)x\geq z_{k}x^{*}x,\label{eq:GammaAk}
\end{equation}
and we can set $z=\underset{1\leq k\leq d}{\min}\frac{z_{k}}{2\lambda_{max}}>0$
to obtain (\ref{eq:coercivityperp}). We have that 
\[
2\overline{\lambda}(\rho)\Gamma A_{k}(\rho)=\left(\lambda_{j}\left(\Gamma_{ij}-\Gamma_{ik}\right)\right)_{1\le i,j\leq d}+\left(\overline{\lambda}(\rho)\left(\Gamma_{ik}\right)\right)_{1\leq i,j\leq d}
\]
For $x\in\mathbf{1}^{\perp}$, we have
\begin{eqnarray}
2x^{*}\overline{\lambda}(\rho)\Gamma A_{k}(\rho)x & = & \sum_{i,j=1}^{d}\lambda_{j}\left(\Gamma_{ij}-\Gamma_{ik}\right)x_{i}x_{j}+\sum_{i,j=1}^{d}\overline{\lambda}(\rho)\left(\Gamma_{ik}\right)x_{i}x_{j}\label{eq:computationGammaGammatilde1}\\
 & = & \sum_{i\neq k,j\neq k}\lambda_{j}\left(\Gamma_{ij}-\Gamma_{ik}-\Gamma_{kj}+\Gamma_{kk}\right)x_{i}x_{j}\\
 & = & \sum_{i\neq k,j\neq k}\frac{\lambda_{i}+\lambda_{j}}{2}\left(\Gamma_{ij}-\Gamma_{ik}-\Gamma_{kj}+\Gamma_{kk}\right)x_{i}x_{j},\label{eq:terme3}
\end{eqnarray}
where between the first and the second equality, we used the fact
that $\sum_{i=1}^{d}x_{i}=0$ and then replaced $x_{k}$ by $-\sum_{i\neq k}x_{i}$.
As $\Gamma^{(k)}$ is positive definite on $e_{k}^{\perp}$, there
exists $\tilde{z}_{k}>0$ such that 
\begin{eqnarray*}
2x^{*}\overline{\lambda}(\rho)\Gamma A_{k}(\rho)x & \geq & \tilde{z}_{k}\sum_{i\neq k}x_{i}^{2}.
\end{eqnarray*}
 By Cauchy-Schwarz inequality, $x_{k}^{2}=\left(\sum_{i\neq k}x_{i}\right)^{2}\leq\left(d-1\right)\sum_{i\neq k}x_{i}^{2}$,
so $\sum_{i\neq k}x_{i}^{2}\geq\frac{1}{d}x^{*}x$ and we can set
$z_{k}=\frac{\tilde{z}_{k}}{d}$ to satisfy (\ref{eq:GammaAk}).

Now, we show that for $\epsilon>0$ small enough, the function $\left(J_{d}+\epsilon\Gamma\right)A$
is uniformly coercive on $\mathcal{D}$. For $\rho\in\mathcal{D}$,
as $A(\rho)=\frac{1}{2}\left(I_{d}+M(\rho)\right)$ and for $1\leq j\leq d,$
as it is easy to check that $\sum_{i=1}^{d}M_{ij}(\rho)=0$, we have
that $J_{d}M=0$ and
\[
\left(J_{d}+\epsilon\Gamma\right)A(\rho)=\frac{1}{2}J_{d}+\epsilon\Gamma A(\rho).
\]
For $x\in\mathbb{R}^{d}$, we decompose $x=u+v$ where $u\in\mathbb{R}\mathbf{1}$
and $v\in\mathbf{1}^{\perp}$. As $J_{d}v=0$, we have 
\begin{eqnarray*}
x^{*}\left(\frac{1}{2}J_{d}+\epsilon\Gamma A(\rho)\right)x & = & u^{*}\left(\frac{1}{2}J_{d}+\epsilon\Gamma A(\rho)\right)u+\epsilon v^{*}\Gamma A(\rho)v+\epsilon u^{*}\Gamma A(\rho)v+\epsilon v^{*}\Gamma A(\rho)u.
\end{eqnarray*}
We will use Young's inequality: 
\[
\forall\eta>0,\forall a,b\in\mathbb{R},\ ab\leq\eta a^{2}+\frac{1}{4\eta}b^{2}.
\]
For $1\leq i,j\leq d$, by Young's inequality and Lemma \ref{lem:Abounded},
$\left(\Gamma A(\rho)\right)_{ij}=\sum_{k=1}^{d}\Gamma_{ik}A(\rho)_{kj}\leq\frac{d}{2}||\Gamma||_{\infty}\left(1+\frac{\lambda_{max}}{\lambda_{min}}\right)$,
so for $a,b\in\mathbb{R}$, and $\eta>0$,
\[
\left(\Gamma A(\rho)\right)_{ij}ab\geq-\frac{d}{2}||\Gamma||_{\infty}\left(1+\frac{\lambda_{max}}{\lambda_{min}}\right)\left(\eta a^{2}+\frac{1}{4\eta}b^{2}\right).
\]
We obtain, for $\tilde{x},\tilde{y}\in\mathbb{R}^{d},$ and $\eta>0$,
\[
\tilde{x}^{*}\Gamma A(\rho)\tilde{y}=\sum_{i,j=1}^{d}\left(\Gamma A(\rho)\right)_{ij}\tilde{x}_{i}\tilde{y}_{j}\geq-\frac{d^{2}}{2}||\Gamma||_{\infty}\left(1+\frac{\lambda_{max}}{\lambda_{min}}\right)\left(\eta\tilde{x}^{*}\tilde{x}+\frac{1}{4\eta}\tilde{y}^{*}\tilde{y}\right).
\]
Then for $\eta>0$, 
\begin{eqnarray*}
v^{*}\Gamma A(\rho)u & \geq & -\frac{d^{2}}{2}||\Gamma||_{\infty}\left(1+\frac{\lambda_{max}}{\lambda_{min}}\right)\left(\eta v^{*}v+\frac{1}{4\eta}u^{*}u\right),\\
u^{*}\Gamma A(\rho)v & \geq & -\frac{d^{2}}{2}||\Gamma||_{\infty}\left(1+\frac{\lambda_{max}}{\lambda_{min}}\right)\left(\eta v^{*}v+\frac{1}{4\eta}u^{*}u\right),
\end{eqnarray*}
and moreover, 
\[
u^{*}\Gamma A(\rho)u\geq-\frac{d^{2}}{2}||\Gamma||_{\infty}\left(1+\frac{\lambda_{max}}{\lambda_{min}}\right)u^{*}u.
\]
Using (\ref{eq:coercivityperp}) and $J_{d}u=du$, we get that for
$\eta>0$, 
\[
x^{*}\left(\frac{1}{2}J_{d}+\epsilon\Gamma A(\rho)\right)x\geq\left(\frac{d}{2}-\epsilon\frac{d^{2}}{2}||\Gamma||_{\infty}\left(1+\frac{\lambda_{max}}{\lambda_{min}}\right)\left(1+\frac{1}{2\eta}\right)\right)u^{*}u+\epsilon\left(z-d^{2}||\Gamma||_{\infty}\left(1+\frac{\lambda_{max}}{\lambda_{min}}\right)\eta\right)v^{*}v.
\]
For $0<\epsilon<\left(d||\Gamma||_{\infty}\left(1+\frac{\lambda_{max}}{\lambda_{min}}\right)\left(1+\frac{d^{2}||\Gamma||_{\infty}\left(1+\frac{\lambda_{max}}{\lambda_{min}}\right)}{2z}\right)\right)^{-1}$,
we check that 
\[
\eta_{1}:=\left(\frac{2}{\epsilon d||\Gamma||_{\infty}\left(1+\frac{\lambda_{max}}{\lambda_{min}}\right)}-2\right)^{-1}<\frac{z}{d^{2}||\Gamma||_{\infty}\left(1+\frac{\lambda_{max}}{\lambda_{min}}\right)}=:\eta_{2},
\]
and with the choice $\eta\in(\eta_{1},\eta_{2})$, we see that the
function $\left(J_{d}+\epsilon\Gamma\right)A$ is uniformly coercive
on $\mathcal{D}$.\end{proof}
\begin{cor}
\label{cor:PiA}Condition $(C)$ is equivalent to the existence of
a matrix $\Pi\in\mathcal{S}_{d}^{++}\left(\mathbb{R}\right)$ such
that the function $\text{\ensuremath{\Pi}}A$ is uniformly coercive
on $\mathcal{D}$ with a coefficient $\kappa\in\left(0,\frac{l_{min}(\Pi)}{2}\right)$.\end{cor}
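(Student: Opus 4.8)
The plan is to establish the two implications separately, after one preliminary remark that makes the constraint $\kappa\in\left(0,\frac{1}{2}l_{min}(\Pi)\right)$ costless: if a function $G:\mathcal{D}\rightarrow\mathcal{M}_{d}(\mathbb{R})$ is uniformly coercive on $\mathcal{D}$ with \emph{some} coefficient $c>0$, then it is also uniformly coercive with the coefficient $\min\left(c,\frac{1}{4}l_{min}(\Pi)\right)$, which is positive (because $\Pi\in\mathcal{S}_{d}^{++}(\mathbb{R})$, so $l_{min}(\Pi)>0$) and strictly smaller than $\frac{1}{2}l_{min}(\Pi)$. So it is enough to show that Condition $(C)$ is equivalent to the existence of $\Pi\in\mathcal{S}_{d}^{++}(\mathbb{R})$ making $\Pi A$ uniformly coercive on $\mathcal{D}$. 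For the direct implication, if $\Gamma\in\mathcal{S}_{d}^{++}(\mathbb{R})$ satisfies $(C)$, I would simply invoke Proposition \ref{prop:coercivity1orthogonal}: for $\epsilon>0$ small enough the matrix $\Pi:=J_{d}+\epsilon\Gamma$ makes $\Pi A$ uniformly coercive on $\mathcal{D}$, and $\Pi\in\mathcal{S}_{d}^{++}(\mathbb{R})$ as the sum of the positive semidefinite matrix $J_{d}$ and the positive definite matrix $\epsilon\Gamma$. Together with the remark above this produces the required $\Pi$ and coefficient $\kappa$.

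For the converse, suppose $\Pi\in\mathcal{S}_{d}^{++}(\mathbb{R})$ is such that $\Pi A$ is uniformly coercive on $\mathcal{D}$ with coefficient $c>0$, and set $\Gamma:=\Pi$. Then $\Gamma\in\mathcal{S}_{d}^{++}(\mathbb{R})$, and I have to check that for each $k\in\{1,\dots,d\}$ the matrix $\Gamma^{(k)}$ defined by (\ref{eq:Gammakij}) is positive definite on $e_{k}^{\perp}$. The idea is to test the coercivity inequality at the point $\rho=e_{k}\in\mathcal{D}$: there $\overline{\lambda}(e_{k})=\lambda_{k}$ and the weights $w_{l}(e_{k})$ all vanish for $l\neq k$ while $w_{k}(e_{k})=1$, so that $A(e_{k})=A_{k}(e_{k})$. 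The chain of equalities (\ref{eq:computationGammaGammatilde1})--(\ref{eq:terme3}) in the proof of Proposition \ref{prop:coercivity1orthogonal} uses only the symmetry of the matrix involved and the relation $\sum_{i}x_{i}=0$, hence applies verbatim with $\Gamma$ replaced by $\Pi$, and yields, for every $x\in\mathbf{1}^{\perp}$,
\[
x^{*}\Pi A(e_{k})x=\frac{1}{2\lambda_{k}}\sum_{i\neq k,\ j\neq k}\Gamma_{ij}^{(k)}x_{i}x_{j}.
\]
Combining this with $x^{*}\Pi A(e_{k})x\geq c|x|^{2}\geq c\sum_{i\neq k}x_{i}^{2}$ gives $\sum_{i\neq k,\,j\neq k}\Gamma_{ij}^{(k)}x_{i}x_{j}\geq 2c\lambda_{k}\sum_{i\neq k}x_{i}^{2}$. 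As $x$ ranges over $\mathbf{1}^{\perp}$ the family $(x_{i})_{i\neq k}$ ranges over the whole of $\mathbb{R}^{d-1}$ (take $x_{k}=-\sum_{i\neq k}x_{i}$), so the $(d-1)\times(d-1)$ submatrix of $\Gamma^{(k)}$ obtained by deleting the $k$-th row and column is positive definite; since that row and column of $\Gamma^{(k)}$ vanish by symmetry of $\Gamma$, this is exactly the positive definiteness of $\Gamma^{(k)}$ on $e_{k}^{\perp}$. As $k$ is arbitrary, $\Gamma=\Pi$ satisfies $(C)$, which completes the argument.

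I expect the converse to be the only genuinely delicate step, and within it the key insight is that evaluating the coercivity of $\Pi A$ at the points $\rho=e_{k}$ collapses the convex combination $A(\rho)=\sum_{l}w_{l}(\rho)A_{l}(\rho)$ onto the single matrix $A_{k}$, so that the quadratic-form identity already worked out in the proof of Proposition \ref{prop:coercivity1orthogonal} delivers verbatim the form attached to $\Gamma^{(k)}$ on $e_{k}^{\perp}$. The rest is routine: the direct implication is little more than a restatement of Proposition \ref{prop:coercivity1orthogonal}, and the coefficient normalization $\kappa<\frac{1}{2}l_{min}(\Pi)$ is obtained for free by shrinking the coercivity constant.
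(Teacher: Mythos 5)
Your proof is correct and follows essentially the same route as the paper: the forward implication is exactly the paper's invocation of Proposition \ref{prop:coercivity1orthogonal} with $\Pi=J_{d}+\epsilon\Gamma$, and the converse uses the identity (\ref{eq:computationGammaGammatilde1})--(\ref{eq:terme3}) to read off positive definiteness of $\Pi^{(k)}$ on $e_{k}^{\perp}$, with the coefficient normalization $\kappa<\frac{l_{min}(\Pi)}{2}$ obtained by shrinking the constant. The only (welcome) difference is that you make explicit the evaluation at $\rho=e_{k}$, which collapses $A(\rho)$ onto $A_{k}(\rho)$ and justifies the coercivity inequality that the paper leaves implicit.
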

\begin{proof}
If a matrix $\Gamma\in\mathcal{S}_{d}^{++}\left(\mathbb{R}\right)$
satisfies $(C)$, then by Proposition \ref{prop:coercivity1orthogonal},
for $\epsilon>0$ small enough, the function $(J_{d}+\epsilon\Gamma)A$
is uniformly coercive on $\mathcal{D}$. Moreover, $(J_{d}+\epsilon\Gamma)\in\mathcal{S}_{d}^{++}(\mathbb{R})$
as $J_{d}\in\mathcal{S}_{d}^{+}(\mathbb{R})$.

Conversely, if $\Pi\in\mathcal{S}_{d}^{++}\left(\mathbb{R}\right)$
is such that the function $\text{\ensuremath{\Pi}}A$ is uniformly
coercive on $\mathcal{D}$ with a coefficient $c>0$, with the same
computation as (\ref{eq:computationGammaGammatilde1})-(\ref{eq:terme3}),
we have that for $x\in\mathbf{1}^{\perp}$ and $1\leq k\leq d$,
\[
\sum_{i\neq k,j\neq k}\frac{\lambda_{i}+\lambda_{j}}{2}\left(\Pi_{ij}-\Pi_{ik}-\Pi_{kj}+\Pi_{kk}\right)x_{i}x_{j}=2x^{*}\overline{\lambda}(\rho)\Pi A_{k}(\rho)x\geq2\lambda_{min}c\sum_{i\neq k}x_{i}^{2},
\]
so $\Pi$ satisfies $(C)$. To conclude, it is obvious that if $\Pi A$
is uniformly coercive, then $\Pi A$ is uniformly coercive with a
coefficient $\kappa\leq\frac{l_{min}(\Pi)}{2}$.
\end{proof}
Making Condition $(C)$ explicit does not seem to be an easy task
in general, but we give here simpler criteriae which are all proved
in Appendix \ref{sub:ConditionC}, for particular situations.

\begin{itemize}
\item For $d=2$, Condition $(C)$ is satisfied, for the choice $\Gamma=I_{2}$.
\item For $d=3$, we define 
\[
r_{1}=\frac{\lambda_{3}}{\lambda_{2}}+\frac{\lambda_{2}}{\lambda_{3}}\geq2,\ r_{2}=\frac{\lambda_{3}}{\lambda_{1}}+\frac{\lambda_{1}}{\lambda_{3}}\ge2,\ r_{3}=\frac{\lambda_{1}}{\lambda_{2}}+\frac{\lambda_{2}}{\lambda_{1}}\geq2.
\]
 Condition $(C)$ is satisfied if and only if 
\[
\frac{1}{\sqrt{\left(r_{1}-2\right)\left(r_{2}-2\right)}}+\frac{1}{\sqrt{\left(r_{2}-2\right)\left(r_{3}-2\right)}}+\frac{1}{\sqrt{\left(r_{1}-2\right)\left(r_{3}-2\right)}}>\frac{1}{4},
\]
with the convention that $\frac{1}{0}=+\infty$.
\item For $d\geq4$, we give in Appendix \ref{sub:ConditionC} a numerical
procedure to check if there exists a diagonal matrix that satisfies
Condition $(C)$.
\item For $d\geq4$, if 
\[
\underset{1\leq k\leq d}{\max}\ \sqrt{\sum_{i\neq k}\lambda_{i}\sum_{i\neq k}\frac{1}{\lambda_{i}}}<d+1,
\]
then Condition $(C)$ is satisfied, for the choice $\Gamma=I_{d}$.
\end{itemize}

\subsection{\label{sub:squareintegrabledensity}$\mu$ has a square integrable
density}

We first suppose that the measure $\mu$ has a square integrable density
with respect to the Lebesgue measure. In this case, we denote by $p_{0}$,
the element of $L$ obtained by multiplication of that density by
the vector $(\alpha_{1},...,\alpha_{d})$, and we define the stronger
variational formulation:

\begin{eqnarray}
\text{Find} &  & p=(p_{1},...,p_{d})\ \text{satisfying :}\label{eq:ToyFVfind}
\end{eqnarray}
\begin{equation}
p\in L^{2}([0,T];H)\cap L^{\infty}([0,T];L),\label{eq:ToyFVregularity}
\end{equation}

\begin{equation}
p\text{ takes values in \ensuremath{\mathcal{D}}, a.e. on \ensuremath{[0,T]\times\mathbb{R}}},\label{eq:ToyFVvalueinD}
\end{equation}

\begin{equation}
\forall v\in H,\ \frac{d}{dt}(v,p)_{d}+(\partial_{x}v,A(p)\partial_{x}p)_{d}=\text{0}\text{ in the sense of distributions on \ensuremath{(0,T)},}\label{eq:ToyFVdistribution}
\end{equation}

\begin{equation}
p_{i}(0,\cdot)=p_{0,i}\ ,\forall i\in\{1,...,d\}.\label{eq:ToyFVinit}
\end{equation}
We call $V_{L^{2}}(\mu)$ the problem defined by (\ref{eq:ToyFVfind})-(\ref{eq:ToyFVinit}).
If $p$ only satisfies (\ref{eq:ToyFVregularity}), then the initial
condition (\ref{eq:ToyFVinit}) does not make sense. We will show
that if $p$ satisfies the conditions (\ref{eq:ToyFVregularity})-(\ref{eq:ToyFVdistribution}),
then $p$ is a.e. equal to a function continuous from $[0,T]$ into
$L$. We consider in what follows this continuous representative and
therefore the initial condition (\ref{eq:ToyFVinit}) makes sense.
We now present an existence result to $V_{L^{2}}(\mu)$.
\begin{thm}
\label{thm:ThmL2}Under Condition (C), $V_{L^{2}}(\mu)$ has a solution
$p\in C([0,T],L)$ such that for almost every $(t,x)$ in $[0,T]\times\mathbb{R}$,
$\sum_{i=1}^{d}p_{i}(t,x)=(\mu*h_{t})(x)=\int_{\mathbb{R}}h_{t}(x-y)\mu(y)dy$.
\end{thm}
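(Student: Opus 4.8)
The plan is to apply a Galerkin approximation scheme, following the classical strategy for quasilinear parabolic systems (as in Lions or Brezis), but with the crucial twist supplied by Corollary \ref{cor:PiA}: instead of working with the energy $|p|_d^2$ directly, we work with the weighted quantity $(p,\Pi p)_d$ for the matrix $\Pi \in \mathcal{S}_d^{++}(\mathbb{R})$ making $\Pi A$ uniformly coercive on $\mathcal{D}$. First I would fix an orthonormal Hilbert basis $(w_n)_{n\ge 1}$ of $L^2(\mathbb{R})$ consisting of smooth functions that also lie in $H^1(\mathbb{R})$ (e.g.\ Hermite functions), form the finite-dimensional spaces $V_n = \mathrm{span}(w_1,\dots,w_n)^d \subset H$, and seek an approximate solution $p^{(n)}(t,x) = \sum_{m=1}^n g_m^{(n)}(t) w_m(x)$ (with $\mathbb{R}^d$-valued coefficients $g_m^{(n)}$) solving the projected ODE system obtained by testing (\ref{eq:ToyFVdistribution}) against $v \in V_n$, with initial data the $L$-projection of $p_0$ onto $V_n$. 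Since $A$ is bounded and continuous on $\mathcal{D}$ and the nonlinearity is locally Lipschitz away from the set where $\sum_l \lambda_l \rho_l$ vanishes, the Cauchy--Lipschitz theorem gives a local-in-time solution; global existence on $[0,T]$ will follow from the a priori bound below once we also control that $p^{(n)}$ stays in (a neighborhood allowing us to extend past) $\mathcal{D}$.

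The heart of the argument is the \textbf{a priori energy estimate}. Testing the equation against $\Pi p^{(n)}$ (which is legitimate after projecting, since $\Pi$ has constant coefficients so $\Pi p^{(n)} \in V_n$) yields
\[
\frac{1}{2}\frac{d}{dt}(p^{(n)},\Pi p^{(n)})_d + (\partial_x p^{(n)}, \Pi A(p^{(n)})\partial_x p^{(n)})_d = 0,
\]
and uniform coercivity of $\Pi A$ on $\mathcal{D}$ with coefficient $\kappa$ gives
\[
\frac{1}{2}\frac{d}{dt}(p^{(n)},\Pi p^{(n)})_d + \kappa\,|\partial_x p^{(n)}|_d^2 \le 0,
\]
\emph{provided} $p^{(n)}(t,x) \in \mathcal{D}$. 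Since $\Pi$ is positive definite this controls $\sup_{t\in[0,T]} |p^{(n)}(t)|_d^2$ by $C|p_0|_d^2$ and $\int_0^T |\partial_x p^{(n)}|_d^2\,dt$ by $C|p_0|_d^2$ uniformly in $n$; hence $p^{(n)}$ is bounded in $L^\infty([0,T];L) \cap L^2([0,T];H)$, and from the equation $\partial_t p^{(n)}$ is bounded in $L^2([0,T];H')$ (using Lemma \ref{lem:Abounded}). The key point that keeps $p^{(n)}$ in $\mathcal{D}$, i.e.\ nonnegative with nonzero sum, is the observation that summing the projected equations over $i$ and using $J_d M = 0$ forces $\sum_i p_i^{(n)}$ to solve the (projected) heat equation; at the continuous limit this pins $\sum_i p_i = \mu * h_t > 0$. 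Nonnegativity of the limit components is obtained by testing the limit equation against $(p_i)^-$-type test functions (a Stampacchia truncation argument), exploiting that the off-diagonal structure of $A(p)\partial_x p$ degenerates correctly on $\{p_i = 0\}$ so that $p_i^- \equiv 0$; one must be slightly careful to run this argument for the limit $p$ rather than for $p^{(n)}$, since the truncation need not respect $V_n$.

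To pass to the limit I would extract, by the Banach--Alaoglu and Aubin--Lions--Simon lemmas, a subsequence with $p^{(n)} \rightharpoonup p$ weakly in $L^2([0,T];H)$, weak-$*$ in $L^\infty([0,T];L)$, $\partial_t p^{(n)} \rightharpoonup \partial_t p$ in $L^2([0,T];H')$, and $p^{(n)} \to p$ strongly in $L^2([0,T];L)$ and a.e.\ on $[0,T]\times\mathbb{R}$; the a.e.\ convergence together with continuity and boundedness of $A$ on $\mathcal{D}$ and dominated convergence gives $A(p^{(n)})\partial_x p^{(n)} \rightharpoonup A(p)\partial_x p$ in $L^2$, which suffices to pass to the limit in the weak formulation (\ref{eq:ToyFVdistribution}) tested against any $v \in \bigcup_n V_n$, hence against all $v\in H$ by density. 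The Lions--Magenes lemma then gives $p \in C([0,T],L)$, and the initial condition (\ref{eq:ToyFVinit}) is inherited from the convergence of the projected initial data; the identity $\sum_i p_i = \mu * h_t$ passes to the limit as well, which in particular re-confirms $p(t,x) \in \mathcal{D}$ a.e. \textbf{The main obstacle} is the interplay between the Galerkin truncation and the constraint $p \in \mathcal{D}$: the coercivity estimate is only valid where $p^{(n)} \in \mathcal{D}$, yet nonnegativity is most naturally proved by a truncation test function not available in $V_n$. The clean way around this is to note that the energy estimate and hence the uniform bounds hold on the maximal interval where $p^{(n)}$ stays in a compact subset of $\mathcal D$, combined with a continuation/bootstrap argument, and to defer the proof of $p\in\mathcal D$ entirely to the limit, using both the heat-equation identity for $\sum_i p_i$ (positivity of the sum) and the Stampacchia argument on the limiting PDS (nonnegativity of each component).
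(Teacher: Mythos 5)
Your overall architecture (Galerkin, weighted energy via $\Pi$ and the coercivity from Corollary \ref{cor:PiA}, Aubin--Lions compactness, a.e.\ convergence plus boundedness of $A$ to pass to the limit in the nonlinearity, the heat equation for $\sum_i p_i$, and a Stampacchia truncation for nonnegativity) matches the paper's. But there is a genuine gap exactly at the point you flag as ``the main obstacle'', and your proposed workaround does not close it. You run Galerkin directly on the unregularized system, so the right-hand side of the projected ODE involves $A(p^{(n)})$, which is only defined on $\mathcal{D}$. The Galerkin approximants are sign-changing linear combinations of basis functions: already at $t=0$ the $L$-orthogonal projection $p_0^m$ of a nonnegative $p_0$ need not be componentwise nonnegative, so $p^{(n)}(0,x)$ can lie outside $\mathcal{D}$ on a set of positive measure and the ODE's vector field is undefined from the start --- the ``maximal interval on which $p^{(n)}$ stays in a compact subset of $\mathcal{D}$'' may be empty. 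Moreover, the energy estimate only controls $L^2$- and $H^1$-type norms, which cannot prevent pointwise exit from $\mathcal{D}$, so the continuation/bootstrap has no mechanism to work with; and the ``projected heat equation'' satisfied by $\sum_i p_i^{(n)}$ carries no discrete maximum principle, so it does not keep the sum positive at the discrete level.

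The missing idea is the paper's $\epsilon$-regularization of the coefficients: replace $M$ by $M_\epsilon$, whose denominators are $\epsilon^2\vee\bigl(\sum_l\lambda_l\rho_l\bigr)^2$, and evaluate $A_\epsilon$ at the positive part $\bigl(p_\epsilon^m\bigr)^+$. This makes the Galerkin vector field globally defined, bounded and locally Lipschitz on all of $\mathbb{R}^m$ (so Cauchy--Lipschitz plus Gronwall give global solvability unconditionally), and Lemma \ref{lem:AcoerciveimpliesAepscoercive} shows $\Pi A_\epsilon$ keeps the same coercivity constant $\kappa$ on all of $\left(\mathbb{R}^+\right)^d$, so the energy estimates are unconditional and uniform in both $m$ and $\epsilon$. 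One then passes $m\to\infty$ at fixed $\epsilon$, proves nonnegativity of $p_\epsilon$ by the $p_\epsilon^-$ test (which works precisely because the off-diagonal entries of $A_\epsilon(p_\epsilon^+)$ carry a factor $p_{\epsilon,i}^+$ that vanishes where $\partial_x p_{\epsilon,i}^-\neq 0$), and only in the final limit $\epsilon\to 0$ recovers that $p$ takes values in $\mathcal{D}$ via the heat-equation identity for the sum. Note also that your plan to prove nonnegativity by testing the \emph{limit} equation against $p^-$ is circular without the regularization: writing down $A(p)\partial_x p$ in the limit equation already presupposes $p\in\mathcal{D}$ a.e.
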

To prove Theorem \ref{thm:ThmL2}, we apply Galerkin's procedure,
as in \cite[III. 1.3]{temam}. Since $H$ is a separable Hilbert space,
there exists a sequence $(w_{k})_{k\geq1}=(\left(w_{k1},...,w_{kd}\right))_{k\geq1}$
of linearly independent elements which is total in $H$. It is not
at all obvious to preserve the condition that $p$ takes values in
$\mathcal{D}$ a.e. on $[0,T]\times\mathbb{R}$ at the discrete level.
That is why, for $\epsilon>0$, we introduce for $\rho\in\left(\mathbb{R}_{+}\right)^{d}$
the approximation $M_{\epsilon}$ of $M$ defined on $\left(\mathbb{R}_{+}\right)^{d}$
by 
\begin{eqnarray*}
M_{\epsilon,ii}(\rho) & = & \frac{\sum_{l\neq i}\lambda_{l}\rho_{l}\sum_{l\neq i}(\lambda_{i}-\lambda_{l})\rho_{l}}{\epsilon^{2}\vee\left(\sum_{l}\lambda_{l}\rho_{l}\right)^{2}},\ 1\leq i\leq d,\\
M_{\epsilon,ij}(\rho) & = & \frac{\lambda_{i}\rho_{i}\sum_{l\neq j}(\lambda_{l}-\lambda_{j})\rho_{l}}{\epsilon^{2}\vee\left(\sum_{l}\lambda_{l}\rho_{l}\right)^{2}},\ 1\leq i\neq j\leq d.
\end{eqnarray*}
We introduce the approximation $A_{\epsilon}$ of $A$ defined on
$\left(\mathbb{R}_{+}\right)^{d}$ by $A_{\epsilon}=\frac{1}{2}(I_{d}+M_{\epsilon})$,
and the approximate variational formulation $V_{\epsilon}(\mu)$ defined
by

\begin{eqnarray}
\text{Find} &  & p_{\epsilon}=(p_{\epsilon,1},...,p_{\epsilon,d})\ \text{satisfying :}\label{eq:ToyEps1Find}
\end{eqnarray}
\begin{equation}
p_{\epsilon}\in L^{2}([0,T];H)\cap L^{\infty}([0,T];L)\label{eq:ToyEps2regularity}
\end{equation}

\begin{equation}
\forall v\in H,\ \frac{d}{dt}(v,p_{\epsilon})_{d}+(\partial_{x}v,A_{\epsilon}(p_{\epsilon}^{+})\partial_{x}p_{\epsilon})_{d}=\text{0}\text{ in the sense of distributions on \ensuremath{(0,T)},}\label{eq:ToyEps2distribution}
\end{equation}

\begin{equation}
p_{\epsilon,i}(0,\cdot)=p_{0,i},\ \forall i\in\{1,...,d\}\label{eq:ToyEps2init}
\end{equation}
For the same reasons as for the variational formulation $V_{L^{2}}(\mu)$,
any solution of $V_{\epsilon}(\mu)$ has a continuous representative
in $C([0,T],L)$, and therefore (\ref{eq:ToyEps2init}) makes sense. 

We first prove existence to $V_{\epsilon}(\mu)$ by Galerkin's procedure.
Then we will check that a.e. on $[0,T]\times\mathbb{R}$, $p_{\epsilon}$
takes values in $\left(\mathbb{R}_{+}\right)^{d}$ . We will then
obtain existence to $V_{L^{2}}(\mu)$ by taking the limit $\epsilon\rightarrow0$.
In what follows, whenever Condition $(C)$ is satisfied, we denote
by $\Pi$ an element of $\mathcal{S}_{d}^{++}\left(\mathbb{R}\right)$
such that $\Pi A$ is uniformly coercive on $\mathcal{D}$ with a
coefficient $\kappa\in\left(0,\frac{l_{min}(\Pi)}{2}\right)$, and
both exist by Corollary \ref{cor:PiA}. Let us remark that as the
function $v\rightarrow\Pi v$ is a bijection from $H$ to $H$ and
$\Pi$ is symmetric, Equality (\ref{eq:ToyEps2distribution}) is equivalent
to: 
\begin{equation}
\forall v\in H,\ \frac{d}{dt}(v,\Pi p_{\epsilon})_{d}+(\partial_{x}v,\Pi A_{\epsilon}(p_{\epsilon}^{+})\partial_{x}p_{\epsilon})_{d}=\text{0},\text{ in the sense of distributions on \ensuremath{(0,T)}. }\label{eq:formulationPidiscrete}
\end{equation}
This formulation will help us take advantage of the coercivity property
of the function $\Pi A$. For $m\in\mathbb{N}^{*}$, we denote by
$p_{0}^{m}$ the orthogonal projection of $p_{0}$ onto the subspace
of $L$ spanned by $(w_{1},...,w_{m})$. We first solve an approximate
formulation named $V_{\epsilon}^{m}(\mu)$:
\begin{equation}
\text{Find \ensuremath{g_{\epsilon,1}^{m},...,g_{\epsilon,m}^{m}}}\in C^{1}([0,T],\mathbb{R}),\mbox{\text{ such that \text{the function \ensuremath{t\in[0,T]\rightarrow p_{\epsilon}^{m}(t):=\sum_{j=1}^{m}g_{\epsilon,j}^{m}(t)w_{j}} satisfies: }}}\label{eq:ToyEpsM1Find}
\end{equation}

\begin{equation}
\ensuremath{\forall i\in\{1,...,m\}},\ \frac{d}{dt}\left(w_{i},\Pi p_{\epsilon}^{m}(t)\right)_{d}+\left(\partial_{x}w_{i},\Pi A_{\epsilon}\left(\left(p_{\epsilon}^{m}\right)^{+}(t)\right)\partial_{x}p_{\epsilon}^{m}(t)\right)_{d}=0,\label{eq:ToyEpsiMode}
\end{equation}

\begin{equation}
p_{\epsilon}^{m}(0)=p_{0}^{m}.\label{eq:ToyEpsiMinit}
\end{equation}
We denote by $W^{(m)}\in\mathcal{S}_{m}^{++}(\mathbb{R})$ the non
singular Gram matrix of the linearly independent family $\left(\sqrt{\Pi}w_{i}\right)_{1\leq i\leq m}$,
with coefficients $W_{ij}^{(m)}=\left(w_{i},\Pi w_{j}\right)_{d}$
for $1\leq i,j\leq m$. We introduce $g_{\epsilon,0}^{m}:=(g_{01}^{m},...,g_{0m}^{m})\in\mathbb{R}^{m}$,
which is $p_{0}^{m}$ expressed on the basis $(w_{1},...,w_{m})$,
and $g_{\epsilon}^{m}:=\left(\ensuremath{g_{\epsilon,1}^{m},...,g_{\epsilon,m}^{m}}\right)$.
We define the function $K_{\epsilon}^{m}:\mathbb{R}^{m}\rightarrow\mathcal{M}_{m}(\mathbb{R})$
such that for $z\in\mathbb{R}^{m}$, $K_{\epsilon}^{m}(z)$ is the
matrix with coefficients 
\[
K_{\epsilon}^{m}(z)_{ij}=\left(\partial_{x}w_{i},\Pi A_{\epsilon}\left(\left(\sum_{k=1}^{m}z_{k}w_{k}\right)^{+}\right)\partial_{x}w_{j}\right)_{d},
\]
for $1\leq i,j\leq m$ and we define the function $F_{\epsilon}^{m}$
on $\mathbb{R}^{m}$ by: 
\[
F_{\epsilon}^{m}(z)=-\left(W^{(m)}\right)^{-1}K_{\epsilon}^{m}(z)z.
\]
Solving $V_{\epsilon}^{m}(\mu)$ becomes equivalent to solving the
following ODE for $g_{\epsilon}^{m}$: 
\begin{eqnarray}
\left(g_{\epsilon}^{m}\right)'(t) & = & F_{\epsilon}^{m}(g_{\epsilon}^{m})\label{eq:ToyODE1}\\
g_{\epsilon}^{m}(0) & = & g_{\epsilon,0}^{m}.\label{eq:ToyODE2}
\end{eqnarray}
To show existence of a unique solution to $(V_{\epsilon}^{m}(\mu))$,
for $m\geq1$, we check that the function $F_{\epsilon}^{m}$ is locally
Lipschitz and that the function $A_{\epsilon}$ is bounded. The proof
of Lemma \ref{lem:FespLocalLipschitz} below is postponed to Appendix
\ref{sec:AppendixA}.
\begin{lem}
\label{lem:FespLocalLipschitz}For $m\geq1$, the function $z\in\mathbb{R}^{m}\rightarrow F_{\epsilon}^{m}(z)$
is locally Lipschitz.
\end{lem}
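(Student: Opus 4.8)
The plan is to show that $F_\epsilon^m$ is locally Lipschitz on $\mathbb{R}^m$ by decomposing it into a product of maps and checking that each factor is locally Lipschitz, then invoking the stability of local Lipschitz continuity under composition, products and sums on bounded sets. Since $F_\epsilon^m(z) = -\left(W^{(m)}\right)^{-1}K_\epsilon^m(z)z$ and $\left(W^{(m)}\right)^{-1}$ is a fixed matrix, it suffices to prove that $z \mapsto K_\epsilon^m(z)z$ is locally Lipschitz, and for that it is enough to show that each entry $z \mapsto K_\epsilon^m(z)_{ij}$ is locally Lipschitz (a finite family of such functions multiplied by the coordinates of $z$, which are globally Lipschitz, stays locally Lipschitz on bounded sets).

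First I would reduce the problem to a pointwise estimate on the matrix-valued map $A_\epsilon$. Writing $K_\epsilon^m(z)_{ij} = \bigl(\partial_x w_i, \Pi A_\epsilon\bigl((\sum_k z_k w_k)^+\bigr)\partial_x w_j\bigr)_d$, and using that $\Pi$ is a fixed matrix and $\partial_x w_i, \partial_x w_j \in L$, the difference $K_\epsilon^m(z)_{ij} - K_\epsilon^m(z')_{ij}$ is controlled, via Cauchy--Schwarz in $L$, by $\|\partial_x w_i\|\,\|\partial_x w_j\|\,\|\Pi\|_\infty$ times the (essential) supremum over $x$ of $\bigl\|A_\epsilon\bigl((\sum_k z_k w_k(x))^+\bigr) - A_\epsilon\bigl((\sum_k z'_k w_k(x))^+\bigr)\bigr\|_\infty$. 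So the crux is: the matrix-valued function $A_\epsilon$, viewed as a map from $(\mathbb{R}_+)^d$ to $\mathcal{M}_d(\mathbb{R})$, is globally Lipschitz (or at least Lipschitz on bounded sets) — indeed, because of the $\epsilon^2 \vee (\cdot)^2$ regularization in the denominator, each entry $M_{\epsilon,ij}$ is a ratio whose numerator is a polynomial (degree $2$) in $\rho$ and whose denominator is bounded below by $\epsilon^2$, so $M_\epsilon$ is locally Lipschitz on $(\mathbb{R}_+)^d$ with Lipschitz constant depending on $\epsilon$ and on the bound on $\rho$; hence so is $A_\epsilon = \frac12(I_d + M_\epsilon)$.

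Then I would chain this with the fact that $z \mapsto \bigl(\sum_k z_k w_k(x)\bigr)^+$ is, for each fixed $x$, a Lipschitz function of $z$ with constant $\sum_k |w_k(x)|$ — but here one must be a little careful: the $w_k$ are $H$-functions, not bounded, so $\sum_k |w_k(x)|$ need not be uniformly bounded in $x$. The resolution is that the argument $\rho = (\sum_k z_k w_k(x))^+$ itself is then unbounded in $x$, so one cannot use a uniform-in-$x$ Lipschitz bound on $A_\epsilon$. Instead I would exploit that $A_\epsilon$ is in fact \emph{globally bounded} on $(\mathbb{R}_+)^d$ (this is the $\epsilon$-analogue of Lemma \ref{lem:Abounded}: $\|M_\epsilon(\rho)\|_\infty \le \lambda_{max}/\lambda_{min}$ by the same estimates, since the regularization only makes the denominator larger) and that $M_\epsilon$ is \emph{positively homogeneous of degree $0$} away from the regularization zone $\{\sum_l \lambda_l \rho_l \le \epsilon\}$. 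More robustly, one observes that the map $\rho \mapsto A_\epsilon(\rho)$ has a gradient bounded in terms of $1/\epsilon$ times a polynomial in $\rho$ divided by $\epsilon^2 \vee (\sum \lambda_l \rho_l)^2$ raised to an appropriate power; a direct computation shows $|\nabla M_{\epsilon,ij}(\rho)| \le C_\epsilon/(1 + |\rho|)$ for $|\rho|$ large, which combined with continuity on compacts gives a global Lipschitz bound $\|A_\epsilon(\rho) - A_\epsilon(\rho')\|_\infty \le L_\epsilon |\rho - \rho'|$ for all $\rho,\rho' \in (\mathbb{R}_+)^d$. Granting this, and since $|(\sum_k z_k w_k(x))^+ - (\sum_k z'_k w_k(x))^+| \le \sum_k |z_k - z'_k|\,|w_k(x)|$, we get $\|A_\epsilon(\cdots) - A_\epsilon(\cdots)\|_\infty \le L_\epsilon \sum_k |z_k - z'_k| |w_k(x)|$, and then Cauchy--Schwarz in $x$ against $|\partial_x w_i \cdot \partial_x w_j| \in L^1$ — no, rather we must keep $|w_k(x)|$ paired against $|\partial_x w_i(x)|\,|\partial_x w_j(x)|$, which is only in $L^1$ if $w_k \in L^\infty$.

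Because of this subtlety, the cleanest route — and the one I expect the paper takes — is to \emph{not} pull the supremum out, but rather estimate $|K_\epsilon^m(z)_{ij} - K_\epsilon^m(z')_{ij}|$ directly as $\int_{\mathbb{R}} \sum_{a,b} |\partial_x w_{ia}|\,|(\Pi)_{ab}|\,\bigl|\bigl(A_\epsilon(\rho) - A_\epsilon(\rho')\bigr)_{bc}\bigr|\,|\partial_x w_{jc}|\,dx$ where $\rho = (\sum_k z_k w_k(x))^+$, $\rho' = (\sum_k z'_k w_k(x))^+$, bound the $A_\epsilon$-difference pointwise by $L_\epsilon \sum_k |z_k - z'_k|\,|w_k(x)|$, and then bound $\int |w_k|\,|\partial_x w_{ia}|\,|\partial_x w_{jc}|\,dx$ — and here one uses that $w_k \in H^1(\mathbb{R})$ embeds in $L^\infty(\mathbb{R})$ in dimension one (Sobolev embedding $H^1(\mathbb{R}) \hookrightarrow C_b(\mathbb{R})$), so $\|w_k\|_{L^\infty} < \infty$ and the integral is at most $\|w_k\|_{L^\infty}\,|\partial_x w_i|\,|\partial_x w_j| < \infty$. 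Thus $|K_\epsilon^m(z)_{ij} - K_\epsilon^m(z')_{ij}| \le C(m,\epsilon,\Pi,(w_k))\,|z - z'|$ with $C$ finite and \emph{independent of $z,z'$} (the dependence on the size of $z$ that one might fear is absorbed because $A_\epsilon$ is globally Lipschitz on $(\mathbb{R}_+)^d$). Finally, writing $F_\epsilon^m(z) - F_\epsilon^m(z') = -\left(W^{(m)}\right)^{-1}\bigl(K_\epsilon^m(z) - K_\epsilon^m(z')\bigr)z - \left(W^{(m)}\right)^{-1}K_\epsilon^m(z')(z - z')$ and using that $K_\epsilon^m$ is uniformly bounded (again by boundedness of $A_\epsilon$), on any ball $\{|z| \le R\}$ we obtain a Lipschitz estimate with constant $\le \|(W^{(m)})^{-1}\|(C R + \|K_\epsilon^m\|_\infty)$, proving $F_\epsilon^m$ is locally Lipschitz. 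The main obstacle is precisely the one flagged above: ensuring the integrals $\int |w_k|\,|\partial_x w_i|\,|\partial_x w_j|\,dx$ are finite, which is exactly where the one-dimensional Sobolev embedding $H^1(\mathbb{R}) \hookrightarrow L^\infty(\mathbb{R})$ does the work, together with the fact that the $\epsilon$-regularization makes $A_\epsilon$ globally (not merely locally) Lipschitz and bounded on $(\mathbb{R}_+)^d$.
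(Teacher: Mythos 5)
Your proposal is correct and follows essentially the same route as the paper: reduce to a Lipschitz estimate on the entries of $K_{\epsilon}^{m}$, control the difference of $M_{\epsilon}$ (equivalently $A_{\epsilon}$) with an $\epsilon$-dependent constant, use the one-dimensional embedding $H^{1}(\mathbb{R})\hookrightarrow L^{\infty}(\mathbb{R})$ so that $\sum_{k}z_{k}w_{k}(x)$ stays in a bounded set for $z$ in a compact, and integrate against $\partial_{x}w_{i}\,\partial_{x}w_{j}\in L^{1}(\mathbb{R})$. The only cosmetic difference is that the paper does not prove (nor need) global Lipschitzness of $A_{\epsilon}$ on $\left(\mathbb{R}_{+}\right)^{d}$: it bounds the difference of $M_{\epsilon}$ by terms involving $\|\rho\|_{\infty}$ and $\|\tilde{\rho}\|_{\infty}$ and then uses the compactness of the $z$-range together with the $L^{\infty}$ bound on the $w_{k}$, which is exactly the fallback you yourself identify.
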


\begin{lem}
\label{lem:Bepsbounded}For $\epsilon>0$ and $B\in A_{\epsilon}\left(\left(\mathbb{R}^{+}\right)^{d}\right)$,
$||B||_{\infty}\leq\frac{1}{2}\left(1+\frac{\lambda_{max}}{\lambda_{min}}\right)$. \end{lem}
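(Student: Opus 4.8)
The plan is to mimic exactly the proof of Lemma \ref{lem:Abounded}, since $A_\epsilon = \frac12(I_d + M_\epsilon)$ has the same structure as $A = \frac12(I_d+M)$, with the only change being that the denominator $\left(\sum_l \lambda_l\rho_l\right)^2$ is replaced by $\epsilon^2 \vee \left(\sum_l \lambda_l\rho_l\right)^2$. Thus it suffices to show that $\|M_\epsilon(\rho)\|_\infty \leq \frac{\lambda_{max}}{\lambda_{min}}$ for every $\rho \in (\mathbb{R}^+)^d$, and then conclude via the triangle inequality $\|B\|_\infty \leq \frac12\|I_d\|_\infty + \frac12\|M_\epsilon(\rho)\|_\infty \leq \frac12 + \frac12\frac{\lambda_{max}}{\lambda_{min}} = \frac12\left(1 + \frac{\lambda_{max}}{\lambda_{min}}\right)$, using $\frac{\lambda_{max}}{\lambda_{min}} \geq 1$.

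First I would handle the trivial case $\rho = (0,\dots,0)$: then every numerator in the definition of $M_\epsilon$ vanishes, so $M_\epsilon(0) = 0$ and the bound holds. Now fix $\rho \in \mathcal{D}$, i.e. $\rho \in (\mathbb{R}^+)^d$ with $\rho \neq 0$; then $\sum_l \lambda_l \rho_l > 0$. Write $D_\epsilon := \epsilon^2 \vee \left(\sum_l \lambda_l \rho_l\right)^2 \geq \left(\sum_l \lambda_l \rho_l\right)^2 > 0$. For the off-diagonal terms, since $\lambda_l \rho_l \geq 0$ and $-\lambda_j \leq \lambda_l - \lambda_j \leq \lambda_l$, I get
\[
|M_{\epsilon,ij}(\rho)| = \frac{\lambda_i \rho_i \left|\sum_{l\neq j}(\lambda_l - \lambda_j)\rho_l\right|}{D_\epsilon} \leq \frac{\lambda_i\rho_i \max\left(\lambda_j\sum_{l\neq j}\rho_l,\ \sum_{l\neq j}\lambda_l\rho_l\right)}{\left(\sum_l\lambda_l\rho_l\right)^2} \leq \frac{\lambda_{max}}{\lambda_{min}},
\]
where the last step reproduces the chain of inequalities in the proof of Lemma \ref{lem:Abounded}: bounding $\frac{\lambda_i\rho_i}{\sum_l\lambda_l\rho_l} \leq 1$ (or $\leq \lambda_{max}/\lambda_{min}$ after multiplying by $\lambda_j$ and dividing by $\lambda_l$ appropriately) and $\frac{\sum_{l\neq j}\lambda_l\rho_l}{\sum_l\lambda_l\rho_l}\leq 1$. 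Similarly for the diagonal terms, $-1 \leq \frac{\left(\sum_{l\neq i}\lambda_l\rho_l\right)\left(\sum_{l\neq i}(-\lambda_l)\rho_l\right)}{\left(\sum_l\lambda_l\rho_l\right)^2} \leq M_{\epsilon,ii}(\rho) \leq \lambda_i \frac{\sum_{l\neq i}\lambda_l\rho_l}{\sum_l\lambda_l\rho_l}\frac{\sum_{l\neq i}\rho_l}{\sum_l\lambda_l\rho_l} \leq \frac{\lambda_{max}}{\lambda_{min}}$, exactly as before.

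There is essentially no obstacle here: the replacement of the denominator by a larger quantity ($D_\epsilon \geq \left(\sum_l\lambda_l\rho_l\right)^2$) only makes the positive bounds smaller, and the lower bounds $\geq -1 \geq -\lambda_{max}/\lambda_{min}$ come from writing the numerator as a difference of two nonnegative terms and discarding the positive one, which again is not affected by enlarging the denominator. The only mild care needed is that for the \emph{lower} bound one uses $D_\epsilon \leq$ something — but in fact no upper bound on $D_\epsilon$ is needed, because each time the numerator has a definite sign we either bound $|M_{\epsilon,ij}|$ directly from above (shrinking with larger denominator) or we use that $M_{\epsilon,ii} \geq -\frac{(\text{nonneg})(\text{nonneg})}{D_\epsilon} \geq -\frac{(\text{nonneg})(\text{nonneg})}{\left(\sum_l\lambda_l\rho_l\right)^2} \geq -1$; the middle inequality goes the right way precisely because $D_\epsilon \geq \left(\sum_l\lambda_l\rho_l\right)^2$ and the numerator is nonpositive, so dividing by a larger positive number brings it closer to $0$. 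Hence the same bound $\frac12\left(1+\frac{\lambda_{max}}{\lambda_{min}}\right)$ holds for $A_\epsilon$ on all of $(\mathbb{R}^+)^d$, uniformly in $\epsilon > 0$.
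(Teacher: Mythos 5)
Your proof is correct and follows essentially the same route as the paper: both arguments reduce the bound for $A_{\epsilon}$ to the computation in Lemma \ref{lem:Abounded}, treating $\rho=0$ separately and observing that for $\rho\in\mathcal{D}$ the replacement of the denominator $\left(\sum_{l}\lambda_{l}\rho_{l}\right)^{2}$ by the larger quantity $\epsilon^{2}\vee\left(\sum_{l}\lambda_{l}\rho_{l}\right)^{2}$ can only decrease the absolute values of the entries of $M_{\epsilon}(\rho)$ (the paper phrases this as $||M_{\epsilon}(\rho)||_{\infty}\leq||M(\rho)||_{\infty}$, which is your observation that $M_{\epsilon,ij}(\rho)$ equals $M_{ij}(\rho)$ times a factor in $[0,1]$). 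No gaps.
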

\begin{proof}
For $\epsilon>0$, $A_{\epsilon}(0)=\frac{1}{2}I_{d}$, so $||A_{\epsilon}(0)||_{\infty}=\frac{1}{2}$.
For $\rho\in\mathcal{D}$ and $\epsilon>0$, it is clear that $||M_{\epsilon}(\rho)||_{\infty}\leq||M(\rho)||_{\infty}\leq\frac{\lambda_{max}}{\lambda_{min}}$,
where the inequality on the r.h.s comes from the proof of Lemma \ref{lem:Abounded},
so $||A_{\epsilon}(\rho)||_{\infty}\leq\frac{1}{2}\left(1+\frac{\lambda_{max}}{\lambda_{min}}\right)$.\end{proof}
\begin{lem}
\label{lem:existencetoODEs}For every $m\geq1$, there exists a unique
solution to $(V_{\epsilon}^{m}(\mu))$.\end{lem}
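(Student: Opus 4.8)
The plan is to exploit the reformulation obtained just above the statement, which turns $(V_\epsilon^m(\mu))$ into the $\mathbb{R}^m$-valued initial value problem (\ref{eq:ToyODE1})--(\ref{eq:ToyODE2}) for $g_\epsilon^m$: first get local existence and uniqueness from the Cauchy--Lipschitz theorem, then promote the solution to all of $[0,T]$ by an energy estimate that excludes finite-time blow-up. Since the Gram matrix $W^{(m)}\in\mathcal{S}_m^{++}(\mathbb{R})$ is invertible and, by Lemma \ref{lem:FespLocalLipschitz}, $z\mapsto F_\epsilon^m(z)=-(W^{(m)})^{-1}K_\epsilon^m(z)z$ is locally Lipschitz on $\mathbb{R}^m$, the Cauchy--Lipschitz theorem yields a unique maximal solution $g_\epsilon^m\in C^1([0,\tau_m),\mathbb{R}^m)$ of (\ref{eq:ToyODE1})--(\ref{eq:ToyODE2}) for some $\tau_m\in(0,T]$; by the equivalence, $t\mapsto p_\epsilon^m(t):=\sum_{j=1}^m g_{\epsilon,j}^m(t)w_j$ is then the unique maximal solution of (\ref{eq:ToyEpsiMode})--(\ref{eq:ToyEpsiMinit}) on $[0,\tau_m)$, and everything reduces to proving $\tau_m=T$.

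For that, I would multiply the $i$-th equation (\ref{eq:ToyEpsiMode}) by $g_{\epsilon,i}^m(t)$ and sum over $i\in\{1,\dots,m\}$; using $p_\epsilon^m(t)=\sum_j g_{\epsilon,j}^m(t)w_j$ and the symmetry of $\Pi$ and of $W^{(m)}$ one obtains the energy identity
\[
\frac{1}{2}\frac{d}{dt}\bigl(p_\epsilon^m(t),\Pi p_\epsilon^m(t)\bigr)_d+\bigl(\partial_x p_\epsilon^m(t),\Pi A_\epsilon\bigl((p_\epsilon^m)^+(t)\bigr)\partial_x p_\epsilon^m(t)\bigr)_d=0 \quad\text{on }[0,\tau_m).
\]
By Lemma \ref{lem:Bepsbounded}, $A_\epsilon((p_\epsilon^m)^+(t))$ has $\|\cdot\|_\infty$-norm at most $\tfrac12(1+\lambda_{max}/\lambda_{min})$, so the second term is bounded in modulus by $C\,|\partial_x p_\epsilon^m(t)|_d^2$, with $C$ depending only on $d$, $\Pi$, $\lambda_{max}$, $\lambda_{min}$. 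Since $(w_j)_{1\le j\le m}$ is a finite linearly independent family, all norms on the finite-dimensional space $V_m:=\mathrm{span}(w_1,\dots,w_m)\subset H$ are equivalent and the coordinate map $g\mapsto\sum_j g_jw_j$ is a linear isomorphism; hence there is $C_m>0$ with $|\partial_x u|_d^2\le\|u\|_d^2\le C_m(u,\Pi u)_d$ and $|g|\le C_m|u|_d$ for $u=\sum_j g_jw_j\in V_m$. Feeding this into the energy identity gives $\frac{d}{dt}(p_\epsilon^m(t),\Pi p_\epsilon^m(t))_d\le 2CC_m(p_\epsilon^m(t),\Pi p_\epsilon^m(t))_d$, so Gronwall's lemma together with the initial bound $(p_0^m,\Pi p_0^m)_d\le l_{max}(\Pi)|p_0|_d^2$ yields $\sup_{[0,\tau_m)}(p_\epsilon^m(t),\Pi p_\epsilon^m(t))_d<\infty$, hence $\sup_{[0,\tau_m)}|g_\epsilon^m(t)|<\infty$.

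If $\tau_m<T$, this uniform bound contradicts the blow-up alternative for maximal ODE solutions; therefore $\tau_m=T$, which gives a solution of $(V_\epsilon^m(\mu))$ on $[0,T]$, unique by the uniqueness part of Cauchy--Lipschitz through the equivalence with (\ref{eq:ToyODE1})--(\ref{eq:ToyODE2}). The one genuinely delicate point is this last no-blow-up argument: since $F_\epsilon^m$ is only locally Lipschitz, the a priori energy estimate cannot be dispensed with. I note that an even cleaner variant is available: writing $A_\epsilon(\rho)=\tfrac12(1-\theta(\rho))I_d+\theta(\rho)A(\rho)$ for a suitable $\theta(\rho)\in(0,1]$, the uniform coercivity of $\Pi A$ on $\mathcal{D}$ and the inequality $\kappa<\tfrac12 l_{min}(\Pi)$ show that $\Pi A_\epsilon$ is uniformly coercive on $(\mathbb{R}^+)^d$, so the second term in the energy identity is nonnegative and $(p_\epsilon^m(t),\Pi p_\epsilon^m(t))_d\le(p_0^m,\Pi p_0^m)_d$ directly; but the mere boundedness of $A_\epsilon$ already suffices for this lemma.
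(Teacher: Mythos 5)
Your proof is correct and follows the paper's strategy: local existence and uniqueness from Lemma \ref{lem:FespLocalLipschitz} via Cauchy--Lipschitz, then exclusion of finite-time blow-up via a Gronwall bound resting on the uniform boundedness of $A_{\epsilon}$ (Lemma \ref{lem:Bepsbounded}). The only difference is cosmetic: the paper bounds the coefficients of $\left(W^{(m)}\right)^{-1}K_{\epsilon}^{m}$ directly to get linear growth of the ODE vector field and runs Gronwall on $\sum_{i}|g_{\epsilon,i}^{m}(t)|$, whereas you run Gronwall on the energy $\left(p_{\epsilon}^{m},\Pi p_{\epsilon}^{m}\right)_{d}$ and need the equivalence of norms on the finite-dimensional space spanned by $w_{1},...,w_{m}$ to close the estimate; both are valid.
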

\begin{proof}
For $m\geq1$, we use Lemma \ref{lem:FespLocalLipschitz} and the
Cauchy-Lipschitz theorem to get existence and uniqueness of a maximal
solution $g_{\epsilon}^{m}$ on the interval $[0,T^{*})$ for a certain
$T^{*}>0$. It is sufficient to show that $T^{*}>T$, to ensure that
$g_{\epsilon}^{m}$ is defined on $[0,T]$. As $A_{\epsilon}$ is
uniformly bounded by $\frac{1}{2}\left(1+\frac{\lambda_{max}}{\lambda_{min}}\right)$
by Lemma \ref{lem:Bepsbounded}, we have that all the coefficients
of $\left(W^{(m)}\right)^{-1}K_{\epsilon}^{m}$ are uniformly bounded
by 
\[
\gamma:=\frac{d}{2}\left(1+\frac{\lambda_{max}}{\lambda_{min}}\right)\left|\left|\left(W^{(m)}\right)^{-1}\right|\right|_{\infty}||\Pi||_{\infty}\underset{1\leq j\leq m}{\max}\left(\sum_{i=1}^{m}\sum_{a,b=1}^{d}\int_{\mathbb{R}}|\partial_{x}w_{ia}||\partial_{x}w_{jb}|dx\right).
\]
For $1\leq i\leq m$, 
\[
\left|\left(g_{\epsilon,i}^{m}\right)'(t)\right|=\left|\sum_{j=1}^{m}\left(\left(W^{(m)}\right)^{-1}K_{\epsilon}^{m}(g_{\epsilon}^{m}(t))\right)_{ij}g_{\epsilon,j}^{m}(t)\right|\leq\gamma\sum_{j=1}^{m}|g_{\epsilon,j}^{m}(t)|.
\]
Summing over the index $i\in\{1,...,d\}$, we have for $t\in[0,T^{*})$
:
\[
\sum_{i=1}^{m}|g_{\epsilon,i}^{m}(t)|\leq\sum_{i=1}^{m}|g_{\epsilon,i}^{m}(0)|+\sum_{i=1}^{m}\int_{0}^{t}|\left(g_{\epsilon,i}^{m}\right)'(t)|dt\leq\sum_{i=1}^{m}|g_{\epsilon,i}^{m}(0)|+m\gamma\int_{0}^{t}\left(\sum_{i=1}^{d}|g_{\epsilon,i}^{m}(t)|\right)dt,
\]
and by Gronwall's lemma, for $0\leq t<T^{*}$, $\left(\sum_{i=1}^{m}|g_{\epsilon,i}^{m}(t)|\right)\leq\left(\sum_{i=1}^{m}|g_{\epsilon,i}^{m}(0)|\right)\exp(m\gamma t)$.
If $T^{*}<\infty$, the function $t\rightarrow\left(\sum_{i=1}^{m}|g_{\epsilon,i}^{m}(t)|\right)$
would explode as $t\underset{}{\rightarrow}T^{*}$. We conclude that
$T^{*}=\infty$, $g_{\epsilon}^{m}$ is defined on $[0,T]$ and $p_{\epsilon}^{m}=\sum_{i=1}^{m}g_{\epsilon,i}^{m}(t)w_{i}$
is the solution to $V_{\epsilon}^{m}(\mu)$.
\end{proof}
Before showing the existence of a converging subsequence of $\left(p_{\epsilon}^{m}\right)_{m\geq1}$
whose limit is a solution of $V_{\epsilon}(\mu)$, we check that $\Pi A_{\epsilon}$
is uniformly coercive on $\mathcal{D}$, uniformly in $\epsilon>0$.
\begin{lem}
\label{lem:AcoerciveimpliesAepscoercive}If $\Pi A$ is uniformly
coercive on $\mathcal{D}$ with coefficient $\kappa\in\left(0,\frac{l_{min}(\Pi)}{2}\right)$,
then for $\epsilon>0$, $\Pi A_{\epsilon}$ is uniformly coercive
on $\left(\mathbb{R}^{+}\right)^{d}$ with coefficient $\kappa$.\end{lem}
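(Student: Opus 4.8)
The plan is to relate $A_\epsilon(\rho)$ for $\rho\in(\mathbb{R}^+)^d$ to $A(\tilde\rho)$ for a suitably rescaled point $\tilde\rho\in\mathcal{D}$, and then invoke the coercivity of $\Pi A$ on $\mathcal{D}$. The key observation is that $M$ and hence $A$ are invariant under positive scaling of the argument: for $\rho\in\mathcal{D}$ and $c>0$ one has $M(c\rho)=M(\rho)$, since each coefficient of $M$ is a ratio of homogeneous degree-two expressions in $\rho$. So $A(\mathcal{D})$ really only depends on the ``direction'' of $\rho$, and $\Pi A$ being uniformly coercive on $\mathcal{D}$ with coefficient $\kappa$ is the same as saying $\xi^*\Pi A(\rho)\xi\ge\kappa\,\xi^*\xi$ for all $\xi\in\mathbb{R}^d$ and all $\rho$ in the unit simplex (or any fixed scaling of it).

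First I would dispose of the degenerate boundary case $\rho=0$: there $A_\epsilon(0)=\tfrac12 I_d$, so $\xi^*\Pi A_\epsilon(0)\xi=\tfrac12\xi^*\Pi\xi\ge\tfrac12 l_{min}(\Pi)\,\xi^*\xi\ge\kappa\,\xi^*\xi$ by the hypothesis $\kappa<l_{min}(\Pi)/2$. Next, for $\rho\in(\mathbb{R}^+)^d\setminus\{0\}=\mathcal{D}$, I distinguish according to whether the cutoff $\epsilon^2$ in the denominator of $M_\epsilon$ is active. If $\epsilon^2\le(\sum_l\lambda_l\rho_l)^2$, then $M_\epsilon(\rho)=M(\rho)$, hence $A_\epsilon(\rho)=A(\rho)$ and coercivity with coefficient $\kappa$ is immediate. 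If instead $\epsilon^2>(\sum_l\lambda_l\rho_l)^2$, write $s:=\sum_l\lambda_l\rho_l>0$ and set $c:=s/\epsilon\in(0,1)$, $\tilde\rho:=c\,\rho\in\mathcal{D}$; then $\sum_l\lambda_l\tilde\rho_l=cs=\epsilon$, so the true denominator $(\sum_l\lambda_l\tilde\rho_l)^2=\epsilon^2$ coincides with the capped denominator, and by the scaling invariance of the numerators (each is homogeneous of degree two in $\rho$, hence equals $c^2$ times the corresponding numerator at $\rho$, i.e.\ $c^2 s^2=\epsilon^2$ in the denominator as well after the cap) one checks directly that $M_\epsilon(\rho)=M(\tilde\rho)$. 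Therefore $A_\epsilon(\rho)=A(\tilde\rho)$ with $\tilde\rho\in\mathcal{D}$, and $\xi^*\Pi A_\epsilon(\rho)\xi=\xi^*\Pi A(\tilde\rho)\xi\ge\kappa\,\xi^*\xi$.

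Combining the three cases gives $\xi^*\Pi A_\epsilon(\rho)\xi\ge\kappa\,\xi^*\xi$ for all $\rho\in(\mathbb{R}^+)^d$ and $\xi\in\mathbb{R}^d$, which is exactly the claimed uniform coercivity of $\Pi A_\epsilon$ on $(\mathbb{R}^+)^d$ with the same coefficient $\kappa$. The only slightly delicate point is verifying the identity $M_\epsilon(\rho)=M(\tilde\rho)$ in the ``cutoff active'' regime; this is a short algebraic check using the degree-two homogeneity of both the numerators and $(\sum_l\lambda_l\rho_l)^2$, together with the fact that the scaling was chosen precisely so that $\epsilon^2=(\sum_l\lambda_l\tilde\rho_l)^2$. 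I do not expect any real obstacle beyond bookkeeping, since the structure of $M_\epsilon$ was evidently designed so that its image is contained in $M(\mathcal{D})\cup\{0\}$.
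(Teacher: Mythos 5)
Your treatment of the cases $\rho=0$ and $\epsilon\le\sum_l\lambda_l\rho_l$ is correct and matches the paper. The gap is in the cutoff-active regime, where your key identity $M_\epsilon(\rho)=M(\tilde\rho)$ is false, and in fact the very scale invariance of $M$ that you observe is what kills the strategy. First, a slip: with $s=\sum_l\lambda_l\rho_l<\epsilon$ and $c=s/\epsilon$ you get $cs=s^2/\epsilon\neq\epsilon$; you would need $c=\epsilon/s>1$ to make the capped denominator match. But even with the correct scaling, since the numerators of $M$ are homogeneous of degree two, $M(\tilde\rho)=\frac{c^2\,\mathrm{num}(\rho)}{\epsilon^2}=M(\rho)$ (degree-zero homogeneity), whereas $M_\epsilon(\rho)=\frac{\mathrm{num}(\rho)}{\epsilon^2}=\left(\frac{s}{\epsilon}\right)^2 M(\rho)$. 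So $M_\epsilon(\rho)$ is a strict scalar shrinkage of $M(\rho)$ by the factor $(s/\epsilon)^2<1$, and precisely because $M$ is invariant under rescaling of its argument, no choice of $\tilde\rho\in\mathcal{D}$ can realize this shrunken matrix as a value of $M$. Your closing assertion that the image of $M_\epsilon$ is contained in $M(\mathcal{D})\cup\{0\}$ is therefore wrong, and the coercivity of $\Pi A_\epsilon$ does not reduce to that of $\Pi A$ on $\mathcal{D}$ by a change of argument.

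The statement is still true, and the repair is short: from $M_\epsilon(\rho)=t\,M(\rho)$ with $t=(s/\epsilon)^2\in(0,1)$ one gets the convex combination $A_\epsilon(\rho)=\tfrac12\bigl(I_d+tM(\rho)\bigr)=t\,A(\rho)+(1-t)\tfrac12 I_d$, hence
\[
\xi^*\Pi A_\epsilon(\rho)\xi \;=\; t\,\xi^*\Pi A(\rho)\xi+(1-t)\tfrac12\xi^*\Pi\xi \;\ge\; t\kappa\,\xi^*\xi+(1-t)\tfrac{l_{min}(\Pi)}{2}\xi^*\xi \;\ge\; \kappa\,\xi^*\xi,
\]
where the last step uses the hypothesis $\kappa\le\frac{l_{min}(\Pi)}{2}$ --- this is exactly where that assumption is needed, and your proposal never invokes it outside the $\rho=0$ case. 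The paper's proof implements the same interpolation by splitting on the sign of $\xi^*\Pi M(\rho)\xi$: if it is nonpositive, shrinking $M$ only helps and one falls back on the coercivity of $\Pi A$; if it is positive, one discards the $M$ term entirely and uses $\tfrac12\xi^*\Pi\xi\ge\kappa\,\xi^*\xi$.
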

\begin{proof}
For $\epsilon>0,A_{\epsilon}(0)=\frac{1}{2}I_{d}$ so $\forall\xi\in\mathbb{R}^{d},\xi^{*}\Pi A_{\epsilon}(0)\xi=\frac{1}{2}\xi^{*}\Pi\xi\geq\frac{l_{min}(\Pi)}{2}\xi^{*}\xi\geq\kappa\xi^{*}\xi$.
For $\rho\in\mathcal{D}$, if $\epsilon\leq\sum_{l}\lambda_{l}\rho_{l}$,
then $A_{\epsilon}(\rho)=A(\rho)$ and by hypothesis $\forall\xi\in\mathbb{R}^{d},\xi^{*}\Pi A_{\epsilon}(\rho)\xi\geq\kappa\xi^{*}\xi$.
If $\epsilon>\sum_{l}\lambda_{l}p_{l}$, then for $1\leq i,j\leq d$,
$M_{\epsilon,ij}(\rho)=\left(\frac{1}{\epsilon}\sum_{l}\lambda_{l}\rho_{l}\right)^{2}M_{ij}(\rho),$
with $\left(\frac{1}{\epsilon}\sum_{l}\lambda_{l}\rho_{l}\right)^{2}<1$.
If $\xi^{*}\Pi M(\rho)\xi\leq0$, then $\xi^{*}\Pi M_{\epsilon}(\rho)\xi\geq\xi^{*}\Pi M(\rho)\xi$
and $\xi^{*}\Pi A_{\epsilon}(\rho)\xi\geq\xi^{*}\Pi A(\rho)\xi\geq\kappa\xi^{*}\xi$.
If $\xi^{*}\Pi M(\rho)\xi>0$, then $\xi^{*}\Pi M_{\epsilon}(\rho)\xi\geq0$
and $\xi^{*}\Pi A_{\epsilon}(\rho)\xi\geq\frac{1}{2}\xi^{*}\Pi\xi\geq\kappa\xi^{*}\xi$,
so $\Pi A_{\epsilon}$ is uniformly coercive on $\left(\mathbb{R}^{+}\right)^{d}$
with coefficient $\kappa$.
\end{proof}
We now state an existence result for $(V_{\epsilon}(\mu))$. 
\begin{prop}
\label{prop:ExistencetoVeps}Under Condition $(C)$, for $\epsilon>0$,
there exists a solution $p_{\epsilon}\in C([0,T],L)$ to $(V_{\epsilon}(\mu))$.\end{prop}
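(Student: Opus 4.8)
The plan is to pass to the limit $m\to\infty$ in the Galerkin approximations $p_\epsilon^m$ constructed in Lemma \ref{lem:existencetoODEs}, following the scheme of \cite[III.1.3]{temam}. First I would derive a priori estimates uniform in $m$: taking in (\ref{eq:ToyEpsiMode}) the linear combination of the equations with coefficients $g_{\epsilon,i}^m(t)$, i.e. formally testing against $v=p_\epsilon^m(t)$, and using that $\Pi$ is symmetric, one gets $\frac12\frac{d}{dt}(p_\epsilon^m,\Pi p_\epsilon^m)_d+(\partial_x p_\epsilon^m,\Pi A_\epsilon((p_\epsilon^m)^+)\partial_x p_\epsilon^m)_d=0$. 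By Lemma \ref{lem:AcoerciveimpliesAepscoercive}, $\Pi A_\epsilon$ is uniformly coercive on $(\mathbb{R}^+)^d$ with coefficient $\kappa$, so the second term is bounded below by $\kappa|\partial_x p_\epsilon^m|_d^2$; integrating in time and using $l_{min}(\Pi)|w|_d^2\le(w,\Pi w)_d\le l_{max}(\Pi)|w|_d^2$ together with $|p_0^m|_d\le|p_0|_d$ yields a bound on $p_\epsilon^m$ in $L^\infty([0,T];L)\cap L^2([0,T];H)$ depending only on $|p_0|_d$, $\kappa$ and the eigenvalues of $\Pi$. From (\ref{eq:ToyEpsiMode}) and the boundedness of $A_\epsilon$ (Lemma \ref{lem:Bepsbounded}) one then obtains in the standard way a bound on $\partial_t p_\epsilon^m$ in $L^2([0,T];H')$, uniform in $m$.

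Next I would extract a subsequence along which $p_\epsilon^m\rightharpoonup p_\epsilon$ weakly in $L^2([0,T];H)$, weakly-$*$ in $L^\infty([0,T];L)$, and $\partial_t p_\epsilon^m\rightharpoonup\partial_t p_\epsilon$ weakly in $L^2([0,T];H')$. The delicate point, and the main obstacle, is that the embedding $H\hookrightarrow L$ is not compact because the underlying domain $\mathbb{R}$ is unbounded, so the Aubin--Lions lemma does not apply globally. I would circumvent this by localization: on each bounded interval $(-R,R)$ the embedding $H^1((-R,R))\hookrightarrow L^2((-R,R))$ is compact, so Aubin--Lions gives strong convergence of $p_\epsilon^m$ in $L^2([0,T]\times(-R,R))$; a diagonal extraction over $R\in\mathbb{N}$ produces a subsequence converging to $p_\epsilon$ strongly in $L^2_{loc}([0,T]\times\mathbb{R})$ and, up to a further extraction, a.e. on $[0,T]\times\mathbb{R}$.

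Then I would pass to the limit in (\ref{eq:ToyEps2distribution}). Taking the total family $(w_k)$ in $H$ to consist of functions in $(C_c^\infty(\mathbb{R}))^d$ (possible by density), for fixed $i$ the identity (\ref{eq:ToyEpsiMode}) holds for all $m\ge i$; the time-derivative term converges in the sense of distributions on $(0,T)$ since $(w_i,\Pi p_\epsilon^m)_d\to(w_i,\Pi p_\epsilon)_d$ weakly in $L^2([0,T])$. For the elliptic term, on the compact support of $\partial_x w_i$ we have $(p_\epsilon^m)^+\to p_\epsilon^+$ a.e., and, $A_\epsilon$ being continuous and bounded, $\partial_x w_i\,\Pi A_\epsilon((p_\epsilon^m)^+)\to\partial_x w_i\,\Pi A_\epsilon(p_\epsilon^+)$ strongly in $L^2$ by dominated convergence, while $\partial_x p_\epsilon^m\rightharpoonup\partial_x p_\epsilon$ weakly in $L$; the product of a strongly and a weakly convergent sequence converges, so $(\partial_x w_i,\Pi A_\epsilon((p_\epsilon^m)^+)\partial_x p_\epsilon^m)_d\to(\partial_x w_i,\Pi A_\epsilon(p_\epsilon^+)\partial_x p_\epsilon)_d$. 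Since $(w_k)$ is total in $H$ and both sides of the limiting identity are continuous in $v\in H$ (using the boundedness of $\Pi A_\epsilon$), the equation (\ref{eq:ToyEps2distribution}) holds for all $v\in H$, equivalently (\ref{eq:formulationPidiscrete}); in particular $p_\epsilon$ satisfies (\ref{eq:ToyEps2regularity}). Finally, $p_\epsilon\in L^2([0,T];H)$ with $\partial_t p_\epsilon\in L^2([0,T];H')$ (the latter now read off the limiting equation, $\Pi$ being invertible) implies, by the Lions--Magenes interpolation theorem for the Gelfand triple $H\hookrightarrow L\hookrightarrow H'$, that $p_\epsilon$ has a representative in $C([0,T],L)$; multiplying (\ref{eq:ToyEpsiMode}) by a $C^1$ time profile vanishing at $T$, integrating by parts in time and passing to the limit, using $p_0^m\to p_0$ in $L$, identifies the initial condition $p_\epsilon(0,\cdot)=p_0$, which gives the required solution of $(V_\epsilon(\mu))$.
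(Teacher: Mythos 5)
Your proposal follows essentially the same route as the paper's proof: Galerkin approximation, the energy estimate obtained by testing with $p_\epsilon^m$ together with the uniform coercivity of $\Pi A_\epsilon$ (Lemma \ref{lem:AcoerciveimpliesAepscoercive}), local compactness with diagonal extraction over the intervals $(-n,n)$ to get a.e. convergence, a strong-times-weak passage to the limit in the nonlinear term using the boundedness and continuity of $A_\epsilon$, and a Lions-type continuity lemma plus the comparison of two integration-by-parts identities to recover the initial condition. The one step to tighten is your claim that (\ref{eq:ToyEpsiMode}) yields, "in the standard way," a uniform bound on $\partial_t p_\epsilon^m$ in $L^2([0,T];H')$: the Galerkin equation only controls the pairing of $\partial_t p_\epsilon^m$ against $\mathrm{span}(w_1,\dots,w_m)$, and for an arbitrary total family the projection onto that span need not be uniformly bounded on $H$, so this bound is not automatic; the paper sidesteps the issue by bounding instead the functional $G_\epsilon p_\epsilon^m$ in $L^2([0,T];H')$ and invoking the compactness computations of \cite[(iii) p.\ 285]{temam}, which require only that bound (alternatively, one can choose a special basis for which the projections are bounded on $H$).
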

\begin{proof}
We compute standard energy estimates of $p_{\epsilon}^{m}$ for $m\geq1$.
We multiply (\ref{eq:ToyEpsiMode}) by $g_{\epsilon,i}^{m}(t)$ and
add these equations for $i=1,...,m$. This gives 
\begin{equation}
\frac{1}{2}\frac{d}{dt}\left|\sqrt{\Pi}p_{\epsilon}^{m}\right|_{d}^{2}+\left(\partial_{x}p_{\epsilon}^{m}(t),\Pi A_{\epsilon}\left(\left(p_{\epsilon}^{m}\right)^{+}(t)\right)\partial_{x}p_{\epsilon}^{m}(t)\right)_{d}=0.
\end{equation}
By the uniform coercivity of the function $\Pi A_{\epsilon}$, we
obtain that
\[
-\frac{1}{2}\frac{d}{dt}\left|\sqrt{\Pi}p_{\epsilon}^{m}\right|_{d}^{2}=\left(\partial_{x}p_{\epsilon}^{m}(t),\Pi A_{\epsilon}\left(\left(p_{\epsilon}^{m}\right)^{+}(t)\right)\partial_{x}p_{\epsilon}^{m}(t)\right)_{d}\geq\kappa\left|\partial_{x}p_{\epsilon}^{m}(t)\right|_{d}^{2}\geq0.
\]
Therefore, we get the following inequalities:

\begin{equation}
l_{min}(\Pi)\underset{t\in[0,T]}{\sup}|p_{\epsilon}^{m}(t)|_{d}^{2}\leq\underset{t\in[0,T]}{\sup}|\sqrt{\Pi}p_{\epsilon}^{m}(t)|_{d}^{2}\leq|\sqrt{\Pi}p_{\epsilon}^{m}(0)|_{d}^{2}\leq l_{max}(\Pi)|p_{\epsilon}^{m}(0)|_{d}^{2}\leq l_{max}(\Pi)|p_{0}|_{d}^{2},\label{eq:energyestimateLinfty}
\end{equation}

\begin{equation}
\int_{0}^{T}\kappa|\partial_{x}p_{\epsilon}^{m}(t)|_{d}^{2}dt\leq\frac{1}{2}\left(|\sqrt{\Pi}p_{\epsilon}^{m}(0)|_{d}^{2}-|\sqrt{\Pi}p_{\epsilon}^{m}(T)|_{d}^{2}\right)\leq\frac{l_{max}(\Pi)}{2}|p_{0}|_{d}^{2},\label{eq:energyestimatederivative}
\end{equation}
\begin{equation}
\int_{0}^{T}||p_{\epsilon}^{m}(t)||_{d}^{2}dt=\int_{0}^{T}|p_{\epsilon}^{m}(t)|_{d}^{2}dt+\int_{0}^{T}|\partial_{x}p_{\epsilon}^{m}(t)|_{d}^{2}dt\leq\left(T\frac{l_{max}(\Pi)}{l_{min}(\Pi)}+\frac{l_{max}(\Pi)}{2\kappa}\right)|p_{0}|_{d}^{2}.\label{eq:energyestimateH1}
\end{equation}
We see that the sequence $\left(p_{\epsilon}^{m}\right)_{m\geq1}$
remains in a bounded set of $L^{2}([0,T];H)\cap L^{\infty}([0,T];L)$,
so there exists an element $p_{\epsilon}\in L^{2}([0,T];H)\cap L^{\infty}([0,T];L)$
and a subsequence, for notational simplicity also called $\left(p_{\epsilon}^{m}\right)_{m\geq1}$,
that has the following convergence:
\begin{eqnarray*}
p_{\epsilon}^{m} & \underset{m\rightarrow\infty}{\rightarrow} & p_{\epsilon}\text{ {in} }L^{2}([0,T];H)\text{ weakly}\\
p_{\epsilon}^{m} & \underset{m\rightarrow\infty}{\rightarrow} & p_{\epsilon}\text{ {in} }L^{\infty}([0,T];L)\text{ weakly-*}.
\end{eqnarray*}
We now show that there exists a subsequence of $\left(p_{\epsilon}^{m}\right)_{m\geq1}$
that converges a.e. on $(0,T)\times\mathbb{R}$ to $p_{\epsilon}$.
For $q\in H$, let us define the function $G_{\epsilon}q\in H'$,
by 
\[
\langle G_{\epsilon}q,v\rangle=\left(\partial_{x}v,\Pi A_{\epsilon}\left(q^{+}\right)\partial_{x}q\right)_{d}.
\]
for $v\in H$. Then the equality (\ref{eq:ToyEpsiMode}) rewrites:
\begin{equation}
\forall i\in\{1,...,m\},\frac{d}{dt}\left(w_{i},\Pi p_{\epsilon}^{m}\right)_{d}+\langle G_{\epsilon}p_{\epsilon}^{m},w_{i}\rangle=\text{0}.\label{eq:pcontinuous}
\end{equation}
As by Lemma \ref{lem:Bepsbounded}, the matrices in $A_{\epsilon}\left(\left(\mathbb{R}^{+}\right)^{d}\right)$
are uniformly bounded by $\frac{1}{2}\left(1+\frac{\lambda_{max}}{\lambda_{min}}\right)$,
we have for $q\in H$:

\begin{equation}
||G_{\epsilon}q||_{H'}=\underset{v\in H,||v||_{H}\leq1}{\sup}\left(\partial_{x}v,\Pi A_{\epsilon}\left(q^{+}\right)\partial_{x}q\right)_{d}\leq\frac{d^{2}}{2}\left(1+\frac{\lambda_{max}}{\lambda_{min}}\right)||\Pi||_{\infty}||q||_{d}.\label{eq:Gp}
\end{equation}
Since the family $\left(p_{\epsilon}^{m}\right)_{m\geq1}$ is bounded
in $L^{2}([0,T],H)$, the family $\left(G_{\epsilon}p_{\epsilon}^{m}\right)_{m\geq1}$
is bounded in $L^{2}([0,T],H')$ and the computations in \cite[(iii) p. 285]{temam}
give that for any bounded open subset $\mathcal{O}\subset\mathbb{R}$,
modulo the extraction of a subsequence, 
\[
p_{\epsilon|\mathcal{O}}^{m}\rightarrow p_{\epsilon|\mathcal{O}}\text{ {in} }L^{2}([0,T],L(\mathcal{O}))\text{ strongly and a.e. on \ensuremath{[0,T]\times\mathcal{O}}.}
\]
We define for $n\geq1$, $\mathcal{O}_{n}=(-n,n)$, so $\mathcal{O}_{n}\subset\mbox{\ensuremath{\mathcal{O}}}_{n+1}$,
and $\bigcup_{n\geq1}\mathcal{O}_{n}=\mathbb{R}$. By diagonal extraction,
we get from $\left(p_{\epsilon}^{m}\right)_{m\geq1}$ a subsequence,
called $\left(p_{\epsilon}^{\phi(m)}\right)_{m\geq1}$ such that 
\begin{equation}
\forall n\geq1,p_{\epsilon|\mathcal{O}_{n}}^{\phi(m)}\underset{m\rightarrow\infty}{\rightarrow}p_{\epsilon|\mathcal{O}_{n}}\text{ strongly in \ensuremath{L^{2}([0,T];L(\mathcal{O}_{n})),} }\label{eq:strongconvergence}
\end{equation}
 
\begin{equation}
p_{\epsilon}^{\phi(m)}\rightarrow p_{\epsilon}\text{ a.e. on }[0,T]\times\mathbb{R}.\label{eq:almosteverywhereconvergence}
\end{equation}
We show that $p_{\epsilon}$ is a solution to the variational formulation
$V_{\epsilon}(\mu)$. For $1\leq j\leq m$ and $\psi\in C^{1}([0,T],\mathbb{R})$
with $\psi(T)=0$, we have, through integration by parts, the equality:
\[
-\int_{0}^{T}\left(\psi'(t)w_{j},\Pi p_{\epsilon}^{\phi(m)}(t)\right)_{d}dt+\int_{0}^{T}\left(\psi(t)\partial_{x}w_{j},\Pi A_{\epsilon}\left(\left(p_{\epsilon}^{\phi(m)}\right)^{+}(t)\right)\partial_{x}p_{\epsilon}^{\phi(m)}(t)\right)_{d}dt=\left(w_{j},\Pi p_{0}^{\phi(m)}\right)_{d}\psi(0).
\]
The sequence $\left(p_{0}^{\phi(m)}\right)_{m\geq0}$ converges strongly
to $p_{0}$ in $L$ so 
\[
\left(w_{j},\Pi p_{0}^{\phi(m)}\right)_{d}\psi(0)\rightarrow(w_{j},\Pi p_{0})_{d}\psi(0).
\]
The sequence $\left(p_{\epsilon}^{\phi(m)}\right)_{m\geq0}$ converges
weakly to $p_{\epsilon}$ in $L^{2}([0,T],L)$ so 
\[
-\int_{0}^{T}\left(\psi'(t)w_{j},\Pi p_{\epsilon}^{\phi(m)}(t)\right)_{d}dt\rightarrow-\int_{0}^{T}\left(\psi'(t)w_{j},\Pi p_{\epsilon}(t)\right)_{d}dt.
\]
For the remaining term, we have

$\left|\int_{0}^{T}\left(\psi(t)\partial_{x}w_{j},\Pi A_{\epsilon}(p_{\epsilon}^{+}(t))\partial_{x}p_{\epsilon}(t)\right)_{d}dt-\int_{0}^{T}\left(\psi(t)\partial_{x}w_{j},\Pi A_{\epsilon}\left(\left(p_{\epsilon}^{\phi(m)}\right)^{+}(t)\right)\partial_{x}p_{\epsilon}^{\phi(m)}(t)\right)_{d}dt\right|$
\begin{eqnarray}
 & \leq & \left|\int_{0}^{T}\left(\psi(t)\partial_{x}w_{j},\Pi A_{\epsilon}(p_{\epsilon}^{+}(t))\left(\partial_{x}p_{\epsilon}(t)-\partial_{x}p_{\epsilon}^{\phi(m)}(t)\right)\right)_{d}dt\right|\label{eq:ConvergenceApp1}\\
 & + & \left|\int_{0}^{T}\left(\psi(t)\partial_{x}w_{j},\Pi\left(A_{\epsilon}\left(\left(p_{\epsilon}^{\phi(m)}\right)^{+}(t)\right)-A_{\epsilon}(p_{\epsilon}^{+}(t))\right)\partial_{x}p_{\epsilon}^{\phi(m)}(t)\right)_{d}dt\right|.\label{eq:convergenceApp2}
\end{eqnarray}
For the term in (\ref{eq:ConvergenceApp1}), the sequence $\left(\partial_{x}p_{\epsilon}^{\phi(k)}\right)_{k\geq0}$
converges weakly to $\partial_{x}p_{\epsilon}$ in $L^{2}([0,T];L)$.
In addition, by Lemma \ref{lem:Bepsbounded}, $A_{\epsilon}$ is bounded,
so the function $t\rightarrow\psi(t)A_{\epsilon}^{*}(p_{\epsilon}^{+}(t))\Pi\partial_{x}w_{j}$
belongs to $L^{2}([0,T];L)$ and we have the convergence 
\[
\left|\int_{0}^{T}\left(\psi(t)\partial_{x}w_{j},\Pi A_{\epsilon}(p_{\epsilon}^{+}(t))\left(\partial_{x}p_{\epsilon}(t)-\partial_{x}p_{\epsilon}^{\phi(m)}(t)\right)\right)_{d}dt\right|\underset{m\rightarrow\infty}{\rightarrow}0.
\]
For the term in (\ref{eq:convergenceApp2}), using the Cauchy-Schwarz
inequality we have that

$\left|\int_{0}^{T}\left(\psi(t)\left(A_{\epsilon}^{*}\left(\left(p_{\epsilon}^{\phi(m)}\right)^{+}(t)\right)-A_{\epsilon}^{*}\left(p_{\epsilon}^{+}(t)\right)\right)\Pi\partial_{x}w_{j},\partial_{x}p_{\epsilon}^{\phi(m)}(t)\right)_{d}dt\right|^{2}$
\begin{eqnarray*}
 & \leq & \left(\int_{0}^{T}\left|\psi(t)\left(A_{\epsilon}^{*}\left(\left(p_{\epsilon}^{\phi(m)}\right)^{+}(t)\right)-A_{\epsilon}^{*}(p_{\epsilon}^{+}(t))\right)\Pi\partial_{x}w_{j}\right|_{d}^{2}dt\right)\left(\int_{0}^{T}|\partial_{x}p_{\epsilon}^{\phi(k)}(t)|_{d}^{2}dt\right)\\
 & \leq & \frac{l_{max}(\Pi)}{2\kappa}|p_{0}|_{d}^{2}\int_{0}^{T}\left|\psi(t)\left(A_{\epsilon}^{*}\left(\left(p_{\epsilon}^{\phi(m)}\right)^{+}(t)\right)-A_{\epsilon}^{*}(p_{\epsilon}^{+}(t))\right)\Pi\partial_{x}w_{j}\right|_{d}^{2}dt,
\end{eqnarray*}
where we used the energy estimate (\ref{eq:energyestimatederivative})
in the last inequality. The function $A_{\epsilon}$ is continuous
on $\left(\mathbb{R}^{+}\right)^{d}$ as shown in the proof of Lemma
\ref{lem:FespLocalLipschitz}, and $p_{\epsilon}^{\phi(m)}\rightarrow p_{\epsilon}$
a.e. on $[0,T]\times\mathbb{R}$, so $A_{\epsilon}\left(\left(p_{\epsilon}^{\phi(m)}\right)^{+}\right)\rightarrow A_{\epsilon}\left(p_{\epsilon}^{+}\right)$
a.e. on $[0,T]\times\mathbb{R}$. By Lemma \ref{lem:Bepsbounded},
$A_{\epsilon}$ is uniformly bounded so we have through dominated
convergence, 
\[
\int_{0}^{T}\left|\psi(t)\left(A_{\epsilon}^{*}\left(\left(p_{\epsilon}^{\phi(m)}\right)^{+}(t)\right)-A_{\epsilon}^{*}(p_{\epsilon}^{+}(t))\right)\Pi\partial_{x}w_{j}\right|_{d}^{2}dt\underset{m\rightarrow\infty}{\rightarrow}0.
\]
Thus we have shown that
\begin{equation}
\int_{0}^{T}\left(\psi(t)\partial_{x}w_{j},\Pi A_{\epsilon}\left(\left(p_{\epsilon}^{\phi(m)}\right)^{+}(t)\right)\partial_{x}p_{\epsilon}^{\phi(m)}(t)\right)_{d}dt\underset{m\rightarrow\text{\ensuremath{\infty}}}{\rightarrow}\int_{0}^{T}\left(\psi(t)\partial_{x}w_{j},\Pi A_{\epsilon}\left(p_{\epsilon}^{+}(t)\right)\partial_{x}p_{\epsilon}(t)\right)_{d}dt.\label{eq:ToyEpsFVconvergence}
\end{equation}
When we gather the convergence results and use the fact that the sequence
$\left(w_{j}\right)_{j\geq1}$ is total in $H$, we obtain that 
\[
\forall v\in H,-\int_{0}^{T}(\psi'(t)v,\Pi p_{\epsilon}(t))_{d}dt+\int_{0}^{T}(\psi(t)\partial_{x}v,\Pi A_{\epsilon}(p_{\epsilon}^{+}(t))\partial_{x}p_{\epsilon}(t))_{d}dt=(v,\Pi p_{0})_{d}\psi(0),
\]
which rewrites:
\begin{equation}
\forall v\in H,-\int_{0}^{T}(\psi'(t)v,p_{\epsilon}(t))_{d}dt+\int_{0}^{T}(\psi(t)\partial_{x}v,A_{\epsilon}(p_{\epsilon}^{+}(t))\partial_{x}p_{\epsilon}(t))_{d}dt=(v,p_{0})_{d}\psi(0),\label{eq:ToyEpsFVequality1}
\end{equation}
If moreover $\psi$ belongs to $C_{c}^{\infty}((0,T))$, we obtain
that $p_{\epsilon}$ satisfies (\ref{eq:ToyEps2distribution}) in
the distributional sense on $(0,T)$. For $v\in H$, the function
$t\rightarrow(v,p_{\epsilon}(t))_{d}$ belongs to $H^{1}((0,T))$,
as the functions $t\in(0,T)\rightarrow(v,p_{\epsilon}(t))_{d}$ and
$t\in(0,T)\rightarrow(\partial_{x}v,A_{\epsilon}(p_{\epsilon}^{+}(t))\partial_{x}p_{\epsilon}(t))_{d}$
both belong to $L^{2}((0,T))$. Thanks to \cite[Corollary 8.10]{Brezis},
the following integration by parts formula also holds:

\begin{equation}
\forall v\in H,-\int_{0}^{T}\psi'(t)(v,p_{\epsilon}(t))_{d}dt+\int_{0}^{T}\psi(t)(\partial_{x}v,A_{\epsilon}(p_{\epsilon}^{+}(t))\partial_{x}p_{\epsilon}(t))_{d}dt=(v,p_{\epsilon}(0))_{d}\psi(0).\label{eq:ToyEpsFVequality2}
\end{equation}
If we choose $\psi(0)\neq0$, then by comparing (\ref{eq:ToyEpsFVequality2})
with (\ref{eq:ToyEpsFVequality1}), we have that: 
\begin{equation}
\forall v\in H,(v,p_{\epsilon}(0)-p_{0})_{d}=0,\label{eq:initverified}
\end{equation}
and this concludes the proof for the existence of a solution to $(V_{\epsilon}(\mu))$.
We now show that $p_{\epsilon}\in C([0,T],L)$. The function $p_{\epsilon}$
satisfies:
\[
\forall v\in H,\frac{d}{dt}\left(v,p_{\epsilon}\right)_{d}+\left(v,A_{\epsilon}(p_{\epsilon}^{+})\partial_{x}p_{\epsilon}\right)_{d}=0,
\]
in the distributional sense on $(0,T)$, with 
\begin{eqnarray}
p_{\epsilon} & \in & L^{2}([0,T],H),\label{eq:UdansL2H}\\
A_{\epsilon}(p_{\epsilon}^{+})\partial_{x}p_{\epsilon} & \in & L^{2}([0,T],L).\label{eq:UprimedansL2Hprime}
\end{eqnarray}
Then by \cite[III. Lemma 1.2]{temam}, $p_{\epsilon}$ is a.e. equal
to a function belonging to $C([0,T],L)$.
\end{proof}

\begin{prop}
\label{prop:solutionarenonnegative}Under Condition $(C)$, for $\epsilon>0$,
the solutions to $V_{\epsilon}(\mu)$ are non negative.\end{prop}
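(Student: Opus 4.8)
The plan is to run a maximum-principle (Stampacchia truncation) argument on each scalar equation of $V_\epsilon(\mu)$ separately, the point being that in $V_\epsilon(\mu)$ the matrix $A_\epsilon$ is evaluated at the truncation $p_\epsilon^+$, which makes the $i$-th equation decouple on the region where $p_{\epsilon,i}$ is negative. First I would fix a solution $p_\epsilon$ of $V_\epsilon(\mu)$ and an index $i\in\{1,\dots,d\}$, and test (\ref{eq:ToyEps2distribution}) with $v=(0,\dots,0,\phi,0,\dots,0)$, $\phi\in H^{1}(\mathbb{R})$ in the $i$-th slot, to get, in the sense of distributions on $(0,T)$,
\[
\frac{d}{dt}\int_{\mathbb{R}}\phi\,p_{\epsilon,i}\,dx+\int_{\mathbb{R}}\partial_x\phi\,(A_\epsilon(p_\epsilon^+)\partial_x p_\epsilon)_i\,dx=0.
\]
Since $A_\epsilon$ is bounded (Lemma \ref{lem:Bepsbounded}) and $p_\epsilon\in L^{2}([0,T];H)$, the field $A_\epsilon(p_\epsilon^+)\partial_x p_\epsilon$ lies in $L^{2}([0,T];L)$, so $\partial_t p_{\epsilon,i}\in L^{2}([0,T];H^{1}(\mathbb{R})')$, and together with $p_{\epsilon,i}\in L^{2}([0,T];H^{1}(\mathbb{R}))$ this places us in the classical framework where $t\mapsto p_{\epsilon,i}(t)$ has a representative in $C([0,T];L^{2}(\mathbb{R}))$ (consistent with $p_\epsilon\in C([0,T];L)$), so that $p_{\epsilon,i}(0,\cdot)=p_{0,i}$ makes sense.

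Next I would use $p_{\epsilon,i}^-=\min(0,p_{\epsilon,i})$ as a test function: for a.e. $t$, $p_{\epsilon,i}(t,\cdot)\in H^{1}(\mathbb{R})$ forces $p_{\epsilon,i}^-(t,\cdot)\in H^{1}(\mathbb{R})$ with $\partial_x p_{\epsilon,i}^-=1_{\{p_{\epsilon,i}<0\}}\partial_x p_{\epsilon,i}$ and $\|p_{\epsilon,i}^-(t)\|_{H^1}\le\|p_{\epsilon,i}(t)\|_{H^1}$, hence $p_{\epsilon,i}^-\in L^{2}([0,T];H^{1}(\mathbb{R}))$. Applying the chain rule for the convex $C^1$ function $y\mapsto(y^-)^2$ (justified by a routine time-regularization of $p_{\epsilon,i}$, or by approximating $(y^-)^2$ by smooth convex functions), one obtains, for $0\le t\le T$,
\[
\tfrac12\,|p_{\epsilon,i}^-(t)|_{L^2(\mathbb{R})}^2-\tfrac12\,|p_{\epsilon,i}^-(0)|_{L^2(\mathbb{R})}^2=-\int_0^t\int_{\{p_{\epsilon,i}<0\}}(A_\epsilon(p_\epsilon^+)\partial_x p_\epsilon)_i\,\partial_x p_{\epsilon,i}\,dx\,ds.
\]

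The heart of the argument is the algebraic remark that on $\{p_{\epsilon,i}<0\}$ one has $(p_\epsilon^+)_i=0$, hence $M_{\epsilon,ij}(p_\epsilon^+)=0$ for every $j\neq i$ (the expression $M_{\epsilon,ij}$ carries the factor $\lambda_i(p_\epsilon^+)_i$); so on that set $(A_\epsilon(p_\epsilon^+)\partial_x p_\epsilon)_i=(A_\epsilon(p_\epsilon^+))_{ii}\,\partial_x p_{\epsilon,i}$, i.e. the $i$-th equation decouples into a scalar parabolic equation, and moreover $(A_\epsilon(\rho))_{ii}=\tfrac12(1+M_{\epsilon,ii}(\rho))\ge0$ because $M_{\epsilon,ii}(\rho)\ge-1$. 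The latter is checked as in the proof of Lemma \ref{lem:Abounded}: if the numerator of $M_{\epsilon,ii}$ is nonnegative there is nothing to prove, and if it is negative, replacing $(\sum_l\lambda_l\rho_l)^2$ by the larger $\epsilon^2\vee(\sum_l\lambda_l\rho_l)^2$ only moves the ratio towards $0$, so $M_{\epsilon,ii}(\rho)\ge M_{ii}(\rho)\ge-1$. Plugging this in, the right-hand side above equals $-\int_0^t\int_{\{p_{\epsilon,i}<0\}}(A_\epsilon(p_\epsilon^+))_{ii}(\partial_x p_{\epsilon,i})^2\,dx\,ds\le0$. Since $p_{\epsilon,i}(0,\cdot)=p_{0,i}\ge0$ (recall $\alpha_i>0$ and $\mu$ has a nonnegative density), $|p_{\epsilon,i}^-(0)|_{L^2}=0$, whence $|p_{\epsilon,i}^-(t)|_{L^2}=0$ for all $t\in[0,T]$, i.e. $p_{\epsilon,i}\ge0$ a.e. on $[0,T]\times\mathbb{R}$; repeating this for each $i$ shows that $p_\epsilon$ takes values in $(\mathbb{R}^+)^d$ a.e.

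I expect the only genuinely delicate point to be the justification that the time-dependent truncation $p_{\epsilon,i}^-$ is an admissible test function, i.e. the chain-rule identity identifying $\int_0^t\langle\partial_t p_{\epsilon,i},p_{\epsilon,i}^-\rangle$ with $\tfrac12|p_{\epsilon,i}^-(t)|_{L^2}^2-\tfrac12|p_{\epsilon,i}^-(0)|_{L^2}^2$; this is the standard technical ingredient of the Stampacchia method and is handled by the regularization mentioned above. Everything else is elementary, and in particular Condition $(C)$ plays no role in this proof beyond guaranteeing, via Proposition \ref{prop:ExistencetoVeps}, that a solution $p_\epsilon$ exists.
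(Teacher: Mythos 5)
Your proof is correct and follows essentially the same route as the paper: test with the negative part, observe that the factor $\lambda_i p_{\epsilon,i}^{+}$ in $M_{\epsilon,ij}$ kills the off-diagonal contributions on $\{p_{\epsilon,i}<0\}$, and use $A_{\epsilon,ii}\geq 0$ together with Temam-type time regularity to conclude by Gronwall/monotonicity. The only difference is that the paper establishes the quantitative bound $A_{\epsilon,ii}(p_\epsilon^{+})(\partial_x p_{\epsilon,i}^{-})^{2}\geq\frac{\lambda_{min}}{2\lambda_{max}}(\partial_x p_{\epsilon,i}^{-})^{2}$ because it is reused in Proposition \ref{lem:VFinnonnegative}, while explicitly remarking that your weaker observation $A_{\epsilon,ii}\geq 0$ suffices here.
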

\begin{proof}
Let $p_{\epsilon}=(p_{\epsilon,1},...,p_{\epsilon,d})$ be a solution
to the variational problem $V_{\epsilon}(\mu)$. For $x\in\mathbb{R},$
let $x^{-}=\min(x,0)$. We take $p_{\epsilon}^{-}=(p_{\epsilon,1}^{-},...,p_{\epsilon,d}^{-})$
as a test function in (\ref{eq:ToyEps2distribution}). Thanks to (\ref{eq:UdansL2H})-(\ref{eq:UprimedansL2Hprime}),
we obtain by \cite[Lemma 1.2 p. 261]{temam}:

\begin{equation}
\frac{1}{2}\frac{d}{dt}(p_{\epsilon}^{-},p_{\epsilon})_{d}+(\partial_{x}p_{\epsilon}^{-},A_{\epsilon}(p_{\epsilon}^{+})\partial_{x}p_{\epsilon})_{d}=0\label{eq:nonnegative}
\end{equation}
in the sense of distributions. For $f\in H^{1}(\mathbb{R}),\partial_{x}f^{-}=1_{\{f<0\}}\partial_{x}f$,
so we have $\forall i\in\{1,...,d\}$, $p_{\epsilon,i}^{+}\partial_{x}p_{\epsilon,i}^{-}=0$,
$\partial_{x}p_{\epsilon,i}^{-}\partial_{x}p_{\epsilon,i}=(\partial_{x}p_{\epsilon,i}^{-})^{2}$.
As a consequence, 

\[
\partial_{x}p_{\epsilon,i}^{-}\sum_{j\neq i}M_{\epsilon,ij}(p_{\epsilon}^{+})\partial_{x}p_{\epsilon,j}=\partial_{x}p_{\epsilon,i}^{-}\frac{\lambda_{i}p_{\epsilon,i}^{+}}{\epsilon\lor\left(\sum_{l}\lambda_{l}p_{\epsilon,l}^{+}\right)}\left(\sum_{j\neq i}\frac{\sum_{l\neq j}(\lambda_{l}-\lambda_{j})p_{\epsilon,l}^{+}}{\epsilon\lor\left(\sum_{l}\lambda_{l}p_{\epsilon,l}^{+}\right)}\partial_{x}p_{\epsilon,j}\right)=0,
\]
 
\[
(\partial_{x}p_{\epsilon}^{-},A_{\epsilon}(p_{\epsilon}^{+})\partial_{x}p_{\epsilon})_{d}=\int_{\mathbb{R}}\sum_{i,j=1}^{d}\partial_{x}p_{\epsilon,i}^{-}A_{\epsilon,ij}(p_{\epsilon}^{+})\partial_{x}p_{\epsilon,j}dx=\int_{\mathbb{R}}\sum_{i=1}^{d}\partial_{x}p_{\epsilon,i}^{-}A_{\epsilon,ii}(p_{\epsilon}^{+})\partial_{x}p_{\epsilon,i}dx.
\]
Equality (\ref{eq:nonnegative}) simplifies into
\[
\frac{1}{2}\frac{d}{dt}\int_{\mathbb{R}}\sum_{i=1}^{d}\left(p_{\epsilon,i}^{-}\right)^{2}dx+\int_{\mathbb{R}}\sum_{i=1}^{d}A_{\epsilon,ii}(p_{\epsilon}^{+})\left(\partial_{x}p_{\epsilon,i}^{-}\right)^{2}dx=0.
\]
Let us show that for $1\leq i\leq d$, 
\begin{equation}
A_{\epsilon,ii}(p_{\epsilon}^{+})\left(\partial_{x}p_{\epsilon,i}^{-}\right)^{2}\geq\frac{\lambda_{min}}{2\lambda_{max}}\left(\partial_{x}p_{\epsilon,i}^{-}\right)^{2}\geq0.\label{eq:inegalitepositive}
\end{equation}
For $1\leq i\leq d$, 
\[
2A_{\epsilon,ii}(p_{\epsilon}^{+})=1+\frac{\left(\sum_{l\neq i}\lambda_{l}p_{\epsilon,l}^{+}\right)\left(\lambda_{i}\sum_{l\neq i}p_{\epsilon,l}^{+}\right)}{\epsilon^{2}\vee\left(\sum_{l=1}^{d}\lambda_{l}p_{\epsilon,l}^{+}\right)^{2}}-\frac{\left(\sum_{l\neq i}\lambda_{l}p_{\epsilon,l}^{+}\right)^{2}}{\epsilon^{2}\vee\left(\sum_{l=1}^{d}\lambda_{l}p_{\epsilon,l}^{+}\right)^{2}}.
\]
We distinguish the cases $p_{\epsilon,i}^{+}>0$ and $p_{\epsilon,i}^{+}=0$.
In the first case, $\partial_{x}p_{\epsilon,i}^{-}=1_{\{p_{\epsilon,i}<0\}}\partial_{x}p_{\epsilon,i}=0$,
so (\ref{eq:inegalitepositive}) is true. In the second case, let
us define $z:=\left(\sum_{l\neq i}\lambda_{l}p_{\epsilon,l}^{+}\right)=\left(\sum_{l=1}^{d}\lambda_{l}p_{\epsilon,l}^{+}\right)$,
and notice that $\left(\lambda_{i}\sum_{l\neq i}p_{\epsilon,l}^{+}\right)\geq\frac{\lambda_{min}}{\lambda_{max}}z$.
Thus we obtain:
\[
2A_{\epsilon,ii}(p_{\epsilon}^{+})\geq1+\left(\frac{\lambda_{min}}{\lambda_{max}}-1\right)\frac{z^{2}}{\epsilon^{2}\vee z^{2}}\geq\frac{\lambda_{min}}{\lambda_{max}},
\]
as $0\leq\frac{z^{2}}{\epsilon^{2}\vee z^{2}}\leq1$, and (\ref{eq:inegalitepositive})
is also true. 

Consequently, $\int_{\mathbb{R}}\sum_{i=1}^{d}\left(p_{\epsilon,i}^{-}\right)^{2}(t)$
is non increasing in time. As $\int_{\mathbb{R}}\sum_{i=1}^{d}\left(p_{\epsilon,i}^{-}\right)^{2}(0)dx=\int_{\mathbb{R}}\sum_{i=1}^{d}\left(p_{0,i}^{-}\right)^{2}dx=0$,
we conclude that $p_{\epsilon}$ has every component non-negative.
Let us remark that to obtain the non negativity of the solutions to
$V_{\epsilon}(\mu)$, it is sufficient and easier to prove directly
that $A_{\epsilon,ii}\geq0$ for $1\leq i\leq d$, but Inequality
(\ref{eq:inegalitepositive}) will be reused for the proof of Proposition
\ref{lem:VFinnonnegative} below.
\end{proof}
Before showing the existence of a solution to $V_{L^{2}}(\mathbb{R})$,
we compute the energy estimates on $p_{\epsilon}$.
\begin{lem}
\label{lem:EnergyEstimates}Let $\epsilon>0$, and let $p_{\epsilon}$
be a solution to $V_{\epsilon}(\mu)$. The following energy estimates
hold.
\begin{equation}
\underset{t\in[0,T]}{\sup}|p_{\epsilon}(t)|_{d}^{2}\leq\frac{l_{max}(\Pi)}{l_{min}(\Pi)}|p_{0}|_{d}^{2},\label{eq:energyestimateLinfty-1}
\end{equation}

\begin{equation}
\int_{0}^{T}|\partial_{x}p_{\epsilon}(t)|_{d}^{2}dt\leq\frac{l_{max}(\Pi)}{2\kappa}|p_{0}|_{d}^{2},\label{eq:energyestimatederivative-1}
\end{equation}
\begin{equation}
\int_{0}^{T}||p_{\epsilon}||_{d}^{2}dt=\int_{0}^{T}|p_{\epsilon}|_{d}^{2}dt+\int_{0}^{T}|\partial_{x}p_{\epsilon}|_{d}^{2}dt\leq\left(T\frac{l_{max}(\Pi)}{l_{min}\left(\Pi\right)}+\frac{l_{max}(\Pi)}{2\kappa}\right)|p_{0}|_{d}^{2}.\label{eq:energyestimateH1-1}
\end{equation}

\end{lem}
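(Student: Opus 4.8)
The plan is to transfer to the solution $p_{\epsilon}$ itself the energy identity that was already obtained on the Galerkin approximations $p_{\epsilon}^{m}$ in the proof of Proposition~\ref{prop:ExistencetoVeps}; the estimates (\ref{eq:energyestimateLinfty-1})--(\ref{eq:energyestimateH1-1}) are the exact continuous analogues of (\ref{eq:energyestimateLinfty})--(\ref{eq:energyestimateH1}), so once that identity is available the rest is pure bookkeeping with the eigenvalues of $\Pi$. First I would recall that $p_{\epsilon}\in L^{2}([0,T];H)$ by (\ref{eq:UdansL2H}), hence $\Pi p_{\epsilon}\in L^{2}([0,T];H)$ since $v\mapsto\Pi v$ is a bounded symmetric bijection of $H$; and that, by Lemma~\ref{lem:Bepsbounded} and the bound (\ref{eq:Gp}), the $H'$-valued distribution $\frac{d}{dt}(\Pi p_{\epsilon})$ defined through (\ref{eq:formulationPidiscrete}), namely $v\mapsto-(\partial_{x}v,\Pi A_{\epsilon}(p_{\epsilon}^{+})\partial_{x}p_{\epsilon})_{d}$, has $H'$-norm at time $t$ bounded by a constant times $||p_{\epsilon}(t)||_{d}$, so it lies in $L^{2}([0,T];H')$. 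This is precisely the situation in which \cite[III. Lemma 1.2]{temam} applies, as was already used at the end of the proof of Proposition~\ref{prop:ExistencetoVeps} to get $p_{\epsilon}\in C([0,T];L)$.

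Next I would take $v=p_{\epsilon}(t)$ as a test function in the $\Pi$-weighted formulation (\ref{eq:formulationPidiscrete}). By \cite[III. Lemma 1.2]{temam} the function $t\mapsto(\Pi p_{\epsilon}(t),p_{\epsilon}(t))_{d}=|\sqrt{\Pi}p_{\epsilon}(t)|_{d}^{2}$ is absolutely continuous on $[0,T]$, and, using the symmetry of $\Pi$, its derivative equals $2\langle\frac{d}{dt}p_{\epsilon}(t),\Pi p_{\epsilon}(t)\rangle$; hence, for a.e.\ $t\in(0,T)$,
\[
\frac{1}{2}\frac{d}{dt}|\sqrt{\Pi}p_{\epsilon}(t)|_{d}^{2}+\left(\partial_{x}p_{\epsilon}(t),\Pi A_{\epsilon}(p_{\epsilon}^{+}(t))\partial_{x}p_{\epsilon}(t)\right)_{d}=0 .
\]
Since $p_{\epsilon}^{+}$ takes values in $\left(\mathbb{R}^{+}\right)^{d}$, Lemma~\ref{lem:AcoerciveimpliesAepscoercive} gives $\left(\partial_{x}p_{\epsilon}(t),\Pi A_{\epsilon}(p_{\epsilon}^{+}(t))\partial_{x}p_{\epsilon}(t)\right)_{d}\geq\kappa|\partial_{x}p_{\epsilon}(t)|_{d}^{2}\geq0$, which yields the differential inequality
\[
\frac{d}{dt}|\sqrt{\Pi}p_{\epsilon}(t)|_{d}^{2}\leq-2\kappa|\partial_{x}p_{\epsilon}(t)|_{d}^{2}\leq0 .
\]

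From here the three estimates follow mechanically. Integrating from $0$ to $t\in[0,T]$, using $p_{\epsilon}(0)=p_{0}$ and $l_{min}(\Pi)|v|_{d}^{2}\leq|\sqrt{\Pi}v|_{d}^{2}\leq l_{max}(\Pi)|v|_{d}^{2}$, one gets $l_{min}(\Pi)|p_{\epsilon}(t)|_{d}^{2}\leq|\sqrt{\Pi}p_{\epsilon}(t)|_{d}^{2}\leq|\sqrt{\Pi}p_{0}|_{d}^{2}\leq l_{max}(\Pi)|p_{0}|_{d}^{2}$, which is (\ref{eq:energyestimateLinfty-1}). Integrating the differential inequality over the whole interval $[0,T]$ gives $2\kappa\int_{0}^{T}|\partial_{x}p_{\epsilon}(t)|_{d}^{2}\,dt\leq|\sqrt{\Pi}p_{0}|_{d}^{2}-|\sqrt{\Pi}p_{\epsilon}(T)|_{d}^{2}\leq l_{max}(\Pi)|p_{0}|_{d}^{2}$, i.e.\ (\ref{eq:energyestimatederivative-1}); bounding $\int_{0}^{T}|p_{\epsilon}(t)|_{d}^{2}\,dt\leq T\sup_{t\in[0,T]}|p_{\epsilon}(t)|_{d}^{2}$ via (\ref{eq:energyestimateLinfty-1}) and adding (\ref{eq:energyestimatederivative-1}) then produces (\ref{eq:energyestimateH1-1}), since $||p_{\epsilon}||_{d}^{2}=|p_{\epsilon}|_{d}^{2}+|\partial_{x}p_{\epsilon}|_{d}^{2}$. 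The only genuinely non-routine point is the very first one: justifying that $t\mapsto|\sqrt{\Pi}p_{\epsilon}(t)|_{d}^{2}$ is absolutely continuous with the stated derivative, equivalently that $p_{\epsilon}$ is an admissible test function in its own variational formulation. This is exactly where \cite[III. Lemma 1.2]{temam} together with the uniform bound (\ref{eq:Gp}) on $A_{\epsilon}$ (itself a consequence of Lemma~\ref{lem:Bepsbounded}) enter, and everything downstream is the same arithmetic already performed at the Galerkin level.
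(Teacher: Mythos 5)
Your proposal is correct and follows exactly the route the paper intends: take $p_{\epsilon}$ itself as a test function in the ($\Pi$-weighted) formulation, justify the chain rule for $t\mapsto|\sqrt{\Pi}p_{\epsilon}(t)|_{d}^{2}$ via \cite[III. Lemma 1.2]{temam} together with the bound (\ref{eq:Gp}), invoke the uniform coercivity of $\Pi A_{\epsilon}$ from Lemma \ref{lem:AcoerciveimpliesAepscoercive}, and then repeat the eigenvalue arithmetic of (\ref{eq:energyestimateLinfty})--(\ref{eq:energyestimateH1}). The paper's own proof is just a one-line pointer to these same computations, so your write-up is a faithful (and more detailed) rendering of it.
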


\begin{proof}
We obtain the energy estimates by taking $p_{\epsilon}$ as the test
function in (\ref{eq:ToyEps2distribution}), and the computations
are the same as for (\ref{eq:energyestimateLinfty})-(\ref{eq:energyestimateH1}).
\end{proof}

\begin{prop}
\label{prop:existencetoVL2}Under Condition $(C)$, $V_{L^{2}}(\mu)$
has a solution $p$, which satisfies the same energy estimates as
the solutions to $V_{\epsilon}(\mu)$, and that are given by (\ref{eq:energyestimateLinfty-1})-(\ref{eq:energyestimateH1-1}).
Moreover, a.e. on $(0,T)\times\mathbb{R},\sum_{i=1}^{d}p_{i}(t,x)=\mu*h_{t}(x)$.\end{prop}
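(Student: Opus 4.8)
The plan is to construct $p$ as a limit point, as $\epsilon\downarrow 0$, of the family $(p_\epsilon)_{\epsilon>0}$ of solutions to $V_\epsilon(\mu)$ provided by Proposition \ref{prop:ExistencetoVeps}. By Lemma \ref{lem:EnergyEstimates} the estimates (\ref{eq:energyestimateLinfty-1})--(\ref{eq:energyestimateH1-1}) are uniform in $\epsilon$, so $(p_\epsilon)$ lies in a fixed bounded subset of $L^2([0,T];H)\cap L^\infty([0,T];L)$; I would extract a sequence $\epsilon_n\downarrow 0$ along which $p_{\epsilon_n}\rightharpoonup p$ weakly in $L^2([0,T];H)$ and weakly-$*$ in $L^\infty([0,T];L)$. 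Exactly as in the proof of Proposition \ref{prop:ExistencetoVeps}, the uniform bound on the matrices $A_\epsilon$ (Lemma \ref{lem:Bepsbounded}) gives a bound on the associated operators $G_\epsilon p_\epsilon$ --- hence on $\partial_t p_\epsilon$ --- in $L^2([0,T];H')$, and an Aubin--Lions type compactness argument on each window $\mathcal{O}_n=(-n,n)$ together with diagonal extraction yields a further subsequence (still written $p_{\epsilon_n}$) with $p_{\epsilon_n}\to p$ strongly in $L^2([0,T];L(\mathcal{O}_n))$ for every $n$ and a.e.\ on $[0,T]\times\mathbb{R}$.

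The next step is to identify $\sum_{i=1}^d p_i$. Testing (\ref{eq:ToyEps2distribution}) with $v=\varphi\,\mathbf{1}$ for $\varphi\in H^1(\mathbb{R})$ and using that $\sum_{i=1}^d M_{\epsilon,ij}\equiv 0$ for each $j$ (as for $M$), so that $\mathbf{1}^*A_\epsilon(p_\epsilon^+)=\tfrac12\mathbf{1}^*$, one sees that $\sum_{i=1}^d p_{\epsilon,i}$ solves the heat equation in the variational sense with initial datum $\sum_{i=1}^d p_{0,i}=\mu$ (recall $\sum_i\alpha_i=1$). By uniqueness for the heat equation in this class (see e.g.\ \cite{Brezis}) this forces $\sum_{i=1}^d p_{\epsilon,i}(t,\cdot)=\mu*h_t$ for a.e.\ $(t,x)$, for every $\epsilon$, and passing to the a.e.\ limit gives $\sum_{i=1}^d p_i=\mu*h_t$. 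Since moreover the $p_\epsilon$ are nonnegative by Proposition \ref{prop:solutionarenonnegative} and a.e.\ convergence preserves this, $p\geq 0$ with $\sum_i p_i=\mu*h_t>0$, so $p$ takes values in $\mathcal{D}$ a.e.\ on $[0,T]\times\mathbb{R}$, which is (\ref{eq:ToyFVvalueinD}).

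The heart of the argument is the a.e.\ convergence $A_\epsilon(p_{\epsilon_n}^+)\to A(p)$. Here I would use that, since $p_\epsilon\geq 0$, $\sum_l\lambda_l p_{\epsilon,l}^+=\sum_l\lambda_l p_{\epsilon,l}\geq \lambda_{min}\sum_l p_{\epsilon,l}=\lambda_{min}\,(\mu*h_t)$, a strictly positive lower bound \emph{independent of $\epsilon$}; hence at a.e.\ point $(t,x)$, for $\epsilon$ small enough the truncation $\epsilon^2\vee(\cdot)^2$ in $M_\epsilon$ is inactive, so $A_\epsilon(p_\epsilon^+)=A(p_\epsilon)$ there, and the continuity of $A$ on $\mathcal{D}$ together with $p_{\epsilon_n}\to p\in\mathcal{D}$ a.e.\ yields the claim. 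One then passes to the limit in the weak formulation of $V_{\epsilon_n}(\mu)$ tested against $\psi(t)w_j$, $\psi\in C^1([0,T])$ with $\psi(T)=0$, splitting the nonlinear term into two pieces as in (\ref{eq:ConvergenceApp1})--(\ref{eq:convergenceApp2}): the first vanishes by weak convergence $\partial_x p_{\epsilon_n}\rightharpoonup\partial_x p$ against the fixed $L^2([0,T];L)$ function $t\mapsto\psi(t)A(p^+(t))^*\Pi\partial_x w_j$ (bounded thanks to Lemma \ref{lem:Abounded}), the second by dominated convergence using the a.e.\ convergence just established, the uniform bound of Lemma \ref{lem:Bepsbounded}, and the uniform estimate (\ref{eq:energyestimatederivative-1}). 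Since $(w_j)_{j\geq1}$ is total in $H$, $p$ satisfies (\ref{eq:ToyFVdistribution}); taking $\psi(0)\neq 0$ and comparing with the integration-by-parts identity (from \cite[Corollary 8.10]{Brezis}, as in Proposition \ref{prop:ExistencetoVeps}) gives $p(0)=p_0$, i.e.\ (\ref{eq:ToyFVinit}). The estimates (\ref{eq:energyestimateLinfty-1})--(\ref{eq:energyestimateH1-1}) pass to $p$ by weak and weak-$*$ lower semicontinuity of the norms, and $p\in C([0,T],L)$ follows from \cite[III. Lemma 1.2]{temam} since $p\in L^2([0,T];H)$ and $A(p)\partial_x p\in L^2([0,T];L)$. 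I expect the main obstacle to be precisely the a.e.\ convergence $A_\epsilon(p_{\epsilon_n}^+)\to A(p)$: this is the one place where the heat-equation identity $\sum_i p_{\epsilon,i}=\mu*h_t$ is indispensable, as it supplies the $\epsilon$-uniform strict positivity of $\sum_l\lambda_l p_{\epsilon,l}$ that cancels the effect of the regularizing truncation in $M_\epsilon$.
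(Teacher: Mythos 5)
Your proposal is correct and follows essentially the same route as the paper: uniform energy estimates, weak and a.e.\ compactness via the $L^{2}([0,T],H')$ bound on $\partial_{t}p_{\epsilon}$ with diagonal extraction, identification of $\sum_{i}p_{\epsilon,i}$ with $\mu*h_{t}$ through the heat-equation variational formulation, and the resulting deactivation of the truncation in $M_{\epsilon}$ to get $A_{\epsilon}(p_{\epsilon}^{+})\to A(p)$ a.e.\ before passing to the limit in the nonlinear term as in (\ref{eq:ConvergenceApp1})--(\ref{eq:convergenceApp2}). The only cosmetic difference is that you invoke the $\epsilon$-uniform lower bound $\sum_{l}\lambda_{l}p_{\epsilon,l}\geq\lambda_{min}\,\mu*h_{t}$ directly, whereas the paper phrases the same fact through the pointwise limit $\sum_{i}\lambda_{i}p_{\epsilon_{k},i}(t,x)\to\sum_{i}\lambda_{i}p_{i}(t,x)>0$; both rest on the same heat-kernel identity.
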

\begin{proof}
For $\epsilon>0$, let us now denote by $p_{\epsilon}$ a solution
to $V_{\epsilon}(\mu)$, which exists under Condition $(C)$ according
to Proposition \ref{prop:ExistencetoVeps}. The family $\left(p_{\epsilon}\right)_{\epsilon>0}$
is bounded in $L^{2}([0,T];H)\cap L^{\infty}([0,T];L)$, as the energy
estimates (\ref{eq:energyestimateLinfty-1})-(\ref{eq:energyestimateH1-1})
have bounds that do not depend on $\epsilon$. There exists a limit
function $p\in L^{2}([0,T];H)\cap L^{\infty}([0,T];L)$ and a subsequence
$\left(\epsilon_{k}\right)_{k\geq1}$ decreasing to $0$, such that

\begin{eqnarray*}
p_{\epsilon_{k}} & \rightarrow & p\text{ {in} }L^{2}([0,T];H)\text{ weakly},\\
p_{\epsilon_{k}} & \rightarrow & p\text{ {in} }L^{\infty}([0,T];L)\text{ weakly-*},
\end{eqnarray*}
and $p$ also satisfies Inequalities (\ref{eq:energyestimateLinfty-1})-(\ref{eq:energyestimateH1-1}),
where we replace $p_{\epsilon}$ by $p$. Moreover, in the sense of
\cite[III, Lemma 1.1]{temam}, we check with arguments similar to
(\ref{eq:Gp}) that the family $\left(\frac{dp_{\epsilon_{k}}}{dt}\right)_{k\geq1}$
is bounded in $L^{2}([0,T],H')$ and by \cite[III, Theorem 2.3]{temam},
for any bounded open subset $\mathcal{O}\subset\mathbb{R}$, there
is a subsequence of $\left(p_{\epsilon_{k}}\right)_{k\geq1}$ converging
to $p$ in $L^{2}([0,T],L(\mathcal{O}))$ strongly and a.e. on $\ensuremath{[0,T]\times\mathcal{O}}$.
Then by diagonal extraction, similarly to the proof of Proposition
\ref{prop:ExistencetoVeps}, we can also assume that: 
\[
p_{\epsilon_{k}}\rightarrow p\text{ a.e. on \ensuremath{[0,T]\times\mathbb{R}}},
\]
and that $p$ is non negative as $p_{\epsilon}$ is non negative for
$\epsilon>0$. For any function $\psi\in C^{1}([0,T],\mathbb{R})$,
with $\psi(T)=0$, and $k\geq1$, we have 
\[
\forall v\in H,-\int_{0}^{T}(\psi'(t)v,p_{\epsilon_{\phi(k)}}(t))_{d}dt+\int_{0}^{T}(\psi(t)\partial_{x}v,A_{\epsilon_{\phi(k)}}(p_{\epsilon_{\phi(k)}}(t))\partial_{x}p_{\epsilon_{\phi(k)}}(t))_{d}dt=(v,p_{0})_{d}\psi(0).
\]
The sequence $\left(p_{\epsilon_{\phi(k)}}\right)_{k\geq0}$ converges
weakly to $p$ in $L^{2}([0,T];H)$ so 
\[
-\int_{0}^{T}(\psi'(t)v,p_{\epsilon_{\phi(k)}}(t))_{d}dt\underset{k\rightarrow\infty}{\rightarrow}-\int_{0}^{T}(\psi'(t)v,p(t))_{d}dt.
\]
We show that $p$ takes values in $\mathcal{D}$ a.e. on $[0,T]\times\mathbb{R}$.
As for $\rho\in\left(\mathbb{R}_{+}\right)^{d}$, and $1\leq j\leq d$,
$\sum_{i=1}^{d}M_{\epsilon,ij}(\rho)=0$, we have 
\[
\sum_{i=1}^{d}\left(A_{\epsilon}(p_{\epsilon}^{+})\partial_{x}p_{\epsilon}\right)_{i}=\frac{1}{2}\partial_{x}\left(\sum_{i=1}^{d}p_{\epsilon,i}\right)+\frac{1}{2}\sum_{j=1}^{d}\left(\sum_{i=1}^{d}M_{\epsilon,ij}(p_{\epsilon}^{+})\right)\partial_{x}p_{\epsilon,j}=\frac{1}{2}\partial_{x}\left(\sum_{i=1}^{d}p_{\epsilon,i}\right).
\]
Thus, in the sense of distributions on $(0,T)$, for $\tilde{v}\in H^{1}(\mathbb{R})$,
and $v:=(\tilde{v},...,\tilde{v})\in H$, 
\begin{eqnarray*}
0=\frac{d}{dt}(v,p_{\epsilon})_{d}+(\partial_{x}v,A_{\epsilon}(p_{\epsilon}^{+})\partial_{x}p_{\epsilon})_{d} & = & \frac{d}{dt}\int_{\mathbb{R}}\tilde{v}\left(\sum_{i=1}^{d}p_{\epsilon,i}\right)dx+\frac{1}{2}\int_{\mathbb{R}}\partial_{x}\tilde{v}\partial_{x}\left(\sum_{i=1}^{d}p_{\epsilon,i}\right)dx.
\end{eqnarray*}
We also have for $\epsilon>0$, $\sum_{i=1}^{d}p_{\epsilon,i}(0,x)dx=\mu(dx)$.
Then the function $\sum_{i=1}^{d}p_{\epsilon,i}$ is a solution to
the formulation $H_{L^{2}}(\mu)$ defined by : 
\begin{equation}
z\in L^{2}([0,T];H^{1}(\mathbb{R}))\cap L^{\infty}([0,T];L^{2}(\mathbb{R}))\label{eq:pL2-1-1}
\end{equation}

\begin{equation}
\forall w\in H^{1}(\mathbb{R}),\ \frac{d}{dt}\int_{\mathbb{R}}wzdx+\frac{1}{2}\int_{\mathbb{R}}\left(\partial_{x}w\right)\left(\partial_{x}z\right)dx=\text{0}\text{ in the sense of distributions on \ensuremath{(0,T)},}\label{eq:fvar-1-1}
\end{equation}

\begin{equation}
z(0)=\mu.\label{eq:cond0-1-1}
\end{equation}
The problem $H_{L^{2}}(\mu)$ is a variational formulation to the
heat PDE
\begin{eqnarray*}
\partial_{t}z & = & \frac{1}{2}\partial_{xx}z\\
z(0) & = & \mu,
\end{eqnarray*}
and it is well known that the solution of $H_{L^{2}}(\mu)$ is unique
and expressed as the convolution of $\mu$ with the heat kernel $h_{t}(x)=\frac{1}{\sqrt{2\pi t}}\exp\left(-\frac{x^{2}}{2t}\right).$
Consequently, for a subsequence $\left(p_{\epsilon_{k}}\right)_{k\geq1}$
such that $p_{\epsilon_{k}}\underset{k\rightarrow\infty}{\rightarrow}p$
a.e. on $(0,T]\times\mathbb{R}$, the sequence $\left(z_{\epsilon_{k}}\right)_{k\geq1}$,
defined by $z_{\epsilon_{k}}=\sum_{i=1}^{d}p_{\epsilon_{k},i}$ satisfies
\[
\text{for almost every }(t,x)\in(0,T]\times\mathbb{R},\forall k\geq1,\ z_{\epsilon_{k}}(t,x)=\mu*h_{t}(x)>0,
\]
and the value of $z_{\epsilon_{k}}(t,x)$ is independent from $\epsilon_{k}$.
If we define $z_{0}=\sum_{i=1}^{d}p_{i}$, we have that $z_{0}(t,x)=\mu*h_{t}(x)>0$
a.e. on $(0,T]\times\mathbb{R}$. As the limit $p$ has every component
non-negative, we can thus conclude that $p$ takes values in $\mathcal{D}$,
a.e. on $(0,T]\times\mathbb{R}$. Then for almost every $(t,x)\in[0,T]\times\mathbb{R}$,
there exists $k_{0}(t,x)$ such that $\forall k\geq k_{0}(t,x)$,
$\sum\lambda_{i}p_{\epsilon_{\phi(k)},i}(t,x)\geq\frac{1}{2}\sum_{i}\lambda_{i}p_{i}(t,x)>\epsilon_{\phi(k)}$.
For $k\geq k_{0}(t,x)$, $A_{\epsilon_{\phi(k)}}(p_{\epsilon_{\phi(k)}}(t,x))=A(p_{\epsilon_{\phi(k)}}(t,x))$,
and as $A$ is continuous on $\mathcal{D}$, we have 
\begin{equation}
A_{\epsilon_{\phi(k)}}(p_{\epsilon_{\phi(k)}}(t,x))\underset{k\rightarrow\infty}{\rightarrow}A(p(t,x))\ a.e..\label{eq:AepsPepsCVAE}
\end{equation}
By Lemma \ref{lem:Bepsbounded}, the family $\left(A_{\epsilon_{\phi(k)}}\right)_{k\geq1}$
has bounds uniform in $k$. Hence 
\[
\int_{0}^{T}(\psi(t)\partial_{x}v,A_{\epsilon_{\phi(k)}}(p_{\epsilon_{\phi(k)}}(t))\partial_{x}p_{\epsilon_{\phi(k)}}(t))_{d}dt\underset{k\rightarrow\infty}{\rightarrow}\int_{0}^{T}(\psi(t)\partial_{x}v,A(p(t))\partial_{x}p(t))_{d}dt,
\]
in the same manner as in (\ref{eq:convergenceApp2}). Consequently,
we have that
\begin{equation}
\forall v\in H,-\int_{0}^{T}(\psi'(t)v,p(t))_{d}dt+\int_{0}^{T}(\psi(t)\partial_{x}v,A(p(t))\partial_{x}p(t))_{d}dt=(v,p_{0})_{d}\psi(0).\label{eq:lastequalityL2}
\end{equation}
Similarly to the proof of Proposition \ref{prop:ExistencetoVeps},
we prove (\ref{eq:ToyFVdistribution}) and obtain $p\in C([0,T],L)$.
Finally we repeat the same arguments as in (\ref{eq:initverified})
to conclude that $p$ is a solution to $V_{L^{2}}(\mu)$.
\end{proof}

\subsection{\label{sub:FBMGeneralCase}Proof of Theorem \ref{thm:FBMmuProba}}

We first introduce a lemma that will be used later in the proof.
\begin{lem}
\label{lem:L1dxp}Let $\gamma\geq0$ and let $\phi$ be a non-negative
function, s.t. $\forall t\in(0,T],\int_{t}^{T}\phi^{2}(s)ds\leq\frac{\gamma}{\sqrt{t}}$. 

Then for $t\in(0,T]$ $\int_{0}^{t}\phi(s)ds\leq\frac{\sqrt{\gamma}t^{\frac{1}{4}}}{2^{\frac{1}{4}}-1}$.\end{lem}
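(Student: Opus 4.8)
The plan is to use a dyadic decomposition of the interval $(0,t]$ together with the Cauchy--Schwarz inequality. First I would split $(0,t]$ into the pairwise disjoint intervals $I_n:=(t2^{-(n+1)},t2^{-n}]$, $n\geq 0$, whose union is $(0,t]$. On each $I_n$, Cauchy--Schwarz gives
\[
\int_{I_n}\phi(s)\,ds\leq |I_n|^{1/2}\left(\int_{I_n}\phi^2(s)\,ds\right)^{1/2}\leq\left(\frac{t}{2^{n+1}}\right)^{1/2}\left(\int_{t2^{-(n+1)}}^{T}\phi^2(s)\,ds\right)^{1/2},
\]
where the last step enlarges the domain of integration, which is licit since $\phi^2\geq 0$ and $t2^{-n}\leq t\leq T$.

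Next I would invoke the hypothesis with the time parameter $t2^{-(n+1)}\in(0,T]$, which bounds the remaining integral by $\gamma/\sqrt{t2^{-(n+1)}}$. Using the exponent arithmetic $\tfrac12-\tfrac14=\tfrac14$, this yields
\[
\int_{I_n}\phi(s)\,ds\leq\sqrt{\gamma}\left(\frac{t}{2^{n+1}}\right)^{1/4}=\sqrt{\gamma}\,t^{1/4}\,2^{-(n+1)/4}.
\]
Since $\phi\geq 0$, monotone convergence gives $\int_0^t\phi(s)\,ds=\sum_{n\geq 0}\int_{I_n}\phi(s)\,ds$, and summing the geometric series $\sum_{n\geq 0}2^{-(n+1)/4}=2^{-1/4}/(1-2^{-1/4})=1/(2^{1/4}-1)$ delivers the claimed bound. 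As a byproduct this shows $\phi\in L^1(0,t)$; the degenerate case $\gamma=0$ forces $\phi=0$ a.e.\ and is trivial.

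I do not expect a genuine obstacle here: the argument is a routine "Cauchy--Schwarz on a dyadic scale" estimate. The only points needing care are the a priori integrability of $\phi$ near $0$, which is handled because the dyadic bounds are summable and $\phi$ is non-negative so Beppo Levi applies, and getting the value of the geometric sum exactly right so that the constant matches $\tfrac{1}{2^{1/4}-1}$.
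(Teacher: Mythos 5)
Your proof is correct and is essentially identical to the paper's: the same dyadic decomposition of $(0,t]$, Cauchy--Schwarz on each dyadic block, enlargement of the integration domain to apply the hypothesis at $t2^{-(n+1)}$, and summation of the resulting geometric series with ratio $2^{-1/4}$. Nothing further is needed.
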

\begin{proof}
With monotone convergence, then using the Cauchy-Schwarz inequality,
we obtain, for $t\in(0,T]$: 
\begin{eqnarray*}
\int_{0}^{t}\phi(s)ds & = & \sum_{k=0}^{\infty}\int_{t2^{-(k+1)}}^{t2^{-k}}\phi(s)ds\\
 & \leq & \sum_{k=0}^{\infty}\sqrt{t}2^{-(\frac{k+1}{2})}\sqrt{\int_{t2^{-(k+1)}}^{t2^{-k}}\phi^{2}(s)ds}\leq\sum_{k=0}^{\infty}\sqrt{t}2^{-(\frac{k+1}{2})}\sqrt{\int_{t2^{-(k+1)}}^{T}\phi^{2}(s)ds}\\
 & \leq & \sqrt{t}\sum_{k=0}^{\infty}2^{-(\frac{k+1}{2})}\sqrt{\frac{\gamma}{2^{-(\frac{k+1}{2})}\sqrt{t}}}=\frac{\sqrt{\gamma}t^{\frac{1}{4}}}{2^{\frac{1}{4}}-1}.
\end{eqnarray*}

\end{proof}
Now let $\left(\sigma_{k}\right)_{k\geq0}$ be a sequence decreasing
to $0$ as $k\rightarrow\infty$. In order to rely on Theorem \ref{thm:ThmL2},
we approximate $\mu$ by a sequence of measures $\mu_{\sigma_{k}}$,
weakly converging to $\mu$ and that have densities in $L^{2}(\mathbb{R})$.
To do so, we apply a convolution product on $\mu$ by setting $\mu_{\sigma_{k}}:=\mu*h_{\sigma_{k}^{2}}$.
The measure $\mu_{\sigma_{k}}$ is absolutely continuous with respect
to the Lebesgue measure, with the density

\[
\mu_{\sigma_{k}}(x):=\frac{1}{\sqrt{2\pi\sigma_{k}^{2}}}\int_{\mathbb{R}}e^{-\frac{(x-y)^{2}}{2\sigma_{k}^{2}}}d\mu(y).
\]
With Jensen's inequality and Fubini's theorem, we check that $\mu_{\sigma_{k}}\in L^{2}(\mathbb{R})$.
Indeed,
\begin{eqnarray*}
\int_{\mathbb{R}}\mu_{\sigma_{k}}^{2}(x)dx & = & \int_{\mathbb{R}}\left(\frac{1}{\sqrt{2\pi\sigma_{k}^{2}}}\int_{\mathbb{R}}e^{-\frac{(x-y)^{2}}{2\sigma_{k}^{2}}}d\mu(y)\right)^{2}dx\leq\frac{1}{2\pi\sigma_{k}^{2}}\int_{\mathbb{R}}\left(\int_{\mathbb{R}}e^{-\frac{(x-y)^{2}}{\sigma_{k}^{2}}}dx\right)d\mu(y)=\frac{1}{2\sqrt{\pi}\sigma_{k}}.
\end{eqnarray*}
Let us define $p_{0,\sigma_{k}}:=(\alpha_{1}\mu_{\sigma_{k}},...,\alpha_{d}\mu_{\sigma_{k}})\in L$
and let us denote by $p_{\sigma_{k}}=(p_{\sigma_{k},1},...,p_{\sigma_{k},d})$
a solution to the variational formulation $V_{L^{2}}(\mu_{\sigma_{k}})$,
which exists as an application of Theorem \ref{thm:ThmL2}, and that
satisfies equality (\ref{eq:lastequalityL2}) with initial condition
$p_{0,\sigma_{k}}$ as in the proof of Proposition \ref{prop:existencetoVL2}.
We compute the same energy estimates as previously. As $p_{\sigma_{k},i}$
is non-negative for $1\leq i\leq d$ and $\sum_{i=1}^{d}p_{\sigma_{k},i}=\mu_{\sigma_{k}}*h_{t}$
a.e. on $(0,T)\times\mathbb{R}$, 
\[
\text{for a.e. \ensuremath{t} on }(0,T),\ |p_{\sigma_{k}}(t)|_{d}^{2}\leq\left|\left|\sum_{i=1}^{d}p_{\sigma_{k},i}(t)\right|\right|_{L^{2}}^{2}=\left|\left|\mu*h_{\sigma_{k}^{2}}*h_{t}\right|\right|_{L^{2}}^{2}\leq\frac{1}{2\sqrt{\pi t}},
\]
\begin{eqnarray*}
\int_{0}^{T}|p_{\sigma_{k}}(t)|_{d}^{2}dt & \leq & \int_{0}^{T}\frac{1}{2\sqrt{\pi t}}dt=\sqrt{\frac{T}{\pi}},
\end{eqnarray*}
Using $\Pi p_{\sigma_{k}}$ as a test function in (\ref{eq:ToyFVdistribution})
and the fact that $\Pi A$ is uniformly coercive on $\mathcal{D}$
with coefficient $\kappa$, 
\begin{equation}
\forall t\in(0,T),\ \int_{t}^{T}|\partial_{x}p_{\sigma_{k}}(s)|_{d}^{2}ds\leq\frac{1}{2\kappa}|\sqrt{\Pi}p_{\sigma_{k}}(t)|_{d}^{2}\leq\frac{l_{max}(\Pi)}{4\kappa\sqrt{\pi t}}.\label{eq:forlemma14}
\end{equation}
Let us remark that the estimates $\int_{0}^{T}|p_{\sigma_{k}}(t)|_{d}^{2}dt$
and $\int_{t}^{T}|\partial_{x}p_{\sigma_{k}}(t)|_{d}^{2}ds,$ for
$t\in(0,T)$ have bounds that are independent from the choice of $\sigma_{k}$.
Then for a sequence $\left(s_{n}\right)_{n\geq1}$ with values in
$(0,T)$ and decreasing to $0$ as $n\rightarrow\infty$, there exists
a function $p$ defined a.e. on $(0,T]\times\mathbb{R}$ such that
for each $n\geq1$, there exists a converging subsequence called again
$p_{\sigma_{k}}$ with the following convergences:

\begin{eqnarray*}
p_{\sigma_{k}} & \rightarrow & p\text{ {in} }L^{2}((0,T];L)\text{ weakly},\\
p_{\sigma_{k}} & \rightarrow & p\text{ {in} }L^{2}([s_{n},T];H)\text{ weakly},\\
p_{\sigma_{k}} & \rightarrow & p\text{ {in} }L^{\infty}([s_{n},T];L)\text{ weakly-*}.
\end{eqnarray*}
Similarly to the proof of Proposition \ref{prop:existencetoVL2},
we can also suppose, modulo the extraction of a subsection that 
\[
p_{\sigma_{k}}\rightarrow p\text{ a.e. on \ensuremath{[s_{n},T]\times\mathbb{R}}.}
\]
By diagonal extraction, we obtain a subsequence, called again $p_{\sigma_{k}}$
such that

\begin{eqnarray*}
p_{\sigma_{k}} & \rightarrow & p\text{ {in} }L^{2}((0,T];L)\text{ weakly},\\
p_{\sigma_{k}} & \rightarrow & p\text{ {in} }L_{loc}^{2}((0,T];H)\text{ weakly},\\
p_{\sigma_{k}} & \rightarrow & p\text{ {in} }L_{loc}^{\infty}((0,T];L)\text{ weakly-*},\\
p_{\sigma_{k}} & \rightarrow & p\text{ {in} }(0,T]\times\mathbb{R}.
\end{eqnarray*}
Let us remark that $p\geq0$ as $p_{\sigma_{k}}\geq0$ for $k\geq1$.
Moreover, for a.e. $(t,x)\in(0,T]\times\mathbb{R}$, $\mu_{\sigma_{k}}*h_{t}(x)=\sum_{i=1}^{d}p_{\sigma_{k},i}(t,x)\underset{k\rightarrow\infty}{\rightarrow}\sum_{i=1}^{d}p(t,x)=\mu_{\sigma_{k}}*h_{t}(x)$,
so $p$ takes values in $\mathcal{D}$ a.e. on $(0,T]\times\mathbb{R}$.
Let us prove that $p$ is a solution to $V(\mu)$. For $k\geq0$,
as $p_{\sigma_{k}}$ satisfies (\ref{eq:lastequalityL2}) with initial
condition $p_{0,\sigma_{k}}$, for $\psi\in\mathcal{C}^{1}([0,T],\mathbb{R})$
such that $\psi(T)=0,v\in H^{1}(\mathbb{R})$, we have that 

\begin{eqnarray*}
-\int_{0}^{T}(\psi'(s)v,p_{\sigma_{k}}(s))_{d}ds+\int_{0}^{T}(\psi(s)\partial_{x}v,A(p_{\sigma_{k}})\partial_{x}p_{\sigma_{k}})_{d}ds & = & (v,p_{0,\sigma_{k}})_{d}\psi(0).
\end{eqnarray*}
We study the limit as $k\rightarrow\infty$. First,
\begin{eqnarray*}
\int_{0}^{T}\left(\psi'(s)v,p_{\sigma_{k}}(s)\right)_{d}ds & \rightarrow & \int_{0}^{T}\left(\psi'(s)v,p(s)\right)_{d}ds,
\end{eqnarray*}
as $p_{\sigma_{k}}\rightarrow p\text{ {in} }L^{2}((0,T];L)\text{ weakly}$.
Since $p_{0,\sigma_{k}}\rightarrow p_{0}$ weakly and $v\in H$ is
continuous and bounded, we have the following convergence 
\[
(v,p_{0,\sigma_{k}})_{d}\psi(0)\rightarrow\psi(0)\sum_{i=1}^{d}\alpha_{i}\int v_{i}(x)\mu(dx).
\]
To show that 
\begin{equation}
\int_{0}^{T}\left(\psi(s)\partial_{x}v,A(p_{\sigma_{k}})\partial_{x}p_{\sigma_{k}}\right)_{d}ds\rightarrow\int_{0}^{T}\left(\psi(s)\partial_{x}v,A(p)\partial_{x}p\right)_{d}ds,\label{eq:cvAsigmaA}
\end{equation}
we check that the family $\left(\partial_{x}p_{\sigma_{k}}\right)_{k\geq1}$
and $\partial_{x}p$ belong to a bounded subset of $L^{1}((0,T],L)$.
More precisely, for $q\in\left(\partial_{x}p_{\sigma_{k}}\right)_{k\geq1}\cup\{\partial_{x}p\}$,
using (\ref{eq:forlemma14}) we have that 
\[
\forall t\in(0,T),\ \int_{t}^{T}|\partial_{x}q(s)|_{d}^{2}ds\leq\frac{l_{max}(\Pi)}{4\kappa\sqrt{\pi t}},
\]
and by Lemma \ref{lem:L1dxp}, we have that 
\begin{equation}
\forall t\in(0,T),\ \int_{0}^{t}|\partial_{x}q(s)|_{d}ds\leq\sqrt{\frac{l_{max}(\Pi)}{4\kappa\sqrt{\pi}}}\frac{t^{\frac{1}{4}}}{2^{\frac{1}{4}}-1}.\label{eq:q}
\end{equation}
We then write 
\begin{eqnarray*}
\left|\int_{0}^{T}(\psi(s)\partial_{x}v,A(p_{\sigma_{k}})\partial_{x}p_{\sigma_{k}}-A(p)\partial_{x}p)_{d}ds\right| & \leq & \left|\int_{t}^{T}(\psi(s)\partial_{x}v,A(p_{\sigma_{k}})\partial_{x}p_{\sigma_{k}}-A(p)\partial_{x}p)_{d}ds\right|\\
 & \ + & \left|\int_{0}^{t}(\psi(s)\partial_{x}v,A(p_{\sigma_{k}})\partial_{x}p_{\sigma_{k}})_{d}ds\right|+\left|\int_{0}^{t}(\psi(s)\partial_{x}v,A(p)\partial_{x}p)_{d}ds\right|.
\end{eqnarray*}
For fixed $t>0$, the first term of the r.h.s. goes to $0$ as $k\rightarrow\infty$
with the same reasoning used to obtain (\ref{eq:ToyEpsFVconvergence}).
Moreover,
\begin{eqnarray}
\left|\int_{0}^{t}(\psi(s)\partial_{x}v,A(p_{\sigma_{k}})\partial_{x}p_{\sigma_{k}})_{d}ds\right| & \leq & \frac{d}{2}\left(1+\frac{\lambda_{max}}{\lambda_{min}}\right)||\psi||_{\infty}|\partial_{x}v|_{d}\int_{0}^{t}|\partial_{x}p_{\sigma_{k}}|_{d}ds,\label{eq:CVL2aProba1}\\
\left|\int_{0}^{t}(\psi(s)\partial_{x}v,A(p)\partial_{x}p)_{d}ds\right| & \leq & \frac{d}{2}\left(1+\frac{\lambda_{max}}{\lambda_{min}}\right)||\psi||_{\infty}|\partial_{x}v|_{d}\int_{0}^{t}|\partial_{x}p|_{d}ds,\label{eq:CVL2aProba2}
\end{eqnarray}
by the Cauchy-Schwarz inequality and Lemma \ref{lem:Bepsbounded}.
Using (\ref{eq:q}), the l.h.s. of Inequalities (\ref{eq:CVL2aProba1})
and (\ref{eq:CVL2aProba2}) can be made arbitrarily small, uniformly
in $k$, choosing $t$ small enough, so we obtain (\ref{eq:cvAsigmaA}).
Gathering the convergence results we get 
\begin{equation}
-\int_{0}^{T}(\psi'(s)v,p(s))_{d}ds+\int_{0}^{T}(\psi(s)\partial_{x}v,A(p)\partial_{x}p)_{d}ds=\psi(0)\sum_{i=1}^{d}\alpha_{i}\int v_{i}(x)\mu(dx).\label{eq:egaliteconditioninitiale}
\end{equation}
and this is enough to obtain (\ref{eq:fvar}). As it is easy to check
that the function $s\rightarrow(v,p(s))$ belongs to $H^{1}((t,T))$
for $t\in(0,T)$, similarly to (\ref{eq:ToyEpsFVequality2}), we also
have the following integration by parts formula: 
\begin{equation}
\forall t\in(0,T),-\int_{t}^{T}(\psi'(s)v,p(s))_{d}ds+\int_{t}^{T}(\psi(s)\partial_{x}v,A(p)\partial_{x}p)_{d}ds=(v,p(t))_{d}\psi(t).\label{eq:egalitepetitT}
\end{equation}
The integrals on the l.h.s converge to $-\int_{0}^{T}(\psi'(s)v,p(s))_{d}ds+\int_{0}^{T}(\psi(s)\partial_{x}v,A(p)\partial_{x}p)_{d}ds$
as $t\rightarrow0$, as it is easy to check by Cauchy-Schwarz inequality
that the functions $s\rightarrow(\psi'(s)v,p(s))_{d}$ and $s\rightarrow(\psi(s)\partial_{x}v,A(p)\partial_{x}p)_{d}$
belong to $L^{1}((0,T),\mathbb{R})$. If we choose $\psi(0)\neq0$,
we obtain that 
\begin{equation}
\underset{t\rightarrow0}{\lim}(p(t),v)_{d}=\sum_{i=1}^{d}\alpha_{i}\int v_{i}(x)\mu(dx),\label{eq:ToyVFmeslimit}
\end{equation}
by comparing (\ref{eq:egaliteconditioninitiale}) with (\ref{eq:egalitepetitT})
and this gives existence to $V(\mu)$. Moreover, we obtain, with the
same arguments as the end of Proposition \ref{prop:ExistencetoVeps}
$p\in C((0,T],L)$, and this concludes the proof.

\subsection{\label{sub:WeakSolution}Proof of Theorem \ref{thm:FBMweaksolution}}

The existence of a solution to the variational formulation $V(\mu)$
will give the existence of a weak solution to the original SDE (\ref{eq:ToySDE}).
This essentially comes from the equivalence between existence to a
Fokker-Planck equation and existence to the corresponding martingale
problem, established by Figalli in \cite{Figalli}. To use that result,
we first give a lemma that makes $V(\mu)$ compatible with the variational
formulation described in \cite{Figalli}.
\begin{lem}
\label{lem:VFtoFigalli}Let $p$ be a solution to $V(\mu)$. Then,
for $1\leq i\leq d$ and a.e. $s\in(0,T)$, $\left(\frac{\sum_{k}p_{k}}{\sum_{k}\lambda_{k}p_{k}}\lambda_{i}p_{i}\right)(s,\cdot)\in H^{1}(\mathbb{R})$
and $\frac{1}{2}\partial_{x}\left(\frac{\sum_{k}p_{k}}{\sum_{k}\lambda_{k}p_{k}}\lambda_{i}p_{i}\right)(s,\cdot)=\left(A(p)\partial_{x}p\right)_{i}(s,\cdot)$.\end{lem}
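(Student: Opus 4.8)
The plan is to argue at a fixed time: fix $s\in(0,T)$ in the full-measure set on which $p(s,\cdot)\in H$, $p(s,\cdot)$ takes values in $\mathcal{D}$ a.e. on $\mathbb{R}$, and $\sum_{k}p_{k}(s,\cdot)=\mu*h_{s}$ a.e. on $\mathbb{R}$ (such $s$ form a set of full measure by Fubini, using (\ref{eq:pL2}), (\ref{eq:pinD}) and the fact that, exactly as in the proof of Theorem \ref{thm:FBMmuProba}, every solution of $V(\mu)$ satisfies $\sum_{k}p_{k}=\mu*h_{t}$ a.e.\ after testing (\ref{eq:fvar}) against functions constant in the index $i$ and invoking uniqueness for the heat equation). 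For such $s$, set $f:=\sum_{k}\lambda_{k}p_{k}(s,\cdot)$ and $\theta:=\big(\sum_{k}p_{k}(s,\cdot)\big)/f$. Since $\lambda_{min}\sum_{k}p_{k}(s,\cdot)\le f\le\lambda_{max}\sum_{k}p_{k}(s,\cdot)$ a.e., one has $\tfrac{1}{\lambda_{max}}\le\theta\le\tfrac{1}{\lambda_{min}}$ a.e., and, since $\mu*h_{s}$ is continuous and strictly positive on $\mathbb{R}$, on each interval $(-n,n)$ the function $f\ge\lambda_{min}\,\mu*h_{s}$ is bounded below by a positive constant $c_{n}$ a.e.

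Next I would obtain the regularity and the derivative formula locally. On $(-n,n)$, $f\in H^{1}((-n,n))$ with $f\ge c_{n}>0$ a.e., so by the chain rule for Sobolev functions (composition with $t\mapsto 1/t$, which is $C^{1}$ and Lipschitz on $[c_{n},\infty)$) we get $1/f\in H^{1}((-n,n))$ with $\partial_{x}(1/f)=-f'/f^{2}$ a.e. Because $H^{1}((-n,n))\hookrightarrow L^{\infty}((-n,n))$, the space $H^{1}((-n,n))$ is an algebra, so $\theta=\big(\sum_{k}p_{k}\big)(1/f)$ and then $g_{i}:=\theta\,\lambda_{i}p_{i}$ lie in $H^{1}((-n,n))$; applying the product and quotient rules, the weak derivative of $g_{i}$ on $(-n,n)$ is exactly the expression for $\partial_{x}\big(\frac{\sum_{l}p_{l}}{\sum_{l}\lambda_{l}p_{l}}\lambda_{i}p_{i}\big)$ displayed just after (\ref{eq:fvarindexi}) in Section \ref{sec:newClass}, i.e.\ $\partial_{x}g_{i}=\big((I_{d}+M(p))\partial_{x}p\big)_{i}=2\big(A(p)\partial_{x}p\big)_{i}$ a.e. Since $n$ is arbitrary, $g_{i}\in H^{1}_{loc}(\mathbb{R})$ and $\tfrac{1}{2}\partial_{x}g_{i}=(A(p)\partial_{x}p)_{i}$ a.e.\ on $\mathbb{R}$.

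Finally I would upgrade from $H^{1}_{loc}(\mathbb{R})$ to $H^{1}(\mathbb{R})$ by global $L^{2}$ control. We have $g_{i}=\theta\lambda_{i}p_{i}$ with $\theta\le 1/\lambda_{min}$ bounded and $p_{i}(s,\cdot)\in L^{2}(\mathbb{R})$, so $g_{i}\in L^{2}(\mathbb{R})$; and by Lemma \ref{lem:Abounded} the entries of $A(p(s,\cdot))$ are bounded by $\tfrac{1}{2}(1+\lambda_{max}/\lambda_{min})$ uniformly in $x$, while $\partial_{x}p(s,\cdot)\in(L^{2}(\mathbb{R}))^{d}$ by (\ref{eq:pL2}), hence $(A(p)\partial_{x}p)_{i}=\tfrac{1}{2}\partial_{x}g_{i}\in L^{2}(\mathbb{R})$. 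A function in $H^{1}_{loc}(\mathbb{R})$ whose value and weak derivative are both in $L^{2}(\mathbb{R})$ belongs to $H^{1}(\mathbb{R})$, which yields $\big(\frac{\sum_{k}p_{k}}{\sum_{k}\lambda_{k}p_{k}}\lambda_{i}p_{i}\big)(s,\cdot)\in H^{1}(\mathbb{R})$ together with the claimed identity, for a.e.\ $s\in(0,T)$.

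The one genuine subtlety is the localization in the second paragraph: $f=\sum_{k}\lambda_{k}p_{k}$ decays at infinity and is therefore not bounded below on all of $\mathbb{R}$, so the quotient $\theta$ and the product $g_{i}$ cannot be treated in a single step globally; one must restrict to the intervals $(-n,n)$, where the strict positivity of $\mu*h_{s}$ supplies the needed lower bound, and only afterwards recover the global $H^{1}$ statement from the $\epsilon$-free bound on $A(p)$ coming from Lemma \ref{lem:Abounded}.
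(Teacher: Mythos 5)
Your proof is correct, but it follows a genuinely different route from the paper's. The paper never invokes Sobolev calculus rules directly on $p(s,\cdot)$: it mollifies, setting $p_{n,i}(s,\cdot)=\rho_{n}*p_{i}(s,\cdot)$, observes that $\sum_{k}\lambda_{k}p_{n,k}(s,\cdot)\geq\lambda_{min}\,\rho_{n}*\mu*h_{s}>0$ pointwise so that the classical product/quotient identity holds for the smooth approximations, and then passes to the limit in the integration-by-parts identity against a fixed $\phi\in C_{c}^{\infty}$ using the uniform bound $0\leq p_{n,i}/\sum_{k}\lambda_{k}p_{n,k}\leq1/\lambda_{min}$, strong $L^{2}$ convergence of $p_{n,i}$ and $\partial_{x}p_{n,i}$, and dominated convergence. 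You instead avoid any approximation: you localize to $(-n,n)$, where the strict positivity of $\mu*h_{s}$ gives a pointwise lower bound on $\sum_{k}\lambda_{k}p_{k}(s,\cdot)$, and then exploit that $H^{1}$ of a bounded interval in one dimension embeds into $L^{\infty}$ and is an algebra, together with the Lipschitz chain rule for $t\mapsto1/t$ away from zero; the global $H^{1}$ statement is recovered from the $n$-independent $L^{2}$ bounds exactly as you say. Your argument is shorter and more transparent, but it is tied to the one-dimensional embedding $H^{1}\hookrightarrow L^{\infty}$; the paper's mollification argument uses only the uniform boundedness of the quotients $p_{i}/\sum_{k}\lambda_{k}p_{k}$ and would survive in higher spatial dimension. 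Both arguments rely on the same two inputs that you correctly identify as the genuine content: that every solution of $V(\mu)$ has $\sum_{i}p_{i}(s,\cdot)=\mu*h_{s}$ (so the denominator is positive on compacts), and that the resulting derivative $(A(p)\partial_{x}p)_{i}$ is globally square integrable by Lemma \ref{lem:Abounded}.
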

\begin{proof}
It is sufficient to show that for $1\leq i,j\leq d$ and a.e. $s\in(0,T)$,
$\frac{p_{i}p_{j}}{\sum_{k}\lambda_{k}p_{k}}(s,\cdot)\in H^{1}(\mathbb{R})$
and 
\[
\partial_{x}\left(\frac{p_{i}p_{j}}{\sum_{k}\lambda_{k}p_{k}}\right)(s,\cdot)=\left(\frac{p_{i}\partial_{x}p_{j}+p_{j}\partial_{x}p_{i}}{\sum_{k}\lambda_{k}p_{k}}-\frac{p_{i}p_{j}}{\left(\sum_{k}\lambda_{k}p_{k}\right)^{2}}\sum_{k}\lambda_{k}\partial_{x}p_{k}\right)(s,\cdot),
\]
as we conclude by linearity. For a.e. $s\in(0,T)$, $\frac{p_{i}p_{j}}{\sum_{k}\lambda_{k}p_{k}}(s,\cdot)$
and $\left(\frac{p_{i}\partial_{x}p_{j}+p_{j}\partial_{x}p_{i}}{\sum_{k}\lambda_{k}p_{k}}-\frac{p_{i}p_{j}}{\left(\sum_{k}\lambda_{k}p_{k}\right)^{2}}\sum_{k}\lambda_{k}\partial_{x}p_{k}\right)(s,\cdot)$
belong to $L^{2}(\mathbb{R})$, as $\forall i\in\{1,...,d\},\frac{p_{i}}{\sum_{k}\lambda_{k}p_{k}}\in[0;\frac{1}{\lambda_{min}}]$,
a.e. on $(0,T)\times\mathbb{R}$. It is sufficient to show that for
$K\subset\mathbb{R}$ compact, $\phi\in C_{c}^{\infty}(\mathbb{R})$
with support included in $K$, and a.e. $s\in(0,T)$, 
\[
\int_{K}\partial_{x}\phi(x)\left(\frac{p_{i}p_{j}}{\sum_{k}\lambda_{k}p_{k}}\right)(s,x)dx=-\int_{K}\phi(x)\left(\frac{p_{i}\partial_{x}p_{j}+p_{j}\partial_{x}p_{i}}{\sum_{k}\lambda_{k}p_{k}}-\frac{p_{i}p_{j}}{\left(\sum_{k}\lambda_{k}p_{k}\right)^{2}}\sum_{k}\lambda_{k}\partial_{x}p_{k}\right)(s,x)dx.
\]
Let $\left(\rho_{n}\right)_{n\geq1}$ be a regularizing sequence,
where $\rho_{n}\in C_{c}^{\infty}(\mathbb{R}),$ with a support included
in $(-\frac{1}{n},\frac{1}{n})$, $\int_{\mathbb{R}}\rho_{n}=1,$
and $\rho_{n}\geq0$ for $n\geq1$. We define $p_{n,i}(s,\cdot):=\rho_{n}*p_{i}(s,\cdot)$.
Then for a.e. $s\in(0,T)$, $(p_{n,i}(s,\cdot))_{n\geq0},i\in\{1,...,d\}$
are sequences of non negative functions in $C^{\infty}(\mathbb{R})\cap H^{1}(\mathbb{R})$
such that for $1\leq i\leq d$, the following strong convergences
hold, by \cite[Theorem 4.22]{Brezis}: 
\begin{eqnarray*}
p_{n,i}(s,\cdot) & \underset{n\rightarrow\infty}{\rightarrow} & p_{i}(s,\cdot)\text{ in \ensuremath{L^{2}(\mathbb{R})}},\\
\partial_{x}p_{n,i}(s,\cdot) & \underset{n\rightarrow\infty}{\rightarrow} & \partial_{x}p_{i}(s,\cdot)\text{ in \ensuremath{L^{2}(\mathbb{R})}}.
\end{eqnarray*}
We first check that for $n\geq1$, $x\in\mathbb{R}$ and a.e. $s\in(0,T)$,
$\sum_{k=1}^{d}\lambda_{k}p_{n,k}(s,x)>0$. Indeed, as $\sum_{i}p_{i}$
is a solution to the heat equation, $\sum_{i=1}^{d}p_{i}(s,x)=\mu*h_{s}(x)$
a.e. on $(0,T)\times\mathbb{R}$, we have that for $x\in\mathbb{R}$
and a.e. $s\in(0,T)$, $\sum_{k=1}^{d}\lambda_{k}p_{n,k}(s,x)\geq\lambda_{min}\sum_{k=1}^{d}p_{n,k}(s,x)=\lambda_{min}\rho_{n}*\mu*h_{s}(x)>0$.
Then for a.e. $s\in(0,T)$, we have the equality: 
\begin{equation}
\int_{K}\partial_{x}\phi(x)\left(\frac{p_{n,i}p_{n,j}}{\sum_{k}\lambda_{k}p_{n,k}}\right)(s,x)dx=-\int_{K}\phi(x)\left(\frac{p_{n,i}\partial_{x}p_{n,j}+p_{n,j}\partial_{x}p_{n,i}}{\sum_{k}\lambda_{k}p_{n,k}}-\frac{p_{n,i}p_{n,j}}{\left(\sum_{k}\lambda_{k}p_{n,k}\right)^{2}}\sum_{k}\lambda_{k}\partial_{x}p_{n,k}\right)(s,x)dx.\label{eq:deriv}
\end{equation}
For a.e. $s\in(0,T)$, modulo the extraction of a subsequence (which
can depend on $s$), we can assume that for $1\leq l\leq d$ and a.e.
$x\in K$, $p_{n,l}(s,x)\underset{n\rightarrow\infty}{\rightarrow}p_{l}(s,x)$.
As $\sum_{l=1}^{d}p_{l}(s,x)=\mu*h_{s}(x)>0$ for a.e. $x\in\mathbb{R}$,
we have $\frac{p_{n,l}}{\sum_{k}\lambda_{k}p_{n,k}}(s,x)\rightarrow\frac{p_{l}}{\sum_{k}\lambda_{k}p_{k}}(s,x)$
for a.e. $x\in K$. Then,

\begin{eqnarray*}
\left|\left|\left(\frac{p_{n,i}p_{n,j}}{\sum_{k}\lambda_{k}p_{n,k}}-\frac{p_{i}p_{j}}{\sum_{k}\lambda_{k}p_{k}}\right)(s,\cdot)\right|\right|_{L^{2}(K)} & \leq & \left|\left|\left(\frac{p_{n,i}p_{n,j}}{\sum_{k}\lambda_{k}p_{n,k}}-\frac{p_{n,i}p_{j}}{\sum_{k}\lambda_{k}p_{n,k}}\right)(s,\cdot)\right|\right|_{L^{2}(K)}\\
 & \ + & \left|\left|\left(\frac{p_{n,i}p_{j}}{\sum_{k}\lambda_{k}p_{n,k}}-\frac{p_{i}p_{j}}{\sum_{k}\lambda_{k}p_{k}}\right)(s,\cdot)\right|\right|_{L^{2}(K)}\\
 & \leq & \frac{1}{\lambda_{min}}||\left(p_{n,j}-p_{j}\right)(s,\cdot)||_{L^{2}(K)}\\
 & \ + & \left|\left|\left(p_{j}\left(\frac{p_{n,i}}{\sum_{k}\lambda_{k}p_{n,k}}-\frac{p_{i}}{\sum_{k}\lambda_{k}p_{k}}\right)\right)(s,\cdot)\right|\right|_{L^{2}(K)}.
\end{eqnarray*}

The first term of the r.h.s converges to $0$ as $n\rightarrow\infty$,
as $p_{n,i}(s,\cdot)\underset{n\rightarrow\infty}{\rightarrow}p_{i}(s,\cdot)\text{ in \ensuremath{L^{2}(\mathbb{R})}}$,
and the second term also converges to $0$ by dominated convergence
as $\frac{p_{n,i}}{\sum_{k}\lambda_{k}p_{n,k}}(s,\cdot)\rightarrow\frac{p_{i}}{\sum_{k}\lambda_{k}p_{k}}(s,\cdot)$
a.e. on $K$, and $\forall i\in\{1,...,d\},\forall n\geq1,\frac{p_{n,i}}{\sum_{k}\lambda_{k}p_{n,k}}(s,x)\in[0;\frac{1}{\lambda_{min}}].$
This ensures that 
\[
\int_{K}\partial_{x}\phi(x)\left(\frac{p_{n,i}p_{n,j}}{\sum_{k}\lambda_{k}p_{n,k}}\right)(s,x)dx\underset{n\rightarrow\infty}{\rightarrow}\int_{K}\partial_{x}\phi(x)\left(\frac{p_{i}p_{j}}{\sum_{k}\lambda_{k}p_{k}}\right)(s,x)dx,
\]
for a.e. $s\in(0,T)$. With similar arguments, we let $n\rightarrow\infty$
in the r.h.s. of (\ref{eq:deriv}), we have the convergence of the
r.h.s. term to 
\[
-\int_{K}\phi(x)\left(\frac{p_{i}\partial_{x}p_{j}+p_{j}\partial_{x}p_{i}}{\sum_{k}\lambda_{k}p_{k}}-\frac{p_{i}p_{j}}{\left(\sum_{k}\lambda_{k}p_{k}\right)^{2}}\sum_{k}\lambda_{k}\partial_{x}p_{k}\right)(s,x)dx,
\]
for a.e. $s\in(0,T)$ and this concludes the proof.
\end{proof}
We can now prove Theorem \ref{thm:FBMweaksolution}. By Lemma \ref{lem:VFtoFigalli}
the solution of the variational formulation $V(\mu)$ satisfies :

\[
\forall i\in\{1,...,d\},\forall\phi\in H^{1}(\mathbb{R}),\frac{d}{dt}\int_{x}\phi p_{i}dx+\frac{1}{2}\int_{x}\left(\partial_{x}\phi\right)\partial_{x}\left(\frac{\sum_{k}p_{k}}{\sum_{k}\lambda_{k}p_{k}}\lambda_{i}p_{i}\right)dx=0,
\]
in the sense of distributions. Then through an integration by parts,

\[
\forall i\in\{1,...,d\},\forall\phi\in C_{c}^{\infty}(\mathbb{R}),\frac{d}{dt}\int_{x}\phi p_{i}dx-\frac{1}{2}\int_{x}\left(\partial_{xx}^{2}\phi\right)\left(\frac{\sum_{k}p_{k}}{\sum_{k}\lambda_{k}p_{k}}\lambda_{i}p_{i}\right)dx=0.
\]
For $y_{i}\in\mathcal{Y}$, \cite[Theorem 2.6]{Figalli} gives the
existence of a probability measure $\mathbb{P}^{y_{i}}$ on the space
$C([0,T],\mathbb{R})$ with canonical process $(X_{t})_{0\leq t\leq T}$
satisfying :

\begin{eqnarray*}
X_{0} & \sim & \mu\text{ under \ensuremath{\mathbb{P}^{y_{i}}},}\\
\forall\phi\in C_{c}^{\infty}(\mathbb{R}), &  & M_{t}^{\phi,y_{i}}:=\phi(X_{t})-\phi(X_{0})-\frac{1}{2}\int_{0}^{t}\partial_{xx}^{2}\phi(X_{s})f^{2}(y_{i})\frac{\sum_{k}p_{k}(s,X_{s})}{\sum_{k}f^{2}(y_{k})p_{k}(s,X_{s})}ds\text{ is a martingale under \ensuremath{\mathbb{P}^{y_{i}}}},
\end{eqnarray*}
and for $t>0$, $X_{t}$ has the density $\frac{p_{i}(t,\cdot)}{\alpha_{i}}$
under $\mathbb{P}^{y_{i}}$. We then form the measure $\mathbb{Q}=\sum_{i=1}^{d}\alpha_{i}\mathbb{P}^{y_{i}}(dX)\otimes\delta_{y_{i}}(dY)$
and show that it solves the following martingale problem: 

\begin{eqnarray*}
X_{0} & \sim & \mu\text{ under \ensuremath{\mathbb{Q}},}\\
\forall\phi\in C_{c}^{\infty}(\mathbb{R}), &  & M_{t}^{\phi,Y}:=\phi(X_{t})-\phi(X_{0})-\frac{1}{2}\int_{0}^{t}\partial_{xx}^{2}\phi(X_{s})f^{2}(Y)\frac{\sum_{i=1}^{d}p_{i}(s,X_{s})}{\sum_{i=1}^{d}f^{2}(y_{i})p_{i}(s,X_{s})}ds\text{ is a martingale under \ensuremath{\mathbb{Q}}.}
\end{eqnarray*}
For $\phi\in C_{c}^{\infty}(\mathbb{R})$ and $t\geq0$, $M_{t}^{\phi,Y}$
is bounded as $\left|\frac{f^{2}(Y)\sum_{i=1}^{d}p_{i}(s,X_{s})}{\sum_{i=1}^{d}f^{2}(y_{i})p_{i}(s,X_{s})}\right|\leq\frac{\lambda_{max}}{\lambda_{min}}$.
For $s\geq0$, we define $\mathcal{F}_{s}=\sigma(\{X_{u},u\leq s\})$.
To obtain our result it is enough to check that for $0\leq s\leq t$,
\[
\mathbb{E}^{\mathbb{Q}}\left[M_{t}^{\phi,Y}-M_{s}^{\phi,Y}|\mathcal{F}_{s},Y\right]=0.
\]
For $\phi\in C_{c}^{\infty}(\mathbb{R})$ and $g$ measurable and
bounded on $\mathbb{R}^{p}\times\mathcal{Y}$, $p\geq1$, and $0\leq s_{1}\leq...\leq s_{p}\leq s$,
\begin{eqnarray*}
\mathbb{E}^{\mathbb{Q}}\left[(M_{t}^{\phi,Y}-M_{s}^{\phi,Y})g(X_{s_{1}},...,X_{s_{p}},Y)\right] & = & \sum_{i=1}^{d}\alpha_{i}\mathbb{E}^{\mathbb{P}^{y_{i}}}\left[(M_{t}^{\phi,y_{i}}-M_{s}^{\phi,y_{i}})g(X_{s_{1}},...,X_{s_{p}},y_{i})\right]=0,
\end{eqnarray*}
as $M_{t}^{\phi,y_{i}}$ is a $\mathbb{P}^{y_{i}}$-martingale for
$1\leq i\leq d$. So $\mathbb{E}^{\mathbb{Q}}\left[M_{t}^{\phi,Y}|\mathcal{F}_{s},Y\right]=M_{s}^{\phi,Y}$,
and $M_{t}^{\phi,Y}$ is a $\mathbb{Q}$-martingale. For $0\leq s\leq T$,
we compute the conditional expectation $\mathbb{E}^{\mathbb{Q}}\left[f^{2}(Y)|X_{s}\right]$.
Given a bounded measurable function $g$ on $\mathbb{R}$,
\begin{eqnarray*}
\mathbb{E}^{\mathbb{Q}}\left[f^{2}(Y)g(X_{s})\right] & = & \sum_{i=1}^{d}\alpha_{i}\mathbb{E}^{\mathbb{P}^{y_{i}}}\left[f^{2}(y_{i})g(X_{s})\right]=\sum_{i=1}^{d}\alpha_{i}\int_{\mathbb{R}}f^{2}(y_{i})g(x)\frac{p_{i}(s,x)}{\alpha_{i}}dx\\
 & = & \int_{\mathbb{R}}\sum_{i=1}^{d}f^{2}(y_{i})g(x)p_{i}(s,x)dx=\int_{\mathbb{R}}\left(\frac{\sum_{i=1}^{d}f^{2}(y_{i})p_{i}(s,x)}{\sum_{i=1}^{d}p_{i}(s,x)}\right)g(x)\sum_{i=1}^{d}p_{i}(s,x)dx,\\
\mathbb{E}^{\mathbb{Q}}\left[g(X_{s})\right] & = & \sum_{i=1}^{d}\alpha_{i}\mathbb{E}^{\mathbb{P}^{y_{i}}}\left[g(X_{s})\right]=\int_{\mathbb{R}}g(x)\sum_{i=1}^{d}p_{i}(s,x)dx.
\end{eqnarray*}
Thus under $\mathbb{Q}$, $X_{s}$ has the density $\sum_{i=1}^{d}p_{i}(s,\cdot)$,
which is equal to $\mu*h_{s}(\cdot)$ by Theorem \ref{thm:FBMmuProba},
so $X_{s}$ has the same law as $Z+W_{s}$, where $Z\sim\mu$ and
$Z$ is independent from $\left(W_{t}\right)_{t\geq0}$. Moreover,
we have the equality: 
\[
\mathbb{E}^{\mathbb{Q}}\left[f^{2}(Y)|X_{s}\right]=\frac{\sum_{i=1}^{d}f^{2}(y_{i})p_{i}(s,X_{s})}{\sum_{i=1}^{d}p_{i}(s,X_{s})}\ \text{a.s..}
\]
Therefore $\mathbb{Q}$ is a solution to the martingale problem: 
\begin{eqnarray*}
X_{0} & \sim & \mu\text{ under \ensuremath{\mathbb{Q}},}\\
\forall\phi\in C_{c}^{\infty}(\mathbb{R}), &  & \phi(X_{t})-\phi(X_{0})-\frac{1}{2}\int_{0}^{t}\partial_{xx}^{2}\phi(X_{s})\frac{f^{2}(Y)}{\mathbb{E}^{\mathbb{Q}}\left[f^{2}(Y)|X_{s}\right]}ds\text{ is a martingale under \ensuremath{\mathbb{Q}}.}
\end{eqnarray*}
and this ensures the existence of a weak solution to the SDE (\ref{eq:ToySDE}).

\section{\label{sec:CalibrationofRSLVmain}Calibrated RSLV models}

We extend the results obtained in the previous sections to the case
when the asset price $S$ follows the dynamics: 
\begin{eqnarray*}
dS_{t} & = & rS_{t}dt+\frac{f(Y_{t})}{\sqrt{\mathbb{E}\left[f^{2}(Y_{t})|S_{t}\right]}}\sigma_{Dup}(t,S_{t})S_{t}dW_{t},\\
\left(\log\left(S_{0}\right),Y_{0}\right) & \sim & \mu,
\end{eqnarray*}
where $Y_{0}$ is a random variables with values in $\mathcal{Y}$,
$\mu$ is a probability measure on $\mathbb{R}\times\mathcal{Y}$,
$Y$ is a process evolving in $\mathcal{Y}$, with 
\[
\mathbb{P}\left(Y_{t+dt}=j|\sigma\left(\left(S_{s},Y_{s}\right),0\leq s\leq t\right)\right)=q_{Y_{t}j}(\log(S_{t}))dt
\]
for $j\neq Y_{t}$, and for $1\leq i\neq j\leq d$, the function $q_{ij}:\mathbb{R}\rightarrow\mathbb{R}$
is non negative. Moreover, for $1\leq i\leq d$, we define $q_{ii}:=-\sum_{j\neq i}q_{ij}$.
We assume that $\left(\log\left(S_{0}\right),Y_{0}\right)$ and $(W_{t})_{t\geq0}$
are independent. In addition, we assume that there exists $\overline{q}>0$
such that $||q_{ij}||_{\infty}\leq\overline{q}$ for $1\leq i,j\leq d$
, and that the risk free rate $r$ is constant. We define $\tilde{\sigma}_{Dup}(t,x):=\sigma_{Dup}(t,e^{x})$.
The asset log-price $X_{t}=\log S_{t}$ follows the dynamics:
\begin{eqnarray}
dX_{t} & = & \left(r-\frac{1}{2}\frac{f^{2}(Y_{t})}{\mathbb{E}\left[f^{2}(Y_{t})|X_{t}\right]}\tilde{\sigma}_{Dup}^{2}(t,X_{t})\right)dt+\frac{f(Y_{t})}{\sqrt{\mathbb{E}\left[f^{2}(Y_{t})|X_{t}\right]}}\tilde{\sigma}_{Dup}(t,X_{t})dW_{t},\label{eq:sdefinance1}\\
\left(X_{0},Y_{0}\right) & \sim & \mu.\label{eq:sdefinance2}
\end{eqnarray}
Formally, if we apply Gyongy's theorem, any solution to the SDE (\ref{eq:sdefinance1})
should have the same time marginals as the solution to the Dupire
SDE for the asset's log-price:
\begin{eqnarray}
dX_{t}^{D} & = & \left(r-\frac{1}{2}\tilde{\sigma}_{Dup}^{2}(t,X_{t}^{D})\right)dt+\tilde{\sigma}_{Dup}(t,X_{t}^{D})dW_{t},\label{eq:SDEFinDupireLogPrice1}\\
X_{0}^{D} & \sim & \mu_{X_{0}}\label{eq:SDEFinDupireLogPrice2}
\end{eqnarray}
where $\mu_{X_{0}}$ is the law of $X_{0}$. The Fokker-Planck PDS
associated with the SDE (\ref{eq:sdefinance1})-(\ref{eq:sdefinance2})
writes, for $1\leq i\leq d$:
\begin{eqnarray}
\partial_{t}p_{i} & = & -\partial_{x}\left(\left[r-\frac{1}{2}\frac{\lambda_{i}\sum_{i=1}^{d}p_{k}}{\sum_{i=1}^{d}\lambda_{k}p_{k}}\tilde{\sigma}_{Dup}^{2}\right]p_{i}\right)+\frac{1}{2}\partial_{xx}^{2}\left(\frac{\lambda_{i}\sum_{i=1}^{d}p_{k}}{\sum_{i=1}^{d}\lambda_{k}p_{k}}\tilde{\sigma}_{Dup}^{2}p_{i}\right)+\sum_{j=1}^{d}q_{ji}p_{j}\label{eq:fokkerplanckFin}\\
p_{i}(0,\cdot) & = & \alpha_{i}\mu_{i},\label{eq:fokkerplanckFin2}
\end{eqnarray}
where for $1\leq i\leq d$, $\mu_{i}$ is the conditional law of $X_{0}$
given $\{Y_{0}=i\}$, and as before, $\alpha_{i}=\mathbb{P}\left(Y_{0}=i\right)\geq0$.
We denote by $\Lambda$ the diagonal $d\times d$ matrix with coefficients
$(\lambda_{i})_{1\leq i\leq d}$. We define, for $\rho\in\mathcal{D}$,
$R(\rho):=\frac{\sum_{i=1}^{d}\rho_{i}}{\sum_{i=1}^{d}\lambda_{i}\rho_{i}}$,
and for $x\in\mathbb{R},$ $Q(x)=(q_{ij}(x))_{1\leq i,j\leq d}$.
Moreover, we assume that the European call prices given by the market
have sufficient regularity so that the following assumption holds. 
\begin{assumption*}[B]
 The function $\tilde{\sigma}_{Dup}$ belongs to the space $L^{\infty}([0,T],W^{1,\infty}(\mathbb{R}))$,
and there exists a constant $\underbar{\ensuremath{\sigma}}>0$ such
that a.e. on $[0,T]\times\mathbb{R}$, $\underbar{\ensuremath{\sigma}}\leq\tilde{\sigma}_{Dup}$. 
\end{assumption*}
We will denote by $\overline{\sigma}>0$ some constant such that a.e.
on $[0,T]\times\mathbb{R}$, $\tilde{\sigma}_{Dup}\leq\overline{\sigma}$.
For the PDS (\ref{eq:fokkerplanckFin})-(\ref{eq:fokkerplanckFin2}),
we introduce an associated variational formulation, called $V_{Fin}(\mu)$:

\begin{eqnarray*}
\text{Find} &  & p=(p_{1},...,p_{d})\ \text{satisfying:}
\end{eqnarray*}
\[
p\in L_{loc}^{2}((0,T];H)\cap L_{loc}^{\infty}((0,T];L),
\]
\[
\text{\ensuremath{p} takes values in \ensuremath{\mathcal{D}}, a.e. on \ensuremath{(0,T)\times\mathbb{R}},}
\]
\[
\forall v\in H,\ \frac{d}{dt}(v,p)_{d}-r\left(\partial_{x}v,p\right)_{d}+\left(\partial_{x}v,\frac{1}{2}R(p)\tilde{\sigma}_{Dup}\left(\tilde{\sigma}_{Dup}+2\partial_{x}\tilde{\sigma}_{Dup}\right)\Lambda p\right)_{d}+(\partial_{x}v,\tilde{\sigma}_{Dup}^{2}A(p)\partial_{x}p)_{d}=(Qv,p)_{d}
\]
\[
\text{in the sense of distributions on \ensuremath{(0,T)}, and}
\]

\[
p(t,\cdot)\underset{t\rightarrow0^{+}}{\overset{\text{weakly-*}}{\rightarrow}}p_{0}:=(\alpha_{1}\mu_{1},...,\alpha_{d}\mu_{d}).
\]
Let us remark that if we sum the PDS (\ref{eq:fokkerplanckFin}) over
the index $i$, as $\sum_{i=1}^{d}q_{ji}=0$ for $1\leq j\leq d$,
then $\sum_{i=1}^{d}p_{i}$ satisfies the Fokker-Planck equation associated
to the SDE (\ref{eq:SDEFinDupireLogPrice2}):
\begin{eqnarray}
\partial_{t}\sum_{i=1}^{d}p_{i} & = & -\partial_{x}\left(\left[r-\frac{1}{2}\tilde{\sigma}_{Dup}^{2}\right]\sum_{i=1}^{d}p_{i}\right)+\frac{1}{2}\partial_{xx}^{2}\left(\tilde{\sigma}_{Dup}^{2}\sum_{i=1}^{d}p_{i}\right)\label{eq:DupireFP}\\
\sum_{i=1}^{d}p_{i}(0,\cdot) & = & \mu_{X_{0}}.
\end{eqnarray}
In the same way, if $p$ is a solution to $V_{Fin}(\mu)$, then $u:=\sum_{i=1}^{d}p_{i}$
solves $LV(\mu_{X_{0}})$, where for $\nu\in\mathcal{P}(\mathbb{R})$,
$LV(\nu)$ is defined by:
\[
u\in L_{loc}^{2}((0,T];H^{1}(\mathbb{R}))\cap L_{loc}^{\infty}((0,T];L^{2}(\mathbb{R})),\ u\geq0,
\]
\[
\forall v\in H^{1}(\mathbb{R}),\ \frac{d}{dt}(v,u)_{1}-r\left(\partial_{x}v,u\right)_{1}+\left(\partial_{x}v,\frac{1}{2}\tilde{\sigma}_{Dup}\left(\tilde{\sigma}_{Dup}+2\partial_{x}\tilde{\sigma}_{Dup}\right)u\right)_{1}+\frac{1}{2}\left(\partial_{x}v,\tilde{\sigma}_{Dup}^{2}\partial_{x}u\right)_{1}=0,
\]
\[
\text{in the sense of distributions on \ensuremath{(0,T)}, and }
\]
\[
u(t,\cdot)\underset{t\rightarrow0^{+}}{\overset{\text{weakly-*}}{\rightarrow}}\nu.
\]

\begin{lem}
\label{lem:LVpositivecontinuous}Under Assumption (B), for $\nu\in\mathcal{P}(\mathbb{R})$,
the solutions of $LV(\nu)$ are continuous and positive on $(0,T]\times\mathbb{R}$.
\end{lem}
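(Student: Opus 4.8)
The plan is to recognize $LV(\nu)$ as the weak formulation of a single, scalar, uniformly parabolic equation in divergence form with bounded measurable coefficients, and then to read off continuity and positivity from the classical De Giorgi--Nash--Moser regularity theory and the parabolic Harnack inequality. Writing $\tilde\sigma:=\tilde\sigma_{Dup}$ and using $\tfrac12\tilde\sigma(\tilde\sigma+2\partial_x\tilde\sigma)=\tfrac12\tilde\sigma^2+\tfrac12\partial_x(\tilde\sigma^2)$, the identity defining $LV(\nu)$ becomes, for every $v\in H^1(\mathbb{R})$,
\[
\frac{d}{dt}(v,u)_1+\left(\partial_x v,\ a\,\partial_x u+b\,u\right)_1=0\ \text{ in the sense of distributions on }(0,T),
\]
with $a:=\tfrac12\tilde\sigma^2$ and $b:=\tilde\sigma\,\partial_x\tilde\sigma+\tfrac12\tilde\sigma^2-r$; in other words $u$ is a weak solution of $\partial_t u=\partial_x\!\left(a\,\partial_x u+b\,u\right)$, which has no zero-order term. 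Under Assumption (B) one has $\tfrac12\underline{\sigma}^2\le a\le\tfrac12\overline{\sigma}^2$ a.e., so the operator is uniformly elliptic, and $b\in L^\infty([0,T]\times\mathbb{R})$ since $\tilde\sigma\in L^\infty([0,T];W^{1,\infty}(\mathbb{R}))$. This is precisely the class of equations covered by the Nash--Aronson and Moser theory, the only mildly non-standard feature being that $a,b$ are merely measurable in $t$, which that theory allows.

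\textbf{Continuity.} Fix $\tau\in(0,T)$. Since $u\in L^2_{loc}((0,T];H^1(\mathbb{R}))\cap L^\infty_{loc}((0,T];L^2(\mathbb{R}))$, its restriction to $(\tau,T)$ is a weak solution of the above equation in that class, so by the interior Hölder regularity of weak solutions of uniformly parabolic divergence-form equations with bounded measurable coefficients, $u$ is (locally Hölder, hence) continuous on $(\tau,T)\times\mathbb{R}$. Letting $\tau\downarrow 0$ gives continuity on $(0,T)\times\mathbb{R}$; continuity up to the final time $t=T$ follows by the same argument after harmlessly extending $\tilde\sigma_{Dup}$ to a slightly larger interval $[0,T']$ (the extension being irrelevant on $[0,T]$). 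Hence $u$ is continuous on $(0,T]\times\mathbb{R}$.

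\textbf{Positivity.} By definition of $LV(\nu)$ one has $u\ge 0$. Suppose $u(t_0,x_0)=0$ for some $(t_0,x_0)\in(0,T]\times\mathbb{R}$. Applying the parabolic Harnack inequality to the nonnegative solution $u$ on a space--time cylinder whose upper part contains $(t_0,x_0)$ forces $u$ to vanish identically on a sub-cylinder lying strictly before time $t_0$; chaining this in the space variable (using that $\mathbb{R}$ is connected) and backward in time yields $u\equiv 0$ on $(0,t_0)\times\mathbb{R}$. This contradicts the initial condition: as $t\to 0^+$, $u(t,\cdot)$ converges weakly-$*$ to the probability measure $\nu$, so taking $v(x)=e^{-x^2}\in H^1(\mathbb{R})$ one gets $(v,u(t))_1\to\int_{\mathbb{R}}e^{-x^2}\,\nu(dx)>0$, whence $u(t,\cdot)\not\equiv 0$ for $t$ small. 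Therefore $u>0$ everywhere on $(0,T]\times\mathbb{R}$.

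\textbf{Main obstacle, and an alternative.} The only genuine point is to justify invoking the classical interior Hölder estimate and the Harnack inequality under Assumption (B), i.e. for coefficients measurable in time; but this is exactly the generality of the Nash--Aronson and Moser results, so no further work is needed. Alternatively, using uniqueness for $LV(\nu)$ one may write $u(t,\cdot)=\int_{\mathbb{R}}p(0,y;t,\cdot)\,\nu(dy)$, where $p$ is the fundamental solution of $\partial_t-\partial_x(a\,\partial_x\cdot+b\,\cdot)$, which is jointly continuous in $(y,t,x)$ and satisfies two-sided Gaussian bounds (hence is strictly positive) for $s<t$; continuity and positivity of $u$ on $(0,T]\times\mathbb{R}$ then follow from dominated convergence (dominating measure $\nu$, using the Gaussian upper bound, which is bounded on $[\tau,T]\times\mathbb{R}$ for each $\tau>0$) together with positivity of $p$.
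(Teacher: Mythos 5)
Your proof is correct and follows essentially the same route as the paper: both reduce $LV(\nu)$ to a scalar uniformly parabolic divergence-form equation with bounded measurable coefficients, use the initial condition tested against a positive Gaussian-type function in $H^{1}(\mathbb{R})$ to rule out the identically zero solution near $t=0$, and then invoke Harnack-type continuity-and-positivity theory, which the paper delegates to \cite{Rockner} (Corollary 3.1) while you invoke De Giorgi--Nash--Moser and the parabolic Harnack inequality directly. The only caveat is that your alternative argument via the fundamental solution presupposes uniqueness for $LV(\nu)$, which the paper establishes only under the additional Assumption $(H)$; your main argument does not rely on it.
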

The proof of Lemma \ref{lem:LVpositivecontinuous} is postponed to
Appendix \ref{sec:Additional-proofs-of-Section4}. We make here an
additional assumption on the regularity of the function $\tilde{\sigma}_{Dup}$.
\begin{assumption*}[H]
 The function $\tilde{\sigma}_{Dup}$ is continuous and there exist
two constants $H_{0}>0$ and $\chi\in(0,1]$ such that 
\[
\forall s,t\in[0,T],\forall x\in\mathbb{R},\ |\tilde{\sigma}_{Dup}(s,x)-\tilde{\sigma}_{Dup}(t,x)|\leq H_{0}|t-s|^{\chi}.
\]

\end{assumption*}
If $\tilde{\sigma}_{Dup}\in L^{\infty}([0,T],W^{1,\infty}(\mathbb{R}))$
and $\tilde{\sigma}_{Dup}$ is continuous, then $\tilde{\sigma}_{Dup}$
and $\tilde{\sigma}_{Dup}^{2}$ have the Lipschitz property in the
space variable, uniformly in time so that existence and trajectorial
uniqueness hold for (\ref{eq:SDEFinDupireLogPrice1})-(\ref{eq:SDEFinDupireLogPrice2}).
Moreover, Assumptions $(H)$ and $(B)$ imply that there exists $\tilde{H}_{0}>0$
such that: 
\[
\forall s,t\in[0,T],\forall x,y\in\mathbb{R},\ |\tilde{\sigma}_{Dup}(s,x)-\tilde{\sigma}_{Dup}(t,y)|\leq\tilde{H}_{0}\left(|t-s|^{\chi}+|x-y|\right).
\]
Moreover, for $\nu\in\mathcal{P}(\mathbb{R})$, they are sufficient
to obtain uniqueness to $LV(\nu)$ and Aronson-like upper-bounds on
the solution of $LV(\nu)$. The proof of Proposition \ref{prop:uniquenessandaronson}
that follows is also postponed to Appendix \ref{sec:Additional-proofs-of-Section4}. 
\begin{prop}
\label{prop:uniquenessandaronson}Under Assumptions $(B)$ and $(H)$,
there exists a unique solution $u$ to $LV(\mu_{X_{0}})$ and the
time marginals of the solution $\left(X_{t}^{D}\right)_{t\in(0,T]}$
to the SDE (\ref{eq:SDEFinDupireLogPrice1})-(\ref{eq:SDEFinDupireLogPrice2})
are given by $\left(u(t,\cdot)\right)_{t\in(0,T]}$. Moreover, there
exists a finite constant $\zeta$, independent from $\mu_{X_{0}}$
and such that $u$ satisfies $||u(t)||_{L^{2}}^{2}\leq\frac{\zeta}{\sqrt{t}}$
for a.e. $t\in(0,T]$. 
\end{prop}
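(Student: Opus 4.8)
The plan is to produce the (unique) solution of $LV(\mu_{X_0})$ as the family of time marginals of the Dupire log-price process $X^D$, to read the $1/\sqrt t$ bound off a Gaussian upper bound for its transition density, and to obtain uniqueness by transporting any variational solution to a solution of the associated martingale problem, which is well posed. As a first step, recall from the discussion following Assumption $(H)$ that under $(B)$ and $(H)$ the drift $x\mapsto r-\tfrac12\tilde\sigma_{Dup}^2(t,x)$ and the diffusion coefficient $x\mapsto\tilde\sigma_{Dup}(t,x)$ are bounded and Lipschitz in $x$, uniformly in $t\in[0,T]$, so that \eqref{eq:SDEFinDupireLogPrice1}--\eqref{eq:SDEFinDupireLogPrice2} has a unique (trajectorially unique) solution $X^D$; set $u(t,\cdot):=$ law of $X_t^D$. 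Applying It\^o's formula to $\phi(X_t^D)$ for $\phi\in C_c^\infty(\mathbb{R})$ and taking expectations shows that $u$ solves \eqref{eq:DupireFP} in the distributional sense, which, after the integration by parts that produces the $\partial_x\tilde\sigma_{Dup}$ term (licit since $\partial_x\tilde\sigma_{Dup}\in L^\infty$ by $(B)$), is exactly the weak formulation in $LV(\mu_{X_0})$; moreover $u\ge0$ and $u(t,\cdot)\to\mu_{X_0}$ weakly-$*$ as $t\to0^+$.

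Second, for the function-space membership and the quantitative estimate I would invoke an Aronson-type upper bound: since $\tilde\sigma_{Dup}^2$ is uniformly elliptic ($\ge\underline{\sigma}^2$ by $(B)$), bounded ($\le\overline{\sigma}^2$) and H\"older continuous in $(t,x)$ (by $(B)$ and $(H)$), the transition density $\Gamma(t,y;0,x)$ of $X^D$ satisfies $\Gamma(t,y;0,x)\le \frac{C}{\sqrt t}\exp(-c(y-x)^2/t)$, with $C,c>0$ depending only on $\underline{\sigma},\overline{\sigma},r,T$ and the H\"older data. Writing $u(t,y)=\int_{\mathbb{R}}\Gamma(t,y;0,x)\,\mu_{X_0}(dx)$ and arguing by Jensen's inequality (for the probability measure $\mu_{X_0}$) followed by Fubini's theorem, exactly as in the computation bounding $\|\mu*h_t\|_{L^2}$ earlier in the paper, one gets $\|u(t)\|_{L^2}^2\le\zeta/\sqrt t$ with $\zeta$ depending only on $C,c$, hence independent of $\mu_{X_0}$. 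This gives $u\in L^\infty_{loc}((0,T];L^2(\mathbb{R}))$; then, using $u(t,\cdot)$ itself as a test function on $[t_0,T]$ for $t_0>0$ (licit because $\partial_t u\in L^2_{loc}((0,T];H^{-1}(\mathbb{R}))$, so $s\mapsto\|u(s)\|_{L^2}^2$ is absolutely continuous there), the coercivity $\tfrac12\tilde\sigma_{Dup}^2|\partial_x u|^2\ge\tfrac12\underline{\sigma}^2|\partial_x u|^2$, the cancellation $(\partial_x u,u)_1=0$, Young's inequality on the bounded first-order term, and the already established bound on $\int_0^T\|u(s)\|_{L^2}^2\,ds$, yields $\int_{t_0}^T|\partial_x u(s)|_1^2\,ds\lesssim 1/\sqrt{t_0}$, so $u\in L^2_{loc}((0,T];H^1(\mathbb{R}))$. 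Hence $u$ is a solution of $LV(\mu_{X_0})$ satisfying the announced estimate, which proves existence together with the Aronson bound.

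Third, I would establish uniqueness and the identification of the marginals simultaneously. Let $\tilde u$ be any solution of $LV(\mu_{X_0})$. One first checks $\tilde u(t,\cdot)$ is a probability density for a.e. $t$: non-negativity is built in, and total mass one follows by testing the weak formulation against a sequence $v_n\in H^1(\mathbb{R})$ with $v_n\uparrow1$ and $\|\partial_x v_n\|_{L^2}\to0$ suitably, the coefficients being bounded; alternatively Lemma \ref{lem:LVpositivecontinuous} can be used. Then I would apply \cite[Theorem 2.6]{Figalli} to the distributional Fokker--Planck equation solved by $\tilde u$ (with bounded coefficients $b_t(x)=r-\tfrac12\tilde\sigma_{Dup}^2(t,x)$ and $a_t(x)=\tilde\sigma_{Dup}^2(t,x)$, so that $\int_0^T\!\int_{\mathbb{R}}(|b_t|+|a_t|)\,\tilde u(t,x)\,dx\,dt<\infty$), obtaining a probability measure $\tilde{\mathbb{P}}$ on $C([0,T],\mathbb{R})$ solving the martingale problem for $\mathcal{L}_t=\tfrac12\tilde\sigma_{Dup}^2\partial_{xx}+b_t\partial_x$ with $X_0\sim\mu_{X_0}$ and $X_t$ having density $\tilde u(t,\cdot)$. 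Since \eqref{eq:SDEFinDupireLogPrice1}--\eqref{eq:SDEFinDupireLogPrice2} has pathwise uniqueness, this martingale problem is well posed, so $\tilde{\mathbb{P}}$ is the law of $X^D$; consequently $\tilde u(t,\cdot)=$ law of $X_t^D=u(t,\cdot)$, which is the desired uniqueness and the marginal identification.

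I expect the main obstacle to be the invocation of Figalli's superposition result for an \emph{arbitrary} variational solution $\tilde u$: one must check that $\tilde u$ genuinely has $\mu_{X_0}$ as initial trace in the sense required there ($LV(\mu_{X_0})$ only assumes weak-$*$ convergence), that $\tilde u(t,\cdot)$ is a bona fide probability density, and that the required integrability is uniform enough down to $t=0$. An alternative to this step is a PDE-duality argument: pair $\tilde u(s,\cdot)$ with the smooth bounded solution $\psi(s,\cdot)$ of the adjoint parabolic equation $\partial_s\psi+\mathcal{L}_s\psi=0$ on $(0,t)$ with terminal datum $\phi\in C_c^\infty(\mathbb{R})$, show $\frac{d}{ds}\int_{\mathbb{R}}\psi(s,\cdot)\tilde u(s,\cdot)=0$, and let $s\to0^+$ to identify $\int\phi\,\tilde u(t,\cdot)$ with $\mathbb{E}[\phi(X_t^D)]$; this is self-contained but requires constructing $\psi$ (using $(B)$, $(H)$) and justifying both the time-derivative computation and the limit $s\to0^+$.
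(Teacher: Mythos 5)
Your overall architecture is close to the paper's: both proofs build the candidate from the marginals of $X^{D}$, obtain the $\zeta/\sqrt{t}$ bound from a Gaussian upper bound on the transition density combined with Jensen's inequality and Fubini, and appeal to a uniqueness statement for the distributional Fokker--Planck equation. Two points deserve comment.

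First, your existence step contains a genuine gap. To place $u(t,\cdot)=\mathrm{Law}(X_{t}^{D})$ in $L_{loc}^{2}((0,T];H^{1}(\mathbb{R}))$ you test the equation with $u$ itself on $[t_{0},T]$ and invoke $\partial_{t}u\in L_{loc}^{2}((0,T];H^{-1})$. Both of these presuppose exactly the regularity you are trying to establish: a priori you only know $u\in L_{loc}^{\infty}L^{2}$ from the Aronson bound, so $\tfrac{1}{2}\partial_{xx}^{2}(\tilde{\sigma}_{Dup}^{2}u)$ lives in $H^{-2}$ rather than $H^{-1}$, the energy identity for $\tfrac{1}{2}\frac{d}{ds}\|u(s)\|_{L^{2}}^{2}$ cannot be justified, and $u$ is not an admissible test function. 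The paper avoids this entirely: it never shows directly that the SDE marginals form an $LV$ solution. It takes the $LV$ solution already produced by the Galerkin and mollification construction in the proof of Theorem \ref{thm:VFin}, observes that any $LV$ solution induces a measure-valued solution of (\ref{eq:DupireFP}), proves that measure-valued solutions are unique under $(B)$ and $(H)$ by \cite[Proposition 4.2]{Figalli}, and exhibits the SDE marginals as one such measure-valued solution satisfying the $\zeta/\sqrt{t}$ bound; the $H^{1}$ regularity of the marginals is then inherited by identification. To keep your direct route you would need parabolic regularity for the fundamental solution (a Gaussian bound on its spatial derivative) or a mollified energy estimate; as written the step fails.

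Second, your uniqueness route genuinely differs from the paper's. You propagate an arbitrary $LV$ solution to a martingale solution via the superposition principle \cite[Theorem 2.6]{Figalli} and then invoke well-posedness of the martingale problem for (\ref{eq:SDEFinDupireLogPrice1})--(\ref{eq:SDEFinDupireLogPrice2}). This is viable in principle, but it forces you to verify that $\tilde{u}(t,\cdot)$ is a bona fide (finite, indeed probability) measure for a.e. $t$ and that the initial trace is attained in the sense required by the superposition theorem --- difficulties you correctly flag. The paper instead applies the purely analytic uniqueness statement \cite[Proposition 4.2]{Figalli} for the Fokker--Planck equation with bounded, uniformly elliptic, H\"older coefficients, which requires no mass normalization and no detour through the martingale problem; the identification of $u$ with the time marginals of $X^{D}$ then comes for free from that same uniqueness. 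Both deliver the conclusion, but the paper's choice removes precisely the obstacles you identify.
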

We give here the main results on the calibrated RSLV model, that we
prove in Section \ref{sec:calibrationofRSLVproofs}.
\begin{thm}
\label{thm:VFin}Under Condition $(C)$, Assumptions $(B)$ and $(H)$,
there exists a solution $p\in C((0,T],L)$ to $V_{Fin}(\mu)$ such
that $\sum_{i=1}^{d}p_{i}$ is the unique solution to $LV(\mu_{X_{0}})$.
\end{thm}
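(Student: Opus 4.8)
The plan is to transpose the three-step scheme of Section~\ref{sec:proofsnewclass} to this setting, the roles of the heat equation and of the explicit bound $\|\mu*h_t\|_{L^2}^2\le\frac1{2\sqrt{\pi t}}$ being played respectively by the Dupire problem $LV$, by Lemma~\ref{lem:LVpositivecontinuous}, and by the Aronson-type estimate of Proposition~\ref{prop:uniquenessandaronson}. First I treat the case where every conditional law $\mu_i$ has a density in $L^2(\mathbb R)$: I introduce the strong formulation $V_{Fin,L^2}(\mu)$ on the closed interval $[0,T]$ with initial datum $p_{0,i}=\alpha_i\mu_i$, together with $\epsilon$-truncated nonlinearities $A_\epsilon$ (as in Subsection~\ref{sub:squareintegrabledensity}, preserving $\sum_iM_{\epsilon,ij}=0$) and $R_\epsilon$ of $R$, designed to preserve the identity $\sum_i(R_\epsilon(\rho)\Lambda\rho)_i=\sum_i\rho_i$ — which is possible since the $i$-th coefficient of $R(\rho)\Lambda\rho$ is $\rho_i\lambda_i/\overline\lambda(\rho)\le\rho_i\lambda_{max}/\lambda_{min}$ on $\mathcal D$ and $\overline\lambda(\rho)\ge\lambda_{min}$. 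The truncated problem $V_{Fin,\epsilon}(\mu)$ is solved by Galerkin's method as in Proposition~\ref{prop:ExistencetoVeps}: the approximate ODEs have global solutions since $A_\epsilon$, $R_\epsilon$, $Q$ are bounded and the associated vector field is locally Lipschitz, global existence following from a Gronwall bound on $\sum_i|g_{\epsilon,i}^m|$ as in Lemma~\ref{lem:existencetoODEs}. Because $\Pi A$ is uniformly coercive on $\mathcal D$ with a coefficient $\kappa\in(0,l_{min}(\Pi)/2)$ by Corollary~\ref{cor:PiA}, and $\tilde\sigma_{Dup}\ge\underline{\sigma}>0$ a.e.\ by Assumption~(B), the field $\Pi\,\tilde\sigma_{Dup}^2A_\epsilon$ is uniformly coercive with coefficient $\underline{\sigma}^2\kappa$ (Lemma~\ref{lem:AcoerciveimpliesAepscoercive} gives the coercivity of $\Pi A_\epsilon$, which survives multiplication by the scalar $\tilde\sigma_{Dup}^2$). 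Testing the Galerkin equations with $\sqrt\Pi p^m_\epsilon$, absorbing the lower-order terms — the transport term $-r(\partial_xv,p)_d$, the $R_\epsilon\tilde\sigma_{Dup}(\tilde\sigma_{Dup}+2\partial_x\tilde\sigma_{Dup})\Lambda$ term (bounded via $\overline{\sigma}$ and $\partial_x\tilde\sigma_{Dup}\in L^\infty$) and the jump term $(Qv,p)_d$ (bounded by $\overline{q}$) — by Young's inequality into the coercive term, and closing with Gronwall, gives bounds for $p^m_\epsilon$ in $L^2([0,T];H)\cap L^\infty([0,T];L)$ uniform in $m$ and $\epsilon$; extracting weak, weak-$*$ and a.e.\ limits and passing to the limit as in Proposition~\ref{prop:ExistencetoVeps} produces a solution $p_\epsilon\in C([0,T],L)$ of $V_{Fin,\epsilon}(\mu)$.

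Next I establish non-negativity and the reduction to $LV$. Testing with $p_\epsilon^-$ as in Proposition~\ref{prop:solutionarenonnegative}, the off-diagonal part of the second-order term drops since each $M_{\epsilon,ij}$ ($i\ne j$) carries the factor $p_{\epsilon,i}^+$, and its diagonal part is $\ge\frac{\underline{\sigma}^2\lambda_{min}}{2\lambda_{max}}|\partial_xp_\epsilon^-|_d^2\ge0$ by inequality~(\ref{eq:inegalitepositive}) multiplied by $\tilde\sigma_{Dup}^2$; the transport term, the $R_\epsilon$-term (after one integration by parts, using $\partial_x\tilde\sigma_{Dup}\in L^\infty$) and the jump term $(Qp_\epsilon^-,p_\epsilon)_d$ are bounded by $C|p_\epsilon^-|_d^2$, so since $|p_\epsilon^-(0)|_d=0$, Gronwall forces $p_\epsilon^-\equiv0$. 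Summing the weak formulation over $i$ against a constant vector test function and using $\sum_iM_{\epsilon,ij}=0$, $\sum_i(R_\epsilon(\rho)\Lambda\rho)_i=\sum_i\rho_i$ and $\sum_iq_{ji}=0$ shows that $u_\epsilon:=\sum_ip_{\epsilon,i}$ solves $LV(\mu_{X_0})$; by Proposition~\ref{prop:uniquenessandaronson} it equals the unique solution $u$, independently of $\epsilon$, so $u>0$ on $(0,T]\times\mathbb R$ by Lemma~\ref{lem:LVpositivecontinuous} and $|p_\epsilon(t)|_d^2\le\|u(t)\|_{L^2}^2\le\zeta/\sqrt t$. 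Letting $\epsilon\to0$ as in Proposition~\ref{prop:existencetoVL2}: for a.e.\ fixed $(t,x)$ one eventually has $\sum_i\lambda_ip_{\epsilon,i}(t,x)\ge\frac12\sum_i\lambda_iu(t,x)>\epsilon$, whence $A_\epsilon(p_\epsilon)=A(p_\epsilon)\to A(p)$ and $R_\epsilon(p_\epsilon)=R(p_\epsilon)\to R(p)$ a.e., and one gets a solution $p\in C([0,T],L)$ of $V_{Fin,L^2}(\mu)$, taking values in $\mathcal D$ a.e., with $\sum_ip_i=u$.

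Finally, for a general $\mu\in\mathcal P(\mathbb R\times\mathcal Y)$, I regularize by convolving each conditional law $\mu_i$ in space with a Gaussian of variance $\sigma_k^2$, $\sigma_k\downarrow0$, so that the regularized initial laws have $L^2$ densities; the previous step gives solutions $p_{\sigma_k}$ of $V_{Fin,L^2}(\mu_{\sigma_k})$ with $\sum_ip_{\sigma_k,i}=u_{\sigma_k}$ solving $LV(\mu_{X_0,\sigma_k})$. The crucial point is that the constant $\zeta$ of Proposition~\ref{prop:uniquenessandaronson} is independent of the initial law, so $|p_{\sigma_k}(t)|_d^2\le\|u_{\sigma_k}(t)\|_{L^2}^2\le\zeta/\sqrt t$ uniformly in $k$; testing $V_{Fin,L^2}(\mu_{\sigma_k})$ with $\Pi p_{\sigma_k}$ and using the coercivity gives $\int_t^T|\partial_xp_{\sigma_k}(s)|_d^2\,ds\le C/\sqrt t$, hence by Lemma~\ref{lem:L1dxp} $\int_0^t|\partial_xp_{\sigma_k}(s)|_d\,ds\le C't^{1/4}$ uniformly in $k$, exactly as in Subsection~\ref{sub:FBMGeneralCase}. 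Extracting weak, weak-$*$ and a.e.\ limits and passing to the limit in the weak formulation — the first-order $R$-term handled via this uniform $L^1$ bound on $\partial_xp_{\sigma_k}$ and dominated convergence for $R(p_{\sigma_k})$, and the initial condition recovered in the weak-$*$ sense from $\mu_{\sigma_k}\to\mu$ — yields $p\in C((0,T],L)$ solving $V_{Fin}(\mu)$; moreover $u_{\sigma_k}\to\sum_ip_i$ and, by continuity and uniqueness of the solution of $LV$ (Proposition~\ref{prop:uniquenessandaronson}), $\sum_ip_i$ is the unique solution of $LV(\mu_{X_0})$. The main obstacle is the uniform control of the extra first- and zero-order terms — the Dupire transport, the $R(p)\tilde\sigma_{Dup}(\tilde\sigma_{Dup}+2\partial_x\tilde\sigma_{Dup})\Lambda p$ term and the regime-switching term $Qp$ — throughout the a priori estimates and, most delicately, in the non-negativity argument, where the jump term no longer vanishes pointwise and must be absorbed by Gronwall, together with the need to choose the $\epsilon$-truncation compatibly with the reduction $\sum_ip_i=u$, which is precisely what makes Lemma~\ref{lem:LVpositivecontinuous} and the initial-data-uniform bound of Proposition~\ref{prop:uniquenessandaronson} applicable.
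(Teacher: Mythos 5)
Your architecture is exactly the paper's: the $L^2$-density case via $\epsilon$-truncation and Galerkin, non-negativity by testing with $p_\epsilon^-$, identification of $\sum_ip_i$ with the Dupire density to recover $\mathcal D$-valuedness, then mollification of a general $\mu$ using the initial-data-uniform Aronson bound of Proposition \ref{prop:uniquenessandaronson} and Lemma \ref{lem:L1dxp}. There is, however, one step where you assert more than you (or the paper) can deliver, and it is load-bearing.

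You claim that the truncation $R_\epsilon$ can be ``designed to preserve the identity $\sum_i(R_\epsilon(\rho)\Lambda\rho)_i=\sum_i\rho_i$'', and you then use this to conclude that $u_\epsilon=\sum_ip_{\epsilon,i}$ solves $LV(\mu_{X_0})$ \emph{exactly for each fixed} $\epsilon>0$. The justification you give is only the bound $\rho_i\lambda_i/\overline\lambda(\rho)\le\rho_i\lambda_{max}/\lambda_{min}$, which shows boundedness of the untruncated coefficient but does not produce a truncation with the claimed property. If $R_\epsilon$ is a scalar multiplying $\Lambda\rho$ (as in the formulation $V_{Fin,\epsilon}(\mu)$), the identity forces $R_\epsilon(\rho)\sum_l\lambda_l\rho_l=\sum_l\rho_l$, i.e.\ $R_\epsilon=R$ on all of $\mathcal D$ — which defeats the purpose of truncating, since the Galerkin iterates are not yet known to be non-negative and the map $\rho\mapsto R(\rho^+)\Lambda\rho$ is ill-defined (and not locally Lipschitz) where $\rho^+=0$ but $\rho\neq0$. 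With the actual truncation $R_\epsilon(\rho)=\frac{\sum_l\rho_l}{\epsilon\vee\sum_l\lambda_l\rho_l}$, the summed coefficient is $\frac{\sum_l\lambda_l\rho_l}{\epsilon\vee\sum_l\lambda_l\rho_l}\sum_l\rho_l$, which differs from $\sum_l\rho_l$ precisely on $\{0<\sum_l\lambda_lp_{\epsilon,l}<\epsilon\}$, so $u_\epsilon$ does \emph{not} solve $LV(\mu_{X_0})$ at fixed $\epsilon$. The paper's Lemma \ref{lem:5.8} exists exactly to bridge this: it passes to the limit $\epsilon_k\to0$, splits the offending term on $\{u>0\}$ (where the ratio tends to $1$) and on $\{u=0\}$ (where $u_{\epsilon_k}\to0$ kills it), and only then identifies the limit $u$ with the unique solution of $LV(\mu_{X_0})$, after which positivity (Lemma \ref{lem:LVpositivecontinuous}) gives $\mathcal D$-valuedness of $p$ and hence the a.e.\ convergence $A_{\epsilon_k}(p_{\epsilon_k})\to A(p)$, $R_{\epsilon_k}(p_{\epsilon_k})\to R(p)$ that you use in the next step. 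You should either supply this limit argument, or construct explicitly a (necessarily non-scalar) Lipschitz truncation of the whole vector field $\rho\mapsto R(\rho)\Lambda\rho$ preserving the column sum — e.g.\ adding $\rho_i\bigl(1-\frac{\sum_l\lambda_l\rho_l}{\epsilon\vee\sum_l\lambda_l\rho_l}\bigr)$ to the $i$-th component — and then re-verify boundedness, the energy estimates and the non-negativity computation for the modified term. As written, the deduction ``$u_\epsilon=u$ independently of $\epsilon$'' is unjustified, and everything downstream of it (the pointwise lower bound $\sum_i\lambda_ip_{\epsilon,i}\ge\lambda_{min}u>\epsilon$ eventually, hence $A_\epsilon=A$) silently relies on it. The remainder of your proof — the general-measure step in particular — is faithful to the paper and correct.
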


\begin{thm}
\label{thm:weaksolutionFin}Under Condition $(C)$, Assumptions $(B)$
and $(H)$, there exists a weak solution to the SDE (\ref{eq:sdefinance1})-(\ref{eq:sdefinance2}).
which has the same time marginals as those of the solution to the
SDE (\ref{eq:SDEFinDupireLogPrice1})-(\ref{eq:SDEFinDupireLogPrice2}).
\end{thm}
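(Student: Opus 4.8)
The plan is to transpose the proof of Theorem~\ref{thm:FBMweaksolution} to the regime-switching setting. The essential new difficulty is that, whereas in the toy model $Y$ was constant in time so that the martingale problem split fibrewise over $\{Y=i\}$ and \cite[Theorem~2.6]{Figalli} applied to each fibre, here $Y$ is a genuine jump process, so one must work with the coupled Fokker--Planck system (\ref{eq:fokkerplanckFin})--(\ref{eq:fokkerplanckFin2}), whose reaction terms $\sum_{j}q_{ji}p_j$ encode the jumps of $Y$, and one must extend \cite[Theorem~2.6]{Figalli} accordingly. By Theorem~\ref{thm:VFin}, under Condition~$(C)$ and Assumptions~$(B)$, $(H)$ there is $p=(p_1,\dots,p_d)\in C((0,T],L)$ solving $V_{Fin}(\mu)$, taking values in $\mathcal{D}$ a.e., with $u:=\sum_{i=1}^{d}p_i$ the unique solution of $LV(\mu_{X_0})$; by Lemma~\ref{lem:LVpositivecontinuous} and Proposition~\ref{prop:uniquenessandaronson}, $u$ is positive on $(0,T]\times\mathbb{R}$ and $u(t,\cdot)$ is the law of $X_t^D$ for every $t\in(0,T]$. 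It therefore suffices to build a probability measure $\mathbb{Q}$ on path space under which the canonical pair $(X,Y)$ solves the McKean martingale problem attached to (\ref{eq:sdefinance1})--(\ref{eq:sdefinance2}) and has one-dimensional time marginals $(p_i(t,\cdot))_{1\le i\le d}$; then $X_t$ has density $\sum_i p_i(t,\cdot)=u(t,\cdot)$ and calibration follows.

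First I would establish the analogue of Lemma~\ref{lem:VFtoFigalli}: for $1\le i\le d$ and a.e.\ $s\in(0,T)$, the function $\bigl(R(p)\tilde{\sigma}_{Dup}^{2}\lambda_i p_i\bigr)(s,\cdot)$ belongs to $H^{1}(\mathbb{R})$ and
\[
\tfrac12\partial_x\bigl(R(p)\tilde{\sigma}_{Dup}^{2}\lambda_i p_i\bigr)(s,\cdot)=\tilde{\sigma}_{Dup}^{2}\bigl(A(p)\partial_x p\bigr)_i(s,\cdot)+\tilde{\sigma}_{Dup}\,(\partial_x\tilde{\sigma}_{Dup})\,R(p)\lambda_i p_i(s,\cdot).
\]
This follows from the mollification scheme of Lemma~\ref{lem:VFtoFigalli}, using $R(p)\in[\lambda_{max}^{-1},\lambda_{min}^{-1}]$ a.e.\ (since $p$ takes values in $\mathcal{D}$), $\tilde{\sigma}_{Dup},\partial_x\tilde{\sigma}_{Dup}\in L^{\infty}([0,T]\times\mathbb{R})$ by $(B)$, and $p(s,\cdot)\in H$ for a.e.\ $s$; note the $L^{\infty}$ bound on $\partial_x\tilde{\sigma}_{Dup}$ suffices because it multiplies the $L^{2}$ functions $p_i$. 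Plugging this identity into $V_{Fin}(\mu)$ and integrating by parts once more shows that, for $a_i:=\lambda_i R(p)\tilde{\sigma}_{Dup}^{2}$ and $b_i:=r-\tfrac12 a_i$, the tuple $p$ is a weak solution of the Fokker--Planck system $\partial_t p_i=-\partial_x(b_i p_i)+\tfrac12\partial_{xx}^{2}(a_i p_i)+\sum_j q_{ji}p_j$ tested against $\phi\in C_c^{\infty}(\mathbb{R}\times\mathcal{Y})$; the coefficients $a_i$ are bounded and bounded below by a positive constant (by $(B)$ and $R(p)\ge\lambda_{max}^{-1}$), $b_i$ is bounded, and the rates $q_{ij}$ are bounded by $\overline{q}$.

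I would then invoke the extension of \cite[Theorem~2.6]{Figalli} to regime-switching Fokker--Planck systems (proved along the same lines as in \cite{Figalli}, the only new ingredient being the bounded jump generator $Q$) to obtain a probability $\mathbb{Q}$ on $C([0,T],\mathbb{R})\times\mathbb{D}([0,T],\mathcal{Y})$ under which $(X_0,Y_0)\sim\mu$, the time-$t$ marginal of $(X_t,Y_t)$ is $(p_i(t,\cdot))_{1\le i\le d}$ for every $t\in(0,T]$, and $\phi(X_t,Y_t)-\phi(X_0,Y_0)-\int_0^t\bigl(b_{Y_s}\partial_x\phi+\tfrac12 a_{Y_s}\partial_{xx}^{2}\phi+\sum_{j}q_{Y_s j}(\phi(\cdot,j)-\phi(\cdot,Y_s))\bigr)(s,X_s,Y_s)\,ds$ is a $\mathbb{Q}$-martingale for all $\phi\in C_c^{\infty}(\mathbb{R}\times\mathcal{Y})$; since the jump part does not move the first coordinate, $X$ is continuous. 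Then, exactly as at the end of the proof of Theorem~\ref{thm:FBMweaksolution}, testing against bounded measurable $g(X_s)$ yields $\mathbb{E}^{\mathbb{Q}}[f^2(Y_s)g(X_s)]=\int(\sum_i\lambda_i p_i)(s,\cdot)\,g\,dx$ and $\mathbb{E}^{\mathbb{Q}}[g(X_s)]=\int(\sum_i p_i)(s,\cdot)\,g\,dx$, hence $\mathbb{E}^{\mathbb{Q}}[f^2(Y_s)\mid X_s]=1/R(p)(s,X_s)$ a.s.; substituting this into $a_{Y_s}=\lambda_{Y_s}R(p)\tilde{\sigma}_{Dup}^2$ identifies it with $\frac{f^2(Y_s)}{\mathbb{E}^{\mathbb{Q}}[f^2(Y_s)\mid X_s]}\tilde{\sigma}_{Dup}^2(s,X_s)$, so $\mathbb{Q}$ solves the McKean martingale problem for (\ref{eq:sdefinance1})--(\ref{eq:sdefinance2}). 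A standard enlargement of the probability space provides a Brownian motion driving the continuous martingale part of $X$ and a Poisson structure for the jumps of $Y$ at rates $q_{Y_t j}(X_t)$, which produces a weak solution of (\ref{eq:sdefinance1})--(\ref{eq:sdefinance2}); and $X_t$ has density $\sum_i p_i(t,\cdot)=u(t,\cdot)$, which by Proposition~\ref{prop:uniquenessandaronson} coincides with the density of $X_t^D$.

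The step I expect to be the main obstacle is the superposition principle: extending \cite[Theorem~2.6]{Figalli} from a single uniformly elliptic one-dimensional diffusion to the regime-switching dynamics, that is, reconstructing from the weak solution $p$ of the coupled system a measure on $C([0,T],\mathbb{R})\times\mathbb{D}([0,T],\mathcal{Y})$ solving the martingale problem, with diffusion coefficients $a_i$ that are only bounded measurable in $t$ and Lipschitz in $x$. A secondary technical point is the limit passage in the mollification argument of the first step, where the quadratic-in-$p$ nonlinearity $R(p)\tilde{\sigma}_{Dup}^{2}\lambda_i p_i$ must be handled using only the $L^{2}_{loc}((0,T];H)$ control on $p$ from Theorem~\ref{thm:VFin}, the positivity of $\sum_i p_i=u$ on $(0,T]\times\mathbb{R}$ from Lemma~\ref{lem:LVpositivecontinuous}, and the $L^{\infty}$ bounds on $\tilde{\sigma}_{Dup}$, $\partial_x\tilde{\sigma}_{Dup}$ and $R(p)$.
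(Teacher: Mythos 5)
Your proposal follows essentially the same route as the paper: first establish, via the mollification argument of Lemma \ref{lem:VFtoFigalli} together with the positivity of $\sum_i p_i$ from Lemma \ref{lem:LVpositivecontinuous}, that $p$ solves the Fokker--Planck system (\ref{eq:PDSJump1})--(\ref{eq:PDSJump2}) in the distributional sense with bounded coefficients $a_i=\lambda_i R(p)\tilde{\sigma}_{Dup}^{2}$ and $b_i=r-\tfrac12 a_i$; then apply the regime-switching extension of \cite[Theorem 2.6]{Figalli} (which the paper states as Theorem \ref{thm:FigalliGeneralization} and proves in Appendix \ref{sec:FigGen} by regularization and tightness), identify $\mathbb{E}[f^{2}(Y_s)\mid X_s]$ from the representation formula, and pass from the martingale problem to a weak solution. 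The step you flag as the main obstacle is indeed the one the paper devotes its Appendix \ref{sec:FigGen} to, and your identification of the remaining ingredients is accurate.
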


\section{\label{sec:calibrationofRSLVproofs}Proofs of Section 4}

\subsection{Proof of Theorem \ref{thm:VFin}}

Similarly to the proof of Theorem \ref{thm:FBMmuProba}, in Subsection
5.1.1, under the hypothesis that for $1\leq i\leq d$, $\mu_{i}$
has a square integrable density with respect to the Lebesgue measure,
we prove existence to a variational formulation slightly stronger
than $V_{Fin}(\mu)$. Then, in Subsection 5.1.2, when $\mu$ is a
general probability measure on $\mathbb{R}\times\mathcal{Y}$, we
mollify $\mu_{i}$ for $1\leq i\leq d$, in order to use the results
of Subsection 5.1.1 and obtain a solution to $V_{Fin}(\mu).$

\subsubsection{Case when $\mu$ has square integrable densities}

In this section, the measures $\left(\mu_{i}\right)_{1\leq i\leq d}$
are assumed to have square integrable densities which are also denoted
by $\mu_{i}$ for notational simplicity. We define $p_{0}:=\left(\alpha_{1}\mu_{1},...,\alpha_{d}\mu_{d}\right)\in L$.
We define the variational formulation $V_{Fin,L^{2}}(\mu)$:
\begin{eqnarray}
\text{Find} &  & p=(p_{1},...,p_{d})\ \text{satisfying :}\label{eq:findFinL2}
\end{eqnarray}
\begin{equation}
p\in L^{2}([0,T];H)\cap L^{\infty}([0,T];L)\text{ and takes values in \ensuremath{\mathcal{D}} a.e. on \ensuremath{(0,T)\times\mathbb{R}},}\label{eq:pL2FinL2}
\end{equation}

\begin{equation}
\forall v\in H,\ \frac{d}{dt}(v,p)_{d}-r\left(\partial_{x}v,p\right)_{d}+\left(\partial_{x}v,\frac{1}{2}R(p)\tilde{\sigma}_{Dup}\left(\tilde{\sigma}_{Dup}+2\partial_{x}\tilde{\sigma}_{Dup}\right)\Lambda p\right)_{d}+(\partial_{x}v,\tilde{\sigma}_{Dup}^{2}A(p)\partial_{x}p)_{d}=(Qv,p)_{d}\label{eq:fvarFinL2}
\end{equation}
\[
\text{ in the sense of distributions on \ensuremath{(0,T)},}
\]

\begin{equation}
p(0,\cdot)=p_{0}.\label{eq:cond0FinL2}
\end{equation}
To show existence to $V_{Fin,L^{2}}(\mu)$, we use Galerkin's procedure,
as in the proof of Theorem \ref{thm:FBMmuProba}. It is not obvious
that $p$ takes values in $\mathcal{D}$ a.e. on $(0,T)\times\mathbb{R}$
at the discrete level, that is why for $\epsilon>0$, we define, for
$\rho\in\left(\mathbb{R}_{+}\right)^{d}$, $R_{\epsilon}(\rho)=\frac{\sum_{i=1}^{d}\rho_{i}}{\epsilon\vee\left(\sum_{i=1}^{d}\lambda_{i}\rho_{i}\right)}$
and we introduce the variational formulation $V_{Fin,\epsilon}(\mu)$:
\begin{eqnarray}
\text{Find} &  & p_{\epsilon}=(p_{\epsilon,1},...,p_{\epsilon,d})\ \text{satisfying :}\label{eq:findFinL2eps}
\end{eqnarray}
\begin{equation}
p_{\epsilon}\in L^{2}([0,T];H)\cap L^{\infty}([0,T];L)\label{eq:pL2FinL2eps}
\end{equation}

\begin{equation}
\forall v\in H,\ \frac{d}{dt}(v,p_{\epsilon})_{d}-r\left(\partial_{x}v,p_{\epsilon}\right)_{d}+\left(\partial_{x}v,\frac{1}{2}R_{\epsilon}(p_{\epsilon}^{+})\tilde{\sigma}_{Dup}\left(\tilde{\sigma}_{Dup}+\partial_{x}\tilde{\sigma}_{Dup}\right)\Lambda p_{\epsilon}\right)_{d}+\left(\partial_{x}v,\tilde{\sigma}_{Dup}^{2}A_{\epsilon}(p_{\epsilon}^{+})\partial_{x}p_{\epsilon}\right)_{d}=(Qv,p_{\epsilon})_{d}\label{eq:fvarFinL2eps}
\end{equation}
\[
\text{ in the sense of distributions on \ensuremath{(0,T)},}
\]

\begin{equation}
p_{\epsilon}(0,\cdot)=p_{0}.\label{eq:cond0FinL2eps}
\end{equation}
Let us remark that if $p$ (resp. $p_{\epsilon}$) satisfies (\ref{eq:findFinL2})-(\ref{eq:fvarFinL2})
(resp. (\ref{eq:findFinL2eps})-(\ref{eq:fvarFinL2eps})), those conditions
imply that $\frac{dp}{dt}$ (resp. $\frac{dp_{\epsilon}}{dt}$) belongs
to $L^{2}([0,T],H')$ in the sense of \cite[III, Lemma 1.1]{temam},
so by \cite[III, Lemma 1.2]{temam}, $p$ (resp. $p_{\epsilon}$)
is equal a.e. on $[0,T]$ to a function of $C([0,T],L)$, so that
the initial condition (\ref{eq:cond0FinL2}) (resp. \ref{eq:cond0FinL2eps})
makes sense. 

To take advantage of the fact that under Condition $(C)$, there exists
$\Pi\in S_{d}^{++}(\mathbb{R})$ and $\kappa>0$ such that $\Pi A$
and $\Pi A_{\epsilon}$, for $\epsilon>0$, are uniformly coercive
on $\mathcal{D}$ with the coefficient $\kappa$, we introduce, for
$\epsilon>0$ and $m\geq1$, the approximate variational formulation
$V_{Fin,\epsilon}^{m}(\mu)$: 
\[
\text{Find \ensuremath{g_{\epsilon,1}^{m},...,g_{\epsilon,m}^{m}}}\in C^{0}([0,T],\mathbb{R}),\text{ such that:}
\]
\[
\text{the function }t\in[0,T]\rightarrow p_{\epsilon}^{m}(t)=\sum_{j=1}^{m}g_{\epsilon,j}^{m}(t)w_{j}\text{ satisfies, for \ensuremath{1\leq i\leq m,}}
\]
\begin{eqnarray}
\left(Q\Pi w_{i}+r\Pi\partial_{x}w_{i},p_{\epsilon}^{m}(t)\right)_{d} & = & \frac{d}{dt}\left(w_{i},\Pi p_{\epsilon}^{m}(t)\right)_{d}+\left(\partial_{x}w_{i},\tilde{\sigma}_{Dup}^{2}\Pi A_{\epsilon}\left(\left(p_{\epsilon}^{m}\right)^{+}(t)\right)\partial_{x}p_{\epsilon}^{m}(t)\right)_{d}\nonumber \\
 & \ + & \frac{1}{2}\left(\partial_{x}w_{i},R_{\epsilon}\left(\left(p_{\epsilon}^{m}\right)^{+}(t)\right)\tilde{\sigma}_{Dup}\left(\tilde{\sigma}_{Dup}+2\partial_{x}\tilde{\sigma}_{Dup}\right)\Pi\Lambda p_{\epsilon}^{m}(t)\right)_{d}\label{eq:VFinEspVar}
\end{eqnarray}
\[
p_{\epsilon}^{m}(0)=p_{0}^{m},
\]
where $p_{0}^{m}$ is the orthogonal projection of $p_{0}$ in $L$
on the space spanned by $(w_{j})_{1\le j\leq m}$. For $z\in\mathbb{R}^{m}$
and $t\geq0$, we define $K_{\epsilon,1}^{m}(t,z)$ the matrix, where
for $1\leq i,j\leq m$: 
\[
K_{\epsilon,1,ij}^{m}(t,z)=\frac{1}{2}\left(\partial_{x}w_{i},R_{\epsilon}\left(\left(\sum_{k=1}^{m}z_{k}w_{k}\right)^{+}\right)\tilde{\sigma}_{Dup}(t)\left(\tilde{\sigma}_{Dup}(t)+2\partial_{x}\tilde{\sigma}_{Dup}(t)\right)\Pi\Lambda w_{j}\right)_{d},
\]
$K_{\epsilon,2}^{m}(t,z)$ the matrix where for $1\leq i,j\leq m$,
\[
K_{\epsilon,2,ij}^{m}(t,z)=\left(\partial_{x}w_{i},\tilde{\sigma}_{Dup}^{2}(t)\Pi A_{\epsilon}\left(\left(\sum_{k=1}^{m}z_{k}w_{k}\right)^{+}\right)\partial_{x}w_{j}\right)_{d},
\]
and $K_{\epsilon,3}^{m}$ the constant matrix where for $1\leq i,j\leq m$:
\[
K_{\epsilon,3,ij}^{m}=(Q\Pi w_{i}+r\Pi\partial_{x}w_{i},w_{j})_{d}.
\]
We then define $F_{\epsilon,k}^{m}(t,z):=\left(W^{(m)}\right)^{-1}K_{\epsilon,k}^{m}(t,z)z$,
for $k=1,2,$ and $F_{\epsilon,3}^{m}(z):=\left(W^{(m)}\right)^{-1}K_{\epsilon,3}^{m}z$,
Finally we define for $z\in\mathbb{R}^{m}$, $G_{\epsilon}^{m}(t,z):=-F_{\epsilon,1}^{m}(t,z)-F_{\epsilon,2}^{m}(t,z)+F_{\epsilon,3}^{m}(z)$.
The ODE for $g_{\epsilon}^{m}$ rewrites:
\begin{eqnarray*}
\left(g_{\epsilon}^{m}\right)'(t) & = & G_{\epsilon}^{m}(t,g_{\epsilon}^{m}(t))\\
g_{\epsilon}^{m}(0) & = & g_{\epsilon,0}^{m},
\end{eqnarray*}
where the vector $g_{\epsilon,0}^{m}$ is the expression of $p_{0}^{m}$
on the basis $\left(w_{i}\right)_{1\leq i\leq m}$. We prove existence
and uniqueness to $V_{Fin,\epsilon}^{m}(\mu)$. We clearly have the
following lemma. 
\begin{lem}
\label{lem:Rbounded}The functions $R$ and $R_{\epsilon}$, for $\epsilon>0$,
are uniformly bounded. More precisely, $||R_{\epsilon}||_{L^{\infty}\left(\left(\mathbb{R}_{+}\right)^{d}\right)}\leq\frac{1}{\lambda_{min}}$,
$||R||_{L^{\infty}\left(\mathcal{D}\right)}\leq\frac{1}{\lambda_{min}}$. 
\end{lem}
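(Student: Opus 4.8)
The plan is to obtain both bounds pointwise on the respective domains, relying only on the elementary inequality $\sum_{i=1}^{d}\lambda_{i}\rho_{i}\geq\lambda_{min}\sum_{i=1}^{d}\rho_{i}$, which holds for every $\rho\in\left(\mathbb{R}_{+}\right)^{d}$ since each $\rho_{i}\geq0$ and $\lambda_{i}\geq\lambda_{min}>0$.

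First I would treat $R$ on $\mathcal{D}$. For $\rho\in\mathcal{D}$ the entries of $\rho$ are nonnegative and not all zero, so $\sum_{i=1}^{d}\rho_{i}>0$ and therefore $\sum_{i=1}^{d}\lambda_{i}\rho_{i}\geq\lambda_{min}\sum_{i=1}^{d}\rho_{i}>0$; in particular the denominator in $R(\rho)$ does not vanish, which is exactly why $R$ is only defined on $\mathcal{D}$. Dividing, $0\leq R(\rho)=\frac{\sum_{i=1}^{d}\rho_{i}}{\sum_{i=1}^{d}\lambda_{i}\rho_{i}}\leq\frac{\sum_{i=1}^{d}\rho_{i}}{\lambda_{min}\sum_{i=1}^{d}\rho_{i}}=\frac{1}{\lambda_{min}}$, which gives $||R||_{L^{\infty}(\mathcal{D})}\leq\frac{1}{\lambda_{min}}$.

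Then I would treat $R_{\epsilon}$ on $\left(\mathbb{R}_{+}\right)^{d}$, where the mollified denominator $\epsilon\vee\left(\sum_{i=1}^{d}\lambda_{i}\rho_{i}\right)$ is always at least $\epsilon>0$, so $R_{\epsilon}$ is well defined and nonnegative on the whole orthant, including at $\rho=0$ where $R_{\epsilon}(0)=0$. For $\rho\neq0$ one has $\epsilon\vee\left(\sum_{i=1}^{d}\lambda_{i}\rho_{i}\right)\geq\sum_{i=1}^{d}\lambda_{i}\rho_{i}\geq\lambda_{min}\sum_{i=1}^{d}\rho_{i}>0$, so the same division yields $R_{\epsilon}(\rho)\leq\frac{1}{\lambda_{min}}$; together with $R_{\epsilon}(0)=0$ this gives $||R_{\epsilon}||_{L^{\infty}\left(\left(\mathbb{R}_{+}\right)^{d}\right)}\leq\frac{1}{\lambda_{min}}$ for every $\epsilon>0$.

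There is no genuine obstacle here: the statement is an immediate consequence of $\lambda_{i}\geq\lambda_{min}$. The only point deserving a moment of care is the bookkeeping about where the denominators are positive — on $\mathcal{D}$ for $R$, and on all of $\left(\mathbb{R}_{+}\right)^{d}$ for $R_{\epsilon}$ thanks to the floor $\epsilon$ — and the trivial case $\rho=0$ for $R_{\epsilon}$, which is why the phrase ``we clearly have the following lemma'' is warranted.
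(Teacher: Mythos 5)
Your proof is correct and is exactly the elementary argument the paper has in mind when it states the lemma without proof ("We clearly have the following lemma"): bound the denominator below by $\lambda_{min}\sum_{i}\rho_{i}$ using $\lambda_{i}\geq\lambda_{min}$, and note that the floor $\epsilon$ in $R_{\epsilon}$ only increases the denominator. Nothing further is needed.
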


\begin{lem}
\label{lem:Kbounded}Under Assumption (B), for $m\geq1$, the functions
$K_{\epsilon,1}^{m}$ and $K_{\epsilon,2}^{m}$ are uniformly bounded.\end{lem}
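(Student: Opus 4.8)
The plan is to bound every entry of the matrices $K_{\epsilon,1}^{m}(t,z)$ and $K_{\epsilon,2}^{m}(t,z)$ by a constant that may depend on $m$ and on the fixed basis functions $w_{1},\dots,w_{m}$, but not on $t\in[0,T]$, on $z\in\mathbb{R}^{m}$, or on $\epsilon>0$. First I would expand each scalar product $(\cdot,\cdot)_{d}$ coordinate-wise as an integral over $\mathbb{R}$, pull out the scalar factors, bound the matrix factors entrywise using the uniform bounds already established, and finish with the Cauchy--Schwarz inequality applied to $\int_{\mathbb{R}}|\partial_{x}w_{ia}(x)|\,|\partial_{x}w_{jb}(x)|\,dx\le|\partial_{x}w_{ia}|_{L^{2}}|\partial_{x}w_{jb}|_{L^{2}}<\infty$ (finite since $w_{i},w_{j}\in H$), exactly as in the proof of Lemma \ref{lem:existencetoODEs}.

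For $K_{\epsilon,2}^{m}$: fix $1\le i,j\le m$, $t\in[0,T]$, $z\in\mathbb{R}^{m}$, and write $K_{\epsilon,2,ij}^{m}(t,z)=\int_{\mathbb{R}}\sum_{a,b=1}^{d}\partial_{x}w_{ia}(x)\,\tilde{\sigma}_{Dup}^{2}(t,x)\,\bigl(\Pi A_{\epsilon}((\textstyle\sum_{k}z_{k}w_{k})^{+}(x))\bigr)_{ab}\,\partial_{x}w_{jb}(x)\,dx$. By Assumption (B), $\tilde{\sigma}_{Dup}^{2}(t,x)\le\overline{\sigma}^{2}$; the entries of $\Pi$ are bounded by $||\Pi||_{\infty}$; and by Lemma \ref{lem:Bepsbounded} every entry of $A_{\epsilon}(\rho)$, for $\rho\in(\mathbb{R}^{+})^{d}$ and $\epsilon>0$, is at most $\frac{1}{2}(1+\frac{\lambda_{max}}{\lambda_{min}})$ in absolute value, so every entry of $\Pi A_{\epsilon}(\rho)$ is at most $\frac{d}{2}||\Pi||_{\infty}(1+\frac{\lambda_{max}}{\lambda_{min}})$, uniformly in $\epsilon$ and in $\rho$, hence in particular at $\rho=(\sum_{k}z_{k}w_{k})^{+}(x)$. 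Combining these bounds with Cauchy--Schwarz yields $|K_{\epsilon,2,ij}^{m}(t,z)|\le\overline{\sigma}^{2}\cdot\frac{d^{2}}{2}||\Pi||_{\infty}(1+\frac{\lambda_{max}}{\lambda_{min}})\cdot\bigl(\sum_{a,b}|\partial_{x}w_{ia}|_{L^{2}}|\partial_{x}w_{jb}|_{L^{2}}\bigr)$, independent of $t$, $z$ and $\epsilon$.

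For $K_{\epsilon,1}^{m}$ the argument is identical: I would use Lemma \ref{lem:Rbounded} to bound $R_{\epsilon}((\sum_{k}z_{k}w_{k})^{+}(x))\le\frac{1}{\lambda_{min}}$, Assumption (B) to bound the scalar factor $\tilde{\sigma}_{Dup}(t,x)\bigl(\tilde{\sigma}_{Dup}(t,x)+2\partial_{x}\tilde{\sigma}_{Dup}(t,x)\bigr)\le\overline{\sigma}\bigl(\overline{\sigma}+2||\tilde{\sigma}_{Dup}||_{L^{\infty}([0,T],W^{1,\infty}(\mathbb{R}))}\bigr)$, and the fact that $\Pi\Lambda$ has entries bounded by $||\Pi||_{\infty}\lambda_{max}$; then Cauchy--Schwarz gives $|K_{\epsilon,1,ij}^{m}(t,z)|\le C(m)\,|\partial_{x}w_{i}|_{d}\,|w_{j}|_{d}$ for a constant $C(m)$ independent of $t$, $z$, $\epsilon$. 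There is no genuine obstacle here; the only point to keep in mind is that the uniformity in $\epsilon$ and $z$ is precisely what Lemmas \ref{lem:Rbounded} and \ref{lem:Bepsbounded} provide, since those bounds hold on all of $(\mathbb{R}^{+})^{d}$ with constants independent of $\epsilon$, so the specific argument $(\sum_{k}z_{k}w_{k})^{+}(x)$ never enters the estimate, while the $W^{1,\infty}$ regularity of $\tilde{\sigma}_{Dup}$ from Assumption (B) handles the time-and-space-dependent scalar factors; the dependence on $m$ enters only through the $L^{2}$-norms of the finitely many $\partial_{x}w_{k}$ and $w_{k}$.
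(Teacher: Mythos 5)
Your proof is correct and follows essentially the same route as the paper: both bound the scalar factors via Assumption (B), Lemma \ref{lem:Rbounded} and Lemma \ref{lem:Bepsbounded} uniformly in $t$, $z$ and $\epsilon$, and then conclude using the integrability of the products $\partial_{x}w_{ia}\,\partial_{x}w_{jb}$ and $w_{ja}\,\partial_{x}w_{ib}$ (which you justify, slightly more explicitly than the paper, by Cauchy--Schwarz). No gaps.
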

\begin{proof}
Using Assumption $(B)$, Lemma \ref{lem:Rbounded} and Lemma \ref{lem:Bepsbounded},
for $t\in[0,T]$, $x\in\mathbb{R}$, $\rho\in\mathcal{D}$, we have

\begin{eqnarray*}
\left|\frac{1}{2}R_{\epsilon}(\rho)\tilde{\sigma}_{Dup}(t,x)\left(\tilde{\sigma}_{Dup}(t,x)+2\partial_{x}\tilde{\sigma}_{Dup}(t,x)\right)\right| & \leq & \frac{1}{2\lambda_{min}}\overline{\sigma}(\overline{\sigma}+2||\partial_{x}\tilde{\sigma}_{Dup}||_{\infty}),\\
||A_{\epsilon}(\rho)\tilde{\sigma}_{Dup}^{2}(t,x)||_{\infty} & \leq & \frac{1}{2}\left(1+\frac{\lambda_{max}}{\lambda_{min}}\right)\overline{\sigma}^{2}.
\end{eqnarray*}
This is sufficient to show that $K_{\epsilon,1}^{m}$ and $K_{\epsilon,2}^{m}$
are uniformly bounded, as the functions $\left(w_{ik}\partial_{x}w_{jl}\right)_{1\leq i,j\leq m,1\leq k,l\leq d}$
and $\left(\partial_{x}w_{ik}\partial_{x}w_{jl}\right)_{1\leq i,j\leq m,1\leq k,l\leq d}$
belong to $L^{1}(\mathbb{R})$. \end{proof}
\begin{lem}
\label{lem:F1F2localLip}Under Assumption $(B)$, for $m\geq1$, the
functions $F_{\epsilon,1}^{m}$ and $F_{\epsilon,2}^{m}$ are locally
Lipschitz in $z$, uniformly in $t\in[0,T]$.
\end{lem}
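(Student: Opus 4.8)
The plan is to mimic the argument used for Lemma \ref{lem:FespLocalLipschitz} (postponed to Appendix \ref{sec:AppendixA}), the only new ingredients being the extra factors involving $\tilde{\sigma}_{Dup}$, the scalar $R_{\epsilon}$, and the explicit dependence on $t$, all of which are harmless thanks to Assumption $(B)$. First I would record that both $A_{\epsilon}$ and $R_{\epsilon}$ are locally Lipschitz on $(\mathbb{R}_{+})^{d}$: each coefficient of $M_{\epsilon}$ (resp. the scalar $R_{\epsilon}$) is the ratio of a polynomial in $\rho$ by the function $\rho\mapsto\epsilon^{2}\vee(\sum_{l}\lambda_{l}\rho_{l})^{2}$ (resp. $\rho\mapsto\epsilon\vee(\sum_{l}\lambda_{l}\rho_{l})$), which is locally Lipschitz and bounded below by $\epsilon^{2}>0$ (resp. $\epsilon>0$); hence these quotients, and therefore $A_{\epsilon}=\tfrac{1}{2}(I_{d}+M_{\epsilon})$ and $R_{\epsilon}$, are locally Lipschitz.

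Fix $m\geq1$ and $r>0$. By the Sobolev embedding $H^{1}(\mathbb{R})\hookrightarrow L^{\infty}(\mathbb{R})$, there is a constant $c_{m}$ with $\sup_{x\in\mathbb{R}}\big(\sum_{k=1}^{m}|w_{k}(x)|^{2}\big)^{1/2}\leq c_{m}$. Consequently, for $|z|\leq r$ and every $x\in\mathbb{R}$ the vector $\big(\sum_{k=1}^{m}z_{k}w_{k}\big)^{+}(x)$ lies in the fixed ball $\{\rho\in(\mathbb{R}_{+})^{d}:|\rho|\leq rc_{m}\}$, on which $A_{\epsilon}$ and $R_{\epsilon}$ are Lipschitz with some constant $L_{\epsilon}(m,r)$; and since $t\mapsto t^{+}$ is $1$-Lipschitz, for $|z|,|z'|\leq r$ one has, uniformly in $x$,
\[
\Big|\big(\textstyle\sum_{k}z_{k}w_{k}\big)^{+}(x)-\big(\textstyle\sum_{k}z'_{k}w_{k}\big)^{+}(x)\Big|\leq\Big|\textstyle\sum_{k}(z_{k}-z'_{k})w_{k}(x)\Big|\leq c_{m}|z-z'|.
\]
Combining this with Assumption $(B)$ — which bounds $\tilde{\sigma}_{Dup}$ and $\partial_{x}\tilde{\sigma}_{Dup}$ in $L^{\infty}([0,T]\times\mathbb{R})$, uniformly in $t$ — and with the Cauchy--Schwarz inequality to bound the spatial integrals $\int_{\mathbb{R}}|\partial_{x}w_{ia}||\partial_{x}w_{jb}|\,dx$ and $\int_{\mathbb{R}}|\partial_{x}w_{ia}||w_{jb}|\,dx$ by products of $L^{2}$ norms, one obtains for $|z|,|z'|\leq r$ and all $t\in[0,T]$
\[
\big\|K_{\epsilon,1}^{m}(t,z)-K_{\epsilon,1}^{m}(t,z')\big\|_{\infty}+\big\|K_{\epsilon,2}^{m}(t,z)-K_{\epsilon,2}^{m}(t,z')\big\|_{\infty}\leq C_{\epsilon}(m,r)\,|z-z'|,
\]
so $K_{\epsilon,1}^{m}$ and $K_{\epsilon,2}^{m}$ are locally Lipschitz in $z$, uniformly in $t$.

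Finally, since $F_{\epsilon,k}^{m}(t,z)=(W^{(m)})^{-1}K_{\epsilon,k}^{m}(t,z)z$ for $k=1,2$, and since by Lemma \ref{lem:Kbounded} the maps $K_{\epsilon,k}^{m}$ are bounded while $z\mapsto z$ is bounded on $\{|z|\leq r\}$, the product $z\mapsto K_{\epsilon,k}^{m}(t,z)z$ is locally Lipschitz in $z$ uniformly in $t$ (a product of a bounded locally Lipschitz map and a bounded Lipschitz map), and left multiplication by the constant matrix $(W^{(m)})^{-1}$ preserves this. I expect the only genuinely delicate point to be making the Lipschitz estimate for $A_{\epsilon}\big((\sum_{k}z_{k}w_{k})^{+}(\cdot)\big)$ uniform in the space variable $x$; this is precisely what the embedding $H^{1}(\mathbb{R})\hookrightarrow L^{\infty}(\mathbb{R})$ secures, by confining the arguments of $A_{\epsilon}$ and $R_{\epsilon}$ to a ball independent of $x$ as soon as $z$ ranges over a bounded set, after which everything is routine bookkeeping.
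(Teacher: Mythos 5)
Your proof is correct and follows essentially the same route as the paper's: reduce to the local Lipschitz continuity of $K_{\epsilon,1}^{m}$ and $K_{\epsilon,2}^{m}$, use the local Lipschitz property of $A_{\epsilon}$ and $R_{\epsilon}$ composed with $\rho\mapsto\rho^{+}$ together with the embedding $H^{1}(\mathbb{R})\hookrightarrow L^{\infty}(\mathbb{R})$ to confine the arguments to a fixed ball, and absorb the $t$-dependence via the uniform bounds on $\tilde{\sigma}_{Dup}$ and $\partial_{x}\tilde{\sigma}_{Dup}$ from Assumption $(B)$ and the $L^{1}$ integrability of the products of $w$'s and their derivatives. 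No gaps.
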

The proof of Lemma \ref{lem:F1F2localLip} is similar to the proof
of Lemma \ref{lem:FespLocalLipschitz} and is postponed to Appendix
A.

\begin{lem}
\label{lem:VFinEpsM}Under Assumption (B), $V_{Fin,\epsilon}^{m}(\mu)$
has a unique solution.\end{lem}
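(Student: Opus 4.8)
The plan is to transcribe the proof of Lemma \ref{lem:existencetoODEs} almost verbatim. Solving $V_{Fin,\epsilon}^{m}(\mu)$ is equivalent to solving the ODE $(g_{\epsilon}^{m})'(t)=G_{\epsilon}^{m}(t,g_{\epsilon}^{m}(t))$, $g_{\epsilon}^{m}(0)=g_{\epsilon,0}^{m}$, where $G_{\epsilon}^{m}(t,z)=-F_{\epsilon,1}^{m}(t,z)-F_{\epsilon,2}^{m}(t,z)+F_{\epsilon,3}^{m}(z)$. The one genuine difference with the autonomous case treated in Lemma \ref{lem:existencetoODEs} is that $G_{\epsilon}^{m}$ is now only \emph{measurable} in $t$, since under Assumption $(B)$ the function $\tilde{\sigma}_{Dup}$ is merely of class $L^{\infty}([0,T],W^{1,\infty}(\mathbb{R}))$ in the time variable; consequently I would invoke the Carathéodory version of the Picard--Lindelöf theorem rather than the classical Cauchy--Lipschitz theorem, which yields a unique \emph{absolutely continuous} maximal solution (the equality \eqref{eq:VFinEspVar} then holding for a.e.\ $t$, which matches the requirement $g_{\epsilon,i}^{m}\in C^{0}([0,T],\mathbb{R})$).

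To apply the Carathéodory theorem I would check three points. First, for each fixed $z\in\mathbb{R}^{m}$, the map $t\mapsto G_{\epsilon}^{m}(t,z)$ is measurable: indeed $K_{\epsilon,1}^{m}(t,\cdot)$ and $K_{\epsilon,2}^{m}(t,\cdot)$ depend on $t$ only through $\tilde{\sigma}_{Dup}(t,\cdot)$ and $\partial_{x}\tilde{\sigma}_{Dup}(t,\cdot)$, which are measurable in $t$ by Assumption $(B)$, while $F_{\epsilon,3}^{m}$ does not depend on $t$. Second, for each fixed $t$, the map $z\mapsto G_{\epsilon}^{m}(t,z)$ is locally Lipschitz, with Lipschitz constant locally bounded uniformly in $t\in[0,T]$: this is Lemma \ref{lem:F1F2localLip} for $F_{\epsilon,1}^{m}$ and $F_{\epsilon,2}^{m}$, and $F_{\epsilon,3}^{m}$ is linear in $z$, hence globally Lipschitz. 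Third, using Lemmas \ref{lem:Rbounded}, \ref{lem:Bepsbounded} and \ref{lem:Kbounded}, together with Assumption $(B)$ and the bound $\|q_{ij}\|_{\infty}\le\overline{q}$, all the coefficients of $\left(W^{(m)}\right)^{-1}K_{\epsilon,k}^{m}(t,\cdot)$ for $k=1,2,3$ are bounded by a constant $\gamma_{m}$ independent of $t\in[0,T]$; since each $K_{\epsilon,k}^{m}(t,z)$ acts linearly on $z$, this gives the affine bound $|G_{\epsilon}^{m}(t,z)|\le C_{m}|z|$ for all $t\in[0,T]$ and $z\in\mathbb{R}^{m}$, with $C_{m}$ depending only on $m$ (not on $t$).

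The Carathéodory theorem then provides a unique maximal absolutely continuous solution $g_{\epsilon}^{m}$ on some interval $[0,T^{*})$, and it remains to prove $T^{*}>T$. This follows from the same Gronwall argument as in Lemma \ref{lem:existencetoODEs}: from $|(g_{\epsilon}^{m})'(t)|\le C_{m}|g_{\epsilon}^{m}(t)|$ a.e.\ on $[0,T^{*}\wedge T)$ one gets $|g_{\epsilon}^{m}(t)|\le|g_{\epsilon,0}^{m}|\,e^{C_{m}t}$, which stays finite up to $T$, so $g_{\epsilon}^{m}$ cannot blow up in finite time; hence $T^{*}>T$, $g_{\epsilon}^{m}$ is defined on $[0,T]$, and $p_{\epsilon}^{m}:=\sum_{j=1}^{m}g_{\epsilon,j}^{m}w_{j}$ is the unique solution of $V_{Fin,\epsilon}^{m}(\mu)$. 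I expect no real obstacle here beyond correctly setting up the Carathéodory framework in place of classical Cauchy--Lipschitz; once Lemmas \ref{lem:Rbounded}--\ref{lem:F1F2localLip} are available, the rest is a routine repetition of the argument already used for Lemma \ref{lem:existencetoODEs}.
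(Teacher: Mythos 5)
Your proposal is correct and follows essentially the same route as the paper: the paper likewise replaces the Cauchy--Lipschitz argument of Lemma \ref{lem:existencetoODEs} by Carath\'eodory's theorem (using Lemma \ref{lem:F1F2localLip} and the Lipschitz property of $F_{\epsilon,3}^{m}$ for uniqueness of a maximal absolutely continuous solution), and then uses the uniform boundedness of the matrices $\left(W^{(m)}\right)^{-1}K_{\epsilon,k}^{m}$ together with Gronwall's lemma to rule out blow-up and conclude that the solution exists on all of $[0,T]$.
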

\begin{proof}
In addition to Lemma \ref{lem:F1F2localLip}, $F_{\epsilon,3}$ is
clearly a Lipschitz function. Therefore the function $G_{\epsilon}$
is locally Lipschitz in $z$ uniformly in $t$. Caratheodory's theorem
(see e.g. \cite[Theorems 5.2, 5.3]{hale2009ordinary}) gives the existence
of a unique maximal absolutely continuous solution $g_{\epsilon}^{m}$
on an interval $[0,T^{*})$, with $T^{*}>\text{0}$. In addition,
as the function $(t,z)\rightarrow-W^{-1}K_{\epsilon,1}(t,z)-W^{-1}K_{\epsilon,2}(t,z)+W^{-1}K_{\epsilon,3}$
is uniformly bounded on $\mathbb{R}_{+}\times\mathbb{R}^{m}$, we
conclude as in the proof of Lemma \ref{lem:existencetoODEs}, using
Gronwall's lemma, that $T^{*}=\infty$. Consequently, $g_{\epsilon}^{m}$
is defined on $[0,T]$, and there exists a unique solution to $V_{Fin,\epsilon}^{m}(\mu)$.
\end{proof}
We now compute energy estimates on the solution $p_{\epsilon}^{m}$
to $V_{Fin,\epsilon}^{m}(\mu)$, for $m\ge1$ and $\epsilon>0$. Taking
$p_{\epsilon}^{m}$ as a test function in (\ref{eq:VFinEspVar}),
we have
\begin{eqnarray*}
\frac{1}{2}\frac{d}{dt}|\sqrt{\Pi}p_{\epsilon}^{m}|_{d}^{2}-(Q\Pi p_{\epsilon}^{m},p_{\epsilon}^{m})_{d} & = & -\left(\partial_{x}p_{\epsilon}^{m},\tilde{\sigma}_{Dup}^{2}\Pi A_{\epsilon}\left(\left(p_{\epsilon}^{m}\right)^{+}\right)\partial_{x}p_{\epsilon}^{m}\right)_{d}\\
 & \ - & \left(\partial_{x}p_{\epsilon}^{m},\left(\frac{1}{2}R_{\epsilon}(\left(p_{\epsilon}^{m}\right)^{+}(t))\tilde{\sigma}_{Dup}\left(\tilde{\sigma}_{Dup}+\partial_{x}\tilde{\sigma}_{Dup}\right)\Pi\Lambda-r\Pi\right)p_{\epsilon}^{m}(t)\right)_{d}.
\end{eqnarray*}
For $\eta>0$, by Young's inequality we have that 
\begin{equation}
\left|\left(\partial_{x}p_{\epsilon}^{m},\left(\frac{1}{2}R_{\epsilon}(\left(p_{\epsilon}^{m}\right)^{+})\tilde{\sigma}_{Dup}\left(\tilde{\sigma}_{Dup}+2\partial_{x}\tilde{\sigma}_{Dup}\right)\Pi\Lambda-r\Pi\right)p_{\epsilon}^{m}\right)_{d}\right|\leq C\left(\eta|\partial_{x}p_{\epsilon}^{m}|_{d}^{2}+\frac{1}{4\eta}|p_{\epsilon}^{m}|_{d}^{2}\right),\label{eq:Young}
\end{equation}
where $C:=d||\Pi||_{\infty}\left(\frac{\lambda_{max}}{2\lambda_{min}}\overline{\sigma}(\overline{\sigma}+2||\partial_{x}\tilde{\sigma}||_{\infty})+r\right)$.
As for $\epsilon>0$, $\Pi A_{\epsilon}$ is uniformly coercive with
coefficient $\kappa$, and $\tilde{\sigma}_{Dup}$ is bounded from
below by $\underline{\sigma}>0$, so
\begin{eqnarray*}
\left(\partial_{x}p_{\epsilon}^{m},\tilde{\sigma}_{Dup}^{2}\Pi A_{\epsilon}\left(\left(p_{\epsilon}^{m}\right)^{+}\right)\partial_{x}p_{\epsilon}^{m}\right)_{d} & \geq & \kappa\underline{\sigma}^{2}|\partial_{x}p_{\epsilon}^{m}|_{d}^{2}.
\end{eqnarray*}
We then choose $\eta=\frac{\kappa}{2C}\underbar{\ensuremath{\sigma}}^{2}$,
so that $C\eta=\frac{\kappa}{2}\underline{\sigma}^{2}$. Moreover,
for $b:=d^{2}\overline{q}||\Pi||_{\infty}$ we have that 
\[
\forall\xi\in\mathbb{R}^{d},|\xi^{*}Q\Pi\xi|\leq b\xi^{*}\xi.
\]
Gathering the previous inequalities we have that 
\begin{eqnarray}
\frac{1}{2}\frac{d}{dt}|\sqrt{\Pi}p_{\epsilon}^{m}|_{d}^{2}-(Q\Pi p_{\epsilon}^{m},p_{\epsilon}^{m})_{d} & \leq & -\kappa\underline{\sigma}^{2}|\partial_{x}p_{\epsilon}^{m}|_{d}^{2}+\frac{\kappa}{2}\underbar{\ensuremath{\sigma}}^{2}|\partial_{x}p_{\epsilon}^{m}|_{d}^{2}+\frac{C^{2}}{2\kappa\underline{\sigma}^{2}}|p_{\epsilon}^{m}|_{d}^{2}.\label{eq:FinenergyODE}
\end{eqnarray}
Consequently, we have

\[
\frac{1}{2}\frac{d}{dt}|\sqrt{\Pi}p_{\epsilon}^{m}|_{d}^{2}-\left(b+\frac{C^{2}}{2\kappa\underline{\sigma}^{2}}\right)|p_{\epsilon}^{m}|_{d}^{2}\leq-\frac{\kappa}{2}\underline{\sigma}^{2}|\partial_{x}p_{\epsilon}^{m}|_{d}^{2}\leq0.
\]
As $|p_{\epsilon}^{m}|_{d}^{2}\leq\frac{1}{l_{min}(\Pi)}|\sqrt{\Pi}p_{\epsilon}^{m}|_{d}^{2}$,
we also have
\begin{equation}
\frac{1}{2}\frac{d}{dt}|\sqrt{\Pi}p_{\epsilon}^{m}|_{d}^{2}-\frac{1}{l_{min}(\Pi)}\left(b+\frac{C^{2}}{2\kappa\underline{\sigma}^{2}}\right)|\sqrt{\Pi}p_{\epsilon}^{m}|_{d}^{2}\leq-\frac{\kappa}{2}\underline{\sigma}^{2}|\partial_{x}p_{\epsilon}^{m}|_{d}^{2}\leq0.\label{eq:FinanceEstimODE}
\end{equation}
Integrating the inequality (\ref{eq:FinanceEstimODE}), and using
the fact that 
\[
l_{min}(\Pi)|p_{\epsilon}^{m}|_{d}^{2}\leq|\sqrt{\Pi}p_{\epsilon}^{m}|_{d}^{2}\leq l_{max}(\Pi)|p_{\epsilon}^{m}|_{d}^{2},
\]
we obtain the following lemma.
\begin{lem}
\label{lem:EstimatesFin}The following energy estimates hold.
\begin{eqnarray}
\underset{t\in[0,T]}{\sup}|p_{\epsilon}^{m}(t)|_{d}^{2} & \leq & \frac{l_{max}(\Pi)}{l_{min}(\Pi)}e^{\frac{2}{l_{min}(\Pi)}\left(b+\frac{C^{2}}{2\kappa\underline{\sigma}^{2}}\right)T}|p_{0}|_{d}^{2},\label{eq:energyestFinM}\\
\int_{0}^{T}|\partial_{x}p_{\epsilon}^{m}(t)|_{d}^{2}dt & \leq & \frac{l_{max}(\Pi)}{\kappa\underbar{\ensuremath{\sigma}}^{2}}e^{\frac{2}{l_{min}(\Pi)}\left(b+\frac{C^{2}}{2\kappa\underline{\sigma}^{2}}\right)T}|p_{0}|_{d}^{2},\label{eq:energyest2FinM}\\
\int_{0}^{T}||p_{\epsilon}^{m}||_{d}^{2}dt & \leq & \left(T\frac{l_{max}(\Pi)}{l_{min}(\Pi)}+\frac{l_{max}(\Pi)}{\kappa\underbar{\ensuremath{\sigma}}^{2}}\right)e^{\frac{2}{l_{min}(\Pi)}\left(b+\frac{C^{2}}{2\kappa\underline{\sigma}^{2}}\right)T}|p_{0}|_{d}^{2}.\label{eq:energyest3FinM}
\end{eqnarray}

\end{lem}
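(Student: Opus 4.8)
The plan is to integrate the pointwise-in-time inequality (\ref{eq:FinanceEstimODE}) by a Gronwall argument, taking care to keep all constants independent of $\epsilon$ and $m$. Write $D:=b+\frac{C^{2}}{2\kappa\underline{\sigma}^{2}}$ and $y(t):=|\sqrt{\Pi}p_{\epsilon}^{m}(t)|_{d}^{2}$; since $g_{\epsilon}^{m}$ is absolutely continuous on $[0,T]$ (Lemma \ref{lem:VFinEpsM}) and $\Pi$ is a fixed matrix, $y$ is absolutely continuous, which legitimises the chain rule already used in deriving (\ref{eq:FinanceEstimODE}). First, discarding the nonpositive term $-\frac{\kappa}{2}\underline{\sigma}^{2}|\partial_{x}p_{\epsilon}^{m}|_{d}^{2}$ on the right-hand side of (\ref{eq:FinanceEstimODE}) gives $y'(t)\leq\frac{2D}{l_{min}(\Pi)}y(t)$, so Gronwall's lemma yields $y(t)\leq e^{\frac{2D}{l_{min}(\Pi)}t}y(0)$ on $[0,T]$. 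Combining this with $y(0)=|\sqrt{\Pi}p_{0}^{m}|_{d}^{2}\leq l_{max}(\Pi)|p_{0}^{m}|_{d}^{2}\leq l_{max}(\Pi)|p_{0}|_{d}^{2}$ (the last inequality because $p_{0}^{m}$ is the orthogonal projection of $p_{0}$ onto $\mathrm{span}(w_{1},\dots,w_{m})$) and with $l_{min}(\Pi)|p_{\epsilon}^{m}(t)|_{d}^{2}\leq y(t)$, then taking the supremum over $t\in[0,T]$, one obtains (\ref{eq:energyestFinM}).

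For (\ref{eq:energyest2FinM}) I would not integrate (\ref{eq:FinanceEstimODE}) directly, since that would bring an extra factor $1+\frac{2DT}{l_{min}(\Pi)}$ into the constant; instead I would introduce the weighted quantity $z(t):=e^{-\frac{2D}{l_{min}(\Pi)}t}y(t)$, for which (\ref{eq:FinanceEstimODE}) gives $z'(t)\leq-\kappa\underline{\sigma}^{2}e^{-\frac{2D}{l_{min}(\Pi)}t}|\partial_{x}p_{\epsilon}^{m}(t)|_{d}^{2}\leq0$. Integrating over $[0,T]$ and dropping $z(T)\geq0$ yields $\kappa\underline{\sigma}^{2}\int_{0}^{T}e^{-\frac{2D}{l_{min}(\Pi)}t}|\partial_{x}p_{\epsilon}^{m}(t)|_{d}^{2}dt\leq z(0)=y(0)\leq l_{max}(\Pi)|p_{0}|_{d}^{2}$, and since $e^{-\frac{2D}{l_{min}(\Pi)}t}\geq e^{-\frac{2D}{l_{min}(\Pi)}T}$ on $[0,T]$ this gives exactly (\ref{eq:energyest2FinM}). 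Finally (\ref{eq:energyest3FinM}) follows from $\int_{0}^{T}||p_{\epsilon}^{m}||_{d}^{2}dt=\int_{0}^{T}|p_{\epsilon}^{m}|_{d}^{2}dt+\int_{0}^{T}|\partial_{x}p_{\epsilon}^{m}|_{d}^{2}dt$ by bounding the first integral by $T\sup_{[0,T]}|p_{\epsilon}^{m}|_{d}^{2}$ via (\ref{eq:energyestFinM}) and the second by (\ref{eq:energyest2FinM}).

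There is no real difficulty here: the argument is essentially a one-line Gronwall estimate plus the elementary comparisons $l_{min}(\Pi)|v|_{d}^{2}\leq|\sqrt{\Pi}v|_{d}^{2}\leq l_{max}(\Pi)|v|_{d}^{2}$ already invoked in the text. The only point deserving attention — and the reason the lemma is isolated — is that the bounds must be uniform in $\epsilon>0$ and $m\geq1$, which is why one keeps the coercivity term through the computation and uses the exponential weight $e^{-\frac{2D}{l_{min}(\Pi)}t}$ rather than a crude integration; this uniformity is precisely what will make the later passages to the limit $m\to\infty$ and then $\epsilon\to0$ possible.
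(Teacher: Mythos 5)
Your proof is correct and follows essentially the same route as the paper, which simply integrates the differential inequality (\ref{eq:FinanceEstimODE}) together with the comparison $l_{min}(\Pi)|v|_{d}^{2}\leq|\sqrt{\Pi}v|_{d}^{2}\leq l_{max}(\Pi)|v|_{d}^{2}$; your integrating-factor bookkeeping is just a more explicit way of carrying out that integration, and it reproduces exactly the stated constants.
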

Now we can prove existence to $V_{Fin,\epsilon}(\mu)$.
\begin{prop}
There exists a solution $p_{\epsilon}$ to $(V_{Fin,\epsilon}(\mu))$.\end{prop}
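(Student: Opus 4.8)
The plan is to follow closely the proof of Proposition \ref{prop:ExistencetoVeps}, the extra terms (the drift $-r\partial_x v$, the $R_\epsilon$-term and the reaction term $Q$) being of lower order and handled routinely. First, by Lemma \ref{lem:EstimatesFin} the Galerkin approximations $p_\epsilon^m$ solving $V_{Fin,\epsilon}^m(\mu)$ are bounded in $L^2([0,T];H)\cap L^\infty([0,T];L)$ by a constant independent of $m$, so, up to extraction of a subsequence, $p_\epsilon^m \to p_\epsilon$ weakly in $L^2([0,T];H)$ and weakly-* in $L^\infty([0,T];L)$ for some $p_\epsilon$ in that space, which therefore inherits the regularity (\ref{eq:pL2FinL2eps}).

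Next I would obtain the compactness needed to pass to the limit in the nonlinear coefficients $A_\epsilon((p_\epsilon^m)^+)$ and $R_\epsilon((p_\epsilon^m)^+)$. Using Assumption $(B)$ together with Lemmas \ref{lem:Bepsbounded} and \ref{lem:Rbounded}, the spatial operator appearing in (\ref{eq:fvarFinL2eps}) maps $q\in H$ to an element of $H'$ with norm bounded by $C\|q\|_d$ for a constant depending only on the data; hence $\left(\frac{d}{dt}p_\epsilon^m\right)_{m\geq1}$ is bounded in $L^2([0,T];H')$ in the sense of \cite[III, Lemma 1.1]{temam}. By \cite[III, Theorem 2.3]{temam}, for every bounded open $\mathcal{O}\subset\mathbb{R}$ a subsequence of $(p_\epsilon^m)$ converges to $p_\epsilon$ strongly in $L^2([0,T];L(\mathcal{O}))$; by diagonal extraction along $\mathcal{O}_n=(-n,n)$ one gets a further subsequence $p_\epsilon^{\phi(m)}\to p_\epsilon$ strongly in $L^2_{loc}$ and a.e. on $[0,T]\times\mathbb{R}$, so that $A_\epsilon((p_\epsilon^{\phi(m)})^+)\to A_\epsilon(p_\epsilon^+)$ and $R_\epsilon((p_\epsilon^{\phi(m)})^+)\to R_\epsilon(p_\epsilon^+)$ a.e. by continuity of $A_\epsilon$ and $R_\epsilon$.

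Then I would pass to the limit in (\ref{eq:VFinEspVar}) tested against $\psi(t)w_j$ with $\psi\in C^1([0,T])$, $\psi(T)=0$. The terms $\frac{d}{dt}(v,p_\epsilon^m)_d$, $-r(\partial_x v,p_\epsilon^m)_d$ and $(Qv,p_\epsilon^m)_d$ pass by the weak $L^2([0,T];L)$ convergence; the $R_\epsilon$-term passes by combining a.e. convergence of the (bounded) coefficient with dominated convergence and the $L^2$ convergence of $p_\epsilon^{\phi(m)}$; and the quasilinear term $(\partial_x v,\tilde\sigma_{Dup}^2 A_\epsilon((p_\epsilon^{\phi(m)})^+)\partial_x p_\epsilon^{\phi(m)})_d$ is treated exactly as in (\ref{eq:ConvergenceApp1})--(\ref{eq:ToyEpsFVconvergence}): one splits it into a term handled by weak $L^2$ convergence of $\partial_x p_\epsilon^{\phi(m)}$ against the fixed $L^2$ function $t\mapsto\psi(t)\tilde\sigma_{Dup}^2 A_\epsilon^*((p_\epsilon^+)(t))\Pi\partial_x w_j$, and a remainder controlled via the Cauchy-Schwarz inequality by the uniform bound (\ref{eq:energyest2FinM}) on $\|\partial_x p_\epsilon^{\phi(m)}\|_{L^2}$ times an integral going to zero by dominated convergence (boundedness of $A_\epsilon$ from Lemma \ref{lem:Bepsbounded}, plus a.e. convergence of the coefficient). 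Since $(w_j)_{j\geq1}$ is total in $H$, we obtain (\ref{eq:fvarFinL2eps}) for all $v\in H$ in the distributional sense, as well as its integrated form carrying the boundary term $\psi(0)(v,p_0)_d$.

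Finally, for $v\in H$ the maps $t\mapsto(v,p_\epsilon(t))_d$ and $t\mapsto r(\partial_x v,p_\epsilon(t))_d-\left(\partial_x v,\tfrac12 R_\epsilon(p_\epsilon^+)\tilde\sigma_{Dup}(\tilde\sigma_{Dup}+2\partial_x\tilde\sigma_{Dup})\Lambda p_\epsilon+\tilde\sigma_{Dup}^2 A_\epsilon(p_\epsilon^+)\partial_x p_\epsilon\right)_d+(Qv,p_\epsilon)_d$ both lie in $L^2((0,T))$, so $t\mapsto(v,p_\epsilon(t))_d\in H^1((0,T))$ and \cite[Corollary 8.10]{Brezis} gives an integration-by-parts identity with boundary term $\psi(0)(v,p_\epsilon(0))_d$; comparing it with the limiting identity and choosing $\psi(0)\neq0$ yields $(v,p_\epsilon(0)-p_0)_d=0$ for all $v\in H$, i.e.\ (\ref{eq:cond0FinL2eps}). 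Since $p_\epsilon\in L^2([0,T];H)$ and the full right-hand flux belongs to $L^2([0,T];L)$, \cite[III, Lemma 1.2]{temam} gives a representative $p_\epsilon\in C([0,T],L)$. The main obstacle, as in the toy case, is the passage to the limit in the quasilinear diffusion term $A_\epsilon((p_\epsilon^m)^+)\partial_x p_\epsilon^m$, which is neither purely weak nor purely strong and forces the simultaneous use of compactness for $p_\epsilon^m$ and weak convergence of $\partial_x p_\epsilon^m$; keeping track of the Assumption $(B)$ bounds on $\tilde\sigma_{Dup}$ and $\partial_x\tilde\sigma_{Dup}$ in the constants is a bookkeeping matter only.
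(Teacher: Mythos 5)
Your proposal is correct and follows essentially the same route as the paper: Galerkin approximations with the uniform energy estimates of Lemma \ref{lem:EstimatesFin}, an $L^{2}([0,T],H')$ bound on the time derivative yielding local strong and a.e.\ convergence, passage to the limit term by term (linear and reaction terms by weak convergence, the $R_{\epsilon}$ and quasilinear terms by the splitting of (\ref{eq:ConvergenceApp1})--(\ref{eq:ToyEpsFVconvergence})), and recovery of the initial condition and the $C([0,T],L)$ representative exactly as at the end of the proof of Proposition \ref{prop:ExistencetoVeps}. No gaps.
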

\begin{proof}
Given $\epsilon>0$, the family $\left(p_{\epsilon}^{m}\right)_{m\geq0}$
has standard energy estimates with bounds independent from $m$. We
also check that (\ref{eq:VFinEspVar}) rewrites for $1\leq j\leq m$
\[
\frac{d}{dt}\left(w_{j},\Pi p_{\epsilon}^{m}\right)_{d}+\langle w_{j},Gp_{\epsilon}^{m}\rangle=0
\]
where for $q\in H$, $G_{\epsilon}q\in H'$, and 
\begin{eqnarray*}
\forall v\in H,\langle v,G_{\epsilon}q\rangle & = & -r(\partial_{x}v,\Pi q)_{d}+\left(\partial_{x}v,\left(\frac{1}{2}R_{\epsilon}\left(q^{+}\right)\tilde{\sigma}_{Dup}\left(\tilde{\sigma}_{Dup}+2\partial_{x}\tilde{\sigma}_{Dup}\right)\right)\Pi\Lambda p\right)_{d}\\
 & \ + & \left(\partial_{x}v,\tilde{\sigma}_{Dup}^{2}\Pi A_{\epsilon}\left(q^{+}\right)\partial_{x}q\right)_{d}-(Q\Pi v,q)_{d}.
\end{eqnarray*}
We see that for almost every $t\in[0,T]$, 
\begin{equation}
||G_{\epsilon}p(t)||_{H'}\leq d||\Pi||_{\infty}\left(r+\frac{\lambda_{max}}{2\lambda_{min}}\overline{\sigma}(\overline{\sigma}+2||\partial_{x}\tilde{\sigma}_{Dup}||_{\infty})+\frac{d}{2}\left(1+\frac{\lambda_{max}}{\lambda_{min}}\right)\overline{\sigma}^{2}+d\overline{q}\right)||p(t)||_{H}.\label{eq:GpFin}
\end{equation}
By Lemma \ref{lem:EstimatesFin} the family $\left(p_{\epsilon}^{m}\right)_{m\geq1}$
is bounded in $L^{2}([0,T],H)$ and through the equality (\ref{eq:GpFin})
the family $\left(G_{\epsilon}p_{\epsilon}^{m}\right)_{m\geq1}$ is
bounded in $L^{2}([0,T],H')$. What follows is a repetition of arguments
used in the proof of Proposition \ref{prop:ExistencetoVeps}. We extract
a subsequence of $(p_{\epsilon}^{m})_{m\geq1}$, also denoted by $(p_{\epsilon}^{m})_{m\geq1}$
, and a function $p_{\epsilon}\geq0$ such that as $m\rightarrow\infty$,
\begin{eqnarray*}
p_{\epsilon}^{m} & \rightarrow & p_{\epsilon}\text{ {in} }L^{2}([0,T];H)\text{ weakly},\\
p_{\epsilon}^{m} & \rightarrow & p_{\epsilon}\text{ {in} }L^{\infty}([0,T];L)\text{ weakly-*,}\\
p_{\epsilon}^{m} & \rightarrow & p_{\epsilon}\text{ a.e. on \ensuremath{(0,T]\times\mathbb{R}}}.
\end{eqnarray*}
For $j\geq1$, $\psi\in C^{1}(\mathbb{R})$, with $\psi(T)=0$, and
$m\geq j$, we have: 
\begin{eqnarray*}
\int_{0}^{T}\psi(t)(Q\Pi w_{j}+r\Pi\partial_{x}w_{j},p_{\epsilon}^{m}(t))_{d}dt & = & -\int_{0}^{T}\psi'(t)(w_{j},\Pi p_{\epsilon}^{m}(t))_{d}dt-\psi(0)(w_{j},\Pi p_{\epsilon}^{m}(0))_{d}\\
 & \ + & \int_{0}^{T}\psi(t)\left(\partial_{x}w_{j},\tilde{\sigma}_{Dup}^{2}(t)\Pi A_{\epsilon}\left(\left(p_{\epsilon}^{m}\right)^{+}(t)\right)\partial_{x}p_{\epsilon}^{m}(t)\right)_{d}dt\\
 & \ + & \frac{1}{2}\int_{0}^{T}\psi(t)\left(\partial_{x}w_{j},R_{\epsilon}\left(\left(p_{\epsilon}^{m}\right)^{+}(t)\right)\tilde{\sigma}_{Dup}\left(\tilde{\sigma}_{Dup}+\partial_{x}\tilde{\sigma}_{Dup}\right)\Pi\Lambda p_{\epsilon}^{m}(t)\right)_{d}dt.
\end{eqnarray*}
The following convergences hold, using the same arguments as in the
proof of Proposition \ref{prop:ExistencetoVeps}.

\begin{eqnarray*}
\psi(0)(w_{j},\Pi p_{\epsilon}^{m}(0))_{d} & \underset{m\rightarrow\infty}{\rightarrow} & \psi(0)(w_{j},\Pi p_{0})_{d},\\
-\int_{0}^{T}\psi'(t)(w_{j},\Pi p_{\epsilon}^{m}(t))_{d}dt & \underset{m\rightarrow\infty}{\rightarrow} & -\int_{0}^{T}\psi'(t)(w_{j},\Pi p_{\epsilon}(t))_{d}dt,\\
\int_{0}^{T}\psi(t)(\partial_{x}w_{j},\tilde{\sigma}_{Dup}^{2}(t)\Pi A_{\epsilon}\left(\left(p_{\epsilon}^{m}\right)^{+}(t)\right)\partial_{x}p_{\epsilon}^{m}(t))_{d}dt & \underset{m\rightarrow\infty}{\rightarrow} & \int_{0}^{T}\psi(t)(\partial_{x}w_{j},\tilde{\sigma}_{Dup}^{2}(t)\Pi A_{\epsilon}\left(\left(p_{\epsilon}\right)^{+}(t)\right)\partial_{x}p_{\epsilon}(t))_{d}dt,
\end{eqnarray*}
We only explain how to deal with the additional terms. As $p_{\epsilon}^{m}\rightarrow p_{\epsilon}$
weakly in $L^{2}([0,T];L)$, we have that:
\begin{eqnarray*}
\int_{0}^{T}\psi(t)(Q\Pi w_{j}+r\Pi\partial_{x}w_{j},p_{\epsilon}^{m}(t))_{d}dt & \underset{m\rightarrow\infty}{\rightarrow} & \int_{0}^{T}\psi(t)(Q\Pi w_{j}+r\Pi\partial_{x}w_{j},p_{\epsilon}(t))_{d}dt,
\end{eqnarray*}
As the function $R_{\epsilon}$ is continuous and bounded on $\left(\mathbb{R}_{+}\right)^{d}$,
and as $p_{\epsilon}^{m}\underset{m\rightarrow\infty}{\rightarrow}p_{\epsilon}\text{ a.e. on \ensuremath{(0,T]\times\mathbb{R}}},$
we have that $R_{\epsilon}\left(\left(p_{\epsilon}^{m}\right)^{+}\right)\underset{m\rightarrow\infty}{\rightarrow}R_{\epsilon}\left(\left(p_{\epsilon}\right)^{+}\right)$
a.e. on $[0,T]\times\mathbb{R}$. As the bounds (\ref{eq:energyestFinM})-(\ref{eq:energyest3FinM})
are independent from $m$, we show, with the same arguments as for
(\ref{eq:ToyEpsFVconvergence}), that:

\begin{eqnarray*}
 &  & \int_{0}^{T}\psi(t)\left(\partial_{x}w_{j},R_{\epsilon}\left(\left(p_{\epsilon}^{m}\right)^{+}(t)\right)\tilde{\sigma}_{Dup}\left(\tilde{\sigma}_{Dup}+2\partial_{x}\tilde{\sigma}_{Dup}\right)\Pi\Lambda p_{\epsilon}^{m}(t)\right)_{d}dt\\
 &  & \underset{m\rightarrow\infty}{\rightarrow}\int_{0}^{T}\psi(t)\left(\partial_{x}w_{j},R_{\epsilon}\left(\left(p_{\epsilon}\right)^{+}(t)\right)\tilde{\sigma}_{Dup}\left(\tilde{\sigma}_{Dup}+2\partial_{x}\tilde{\sigma}_{Dup}\right)\Pi\Lambda p_{\epsilon}(t)\right)_{d}dt.
\end{eqnarray*}
As the family $\left(w_{j}\right)_{j\geq1}$ is total in $H$, and
the function $v\rightarrow\Pi v$ is a bijection from $H$ to $H$,
we conclude that:

\begin{eqnarray*}
\forall v\in H,\int_{0}^{T}\psi(t)(Qv+r\partial_{x}v,p_{\epsilon}(t))_{d}dt & = & -\int_{0}^{T}\psi'(t)(v,p_{\epsilon}(t))_{d}dt-\psi(0)(v,p_{0})\\
 & \ + & \int_{0}^{T}\psi(t)(\partial_{x}v,A_{\epsilon}\left(\left(p_{\epsilon}\right)^{+}(t)\right)\tilde{\sigma}_{Dup}^{2}(t)\partial_{x}p_{\epsilon}(t))_{d}dt\\
 & \ + & \frac{1}{2}\int_{0}^{T}\psi(t)\left(\partial_{x}v,R_{\epsilon}\left(\left(p_{\epsilon}\right)^{+}(t)\right)\tilde{\sigma}_{Dup}\left(\tilde{\sigma}_{Dup}+2\partial_{x}\tilde{\sigma}_{Dup}\right)\Lambda p_{\epsilon}(t)\right)_{d}dt.
\end{eqnarray*}
Thus we have checked that $p_{\epsilon}$ satisfies (\ref{eq:fvarFinL2eps}).
With the same arguments as the end of the proof of Proposition \ref{prop:ExistencetoVeps},
we can show that for $v\in H$, the function $t\rightarrow(v,p_{\epsilon}(t))_{d}$
belongs to $H^{1}(0,T)$, and that $p_{\epsilon}$ satisfies the initial
condition (\ref{eq:cond0FinL2eps}), so that $p_{\epsilon}$ is a
solution to $V_{Fin,\epsilon}(\mu)$.
\end{proof}
We now show that the solutions of $V_{Fin,\epsilon}(\mu)$ are non
negative.
\begin{prop}
\label{lem:VFinnonnegative}The solutions of $V_{Fin,\epsilon}(\mu)$
are non negative. \end{prop}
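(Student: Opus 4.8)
The plan is to adapt the proof of Proposition~\ref{prop:solutionarenonnegative}, testing the variational identity against the negative part of $p_\epsilon$ and showing that the extra first order and jump terms contribute only a Gronwall-type right hand side. Let $p_\epsilon$ be a solution to $V_{Fin,\epsilon}(\mu)$. As already noted, conditions (\ref{eq:pL2FinL2eps})-(\ref{eq:fvarFinL2eps}) imply $\frac{dp_\epsilon}{dt}\in L^2([0,T];H')$, so $p_\epsilon^-=(p_{\epsilon,1}^-,\dots,p_{\epsilon,d}^-)\in L^2([0,T];H)$ is an admissible test function in (\ref{eq:fvarFinL2eps}), and by \cite[Lemma~1.2, p.~261]{temam} the map $t\mapsto(p_\epsilon^-,p_\epsilon)_d$ is absolutely continuous with
\[
\frac12\frac{d}{dt}(p_\epsilon^-,p_\epsilon)_d-r(\partial_x p_\epsilon^-,p_\epsilon)_d+\Big(\partial_x p_\epsilon^-,\frac12 R_\epsilon(p_\epsilon^+)\tilde\sigma_{Dup}(\tilde\sigma_{Dup}+\partial_x\tilde\sigma_{Dup})\Lambda p_\epsilon\Big)_d+(\partial_x p_\epsilon^-,\tilde\sigma_{Dup}^2 A_\epsilon(p_\epsilon^+)\partial_x p_\epsilon)_d=(Qp_\epsilon^-,p_\epsilon)_d
\]
in the sense of distributions on $(0,T)$, where $(p_\epsilon^-,p_\epsilon)_d=|p_\epsilon^-|_d^2$ since $p_{\epsilon,i}^-p_{\epsilon,i}^+=0$.

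Next, one estimates the four remaining terms, using as in Proposition~\ref{prop:solutionarenonnegative} the pointwise identities $\partial_x p_{\epsilon,i}^-=1_{\{p_{\epsilon,i}<0\}}\partial_x p_{\epsilon,i}$, $p_{\epsilon,i}^+\partial_x p_{\epsilon,i}^-=0$ and $\partial_x p_{\epsilon,i}^-\partial_x p_{\epsilon,i}=(\partial_x p_{\epsilon,i}^-)^2$. The $r$-term vanishes because $(\partial_x p_{\epsilon,i}^-,p_{\epsilon,i})_1=\frac12\int_{\mathbb R}\partial_x\big((p_{\epsilon,i}^-)^2\big)dx=0$, the function $(p_{\epsilon,i}^-)^2$ belonging to $W^{1,1}(\mathbb R)$. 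In $A_\epsilon(p_\epsilon^+)\partial_x p_\epsilon$ only the diagonal entries survive the contraction with $\partial_x p_\epsilon^-$, since $M_{\epsilon,ij}(p_\epsilon^+)$ carries the factor $p_{\epsilon,i}^+$ for $i\neq j$; using moreover Inequality~(\ref{eq:inegalitepositive}) and $\tilde\sigma_{Dup}\geq\underline{\sigma}$, the diffusion term is bounded below by $\frac{\lambda_{min}\underline{\sigma}^2}{2\lambda_{max}}|\partial_x p_\epsilon^-|_d^2$. The drift term reduces to $\sum_i\int_{\mathbb R}\partial_x p_{\epsilon,i}^-\,c_{\epsilon,i}\,p_{\epsilon,i}^-\,dx$ with $c_{\epsilon,i}:=\frac12\lambda_i R_\epsilon(p_\epsilon^+)\tilde\sigma_{Dup}(\tilde\sigma_{Dup}+\partial_x\tilde\sigma_{Dup})$ uniformly bounded by some constant $C_1$ (Lemma~\ref{lem:Rbounded} and Assumption~$(B)$); since there is no $L^\infty$ control on $\partial_x c_{\epsilon,i}$, rather than integrating by parts one bounds it, for any $\eta>0$, by $C_1\big(\eta|\partial_x p_\epsilon^-|_d^2+\frac1{4\eta}|p_\epsilon^-|_d^2\big)$ via Young's inequality. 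Finally, writing $p_{\epsilon,i}=p_{\epsilon,i}^++p_{\epsilon,i}^-$ and using $q_{ij}\geq0$ for $i\neq j$, $p_{\epsilon,i}^+p_{\epsilon,i}^-=0$ and $||q_{ij}||_\infty\leq\overline q$, the jump term satisfies
\[
(Qp_\epsilon^-,p_\epsilon)_d=\sum_{i,j}\int_{\mathbb R}q_{ij}p_{\epsilon,j}^-p_{\epsilon,i}^+\,dx+\sum_{i,j}\int_{\mathbb R}q_{ij}p_{\epsilon,j}^-p_{\epsilon,i}^-\,dx\leq 0+\overline q\,d\,|p_\epsilon^-|_d^2.
\]

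Collecting these estimates and choosing $\eta$ so that $C_1\eta=\frac{\lambda_{min}\underline{\sigma}^2}{2\lambda_{max}}$ absorbs the gradient part of the drift bound into the diffusion term, one arrives at $\frac{d}{dt}|p_\epsilon^-|_d^2\leq C_2|p_\epsilon^-|_d^2$ on $(0,T)$ for a finite constant $C_2$. Since $p_0=(\alpha_1\mu_1,\dots,\alpha_d\mu_d)$ is non negative, its continuous representative gives $p_\epsilon^-(0)=p_0^-=0$, and Gronwall's lemma forces $|p_\epsilon^-(t)|_d^2=0$ for all $t\in[0,T]$, that is $p_\epsilon\geq0$. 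The only genuine obstacle beyond the pure-diffusion setting of Proposition~\ref{prop:solutionarenonnegative} is the first order term: it cannot be integrated by parts, so its gradient contribution must be absorbed into the uniformly elliptic diffusion term — this is precisely why one needs both $\tilde\sigma_{Dup}\geq\underline{\sigma}$ and the lower bound (\ref{eq:inegalitepositive}) on the diagonal of $A_\epsilon$ — and, unlike in Proposition~\ref{prop:solutionarenonnegative}, the conclusion then comes from Gronwall's lemma rather than from the monotonicity of $t\mapsto|p_\epsilon^-(t)|_d^2$.
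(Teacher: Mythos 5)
Your proof is correct and follows essentially the same route as the paper's: test against $p_\epsilon^-$, use the lower bound (\ref{eq:inegalitepositive}) on the diagonal of $A_\epsilon$ together with $\tilde\sigma_{Dup}\geq\underline\sigma$ for the diffusion term, absorb the first-order term via Young's inequality, split the jump term into the nonpositive $(Qp_\epsilon^-,p_\epsilon^+)_d$ and the $(Qp_\epsilon^-,p_\epsilon^-)_d$ piece, and conclude by Gronwall. The only (harmless) deviation is that you show the $r$-term vanishes exactly by integrating $\partial_x((p_{\epsilon,i}^-)^2)$, whereas the paper simply lumps it with the drift into the Young estimate.
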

\begin{proof}
Let us take $p_{\epsilon}^{-}$ as a test function in (\ref{eq:fvarFinL2eps}).
We obtain
\[
\frac{1}{2}\frac{d}{dt}|p_{\epsilon}^{-}|_{d}^{2}+\left(\partial_{x}p_{\epsilon}^{-},\left(\frac{1}{2}R_{\epsilon}(p_{\epsilon}^{+})\tilde{\sigma}_{Dup}\left(\tilde{\sigma}_{Dup}+2\partial_{x}\tilde{\sigma}_{Dup}\right)\Lambda-rI_{d}\right)p_{\epsilon}\right)_{d}+\left(\partial_{x}p_{\epsilon}^{-},\tilde{\sigma}_{Dup}^{2}A_{\epsilon}(p_{\epsilon}^{+})\partial_{x}p_{\epsilon}\right)_{d}=(Qp_{\epsilon}^{-},p_{\epsilon})_{d}
\]
In the proof of Proposition \ref{prop:solutionarenonnegative}, we
have shown that $A_{\epsilon,ii}(p_{\epsilon}^{+})\left(\partial_{x}p_{\epsilon,i}^{-}\right)^{2}\geq\frac{\lambda_{min}}{2\lambda_{max}}\left(\partial_{x}p_{\epsilon,i}^{-}\right)^{2}$,
so 
\[
\left(\partial_{x}p_{\epsilon}^{-},\tilde{\sigma}_{Dup}^{2}A_{\epsilon}\left(p_{\epsilon}^{+}\right)\partial_{x}p_{\epsilon}\right)_{d}\geq\underline{\sigma}^{2}\frac{\lambda_{min}}{2\lambda_{max}}|\partial_{x}p_{\epsilon}^{-}|_{d}^{2}.
\]
By the Young inequality, for $\eta>0$,
\[
\left(\partial_{x}p_{\epsilon}^{-},\left(\frac{1}{2}R_{\epsilon}(p_{\epsilon}^{+})\tilde{\sigma}_{Dup}\left(\tilde{\sigma}_{Dup}+2\partial_{x}\tilde{\sigma}_{Dup}\right)\Lambda-rI_{d}\right)p_{\epsilon}\right)_{d}\geq-K\left(\eta|\partial_{x}p_{\epsilon}^{-}|_{d}^{2}+\frac{1}{4\eta}|p_{\epsilon}^{-}|_{d}^{2}\right),
\]
where $K=\frac{1}{2}\frac{\lambda_{max}}{\lambda_{min}}\overline{\sigma}\left(\overline{\sigma}+2||\partial_{x}\tilde{\sigma}_{Dup}||_{\infty}\right)+r$.
We set $\eta=\underline{\sigma}^{2}\frac{\lambda_{min}}{4K\lambda_{max}}$,
so that
\begin{equation}
\frac{1}{2}\frac{d}{dt}|p_{\epsilon}^{-}|_{d}^{2}-(Qp_{\epsilon}^{-},p_{\epsilon}^{-})_{d}-\frac{K^{2}}{\underline{\sigma}^{2}}\frac{\lambda_{max}}{\lambda_{min}}|p_{\epsilon}^{-}|_{d}^{2}\leq(Qp_{\epsilon}^{-},p_{\epsilon}^{+})_{d}-\underline{\sigma}^{2}\frac{\lambda_{min}}{4\lambda_{max}}|\partial_{x}p_{\epsilon}^{-}|_{d}^{2}.\label{eq:inegalite5.7}
\end{equation}
We also check that $(Qp_{\epsilon}^{-},p_{\epsilon}^{+})_{d}\leq0$.
Indeed, as for $i\in\{1,...,d\}$, $p_{\epsilon,i}^{-}p_{\epsilon,i}^{+}=0$
and $q_{ij}\geq0$ for $j\neq i$, 
\[
(Qp_{\epsilon}^{-},p_{\epsilon}^{+})_{d}=\int_{\mathbb{R}}\sum_{i=1}^{d}p_{\epsilon,i}^{+}\left(\sum_{j=1}^{d}q_{ij}p_{\epsilon,j}^{-}\right)dx=\int_{\mathbb{R}}\sum_{i\neq j}q_{ij}p_{\epsilon,j}^{-}p_{\epsilon,i}^{+}dx\leq0,
\]
so the r.h.s of (\ref{eq:inegalite5.7}) is nonpositive. Moreover,
$(Qp_{\epsilon}^{-},p_{\epsilon}^{-})_{d}\leq d\overline{q}|p_{\epsilon}^{-}|_{d}^{2}$,
so we have the inequality
\[
\frac{1}{2}\frac{d}{dt}|p_{\epsilon}^{-}|_{d}^{2}-\left(d\overline{q}+\frac{K^{2}}{\underline{\sigma}^{2}}\frac{\lambda_{max}}{\lambda_{min}}\right)|p_{\epsilon}^{-}|_{d}^{2}\leq0
\]
We thus obtain that the function $t\rightarrow\exp\left(-2\left(d\overline{q}+\frac{K^{2}}{\underline{\sigma}^{2}}\frac{\lambda_{max}}{\lambda_{min}}\right)t\right)|p_{\epsilon}^{-}(t)|_{d}^{2}$
is non increasing. As $p_{\epsilon}^{-}(0)=0$, we can conclude that
$p_{\epsilon}^{-}\equiv0$.
\end{proof}
We check that if $p$ is the limit of a sequence $\left(p_{\epsilon_{k}}\right)_{k\geq1}$,
with $p_{\epsilon_{k}}$ being a solution of $V_{Fin,\epsilon_{k}}(\mu)$
for $k\geq1$, then $p$ takes values in $\mathcal{D}$. 
\begin{lem}
\label{lem:5.8}Let $\left(p_{\epsilon_{k}}\right)_{k\geq1}$ be a
sequence such that for $k\geq1$, $p_{\epsilon_{k}}$ is a solution
to $V_{Fin,\epsilon_{k}}(\mu)$, and $\epsilon_{k}\underset{k\rightarrow\infty}{\rightarrow}0$.
If the sequence $\left(p_{\epsilon_{k}}\right)_{k\geq1}$ has the
limit $p$, in the sense: 
\begin{eqnarray*}
p_{\epsilon_{k}} & \rightarrow & p\text{ {in} }L^{2}([0,T];H)\text{ weakly},\\
p_{\epsilon_{k}} & \rightarrow & p\text{ {in} }L^{\infty}([0,T];L)\text{ weakly-*},\\
p_{\epsilon_{k}} & \rightarrow & p\text{ a.e. on \ensuremath{(0,T]\times\mathbb{R}}},
\end{eqnarray*}

then under Assumptions $(B)$ and $(H)$, $\sum_{i=1}^{d}p_{i}$ is
the unique solution to $LV(\mu_{X_{0}})$ and $p$ takes values in
$\mathcal{D}$.\end{lem}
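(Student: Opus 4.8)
The plan is to adapt the summation argument used for the heat equation in the proof of Proposition~\ref{prop:existencetoVL2}: add up the equations satisfied by the $p_{\epsilon_k}$ over the index $i$, pass to the limit in $k$, and recognize the sum as the (unique) solution of $LV(\mu_{X_0})$. First, by Proposition~\ref{lem:VFinnonnegative} every $p_{\epsilon_k}$ is non-negative, hence so is $p$ and in particular $u:=\sum_{i=1}^{d}p_i\ge0$, and one may replace $p_{\epsilon_k}^{+}$ by $p_{\epsilon_k}$ throughout. Testing \eqref{eq:fvarFinL2eps} against $v=(\tilde v,\dots,\tilde v)$ with $\tilde v\in H^{1}(\mathbb{R})$, and using $\sum_{i=1}^{d}M_{\epsilon,ij}=0$ (so that $\sum_{i=1}^{d}\bigl(A_{\epsilon_k}(p_{\epsilon_k})\partial_x p_{\epsilon_k}\bigr)_i=\tfrac12\partial_x u_{\epsilon_k}$ with $u_{\epsilon_k}:=\sum_{i=1}^{d}p_{\epsilon_k,i}$) together with $\sum_{j=1}^{d}q_{ij}=0$ (so that $(Qv,p_{\epsilon_k})_d=0$), I would obtain, in the distributional sense on $(0,T)$,
\[
\frac{d}{dt}\!\int_{\mathbb{R}}\!\tilde v\,u_{\epsilon_k}\,dx-r\!\int_{\mathbb{R}}\!\partial_x\tilde v\,u_{\epsilon_k}\,dx+\frac12\!\int_{\mathbb{R}}\!\partial_x\tilde v\,R_{\epsilon_k}(p_{\epsilon_k})\,\tilde\sigma_{Dup}\bigl(\tilde\sigma_{Dup}+2\partial_x\tilde\sigma_{Dup}\bigr)\!\Bigl(\sum_{i=1}^{d}\lambda_i p_{\epsilon_k,i}\Bigr)dx+\frac12\!\int_{\mathbb{R}}\!\partial_x\tilde v\,\tilde\sigma_{Dup}^{2}\,\partial_x u_{\epsilon_k}\,dx=0,
\]
with $u_{\epsilon_k}(0,\cdot)=\sum_{i=1}^{d}\alpha_i\mu_i=\mu_{X_0}$; this is exactly the variational formulation of $LV(\mu_{X_0})$ except that $R_{\epsilon_k}(p_{\epsilon_k})\sum_i\lambda_i p_{\epsilon_k,i}$ appears in the transport term instead of $u_{\epsilon_k}$.

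Next I would let $k\to\infty$. From the assumed convergences, $u_{\epsilon_k}\to u$ weakly in $L^{2}([0,T];H^{1}(\mathbb{R}))$, weakly-* in $L^{\infty}([0,T];L^{2}(\mathbb{R}))$ and a.e.\ on $(0,T]\times\mathbb{R}$, and the linear and second-order terms pass to the limit as in the proof of Proposition~\ref{prop:ExistencetoVeps}, using $\tilde\sigma_{Dup},\partial_x\tilde\sigma_{Dup}\in L^{\infty}$ (Assumption~(B)) so that $\psi\,\partial_x\tilde v\,\tilde\sigma_{Dup}^{2}\in L^{2}([0,T]\times\mathbb{R})$. For the transport term, since $R_{\epsilon_k}(p_{\epsilon_k})\sum_i\lambda_i p_{\epsilon_k,i}=u_{\epsilon_k}\min\!\bigl(1,\tfrac{1}{\epsilon_k}\sum_i\lambda_i p_{\epsilon_k,i}\bigr)$, on $\{u=0\}$ it is squeezed between $0$ and $u_{\epsilon_k}\to0$, while on $\{u>0\}$ one has $\sum_i\lambda_i p_i\ge\lambda_{min}\,u>0$, so $\sum_i\lambda_i p_{\epsilon_k,i}\to\sum_i\lambda_i p_i>0>\epsilon_k$ for $k$ large, the minimum equals $1$, and the product tends to $u$; hence $R_{\epsilon_k}(p_{\epsilon_k})\sum_i\lambda_i p_{\epsilon_k,i}\to u$ a.e.\ on $(0,T]\times\mathbb{R}$. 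Since $0\le R_{\epsilon_k}(p_{\epsilon_k})\sum_i\lambda_i p_{\epsilon_k,i}\le u_{\epsilon_k}$ and $(u_{\epsilon_k})_k$ is bounded in $L^{2}([0,T]\times\mathbb{R})$ by the energy estimates of Lemma~\ref{lem:EstimatesFin} (uniform in $k$), this convergence holds weakly in $L^{2}([0,T]\times\mathbb{R})$, which suffices to pass to the limit against $\psi\,\partial_x\tilde v\,\tilde\sigma_{Dup}(\tilde\sigma_{Dup}+2\partial_x\tilde\sigma_{Dup})\in L^{2}([0,T]\times\mathbb{R})$. Working with the integrated form tested against $\psi\in C^{1}([0,T])$ with $\psi(T)=0$ and $\psi(0)\ne0$, the limit recovers both the distributional equation of $LV(\mu_{X_0})$ and its initial condition, as at the end of the proof of Proposition~\ref{prop:ExistencetoVeps}. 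Thus $u\ge0$ lies in $L^{2}([0,T];H^{1}(\mathbb{R}))\cap L^{\infty}([0,T];L^{2}(\mathbb{R}))$ and solves $LV(\mu_{X_0})$, so by Proposition~\ref{prop:uniquenessandaronson} it is \emph{the} solution; and by Lemma~\ref{lem:LVpositivecontinuous} it is positive on $(0,T]\times\mathbb{R}$, so for a.e.\ $(t,x)$ the vector $p(t,x)$ has non-negative entries with positive sum, i.e.\ $p(t,x)\in(\mathbb{R}^{+})^{d}\setminus\{0\}=\mathcal{D}$.

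The main obstacle—absent in Proposition~\ref{prop:existencetoVL2}, where the summed equation is the heat equation at every level $\epsilon$ so that $\sum_i p_{\epsilon,i}=\mu*h_t$ identically—is the regularized drift $R_{\epsilon_k}$: the quantity $u_{\epsilon_k}$ does not solve $LV(\mu_{X_0})$ for finite $\epsilon_k$, and the convergence $R_{\epsilon_k}(p_{\epsilon_k})\sum_i\lambda_i p_{\epsilon_k,i}\to u$ must be established without knowing a priori that $u$ (equivalently $\sum_i\lambda_i p_i$) is positive. The resolution is the case split above: the product vanishes together with $u$, while on $\{u>0\}$ the elementary bound $\sum_i\lambda_i p_i\ge\lambda_{min}\sum_i p_i$ forces the $\epsilon_k$-cutoff to disappear, which avoids circularity with the positivity statement; the remaining work is routine, following the limit passages of Propositions~\ref{prop:ExistencetoVeps} and~\ref{prop:existencetoVL2} verbatim.
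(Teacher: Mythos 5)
Your proposal is correct and follows essentially the same route as the paper: sum the $\epsilon_k$-formulation over $i$ with a diagonal test function, kill the $Q$-term and reduce the diffusion term via $\sum_i M_{\epsilon,ij}=0$, handle the regularized transport coefficient by the case split on $\{u>0\}$ versus $\{u=0\}$ (which is exactly how the paper avoids circularity with the positivity of $u$), and conclude by uniqueness of $LV(\mu_{X_0})$ and Lemma \ref{lem:LVpositivecontinuous}. The only cosmetic difference is that you pass to the limit in the transport term by invoking ``a.e.\ convergence plus a uniform $L^{2}$ bound implies weak $L^{2}$ convergence,'' whereas the paper decomposes $u_{\epsilon_k}=1_{\{u>0\}}u_{\epsilon_k}+1_{\{u=0\}}u_{\epsilon_k}$ and pairs strong convergence of the bounded coefficient with weak convergence of $u_{\epsilon_k}$; both are valid.
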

\begin{proof}
For $k\geq1$, we define $u_{\epsilon_{k}}:=\sum_{i=1}^{d}p_{\epsilon_{k},i}$,
and $u:=\sum_{i=1}^{d}p_{i}$. Then $u_{\epsilon_{k}}$ satisfies,
for $\psi\in C_{c}^{\infty}((0,T),\mathbb{R})$ and $v\in H^{1}(\mathbb{R})$,
\begin{eqnarray*}
0 & = & -\int_{0}^{T}\psi'(t)(v,u_{\epsilon_{k}})_{1}dt-\int_{0}^{T}\psi(t)r(\partial_{x}v,u_{\epsilon_{k}})_{1}dt+\int_{0}^{T}\psi(t)\frac{1}{2}\left(\partial_{x}v,\tilde{\sigma}_{Dup}^{2}\partial_{x}u_{\epsilon_{k}}\right)_{1}dt\\
 & \ + & \int_{0}^{T}\psi(t)\left(\partial_{x}v,\frac{1}{2}\tilde{\sigma}_{Dup}(\tilde{\sigma}_{Dup}+2\left(\partial_{x}\tilde{\sigma}_{Dup}\right))\frac{\sum_{i=1}^{d}\lambda_{i}p_{\epsilon_{k},i}}{\epsilon_{k}\vee\left(\sum_{i=1}^{d}\lambda_{i}p_{\epsilon_{k},i}\right)}u_{\epsilon_{k}}\right)_{1}dt,
\end{eqnarray*}
as for $i\in\{1,...,d\}$, $\sum_{j=1}^{d}q_{ij}=0$ and $\sum_{j=1}^{d}A_{\epsilon,ij}=\frac{1}{2}$.
As $p_{\epsilon_{k}}\rightarrow p\text{ {in} }L^{2}([0,T];H)\text{ weakly}$,
the terms on the r.h.s of the first line converge to

\[
-\int_{0}^{T}\psi'(t)(v,u)_{1}dt-\int_{0}^{T}\psi(t)r(\partial_{x}v,u)_{1}dt+\int_{0}^{T}\psi(t)\frac{1}{2}\left(\partial_{x}v,\tilde{\sigma}_{Dup}^{2}\partial_{x}u\right)_{1}dt
\]
It is sufficient to show that the term on the r.h.s of the second
line converges to 
\[
\int_{0}^{T}\psi(t)\left(\partial_{x}v,\frac{1}{2}\tilde{\sigma}_{Dup}(\tilde{\sigma}_{Dup}+2\left(\partial_{x}\tilde{\sigma}_{Dup}\right))u\right)_{1}dt.
\]
For a.e. $(t,x)\in[0,T]\times\mathbb{R}$, $u_{\epsilon_{k}}(t,x)\rightarrow u(t,x)$.
Let us remark that by Proposition \ref{lem:VFinnonnegative}, for
$k\geq1$, $p_{\epsilon_{k}}\geq0$, so $p$ and $u$ are nonnegative.
If $u(t,x)>0$, then $\frac{\sum_{i=1}^{d}\lambda_{i}p_{\epsilon_{k},i}}{\epsilon_{k}\vee\left(\sum_{i=1}^{d}\lambda_{i}p_{\epsilon_{k},i}\right)}(t,x)\rightarrow1$.
If $u(t,x)=0$, $\frac{\sum_{i=1}^{d}\lambda_{i}p_{\epsilon_{k},i}}{\epsilon_{k}\vee\left(\sum_{i=1}^{d}\lambda_{i}p_{\epsilon_{k},i}\right)}u_{\epsilon_{k}}(t,x)\rightarrow0$
as $\left|\frac{\sum_{i=1}^{d}\lambda_{i}p_{\epsilon_{k},i}}{\epsilon_{k}\vee\left(\sum_{i=1}^{d}\lambda_{i}p_{\epsilon_{k},i}\right)}\right|\leq1$.
Let us use the following decomposition : $u_{\epsilon_{k}}=1_{\{u>0\}}u_{\epsilon_{k}}+1_{\{u=0\}}u_{\epsilon_{k}}$.
We first study the limit, as $k\rightarrow\infty$, of

\[
I_{1}(k):=\int_{0}^{T}\psi(t)\left(\frac{1}{2}\tilde{\sigma}_{Dup}(\tilde{\sigma}_{Dup}+2\left(\partial_{x}\tilde{\sigma}_{Dup}\right))\frac{\sum_{i=1}^{d}\lambda_{i}p_{\epsilon_{k},i}}{\epsilon_{k}\vee\left(\sum_{i=1}^{d}\lambda_{i}p_{\epsilon_{k},i}\right)}1_{\{u>0\}}\partial_{x}v,u_{\epsilon_{k}}\right)_{1}dt
\]
The function $u_{\epsilon_{k}}$ converges weakly to $u$ in $L^{2}([0,T],L^{2}(\mathbb{R}))$.
Moreover the function 
\[
\frac{1}{2}\tilde{\sigma}_{Dup}(\tilde{\sigma}_{Dup}+2\left(\partial_{x}\tilde{\sigma}_{Dup}\right))\frac{\sum_{i=1}^{d}\lambda_{i}p_{\epsilon_{k},i}}{\epsilon_{k}\vee\left(\sum_{i=1}^{d}\lambda_{i}p_{\epsilon_{k},i}\right)}1_{\{u>0\}}\partial_{x}v
\]
 converges strongly to $\frac{1}{2}\tilde{\sigma}_{Dup}(\tilde{\sigma}_{Dup}+2\left(\partial_{x}\tilde{\sigma}_{Dup}\right))1_{\{u>0\}}\partial_{x}v$,
in $L^{2}([0,T],L^{2}(\mathbb{R}))$. Indeed, the convergence is a.e.
and 
\[
\left|\frac{1}{2}\tilde{\sigma}_{Dup}(\tilde{\sigma}_{Dup}+2\left(\partial_{x}\tilde{\sigma}_{Dup}\right))\frac{\sum_{i=1}^{d}\lambda_{i}p_{\epsilon_{k},i}}{\epsilon_{k}\vee\left(\sum_{i=1}^{d}\lambda_{i}p_{\epsilon_{k},i}\right)}1_{\{u>0\}}\partial_{x}v\right|\leq\frac{1}{2}\overline{\sigma}(\overline{\sigma}+2||\partial_{x}\tilde{\sigma}||_{\infty})|\partial_{x}v|\in L^{2}([0,T],L^{2}(\mathbb{R}))
\]
 and we conclude by dominated convergence. Therefore, $I_{1}(k)$
converges, as $k\rightarrow\infty$, to 
\[
\int_{0}^{T}\psi(t)\left(\frac{1}{2}\tilde{\sigma}_{Dup}(\tilde{\sigma}_{Dup}+2\left(\partial_{x}\tilde{\sigma}_{Dup}\right))\partial_{x}v,1_{\{u>0\}}u\right)_{1}dt=\int_{0}^{T}\psi(t)\left(\frac{1}{2}\tilde{\sigma}_{Dup}(\tilde{\sigma}_{Dup}+2\left(\partial_{x}\tilde{\sigma}_{Dup}\right))\partial_{x}v,u\right)_{1}dt,
\]
because $u\geq0$. We then study the term 
\[
I_{2}(k):=\int_{0}^{T}\psi(t)\left(\frac{1}{2}\tilde{\sigma}_{Dup}(\tilde{\sigma}_{Dup}+2\left(\partial_{x}\tilde{\sigma}_{Dup}\right))\frac{\sum_{i=1}^{d}\lambda_{i}p_{\epsilon_{k},i}}{\epsilon_{k}\vee\left(\sum_{i=1}^{d}\lambda_{i}p_{\epsilon_{k},i}\right)}1_{\{u=0\}}\partial_{x}v,u_{\epsilon_{k}}\right)_{1}dt.
\]
We notice that $I_{2}(k)\leq\frac{1}{2}\overline{\sigma}(\overline{\sigma}+2||\partial_{x}\tilde{\sigma}||_{\infty})\int_{0}^{T}|\psi(t)|\left(|\partial_{x}v|1_{\{u=0\}},u_{\epsilon_{k}}\right)_{1}$,
and the r.h.s. converges to $0$ as $k\rightarrow\infty$, as $u_{\epsilon_{k}}\rightarrow u$
weakly in $L^{2}([0,T],L)$. We prove that the initial condition is
satisfied with the arguments at the end of the proof of Proposition
\ref{prop:ExistencetoVeps}. So we conclude that $u$ is, by Proposition
\ref{prop:uniquenessandaronson}, the unique solution to $LV(\mu_{X_{0}})$
and by Lemma \ref{lem:LVpositivecontinuous}, $u>0$ a.e. on $(0,T)\times\mathbb{R}$.
Finally, as $p$ is nonnegative, $p$ takes values in $\mathcal{D}$
a.e. on $[0,T]\times\mathbb{R}$. \end{proof}
\begin{prop}
\label{prop:VFinL2existence}Under Assumption (B), (H) and Condition
(C), there exists a solution $p\in C([0,T],L)$ to $V_{Fin,L^{2}}(\mu)$
such that $\sum_{i=1}^{d}p_{i}$ is the unique solution to $LV(\mu_{X_{0}})$.\end{prop}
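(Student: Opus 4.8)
The plan is to reproduce, in the present setting, the limiting argument used in the proof of Proposition \ref{prop:existencetoVL2}: construct solutions $p_\epsilon$ to the regularized problems $V_{Fin,\epsilon}(\mu)$, obtain $\epsilon$-uniform estimates, extract a limit, and identify it as a solution of $V_{Fin,L^2}(\mu)$. First I would fix, for each $\epsilon>0$, a solution $p_\epsilon\in C([0,T],L)$ to $V_{Fin,\epsilon}(\mu)$, whose existence is given by the proposition immediately preceding the statement. Since $p_\epsilon$ is built as a weak limit in $L^2([0,T];H)$ of the Galerkin approximations $p_\epsilon^m$, weak lower semicontinuity of the norms and Lemma \ref{lem:EstimatesFin} show that $p_\epsilon$ satisfies (\ref{eq:energyestFinM})--(\ref{eq:energyest3FinM}), whose bounds do not depend on $\epsilon$ or $m$. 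Hence $(p_\epsilon)_{\epsilon>0}$ is bounded in $L^2([0,T];H)\cap L^\infty([0,T];L)$, and, via the estimate (\ref{eq:GpFin}) together with the $\epsilon$-uniform bounds of Lemmas \ref{lem:Bepsbounded} and \ref{lem:Rbounded} on $A_\epsilon$ and $R_\epsilon$, the family $\big(\tfrac{dp_\epsilon}{dt}\big)_{\epsilon>0}$ is bounded in $L^2([0,T];H')$.

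I would then extract a sequence $\epsilon_k\downarrow0$ and a function $p\in L^2([0,T];H)\cap L^\infty([0,T];L)$ with $p_{\epsilon_k}\to p$ weakly in $L^2([0,T];H)$ and weakly-* in $L^\infty([0,T];L)$, $p$ inheriting the energy estimates. By the compactness result \cite[III, Theorem 2.3]{temam}, for each bounded open $\mathcal O\subset\mathbb R$ a further subsequence converges strongly in $L^2([0,T];L(\mathcal O))$ and a.e. on $[0,T]\times\mathcal O$; diagonal extraction over $\mathcal O_n=(-n,n)$ produces a single subsequence, still denoted $p_{\epsilon_k}$, converging a.e. on $[0,T]\times\mathbb R$. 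By Proposition \ref{lem:VFinnonnegative} each $p_{\epsilon_k}$ is non-negative, hence so is $p$. At this point Lemma \ref{lem:5.8} applies directly: under $(B)$ and $(H)$, $u:=\sum_{i=1}^d p_i$ is the unique solution to $LV(\mu_{X_0})$, and by Lemma \ref{lem:LVpositivecontinuous} and Proposition \ref{prop:uniquenessandaronson} $u>0$ a.e. on $(0,T)\times\mathbb R$, so $p$ takes values in $\mathcal D$ a.e. there.

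It remains to pass to the limit in (\ref{eq:fvarFinL2eps}). The terms linear in $p_{\epsilon_k}$ converge by the weak $L^2([0,T];H)$ convergence. For the nonlinear terms involving $A_{\epsilon_k}(p_{\epsilon_k}^+)\partial_x p_{\epsilon_k}$ and $R_{\epsilon_k}(p_{\epsilon_k}^+)\tilde\sigma_{Dup}(\tilde\sigma_{Dup}+2\partial_x\tilde\sigma_{Dup})\Lambda p_{\epsilon_k}$, I would argue exactly as for (\ref{eq:AepsPepsCVAE}): because $p$ takes values in $\mathcal D$, for a.e. $(t,x)$ there is $k_0(t,x)$ such that $\sum_l\lambda_l p_{\epsilon_k,l}(t,x)\ge\tfrac12\sum_l\lambda_l p_l(t,x)>\epsilon_k$ for $k\ge k_0(t,x)$, hence $A_{\epsilon_k}(p_{\epsilon_k}(t,x))=A(p_{\epsilon_k}(t,x))\to A(p(t,x))$ and $R_{\epsilon_k}(p_{\epsilon_k}(t,x))\to R(p(t,x))$ a.e. by continuity of $A$ and $R$ on $\mathcal D$; combined with the uniform boundedness of Lemmas \ref{lem:Bepsbounded} and \ref{lem:Rbounded}, dominated convergence, and the weak convergence of $\partial_x p_{\epsilon_k}$ in $L^2([0,T];L)$, these terms pass to the limit as in (\ref{eq:ToyEpsFVconvergence})--(\ref{eq:convergenceApp2}). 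This yields (\ref{eq:fvarFinL2}). From (\ref{eq:fvarFinL2}) one reads $\tfrac{dp}{dt}\in L^2([0,T];H')$, so by \cite[III, Lemma 1.2]{temam} $p$ has a representative in $C([0,T],L)$; testing with $\psi\in C^1([0,T],\mathbb R)$, $\psi(T)=0$, $\psi(0)\neq0$ and comparing with the identity inherited from $V_{Fin,\epsilon_k}(\mu)$ gives the initial condition (\ref{eq:cond0FinL2}) as in (\ref{eq:initverified}), so $p\in C([0,T],L)$ solves $V_{Fin,L^2}(\mu)$, with $\sum_i p_i$ the unique solution to $LV(\mu_{X_0})$.

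The main obstacle is the passage to the limit in the nonlinear coefficients, i.e. removing the cutoff $\epsilon$: this works only because $u=\sum_i p_i$ is a priori strictly positive, a fact that comes entirely from identifying $u$ as the solution of $LV(\mu_{X_0})$ (Lemma \ref{lem:5.8}) and invoking Lemma \ref{lem:LVpositivecontinuous} and Proposition \ref{prop:uniquenessandaronson}. The additional transport-type term $\tfrac12 R_\epsilon\tilde\sigma_{Dup}(\tilde\sigma_{Dup}+2\partial_x\tilde\sigma_{Dup})\Lambda p_\epsilon$, which is not present in the toy model, is harmless: it is zeroth order in $p_\epsilon$ tested against $\partial_x v$, its contribution to the energy estimate is absorbed by Young's inequality (already carried out in the proof of Lemma \ref{lem:EstimatesFin}), and in the limiting procedure it is handled like the linear terms since $R_\epsilon$ is bounded and converges a.e. to $R$.
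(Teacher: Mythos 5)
Your proposal is correct and follows essentially the same route as the paper: uniform-in-$\epsilon$ energy estimates and a bound on $\bigl(\tfrac{dp_{\epsilon}}{dt}\bigr)_{\epsilon>0}$ in $L^{2}([0,T],H')$ yield weak, weak-* and a.e. convergent subsequences, Lemma \ref{lem:5.8} identifies $\sum_{i}p_{i}$ with the unique solution of $LV(\mu_{X_{0}})$ and hence gives $\mathcal{D}$-valuedness, and the nonlinear terms pass to the limit exactly as in (\ref{eq:AepsPepsCVAE}) using the uniform bounds and a.e. convergence of $A_{\epsilon}$ and $R_{\epsilon}$. Your closing observation that the strict positivity of $u$ is the crux for removing the cutoff is precisely the mechanism the paper relies on.
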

\begin{proof}
It is easy to check that the family $\left(p_{\epsilon}\right)_{\epsilon>0}$
satisfies uniform in $\epsilon$ energy estimates. Using (\ref{eq:GpFin}),
we also obtain that the family $\left(\frac{dp_{\epsilon}}{dt}\right)_{\epsilon>0}$
is bounded in $L^{2}\left([0,T],H'\right)$. Similarly to the proof
of Proposition \ref{prop:existencetoVL2}, there exists a subsequence
$\left(p_{\epsilon_{k}}\right)_{k\geq1}$ converging to a function
$p\geq0$, with the convergences as $\epsilon_{k}\rightarrow0$. 
\begin{eqnarray*}
p_{\epsilon_{k}} & \rightarrow & p\text{ {in} }L^{2}([0,T];H)\text{ weakly},\\
p_{\epsilon_{k}} & \rightarrow & p\text{ {in} }L^{\infty}([0,T];L)\text{ weakly-*},\\
p_{\epsilon_{k}} & \rightarrow & p\text{ a.e. on \ensuremath{[0,T]\times\mathbb{R}}},
\end{eqnarray*}
We check that for $v\in H$ and $\psi\in C^{1}([0,T],\mathbb{R})$
with $\psi(T)=0$, 
\[
\int_{0}^{T}\left(\psi(t)\partial_{x}v,\frac{1}{2}R_{\epsilon_{k}}(p_{\epsilon_{k}})\tilde{\sigma}_{Dup}\left(\tilde{\sigma}_{Dup}+\partial_{x}\tilde{\sigma}_{Dup}\right)\Lambda p_{\epsilon_{k}}\right)_{d}dt\rightarrow\int_{0}^{T}\left(\psi(t)\partial_{x}v,\frac{1}{2}R(p)\tilde{\sigma}_{Dup}\left(\tilde{\sigma}_{Dup}+\partial_{x}\tilde{\sigma}_{Dup}\right)\Lambda p\right)_{d}dt.
\]
The convergence is justified as $p$ takes values in $\mathcal{D}$
a.e. by Lemma \ref{lem:5.8}, $R_{\epsilon_{k}}(p_{\epsilon_{k}})\underset{k\rightarrow\infty}{\rightarrow}R(p)$
a.e. on $[0,T]\times\mathbb{R}$ (with the same argument that justifies
$A_{\epsilon}(p_{\epsilon})\rightarrow A(p)$ a.e. in (\ref{eq:AepsPepsCVAE})),
the functions $R_{\epsilon}$ and $R$ have uniform bounds by Lemma
\ref{lem:Rbounded}, and $p_{\epsilon_{k}}\rightarrow p$ weakly in
$L^{2}([0,T],H)$. Similarly we also obtain the convergence:
\[
\int_{0}^{T}(\psi(t)\partial_{x}v,A_{\epsilon_{k}}(p_{\epsilon_{k}})\tilde{\sigma}_{Dup}^{2}\partial_{x}p_{\epsilon_{k}})_{d}dt\rightarrow\int_{0}^{T}(\psi(t)\partial_{x}v,A(p)\tilde{\sigma}_{Dup}^{2}\partial_{x}p)_{d}dt.
\]
As $p_{\epsilon_{k}}\rightarrow p$ weakly in $L^{2}([0,T],L)$, we
finally obtain the equality:
\begin{eqnarray*}
\int_{0}^{T}\psi(t)(Qv+r\partial_{x}v,p(t))_{d}dt & = & -\int_{0}^{T}(\psi'(t)v,p)_{d}dt-(v,p_{0})_{d}\psi(0)\\
 & \ + & \int_{0}^{T}(\psi(t)\partial_{x}v,A(p)\tilde{\sigma}_{Dup}^{2}\partial_{x}p)_{d}dt\\
 & \ + & \int_{0}^{T}\left(\psi(t)\partial_{x}v,\frac{1}{2}R(p)\tilde{\sigma}_{Dup}\left(\tilde{\sigma}_{Dup}+\partial_{x}\tilde{\sigma}_{Dup}\right)\Lambda p\right)_{d}dt,
\end{eqnarray*}
so that (\ref{eq:fvarFinL2}) is verified and we conclude the proof
with the same arguments as in the end of the proof of Proposition
\ref{prop:ExistencetoVeps} to obtain existence to $V_{Fin,L^{2}}(\mu)$.
By Lemma \ref{lem:5.8}, $\sum_{i=1}^{d}p_{i}$ solves $LV(\mu_{X_{0}})$,
which has a unique solution by Proposition \ref{prop:uniquenessandaronson}.
Finally, if $p$ is a solution to $V_{Fin,L^{2}}(\mu)$, then $p$
is a.e. equal to a function that belongs to $C([0,T],L)$ and this
concludes the proof.
\end{proof}

\subsubsection{Case when $\mu$ is a general probability measure on $\mathbb{R}\times\mathcal{Y}$}

We now prove Theorem \ref{thm:VFin}. When $\mu$ is a general probability
measure on $\mathbb{R}\times\mathcal{Y}$, for $\sigma>0$, we mollify
each $\mu_{i},1\leq i\leq d$ into measures $\mu_{\sigma,i}:=\mu_{i}*h_{\sigma}$
with square integrable densities, also denoted by $\mu_{\sigma,i}$,
and we define $\mu_{\sigma}$ as the measure under which the conditional
law of $X_{0}$ given $\{Y_{0}=i\}$ is given by $\mu_{\sigma,i}$
and $\mathbb{P}\left(Y_{0}=i\right)=\alpha_{i}$. Let $\left(\sigma_{k}\right)_{k\geq0}$
be a decreasing sequence converging to $0$ as $k\rightarrow\infty$.
Let us denote by $p_{\sigma_{k}}$ a solution to $V_{Fin,L^{2}}\left(\mu_{\sigma_{k}}\right)$,
which exists by Proposition \ref{prop:VFinL2existence}. Let us remark
that for $k\geq0$, $\sum_{i=1}^{d}p_{\sigma_{k},i}$ satisfies $LV\left(\sum_{i=1}^{d}\alpha_{i}\mu_{\sigma_{k},i}\right)$.
We compute energy estimates, using Proposition \ref{prop:uniquenessandaronson}:
\begin{eqnarray}
|p_{\sigma_{k}}(t)|_{d}^{2} & \leq & \left|\left|\sum_{i=1}^{d}p_{\sigma_{k},i}(t)\right|\right|_{L^{2}}^{2}\leq\frac{\zeta}{\sqrt{t}},\nonumber \\
\int_{0}^{T}|p_{\sigma_{k}}(t)|_{d}^{2}dt & \leq & 2\zeta\sqrt{T},\nonumber \\
\int_{t}^{T}|\partial_{x}p_{\sigma_{k}}(s)|_{d}^{2}ds & \leq & \frac{l_{max}(\Pi)}{\kappa\underbar{\ensuremath{\sigma}}}\frac{\zeta}{\sqrt{t}}e^{\frac{2}{l_{min}(\Pi)}\left(b+\frac{C^{2}}{2\kappa\underline{\sigma}^{2}}\right)T},\label{eq:equalityChacon}
\end{eqnarray}
where we used (\ref{eq:energyest2FinM}) for the last inequality.
Repeating the arguments of the proof in Subsection \ref{sub:FBMGeneralCase},
we obtain a subsequence again called $p_{\sigma_{k}}$, such that:

\begin{eqnarray*}
p_{\sigma_{k}} & \rightarrow & p\text{ {in} }L^{2}((0,T];L)\text{ weakly},\\
p_{\sigma_{k}} & \rightarrow & p\text{ {in} }L_{loc}^{2}((0,T];H)\text{ weakly},\\
p_{\sigma_{k}} & \rightarrow & p\text{ {in} }L_{loc}^{\infty}((0,T];L)\text{ weakly-*}\\
p_{\sigma_{k}} & \rightarrow & p\text{ a.e. on \ensuremath{(0,T]\times\mathbb{R}}}.
\end{eqnarray*}
We show that $p$ takes values in $\mathcal{D}$ a.e. on $(0,T]\times\mathbb{R}$.
To do so, we show that $u:=\sum_{i=1}^{d}p_{i}$ is a solution to
$(LV)$ with initial condition $\mu_{X_{0}}$. For $\psi\in C^{1}([0,T],\mathbb{R})$,
and $v\in H^{1}(\mathbb{R})$, as $u_{\sigma_{k}}:=\sum_{i=1}^{d}p_{\sigma_{k},i}\rightarrow u$
in $L^{2}((0,T],L^{2}(\mathbb{R}))$ weakly,

\begin{eqnarray*}
-\int_{0}^{T}\psi'(t)(v,u_{\sigma_{k}})_{1}dt & \rightarrow & -\int_{0}^{T}\psi'(t)(v,u)_{1}dt,\\
\int_{0}^{T}\psi(t)\left(\partial_{x}v,\left(-r+\frac{1}{2}\tilde{\sigma}_{Dup}^{2}+\tilde{\sigma}_{Dup}\left(\partial_{x}\tilde{\sigma}_{Dup}\right)\right)u_{\sigma_{k}}\right)_{1}dt & \rightarrow & \int_{0}^{T}\psi(t)\left(\partial_{x}v,\left(-r+\frac{1}{2}\tilde{\sigma}_{Dup}^{2}+\tilde{\sigma}_{Dup}\left(\partial_{x}\tilde{\sigma}_{Dup}\right)\right)u\right)_{1}dt.
\end{eqnarray*}
Let us define $\varUpsilon:=\frac{1}{2^{1/4}-1}\left(\frac{l_{max}(\Pi)\zeta}{\kappa\underbar{\ensuremath{\sigma}}}e^{\frac{2}{l_{min}(\Pi)}\left(b+\frac{C^{2}}{2\kappa\underline{\sigma}^{2}}\right)T}\right)^{1/2}$.
By Lemma \ref{lem:L1dxp}, we have that for $\forall t\in(0,T)$ and
$k\ge1$, $\int_{0}^{t}|\partial_{x}p_{\sigma_{k}}(s)|_{d}ds\leq\varUpsilon t^{1/4}$
and $\int_{0}^{t}|\partial_{x}p(s)|_{d}ds\leq\varUpsilon t^{1/4}$.
Then, $\forall t\in(0,T)$, 
\[
\int_{0}^{t}\left|\partial_{x}u_{\sigma_{k}}\right|_{d}dt=\int_{0}^{t}\left|\sum_{i=1}^{d}\partial_{x}p_{\sigma_{k},i}(s)\right|_{1}ds\leq\sqrt{d}\int_{0}^{t}|\partial_{x}p_{\sigma_{k}}(s)|_{d}ds\leq\sqrt{d}\varUpsilon t^{1/4},
\]
and $\int_{0}^{t}\left|\partial_{x}u\right|_{d}dt\leq\sqrt{d}\varUpsilon t^{1/4}.$
The same arguments as those leading to (\ref{eq:cvAsigmaA}) enable
to prove that: 
\[
\int_{0}^{T}\psi(t)\left(\partial_{x}v,\tilde{\sigma}_{Dup}^{2}\partial_{x}u_{\sigma_{k}}\right)dt\rightarrow\int_{0}^{T}\psi(t)\left(\partial_{x}v,\tilde{\sigma}_{Dup}^{2}\partial_{x}u\right)dt.
\]
The initial condition is treated as before. This is sufficient to
prove that $u$ solves $LV(\mu_{X_{0}})$ and assert that $p$ takes
values in $\mathcal{D}$ by Lemma \ref{lem:LVpositivecontinuous}.
For $\psi\in C^{1}([0,T],\mathbb{R})$, such that $\psi(T)=0$ and
$v\in H$, as $p_{\sigma_{k}}\rightarrow p$ weakly in $L^{2}((0,T],L),$

\begin{eqnarray*}
\int_{0}^{T}(\psi'(s)v,p_{\sigma_{k}}(s))_{d}ds & \underset{k\rightarrow\infty}{\rightarrow} & \int_{0}^{T}(\psi'(s)v,p(s))_{d}ds\\
\int_{0}^{T}\psi(t)(Qv,p_{\sigma_{k}})_{d}dt & \underset{k\rightarrow\infty}{\rightarrow} & \int_{0}^{T}\psi(t)(Qv,p)_{d}dt
\end{eqnarray*}
We check that:

\begin{eqnarray*}
\int_{0}^{T}\psi(t)\left(\partial_{x}v,R(p_{\sigma_{k}}(t))\tilde{\sigma}_{Dup}\left(\tilde{\sigma}_{Dup}+\partial_{x}\tilde{\sigma}_{Dup}\right)\Lambda p_{\sigma_{k}}(t)\right)_{d}dt & \underset{k\rightarrow\infty}{\rightarrow} & \frac{1}{2}\int_{0}^{T}\psi(t)\left(\partial_{x}v,R(p(t))\tilde{\sigma}_{Dup}\left(\tilde{\sigma}_{Dup}+\partial_{x}\tilde{\sigma}_{Dup}\right)\Lambda p(t)\right)_{d},
\end{eqnarray*}
which is the case as the sequence $\left(p_{\sigma_{k}}\right)_{k\geq0}$
converges to $p$ weakly in $L^{2}((0,T],L)$, $R$ is bounded and
continuous on $\mathcal{D}$, and $R(p_{\sigma_{k}})\rightarrow R(p)$
a.e. on $(0,T]\times\mathbb{R}$. We also check that:
\[
\int_{0}^{T}(\psi(s)\partial_{x}v,\tilde{\sigma}_{Dup}^{2}A(p_{\sigma_{k}})\partial_{x}p_{\sigma_{k}})_{d}ds\underset{k\rightarrow\infty}{\rightarrow}\int_{0}^{T}(\psi(s)\partial_{x}v,\tilde{\sigma}_{Dup}^{2}A(p)\partial_{x}p)_{d}ds,
\]
using the same argument as for (\ref{eq:cvAsigmaA}). Arguments similar
to (\ref{eq:ToyVFmeslimit}) enable to check the initial condition
and thus assert existence to $V_{Fin}(\mu).$ As any solution to $V_{Fin}(\mu)$
has a representative in $C((0,T],L)$, this concludes the proof of
Theorem \ref{thm:VFin}.

\subsection{Existence of a weak solution to the SDE (\ref{eq:sdefinance1})}

To prove Theorem \ref{thm:weaksolutionFin}, we use Theorem \ref{thm:FigalliGeneralization}
below, which is a generalization of \cite[Theorem 2.6]{Figalli} to
make a link between the existence to a Fokker-Planck system and the
existence to the corresponding martingale problem. Theorem \ref{thm:FigalliGeneralization}
is proved in Appendix \ref{sec:FigGen}. There exist several generalizations
of \cite[Theorem 2.6]{Figalli}, among whom we can mention \cite{FournierHauray},
where the coefficients of the generator are no longer bounded but
have linear growth, and \cite{zhang}, where the author deals with
a partial integro differential equation with a Lévy generator.

\subsubsection{Generalization of \cite[Theorem 2.6]{Figalli}}

Given functions $\left(b_{i}\right)_{1\leq i\leq d},\left(a_{i}\right)_{1\leq i\leq d},\left(q_{ij}\right)_{1\leq i,j\leq d}$
defined on $[0,T]\times\mathbb{R}$ and a finite measure $\mu_{0}$
on $\mathbb{R}\times\mathcal{Y}$, we study the following PDS, where
for $1\leq i\leq d$: 
\begin{eqnarray}
\partial_{t}\mu_{i}+\partial_{x}(b_{i}\mu_{i})-\frac{1}{2}\partial_{xx}^{2}(a_{i}\mu_{i})-\sum_{j=1}^{d}q_{ji}\mu_{j} & = & 0\text{ in \ensuremath{(0,T)\times\mathbb{R}}}\label{eq:PDSJump1}\\
\mu_{i}(0) & = & \mu_{0}(\cdot,\{i\}).\label{eq:PDSJump2}
\end{eqnarray}

\begin{defn}
A family of vectors of Borel measures $\left(\mu_{1}(t,\cdot),...,\mu_{d}(t,\cdot)\right)_{t\in(0,T]}$
is a solution to the PDS (\ref{eq:PDSJump1})-(\ref{eq:PDSJump2})
if for any function $\phi$ defined on $\mathbb{\mathcal{S}}$ such
that $\forall i\in\{1,...,d\},\phi(\cdot,i)\in C_{c}^{\infty}(\mathbb{R})$,
\begin{eqnarray}
\frac{d}{dt}\int_{\mathbb{R}}\sum_{i=1}^{d}\phi(x,i)\mu_{i}(t,dx) & = & \sum_{i=1}^{d}\int_{\mathbb{R}}\left(b_{i}(t,x)\partial_{x}\phi(x,i)+\frac{1}{2}a_{i}(t,x)\partial_{xx}^{2}\phi(x,i)\right)\mu_{i}(t,dx)\nonumber \\
 & \ + & \sum_{i=1}^{d}\sum_{j=1}^{d}\left(\int_{\mathbb{R}}q_{ji}(t,x)\phi(x,i)\mu_{j}(t,dx)\right),\label{eq:distributionsolution}
\end{eqnarray}
in the distributional sense on $(0,T)$, and for $1\leq i\leq d$
and $\psi\in C_{b}^{2}(\mathbb{R})$, the function $t\rightarrow\int_{\mathbb{R}}\psi(x)\mu_{i}(t,dx)$
is continuous on $(0,T]$ and converges to $\int_{\mathbb{R}}\psi(x)\mu_{0}(dx,\{i\})$
as $t\rightarrow0$. 
\end{defn}
We suppose that all the coefficients $b_{i},a_{i},q_{ij},1\leq i,j\leq d,$
are uniformly bounded on $[0,T]\times\mathbb{R}$, that the coefficients
$\left(a_{i}\right)_{1\leq i\leq d}$ are non negative, that the coefficients
$\left(q_{ij}\right)_{1\leq i,j\leq d}$ are non negative functions
for $i\neq j$, and $q_{ii}=-\sum_{j\neq i}q_{ij}$. We introduce
the SDE 
\begin{equation}
dX_{t}=b_{Y_{t}}(t,X_{t})dt+\sqrt{a_{Y_{t}}(t,X_{t})}dW_{t},\label{eq:SDEJumpFig}
\end{equation}
where $Y_{t}$ is a stochastic process with values in $\mathcal{Y}$,
and that satisfies, for $j\neq Y_{t}$, 
\[
\text{\ensuremath{\mathbb{P}}}\left(Y_{t+dt}=j|\left(X_{s},Y_{s}\right)_{0\leq s\leq t}\right)=q_{Y_{t}j}(t,X_{t})dt.
\]
We define $E=\{(X,Y),X\in C([0,T],\mathbb{R}),Y\text{ càdlàg with values in \ensuremath{\mathcal{Y}}}\}$
endowed with the Skorokhod topology. For a probability measure $m$
on $\mathbb{R}\times\mathcal{Y}$, a probability measure $\nu$ on
$E$ is a martingale solution to the SDE (\ref{eq:SDEJumpFig}) with
initial condition $m$ if under the probability $\nu$, the canonical
process $(X,Y)$ on $E$ satisfies $(X_{0},Y_{0})\sim m$ and for
any function $\phi$ defined on $\mathbb{R}\times\mathcal{Y}$ s.t.
$\ensuremath{\forall i\in\{1,...,d\},\phi(\cdot,i)\in C_{b}^{2}(\mathbb{R})}$,
the process 

\[
\phi(X_{t},Y_{t})-\phi(X_{0},Y_{0})-\int_{0}^{t}\left(\frac{1}{2}a_{Y_{s}}(s,X_{s})\partial_{xx}^{2}\phi(X_{s},Y_{s})+b_{Y_{s}}(s,X_{s})\partial_{x}\phi(X_{s},Y_{s})+\sum_{l=1}^{d}q_{Y_{s}l}(s,X_{s})\phi(X_{s},l)\right)ds
\]
is a $\nu$-martingale. Now we can state the generalization of \cite[Theorem 2.6]{Figalli},
that will be used in the following section.
\begin{thm}
\label{thm:FigalliGeneralization}Let $(\mu_{1}(t,\cdot),...,\mu_{d}(t,\cdot))_{t\in(0,T]}$
be a solution to the PDS (\ref{eq:PDSJump1})-(\ref{eq:PDSJump2})
where the initial condition $\mu_{0}$ is a probability measure on
$\mathbb{R}\times\mathcal{Y}$. We moreover suppose that there exists
$B>0$, s.t. for $1\leq i\leq d$, and $t\in(0,T]$, $\mu_{i}(t,\mathbb{R})\leq B$.
Then the SDE (\ref{eq:SDEJumpFig}) with initial distribution $\mu_{0}$
has a martingale solution $\nu$ which satisfies the following representation
formula: for $1\leq i\leq d$ and $\psi\in C_{c}^{\infty}(\mathbb{R})$,
\[
\int_{\mathbb{R}}\psi(x)\mu_{i}(t,dx)=\int_{E}\psi(X_{t})1_{\{Y_{t}=i\}}d\nu(X,Y).
\]

\end{thm}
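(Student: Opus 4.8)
The plan is to follow the proof of \cite[Theorem 2.6]{Figalli}, adapting it to the coupled diffusion/jump system. First I would check that $(m_t)_{t\in(0,T]}$ defined by $m_t(dx,\{i\}):=\mu_i(t,dx)$ is a probability measure on $\mathbb{R}\times\mathcal{Y}$: testing (\ref{eq:distributionsolution}) against $\phi(x,i)=\chi_R(x)$ with $\chi_R$ a cut-off equal to $1$ on $[-R,R]$ and using $\sum_{i=1}^{d}q_{ji}=0$ kills the jump term, while the drift/diffusion terms are controlled by $\mu_i(t,\{R\le|x|\le R+1\})\to 0$ (each $\mu_i(t,\cdot)$ is finite, of mass $\le B$), so $t\mapsto\sum_{i=1}^{d}\mu_i(t,\mathbb{R})$ is constant, hence equal to $\mu_0(\mathbb{R}\times\mathcal{Y})=1$. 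I would then regard $(m_t)$ as a weakly continuous curve in $\mathcal{P}(\mathbb{R}\times\mathcal{Y})$ solving $\frac{d}{dt}\int\phi\,dm_t=\int\mathcal{L}_t\phi\,dm_t$ with $\mathcal{L}_t\phi(x,i)=b_i(t,x)\partial_x\phi(x,i)+\frac{1}{2}a_i(t,x)\partial_{xx}^2\phi(x,i)+\sum_{l=1}^{d}q_{il}(t,x)\phi(x,l)$, the aim being to realize $(m_t)$ as the time marginals of a martingale solution for $\mathcal{L}_t$ on the path space $E$.

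\textbf{Regularization.} For $\epsilon>0$ I would mollify in $x$: set $b_i^\epsilon:=b_i(t,\cdot)*\rho_\epsilon$, $q_{ij}^\epsilon:=q_{ij}(t,\cdot)*\rho_\epsilon$ for $i\ne j$ with $q_{ii}^\epsilon:=-\sum_{j\ne i}q_{ij}^\epsilon$, $a_i^\epsilon:=a_i(t,\cdot)*\rho_\epsilon+\epsilon$, and mollify $\mu_0$ into $\mu_0^\epsilon$. The associated PDS is then a non-degenerate linear parabolic system with smooth bounded coefficients, so it has a unique classical solution $(\mu_i^\epsilon)_{1\le i\le d}$, which remains non-negative with total mass $1$; write $m_t^\epsilon(dx,\{i\}):=\mu_i^\epsilon(t,dx)$. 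The SDE (\ref{eq:SDEJumpFig}) with coefficients $b^\epsilon,a^\epsilon,q^\epsilon$ and initial law $\mu_0^\epsilon$ has a weak solution with a well-defined law $\nu^\epsilon$ on $E$ (construction of the regime-switching diffusion is standard in the bounded smooth non-degenerate setting), and by Itô's formula together with uniqueness of the Fokker--Planck solution in that setting, the time marginals of $\nu^\epsilon$ coincide with $(m_t^\epsilon)$.

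\textbf{Compactness.} I would show that $(\nu^\epsilon)_{\epsilon>0}$ is tight on $E$. The $X$-component is a continuous semimartingale whose drift and diffusion coefficient are bounded uniformly in $\epsilon$, so the Kolmogorov--Chentsov criterion gives tightness of the $X$-marginals in $C([0,T],\mathbb{R})$. The $Y$-component is a pure-jump process on the finite set $\mathcal{Y}$ with jump intensities bounded by $\overline{q}$, so the number of its jumps on $[0,T]$ is stochastically dominated by a Poisson variable and Aldous's criterion yields tightness of the $Y$-marginals in the Skorokhod space. Hence $(\nu^\epsilon)$ is tight on $E$, and I extract a subsequence $\nu^{\epsilon_k}\rightharpoonup\nu$.

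\textbf{Identification of the limit, and the main difficulty.} The initial condition $(X_0,Y_0)\sim\mu_0$ follows from $m_0^{\epsilon_k}=\mu_0^{\epsilon_k}\rightharpoonup\mu_0$ and continuity of the evaluation $(X,Y)\mapsto(X_0,Y_0)$ on $E$. For the martingale property, I fix $\phi$ with $\phi(\cdot,i)\in C_b^2(\mathbb{R})$, times $0\le s\le t$, and a bounded continuous functional $g$ of the path up to time $s$, and pass to the limit in $\mathbb{E}^{\nu^{\epsilon_k}}[(M_t^{\phi,\epsilon_k}-M_s^{\phi,\epsilon_k})g]=0$ in three pieces: (i) the increment $\phi(X_t,Y_t)-\phi(X_s,Y_s)$, which is $\nu$-a.e. continuous on $E$ because the limiting $Y$ has, for each fixed time, no discontinuity a.s.\ (a standard property of weak limits of finite-state jump processes with bounded rates), so weak convergence applies; (ii) the ``good'' part of $\int_s^t\mathcal{L}_u^{\epsilon_k}\phi(X_u,Y_u)\,du$, handled as in \cite{Figalli} using the marginal identity $\mathrm{Law}_{\nu^{\epsilon_k}}(X_u,Y_u)=m_u^{\epsilon_k}$ and $m_u^{\epsilon_k}\rightharpoonup m_u$; (iii) the ``defect'' $\int_s^t(\mathcal{L}_u^{\epsilon_k}-\mathcal{L}_u)\phi(X_u,Y_u)\,du$, whose $\nu^{\epsilon_k}$-expectation equals $\int_s^t\sum_i\int_{\mathbb{R}}\big(|b_i^{\epsilon_k}-b_i|+\tfrac12|a_i^{\epsilon_k}-a_i|+\sum_l|q_{il}^{\epsilon_k}-q_{il}|\big)(u,x)\,|\text{derivatives of }\phi|\,\mu_i^{\epsilon_k}(u,dx)\,du$ and tends to $0$ by $L^1_{loc}$ (a.e.) convergence of the mollified coefficients and the uniform mass bound $\mu_i^{\epsilon_k}(u,\mathbb{R})\le1$ (using compact support of the derivatives of $\phi$ when working with $C_c^\infty$). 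This gives the martingale property under $\nu$; the added viscosity $+\epsilon$ disappears automatically in the limit. Finally, weak convergence identifies $\mathrm{Law}_\nu(X_t,Y_t)$ as a weak limit of $m_t^{\epsilon_k}$; combined with $m_t^{\epsilon_k}\rightharpoonup m_t$ and the weak continuity in $t$ of both $m_t$ (from the definition of the PDS solution) and $t\mapsto\mathrm{Law}_\nu(X_t,Y_t)$ (path continuity of $X$, right-continuity of $Y$), this forces $\mathrm{Law}_\nu(X_t,Y_t)=m_t$ for all $t\in(0,T]$, i.e.\ $\int_{\mathbb{R}}\psi(x)\mu_i(t,dx)=\int_E\psi(X_t)1_{\{Y_t=i\}}\,d\nu$ for $\psi\in C_c^\infty(\mathbb{R})$. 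I expect step (iii), together with the verification that the relevant path-space functionals are continuous at $\nu$-a.e.\ trajectory (ruling out fixed-time discontinuities of the limiting $Y$), to be the crux: it is precisely where the merely bounded-measurable nature of the $x$-dependent coefficients $b_i,a_i,q_{ij}$ must be absorbed by the Fokker--Planck structure.
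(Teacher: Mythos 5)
Your overall architecture (regularize, solve the regularized SDE, match its marginals with the regularized PDS via uniqueness, prove tightness by Aldous's criterion, pass to the limit in the martingale problem, handle fixed-time discontinuities of $Y$ through the set of continuity times) is the same as the paper's. But your choice of regularization breaks the proof at the decisive point. You mollify the coefficients directly, setting $b_i^{\epsilon}:=b_i(t,\cdot)*\rho_{\epsilon}$, $a_i^{\epsilon}:=a_i(t,\cdot)*\rho_{\epsilon}+\epsilon$, and then solve the resulting well-posed parabolic system from a mollified initial datum to obtain $(\mu_i^{\epsilon})$. With this construction there is no reason why $\mu_i^{\epsilon}(t,\cdot)$ should converge to the \emph{given} solution $\mu_i(t,\cdot)$: the original Fokker--Planck system has merely bounded measurable coefficients, its solutions need not be unique, and your vanishing-viscosity limit selects \emph{some} solution, not necessarily the one fixed in the statement. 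The assertion ``$m_u^{\epsilon_k}\rightharpoonup m_u$'', on which both your identification of the generator in step (ii) and the final representation formula rest, is therefore unjustified --- and it is essentially the conclusion of the theorem that you would need in order to justify it.

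The paper (following \cite{Figalli}) avoids this by mollifying the \emph{given solution} and the \emph{products} coefficient-times-measure: $\mu_i^{\epsilon}:=\mu_i*\rho^{\epsilon}$ together with $a_i^{\epsilon}:=\frac{(a_i\mu_i)*\rho^{\epsilon}}{\mu_i^{\epsilon}}$, $b_i^{\epsilon}:=\frac{(b_i\mu_i)*\rho^{\epsilon}}{\mu_i^{\epsilon}}$, $q_{ij}^{\epsilon}:=\frac{(q_{ij}\mu_i)*\rho^{\epsilon}}{\mu_i^{\epsilon}}$. This makes $\mu_i^{\epsilon}$ an \emph{exact} solution of the regularized system by construction, so the regularized marginals converge to $\mu_i$ for free, and --- equally important for your step (iii) --- the defect term $\int_s^t\int_{\mathbb{R}}\left|\tilde{L}_u^{\epsilon}\phi-\tilde{L}_u\phi\right|\mu_i^{\epsilon}(u,dx)\,du$ can be transferred back onto the \emph{fixed} measure $\mu_i(v,dy)\,dv$, where one can invoke density of continuous functions in $L^1$ of that measure to approximate the merely measurable $a_i,b_i,q_{ij}$. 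With your direct mollification, Lebesgue-a.e.\ convergence $b_i^{\epsilon_k}\to b_i$ does not control $\int|b_i^{\epsilon_k}-b_i|\,d\mu_i^{\epsilon_k}$, since the measures $\mu_i^{\epsilon_k}$ vary with $k$ and may charge the Lebesgue-null set where the convergence fails. Replacing your regularization by the one above repairs both gaps; the remaining ingredients of your outline (mass conservation, the tightness estimates, the restriction to times in $D(\nu)$) are sound.
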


\subsubsection{Proof of Theorem \ref{thm:weaksolutionFin}}

Let $p$ be a solution to $V_{Fin}(\mu)$ with the properties stated
in Theorem \ref{thm:VFin}. To show that $(p_{1}(t,x)dx,...,p_{d}(t,x)dx)_{t\in(0,T]}$
satisfies the variational formulation in the sense of distributions
(\ref{eq:distributionsolution}), we check that for $1\leq i\leq d$,
$\tilde{\sigma}_{Dup}^{2}\frac{\sum_{k=1}^{d}p_{k}}{\sum_{k=1}^{d}\lambda_{k}p_{k}}\lambda_{i}p_{i}\in H^{1}(\mathbb{R}),$
and $\partial_{x}\left(\tilde{\sigma}_{Dup}^{2}\frac{\sum_{k=1}^{d}p_{k}}{\sum_{k=1}^{d}\lambda_{k}p_{k}}\lambda_{i}p_{i}\right)=\tilde{\sigma}_{Dup}^{2}\left(A(p)p\right)_{i}+\frac{\sum_{k=1}^{d}p_{k}}{\sum_{k=1}^{d}\lambda_{k}p_{k}}\lambda_{i}p_{i}\partial_{x}\tilde{\sigma}_{Dup}^{2}$.
As $\tilde{\sigma}_{Dup}\in L^{\infty}([0,T],W^{1,\infty}(\mathbb{R}))$,
it is sufficient to check that $\frac{\sum_{k=1}^{d}p_{k}}{\sum_{k=1}^{d}\lambda_{k}p_{k}}\lambda_{i}p_{i}\in H^{1}(\mathbb{R})$
and that $\partial_{x}\left(\frac{\sum_{k=1}^{d}p_{k}}{\sum_{k=1}^{d}\lambda_{k}p_{k}}\lambda_{i}p_{i}\right)=\left(A(p)p\right)_{i}$,
and the proof is exactly the same as the one of Lemma \ref{lem:VFtoFigalli},
thanks to Lemma \ref{lem:LVpositivecontinuous} which ensures the
positivity of $\sum_{i=1}^{d}p_{i}$ a.e. on $(0,T]\times\mathbb{R}$.
Then by Theorem \ref{thm:FigalliGeneralization}, there exists a measure
$\nu$ under which $(X_{0},Y_{0})\sim\mu$ and for any function $\phi$
defined on $\mathcal{S}$ s.t. $\ensuremath{\forall i\in\{1,...,d\},\phi(\cdot,i)\in C_{b}^{2}(\mathbb{R})}$,
the function 

\begin{eqnarray*}
 &  & \phi(X_{t},Y_{t})-\phi(X_{0},Y_{0})-\int_{0}^{t}\frac{1}{2}\tilde{\sigma}_{Dup}^{2}f^{2}(Y_{s})\frac{\sum_{i=1}^{d}p_{k}}{\sum_{i=1}^{d}\lambda_{k}p_{k}}(s,X_{s})\partial_{xx}^{2}\phi(X_{s},Y_{s})ds\\
 &  & -\int_{0}^{t}\left(r-\frac{1}{2}\tilde{\sigma}_{Dup}^{2}f^{2}(Y_{s})\frac{\sum_{i=1}^{d}p_{k}}{\sum_{i=1}^{d}\lambda_{k}p_{k}}(s,X_{s})\right)\partial_{x}\phi(X_{s},Y_{s})-\sum_{l=1}^{d}q_{Y_{s}l}(X_{s})\phi(X_{s},l)ds,
\end{eqnarray*}
is a $\nu$-martingale. Moreover, for $h:\mathcal{Y}\rightarrow\mathbb{R}$
and $g:\mathbb{R}\rightarrow\mathbb{R}$ continuous and bounded functions,
we have that: 
\[
\mathbb{E}\left(h^{2}(Y_{s})g(X_{s})\right)=\sum_{i=1}^{d}\mathbb{E}\left(h^{2}(Y_{s})1_{\{Y_{s}=y_{i}\}}g(X_{s})\right)=\int_{\mathbb{R}}g(x)\sum_{i=1}^{d}h^{2}(y_{i})p_{i}(t,x)dx.
\]
Taking $h\equiv1$ and $h\equiv f$, we check that the time marginals
of $X$ are given by $\sum_{i=1}^{d}p_{i}$ and that $\mathbb{E}\left(f^{2}(Y_{s})|X_{s}\right)=\frac{\sum_{k=1}^{d}\lambda_{k}p_{k}}{\sum_{k=1}^{d}p_{k}}(s,X_{s})$.
Therefore $\nu$ is a solution to the martingale problem associated
to the SDE (\ref{eq:sdefinance1}), and we obtain existence of a weak
solution to the SDE (\ref{eq:sdefinance1}) by \cite[Theorem 2.3]{Kurtz2011},
and which has the same time marginals as the solution of the SDE (\ref{eq:SDEFinDupireLogPrice2}).

\subsection{A more general fake Brownian motion}

We consider the SDE:
\begin{equation}
dX_{t}=\frac{f(Y_{t})}{\sqrt{\mathbb{E}[f^{2}(Y_{t})|X_{t}]}}dW_{t},\label{eq:SDEJump}
\end{equation}
where the process $\left(Y_{t}\right)_{t\geq0}$ takes values in $\mathcal{Y}$
and $\mathbb{P}\left(Y_{t+dt}=j|\left(X_{s},Y_{s}\right),0\leq s\leq t\right)=q_{Y_{t}j}(X_{t})dt$,
for $1\leq j\leq d$ and $j\neq Y_{t}$, with the functions $\left(q_{ij}\right)_{1\leq i\neq j\leq d}$
non negative and bounded, and $q_{ii}=-\sum_{j\neq i}q_{ij}$ for
$1\leq i\leq d$. The vector $\left(X_{0},Y_{0}\right)$ has the probability
distribution $\mu$ on $\mathbb{R}\times\mathcal{S}$ and is independent
from $\left(W_{t}\right)_{t\geq0}$. The associated Fokker-Planck
PDS writes:
\begin{eqnarray*}
\forall i\in\{1,...,d\},\ \partial_{t}p_{i} & = & \frac{1}{2}\partial_{xx}^{2}\left(\frac{\sum_{i=1}^{d}p_{k}}{\sum_{i=1}^{d}\lambda_{k}p_{k}}\lambda_{i}p_{i}\right)+\sum_{j=1}^{d}q_{ji}p_{j}\\
p_{i}(0,\cdot) & = & \alpha_{i}\mu_{i},
\end{eqnarray*}
with $\left(\alpha_{i}\right)_{1\leq i\leq d}$ and $\left(\mu_{i}\right)_{1\leq i\leq d}$
defined as in Section \ref{sec:CalibrationofRSLVmain}. We introduce
an associated variational formulation $V_{Jump}(\mu)$:

\begin{eqnarray*}
\text{Find} &  & p=(p_{1},...,p_{d})\ \text{satisfying:}
\end{eqnarray*}
\[
p\in L_{loc}^{2}((0,T];H)\cap L_{loc}^{\infty}((0,T];L),
\]
\[
\text{\ensuremath{p} takes values in \ensuremath{\mathcal{D}}, a.e. on \ensuremath{(0,T)\times\mathbb{R}},}
\]
\begin{eqnarray*}
\forall v\in H,\ \frac{d}{dt}(v,p)_{d}+(\partial_{x}v,A(p)\partial_{x}p)_{d} & = & (Qv,p)_{d},
\end{eqnarray*}
\[
\text{in the sense of distributions on \ensuremath{(0,T)}, and}
\]

\[
p(t,\cdot)\underset{t\rightarrow0^{+}}{\overset{\text{weakly-*}}{\rightarrow}}p_{0}:=(\alpha_{1}\mu_{1},...,\alpha_{d}\mu_{d})
\]
With the same arguments used to prove Theorems \ref{thm:VFin} and
\ref{thm:weaksolutionFin}, we can prove the following results. The
main difference is that $\sum_{i=1}^{d}p$ is solution to the heat
equation and not the Dupire PDE, but existence and uniqueness, positivity
and Aronson-like estimates of the solution hold in both cases, so
it does not affect the proof.
\begin{thm}
Under Condition (C), $V_{Jump}(\mu)$ has a solution $p\in C((0,T],L)$.
Moreover, $\sum_{i=1}^{d}p_{i}(t,x)=\mu_{X_{0}}*h_{t}(x)$ a.e. on
$(0,T]\times\mathbb{R}$.
\end{thm}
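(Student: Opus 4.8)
The proof is a transcription of the arguments of Theorems~\ref{thm:FBMmuProba} and~\ref{thm:VFin}, specialised to $\tilde\sigma_{Dup}\equiv 1$ and $r=0$: the nonlinear diffusion matrix is exactly the matrix $A$ of the system (FBM), but an additional linear reaction term $(Qv,p)_d$ is present, whose treatment is borrowed from Section~\ref{sec:calibrationofRSLVproofs}. As in the proof of Theorem~\ref{thm:VFin}, I would first introduce the stronger formulation $V_{Jump,L^2}(\mu)$ (valid when each $\mu_i$ has a square-integrable density), its $\epsilon$-regularisation $V_{Jump,\epsilon}(\mu)$ obtained by replacing $A$ with the truncated matrix $A_\epsilon$, and the Galerkin approximation $V_{Jump,\epsilon}^m(\mu)$ on the span of $(w_1,\dots,w_m)$. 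By Corollary~\ref{cor:PiA}, Condition $(C)$ provides $\Pi\in\mathcal S_d^{++}(\mathbb R)$ with $\Pi A$ uniformly coercive on $\mathcal D$ with coefficient $\kappa\in(0,l_{min}(\Pi)/2)$, and by Lemma~\ref{lem:AcoerciveimpliesAepscoercive} $\Pi A_\epsilon$ is uniformly coercive on $(\mathbb R^+)^d$ with the same $\kappa$, uniformly in $\epsilon$.

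At the Galerkin level, the ODE for the coordinates $g_\epsilon^m$ has a right-hand side that is locally Lipschitz in the unknown (as in Lemma~\ref{lem:FespLocalLipschitz}, the function $A_\epsilon$ being continuous and bounded by Lemma~\ref{lem:Bepsbounded} and $Q$ being constant in the unknown and bounded), hence admits a unique maximal solution; since all the coefficient matrices are globally bounded, Gronwall's lemma extends it to $[0,T]$. Testing with $\Pi p_\epsilon^m$ and using the coercivity of $\Pi A_\epsilon$ for the diffusion term together with $|(Q\Pi p_\epsilon^m,p_\epsilon^m)_d|\le d^2\overline q\,\|\Pi\|_\infty\,|p_\epsilon^m|_d^2$ yields, after Gronwall (exactly as in Lemma~\ref{lem:EstimatesFin} but with $\underline\sigma=\overline\sigma=1$), bounds on $\sup_{[0,T]}|p_\epsilon^m|_d^2$, on $\int_0^T|\partial_x p_\epsilon^m|_d^2\,dt$ and on $\int_0^T\|p_\epsilon^m\|_d^2\,dt$ that are independent of $m$ and $\epsilon$. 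Passing to the limit $m\to\infty$ along a subsequence — weak convergence in $L^2([0,T];H)$, weak-$*$ convergence in $L^\infty([0,T];L)$, boundedness of $dp_\epsilon^m/dt$ in $L^2([0,T];H')$, the resulting strong and a.e. convergence on bounded open sets, and diagonal extraction — reproduces the proof of Proposition~\ref{prop:ExistencetoVeps} and gives a solution $p_\epsilon\in C([0,T],L)$ of $V_{Jump,\epsilon}(\mu)$. Non-negativity is obtained by testing with $p_\epsilon^-$: the diffusion term is bounded below using $A_{\epsilon,ii}(p_\epsilon^+)(\partial_x p_{\epsilon,i}^-)^2\ge\tfrac{\lambda_{min}}{2\lambda_{max}}(\partial_x p_{\epsilon,i}^-)^2$ (established in the proof of Proposition~\ref{prop:solutionarenonnegative}), while the reaction term satisfies $(Qp_\epsilon^-,p_\epsilon^+)_d\le 0$ because $q_{ij}\ge 0$ for $i\ne j$ and $p_{\epsilon,i}^+p_{\epsilon,i}^-=0$, and $|(Qp_\epsilon^-,p_\epsilon^-)_d|\le d\overline q\,|p_\epsilon^-|_d^2$; since $p_\epsilon^-(0)=0$, Gronwall forces $p_\epsilon^-\equiv 0$, as in the proof of Proposition~\ref{lem:VFinnonnegative}.

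For the limit $\epsilon\to 0$, I would use that $\sum_{j}q_{ij}=0$ and $\sum_i M_{\epsilon,ij}=0$, so that $u_\epsilon:=\sum_{i=1}^d p_{\epsilon,i}$ solves the variational formulation $H_{L^2}(\mu_{X_0})$ of the heat equation $\partial_t u=\tfrac12\partial_{xx}u$; by uniqueness $u_\epsilon(t,\cdot)=\mu_{X_0}*h_t$, which is strictly positive on $(0,T]\times\mathbb R$. Hence $\sum_i\lambda_i p_{\epsilon,i}\ge\lambda_{min}\,\mu_{X_0}*h_t>0$, so $p_\epsilon$ takes values in $\mathcal D$ a.e., and along an a.e.-convergent subsequence $A_{\epsilon_k}(p_{\epsilon_k})=A(p_{\epsilon_k})\to A(p)$ a.e.; by Lemma~\ref{lem:Bepsbounded} and dominated convergence the nonlinear term passes to the limit, the $(Qv,\cdot)_d$ term by weak $L^2$ convergence, exactly as in Proposition~\ref{prop:existencetoVL2}, giving a solution $p\in C([0,T],L)$ of $V_{Jump,L^2}(\mu)$ with $\sum_i p_i=\mu_{X_0}*h_t$. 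The general case follows by mollifying each $\mu_i$ into $\mu_{\sigma_k,i}$ with square-integrable densities, applying the above to $\mu_{\sigma_k}$, using $|p_{\sigma_k}(t)|_d^2\le\|\sum_i p_{\sigma_k,i}(t)\|_{L^2}^2\le\tfrac{1}{2\sqrt{\pi t}}$ and, after testing with $\Pi p_{\sigma_k}$ and invoking the coercivity of $\Pi A$, the bound $\int_t^T|\partial_x p_{\sigma_k}(s)|_d^2\,ds\le\tfrac{l_{max}(\Pi)}{4\kappa\sqrt{\pi t}}$; Lemma~\ref{lem:L1dxp} then gives $\int_0^t|\partial_x p_{\sigma_k}(s)|_d\,ds\le C\,t^{1/4}$, which controls the nonlinear term near $t=0$ as in the proof of Theorem~\ref{thm:FBMmuProba}. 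One extracts a subsequence converging weakly in $L^2((0,T];L)$, weakly in $L^2_{loc}((0,T];H)$, weakly-$*$ in $L^\infty_{loc}((0,T];L)$ and a.e. on $(0,T]\times\mathbb R$, verifies the variational identity of $V_{Jump}(\mu)$ together with the weak-$*$ initial condition through the $H^1((t,T))$ integration-by-parts argument, and concludes that $p\in C((0,T],L)$ with $\sum_i p_i=\mu_{X_0}*h_t$ a.e.

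The main obstacle is organisational rather than conceptual: the reaction term $(Qv,p)_d$ destroys the monotone, dissipative $L^\infty((0,T];L)$ a priori estimate enjoyed by the system (FBM), so every energy bound of Section~\ref{sec:proofsnewclass} must be replaced by a Gronwall estimate as in Section~\ref{sec:calibrationofRSLVproofs}, and one has to check the sign $(Qp_\epsilon^-,p_\epsilon^+)_d\le 0$ to preserve non-negativity. Once this is done the argument is a faithful transcription, because the $A$-dependent nonlinear structure is identical to (FBM) and the \emph{master} equation satisfied by $\sum_i p_i$ is the heat equation, whose well-posedness, strict positivity and $L^2$ smoothing are classical --- which is precisely why replacing $LV(\mu_{X_0})$ by the heat equation, as noted before the statement, does not affect the proof.
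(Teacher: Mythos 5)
Your proposal is correct and follows essentially the same route as the paper, which itself only states that the theorem is proved "with the same arguments used to prove Theorems \ref{thm:VFin} and \ref{thm:weaksolutionFin}", the sole difference being that $\sum_{i}p_{i}$ solves the heat equation rather than the Dupire PDE; your transcription fills in exactly those steps (Galerkin, coercivity plus Gronwall for the $Q$ term, sign of $(Qp_{\epsilon}^{-},p_{\epsilon}^{+})_{d}$, positivity of $\mu_{X_{0}}*h_{t}$, mollification with Lemma \ref{lem:L1dxp}). The only cosmetic slip is that in the mollified case the bound on $\int_{t}^{T}|\partial_{x}p_{\sigma_{k}}|_{d}^{2}\,ds$ carries an extra Gronwall factor coming from $Q$, as in (\ref{eq:equalityChacon}), but this constant is uniform in $k$ and $t$ and does not affect the argument.
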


\begin{thm}
\label{thm:5.13}Under Condition (C), SDE (\ref{eq:SDEJump}) has
a weak solution, and its time marginals are those of $\left(Z+W_{t}\right)_{t\geq0}$,
where $Z$ has the law $\mu_{X_{0}}$ and is independent from $\left(W_{t}\right)_{t\geq0}$.
\end{thm}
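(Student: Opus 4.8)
The plan is to mimic verbatim the proofs of Theorems \ref{thm:VFin} and \ref{thm:weaksolutionFin}, replacing the role of the Dupire PDE for $\sum_i p_i$ by the heat equation. First I would fix a solution $p\in C((0,T],L)$ to $V_{Jump}(\mu)$ provided by the preceding theorem under Condition $(C)$, which moreover satisfies $\sum_{i=1}^{d}p_i(t,x)=\mu_{X_0}*h_t(x)>0$ a.e. on $(0,T]\times\mathbb{R}$ and takes values in $\mathcal{D}$ a.e.

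The next step is the analogue of Lemma \ref{lem:VFtoFigalli}: for $1\leq i\leq d$ and a.e. $s\in(0,T)$, $\left(\frac{\sum_k p_k}{\sum_k\lambda_k p_k}\lambda_i p_i\right)(s,\cdot)\in H^1(\mathbb{R})$ and $\frac12\partial_x\left(\frac{\sum_k p_k}{\sum_k\lambda_k p_k}\lambda_i p_i\right)(s,\cdot)=(A(p)\partial_x p)_i(s,\cdot)$. Its proof is word for word that of Lemma \ref{lem:VFtoFigalli}: the only facts used there are that $p$ takes values in $\mathcal{D}$ a.e., that $\frac{p_i}{\sum_k\lambda_k p_k}\in[0,\frac{1}{\lambda_{min}}]$ a.e., and that $\sum_{i=1}^{d}p_i=\mu_{X_0}*h_s$ is strictly positive, which is what allows one to mollify in $x$ and pass to the limit in the product rule. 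Integrating by parts twice, $p$ then satisfies, for $\phi\in C_c^\infty(\mathbb{R})$ and $1\leq i\leq d$,
\[
\frac{d}{dt}\int_{\mathbb{R}}\phi\,p_i\,dx-\frac12\int_{\mathbb{R}}\partial_{xx}^{2}\phi\,\frac{\sum_k p_k}{\sum_k\lambda_k p_k}\lambda_i p_i\,dx-\sum_{j=1}^{d}\int_{\mathbb{R}}q_{ji}\,\phi\,p_j\,dx=0
\]
in the distributional sense on $(0,T)$, which is exactly \eqref{eq:distributionsolution} with $b_i\equiv0$, $a_i=f^2(y_i)\frac{\sum_k p_k}{\sum_k\lambda_k p_k}$ and jump rates $q_{ij}$. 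Here $a_i\in[\lambda_i/\lambda_{max},\lambda_i/\lambda_{min}]$ is bounded and nonnegative, the $q_{ij}$ are bounded with $q_{ii}=-\sum_{j\neq i}q_{ij}$, and $p_i(t,\mathbb{R})\leq\mu_{X_0}*h_t(\mathbb{R})=1$, so the total-mass hypothesis of Theorem \ref{thm:FigalliGeneralization} holds with $B=1$.

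I would then apply Theorem \ref{thm:FigalliGeneralization} to obtain a martingale solution $\nu$ to the SDE \eqref{eq:SDEJumpFig} with these coefficients and initial law $\mu$, satisfying $\int_{\mathbb{R}}\psi(x)\mu_i(t,dx)=\int_E\psi(X_t)1_{\{Y_t=i\}}\,d\nu$ for $\psi\in C_c^\infty(\mathbb{R})$. As in the proof of Theorem \ref{thm:weaksolutionFin}, testing with $h\equiv1$ shows that under $\nu$ the time marginal of $X_t$ is $\sum_{i=1}^{d}p_i(t,\cdot)=\mu_{X_0}*h_t$, i.e. the law of $Z+W_t$ with $Z\sim\mu_{X_0}$ independent of $(W_t)_{t\geq0}$; testing with $h\equiv f$ yields $\mathbb{E}^\nu[f^2(Y_s)|X_s]=\frac{\sum_k\lambda_k p_k}{\sum_k p_k}(s,X_s)$ a.s. Plugging this identity back into the generator, $\nu$ solves the martingale problem associated with SDE \eqref{eq:SDEJump}, and \cite[Theorem 2.3]{Kurtz2011} then gives a genuine weak solution with the announced time marginals. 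The main technical point is the $H^1$-regularity step, but since the strict positivity of $\sum_i p_i$ is available for free from the explicit heat-kernel formula — rather than from the Aronson-type bounds needed in the RSLV setting — this step is actually easier than in Theorem \ref{thm:VFin}, and the rest of the argument is routine bookkeeping.
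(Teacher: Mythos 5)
Your proof is correct and follows exactly the route the paper intends: the paper proves Theorem \ref{thm:5.13} only by reference to the proofs of Theorems \ref{thm:VFin} and \ref{thm:weaksolutionFin}, noting that the heat equation replaces the Dupire PDE, and your proposal fills in precisely those arguments (the analogue of Lemma \ref{lem:VFtoFigalli}, the verification of the hypotheses of Theorem \ref{thm:FigalliGeneralization} with $b_i\equiv 0$ and $a_i=\lambda_i\frac{\sum_k p_k}{\sum_k\lambda_k p_k}$, the marginal and conditional-expectation computations, and the appeal to \cite{Kurtz2011}). Your observation that the strict positivity of $\sum_i p_i$ comes for free from the explicit heat-kernel formula, rather than from the Aronson-type bounds needed in the RSLV case, matches the paper's own remark on why the argument carries over unchanged.
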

Moreover, the solutions to SDE (\ref{eq:SDEJump}) are also continuous
fake Brownian motions provided that $f(Y_{0})$ can take at least
two distinct values with positive probability. Let us define $\tilde{\mathcal{Y}}:=\{i\in\mathcal{Y},\ \mathbb{P}(Y_{0}=i)>0\}$. 
\begin{prop}
\label{thm:JumpfakeBM}Under Condition (C), if $f$ is non constant
on $\tilde{\mathcal{Y}}$, the solutions to SDE (\ref{eq:SDEJump})
with initial condition $\mu=\delta_{0}$ are continuous fake Brownian
motions.\end{prop}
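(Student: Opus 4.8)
The plan is to mimic, almost verbatim, the proof of the corresponding statement for SDE~(\ref{eq:ToySDE}) (the Proposition following Theorem~\ref{thm:FBMweaksolution}), the only genuinely new point being that $Y$ is now a jump process, so the step where the independence of the \emph{constant} variable $Y$ and $(W_t)_{t\ge0}$ is used must be replaced by an argument localised near $t=0$. Let $(X,Y,W)$ be a weak solution to (\ref{eq:SDEJump}) with $\mu=\delta_0$, so that $X_0=0$ a.s. and $Y_0$ has support $\tilde{\mathcal{Y}}$. Since there is no drift and the diffusion coefficient satisfies $\sqrt{\lambda_{min}/\lambda_{max}}\le \frac{f(Y_t)}{\sqrt{\mathbb{E}[f^{2}(Y_t)|X_t]}}\le\sqrt{\lambda_{max}/\lambda_{min}}$, the process $X$ is a continuous square-integrable martingale with $X_0=0$, and by Theorem~\ref{thm:5.13} its time marginals are $\mathcal{N}(0,t)$. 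To conclude that $X$ is a continuous fake Brownian motion it suffices to show that $X$ does not have the law of a Brownian motion; assume for contradiction that it does, so that a.s. $\langle X\rangle_t=t$ for all $t\ge0$.

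Since $\langle X\rangle_t=\int_0^t\frac{f^{2}(Y_s)}{\mathbb{E}[f^{2}(Y_s)|X_s]}\,ds=t$ for every $t$, Lebesgue differentiation together with Fubini's theorem yields a Lebesgue-null set $N\subset(0,T)$ such that for every $s\notin N$ one has $\frac{f^{2}(Y_s)}{\mathbb{E}[f^{2}(Y_s)|X_s]}=1$ almost surely; as $f>0$, also $\frac{f(Y_s)}{\sqrt{\mathbb{E}[f^{2}(Y_s)|X_s]}}=1$ for a.e.\ $s$, a.s.. Hence $X_t-W_t=\int_0^t\big(\frac{f(Y_s)}{\sqrt{\mathbb{E}[f^{2}(Y_s)|X_s]}}-1\big)dW_s$ has vanishing quadratic variation, so $X_t=W_t$ a.s.\ for all $t$; in particular, for every $s\notin N$, $f^{2}(Y_s)=\mathbb{E}[f^{2}(Y_s)\,|\,W_s]$ is $\sigma(W_s)$-measurable, and we may write $f^{2}(Y_s)=\varphi_s(W_s)$ with $\varphi_s$ Borel and $\{\lambda_1,\dots,\lambda_d\}$-valued.

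Now fix a sequence $s_n\downarrow0$ with $s_n\notin N$. The jump intensities of $Y$ being bounded by $\overline{q}$, its first jump time $\tau$ satisfies $\mathbb{P}(\tau\le s)\le(d-1)\overline{q}\,s$, and $Y_s=Y_0$ on $\{\tau>s\}$; therefore $f^{2}(Y_0)=\varphi_{s_n}(W_{s_n})$ on $\{\tau>s_n\}$ and $\|f^{2}(Y_0)-\varphi_{s_n}(W_{s_n})\|_{L^2}^2\le(\lambda_{max}-\lambda_{min})^2(d-1)\overline{q}\,s_n\to0$. Since in a weak solution $W$ is independent of $\mathcal{F}_0\supset\sigma(X_0,Y_0)$, the variable $f^{2}(Y_0)$ is independent of $\sigma(W_{s_n})$, hence $\mathrm{Cov}\big(f^{2}(Y_0),\varphi_{s_n}(W_{s_n})\big)=0$; letting $n\to\infty$ and using the $L^2$ convergence gives $\mathrm{Var}\big(f^{2}(Y_0)\big)=0$, i.e.\ $f^{2}(Y_0)$ is a.s.\ constant. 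This contradicts the hypothesis that $f$ is non-constant on $\tilde{\mathcal{Y}}$ combined with $\mathbb{P}(Y_0=i)>0$ for every $i\in\tilde{\mathcal{Y}}$ (which makes $f^{2}(Y_0)$ take at least two distinct values with positive probability). Consequently $X$ is not a Brownian motion, and being a continuous martingale with $\mathcal{N}(0,t)$ marginals it is a continuous fake Brownian motion.

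The step I expect to be the main obstacle is the passage $s\to0$ in the relation $f^{2}(Y_s)=\varphi_s(W_s)$: one cannot argue that a limit of $\sigma(W_s)$-measurable random variables is $\sigma(W_0)$-measurable (hence constant), because the functions $\varphi_s$ genuinely depend on $s$ and need not stabilise; the resolution is the covariance/$L^2$ estimate above, which only uses that $f^{2}(Y_0)$ is simultaneously $L^2$-close to, and independent of, a function of $W_{s_n}$. A secondary point to state carefully is that the weak solution produced via Theorem~\ref{thm:5.13} does come with a Brownian motion $W$ independent of $\mathcal{F}_0$ (so that $Y_0\perp W_{s_n}$) — this is part of the definition of a weak solution — and that $X=W$ follows from the a.e.\ identity of the coefficient with $1$.
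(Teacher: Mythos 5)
Your proof is correct, and its first half (continuity and the martingale property, reduction to the assumption $\langle X\rangle_t\equiv t$, the Fubini argument giving $f^{2}(Y_s)=\varphi_s(W_s)$ for a.e.\ $s$, and the identification $X=W$) coincides with the paper's. Where you genuinely diverge is the last step, deducing from $f^{2}(Y_s)=\varphi_s(W_s)$ that $f$ is constant on $\tilde{\mathcal{Y}}$. The paper fixes a single admissible $t>0$ and computes the conditional no-jump probability $\mathbb{E}\bigl[1_{\{\forall s\in[0,t],\,Y_s=i\}}\mid (X_s)_{s\le t},Y_0\bigr]=\exp\bigl(\int_0^t q_{ii}(s,X_s)ds\bigr)1_{\{Y_0=i\}}$, obtaining that on the event $\{\forall s\le t,\ Y_s=i\}$ the law of $X_t$ charges every Borel set of positive Lebesgue measure; since $\varphi_t(X_t)=f^{2}(i)$ on that event, $\varphi_t=f^{2}(i)$ Lebesgue-a.e.\ for every $i\in\tilde{\mathcal{Y}}$, forcing $f$ to be constant there. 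You instead localize at $t=0$: the crude bound $\mathbb{P}(\tau\le s)\le(d-1)\overline{q}\,s$ on the first jump time makes $\varphi_{s_n}(W_{s_n})$ an $L^2$-approximation of $f^{2}(Y_0)$, and independence of $\mathcal{F}_0$ from $W$ annihilates the covariance, so $\mathrm{Var}(f^{2}(Y_0))=0$; this is a valid argument (the covariance passes to the $L^2$ limit by Cauchy--Schwarz, and $f^{2}(Y_0)$ a.s.\ constant contradicts non-constancy of $f$ on $\tilde{\mathcal{Y}}$ since each $i\in\tilde{\mathcal{Y}}$ has positive mass). What each buys: your route needs only the elementary first-jump-time estimate rather than the exact exponential survival formula conditionally on the whole trajectory of $X$, and replaces the support argument for the conditional law of $X_t$ by a soft $L^2$ limit; the paper's route works at any single time $t$ without sending $t\to 0$. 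One minor citation point: for an \emph{arbitrary} weak solution, the marginal property $X_t\sim\mathcal{N}(0,t)$ should be justified by Gyongy's theorem as in the paper, since Theorem \ref{thm:5.13} only describes the marginals of the particular solution it constructs.
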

\begin{proof}
Let $\left(X_{t}\right)_{t\geq0}$ be a solution to SDE (\ref{eq:SDEJump}),
with initial condition $X_{0}=0$. The process $X$ is a continuous
martingale, and by \cite[Theorem 4.6]{Gyongy}, for $t\geq0$, $X_{t}\sim\mathcal{N}(0,t)$.
We consider its quadratic variation $d\langle X\rangle_{t}=\frac{f^{2}\left(Y_{t}\right)}{\mathbb{E}\left[f^{2}\left(Y_{t}\right)|X_{t}\right]}dt$.
We reason by contraposition and first suppose that a.s., for a.e.
$t>0$, the equality $f^{2}\left(Y_{t}\right)=\mathbb{E}\left[f^{2}\left(Y_{t}\right)|X_{t}\right]$.
Then a.s., for $t>0$, $X_{t}=W_{t}$ and there exists a measurable
function $\psi_{t}$, such that $\psi_{t}(X_{t})=f^{2}\left(Y_{t}\right)$.
For $i\in\tilde{\mathcal{Y}}$ and $g:\mathbb{R}\rightarrow\mathbb{R}$
measurable and non negative, let us fix $t>0$ such that those properties
hold and consider the term: 
\begin{eqnarray*}
\mathbb{E}\left[g\left(X_{t}\right)1_{\{\forall s\in[0,t],Y_{s}=i\}}\right] & = & \mathbb{E}\left[g\left(X_{t}\right)\mathbb{E}\left[1_{\{\forall s\in[0,t],Y_{s}=i\}}|\left(X_{s}\right)_{0\leq s\leq t},Y_{0}\right]\right]\\
 & = & \mathbb{E}\left[g\left(X_{t}\right)\exp\left(\int_{0}^{t}q_{ii}\left(s,X_{s}\right)ds\right)1_{\{Y_{0}=i\}}\right]\\
 & \geq & \exp\left(-\overline{q}t\right)\mathbb{E}\left[1_{\{Y_{0}=i\}}g\left(W_{t}\right)\right]=\alpha_{i}\frac{\exp\left(-\overline{q}t\right)}{\sqrt{2\pi t}}\int_{\mathbb{R}^{2}}g\left(y\right)\exp\left(-\frac{y^{2}}{2t}\right)dy,
\end{eqnarray*}
by independence between $Y_{0}$ and $\left(W_{t}\right)_{t\geq0}$.
Therefore on the event $\{\forall s\in[0,t],Y_{s}=y_{i}\}$, the random
variable $X_{t}$ belongs with positive probability to any Borel set
with positive Lebesgue measure. By the equality $\psi_{t}(X_{t})=f^{2}\left(Y_{t}\right)$,
the previous observation implies that $f$ is constant on $\tilde{\mathcal{Y}}$.
To conclude the proof, if $f$ is non constant on $\tilde{\mathcal{Y}}$,
by contraposition, the function $t\rightarrow\langle X\rangle_{t}$
is not equal to the identity function, and $\left(X_{t}\right)_{t\ge0}$
is a fake Brownian motion.
\end{proof}
\appendix

\section{\label{sec:AppendixA}Local Lipschitz property}

\subsection{Proof of Lemma \ref{lem:FespLocalLipschitz}}
\begin{proof}
Let $m\geq1$. It is sufficient to show that the function $z\in\mathbb{R}^{m}\rightarrow K_{\epsilon}^{m}(z)$
is locally Lipschitz to have $z\in\mathbb{R}^{m}\rightarrow F_{\epsilon}^{m}(z)$
locally Lipschitz. Let us remark that for $a,b\in\{1,...,m\}$, $\left(K_{\epsilon}(z)-K_{\epsilon}(\tilde{z})\right)_{ab}$
only contains integrals of type: 
\[
\int_{\mathbb{R}}\Theta\left(M_{\epsilon,ij}\left(\left(\sum_{k=1}^{m}z_{k}w_{k}\right)^{+}\right)-M_{\epsilon,ij}\left(\left(\sum_{k=1}^{m}\tilde{z}_{k}w_{k}\right)^{+}\right)\right)dx,\ 1\leq i,j\leq d,
\]
where $\Theta\in L^{1}(\mathbb{R})$, as for $c,d\in\{1,...,d\}$,
$\partial_{x}w_{ac}\partial_{x}w_{bd}\in L^{1}(\mathbb{R})$. For
$\Theta\in L^{1}(\mathbb{R})$, $1\leq i,j\leq d$ and $z\in\mathbb{R}^{m}$,
let us define 
\[
g_{ij}(z):=\int_{\mathbb{R}}\Theta M_{\epsilon,ij}\left(\left(\sum_{k=1}^{m}z_{k}w_{k}\right)^{+}\right)dx.
\]
Let $C$ be a compact subset of $\mathbb{R}^{m}$. Let us show that
the function $z\rightarrow g_{ij}(z)$ is Lipschitz on $C$ and conclude
by linearity. For $i\neq j$, $\rho,\tilde{\rho}\in\mathbb{R}^{d}$,
\begin{eqnarray*}
|M_{\epsilon,ij}(\rho^{+})-M_{\epsilon,ij}(\tilde{\rho}^{+})| & = & \left|\lambda_{i}\rho_{i}^{+}\frac{\sum_{l\neq j}(\lambda_{l}-\lambda_{j})\rho_{l}^{+}}{\left(\epsilon\vee\left(\sum_{l}\lambda_{l}\rho_{l}^{+}\right)\right)^{2}}-\lambda_{i}\tilde{\rho}_{i}^{+}\frac{\sum_{l\neq j}(\lambda_{l}-\lambda_{j})\tilde{\rho}_{l}^{+}}{\left(\epsilon\vee\left(\sum_{l}\lambda_{l}\tilde{\rho}_{l}^{+}\right)\right)^{2}}\right|\\
 & \leq & |\Delta_{1}(\rho,\tilde{\rho})|+|\Delta_{2}(\rho,\tilde{\rho})|+|\Delta_{3}(\rho,\tilde{\rho})|
\end{eqnarray*}
with

\begin{eqnarray*}
|\Delta_{1}(\rho,\tilde{\rho})|:=\left|\lambda_{i}\left(\rho_{i}^{+}-\tilde{\rho_{i}}^{+}\right)\frac{\sum_{l\neq j}(\lambda_{l}-\lambda_{j})\rho_{l}^{+}}{\left(\epsilon\vee\left(\sum_{l}\lambda_{l}\rho_{l}^{+}\right)\right)^{2}}\right| & \leq & \left|\frac{\sum_{l\neq j}(\lambda_{l}-\lambda_{j})\rho_{l}^{+}}{\epsilon^{2}}\right|\lambda_{i}|\rho_{i}-\tilde{\rho_{i}}|\\
 & \leq & \frac{d-1}{\epsilon^{2}}|\lambda_{max}-\lambda_{min}|\lambda_{max}||\rho||_{\infty}||\rho-\tilde{\rho}||_{\text{\ensuremath{\infty}}}\\
|\Delta_{2}(\rho,\tilde{\rho})|:=\left|\lambda_{i}\tilde{\rho}_{i}^{+}\frac{\sum_{l\neq j}(\lambda_{l}-\lambda_{j})\left(\rho_{l}^{+}-\tilde{\rho_{l}}^{+}\right)}{\left(\epsilon\vee\left(\sum_{l}\lambda_{l}\rho_{l}^{+}\right)\right)^{2}}\right| & \leq & \frac{d-1}{\epsilon^{2}}|\lambda_{max}-\lambda_{min}|\lambda_{max}||\tilde{\rho}||_{\infty}||\rho-\tilde{\rho}||_{\text{\ensuremath{\infty}}}
\end{eqnarray*}

\begin{eqnarray*}
|\Delta_{3}(\rho,\tilde{\rho})| & := & \left|\lambda_{i}\tilde{\rho}_{i}^{+}\left(\sum_{l\neq j}(\lambda_{l}-\lambda_{j})\tilde{\rho_{l}}^{+}\right)\left(\frac{1}{\left(\epsilon\vee\left(\sum_{l}\lambda_{l}\rho_{l}^{+}\right)\right)^{2}}-\frac{1}{\left(\epsilon\vee\left(\sum_{l}\lambda_{l}\tilde{\rho}_{l}^{+}\right)\right)^{2}}\right)\right|\\
 & \leq & \frac{1}{\epsilon^{4}}\lambda_{i}\tilde{\rho}_{i}^{+}\left|\sum_{l\neq j}(\lambda_{l}-\lambda_{j})\left(\tilde{\rho_{l}}^{+}\right)\right|\left|2\epsilon+\sum_{l}\lambda_{l}\left(\rho_{l}^{+}+\tilde{\rho}_{l}^{+}\right)\right|\left|\sum_{l}\lambda_{l}\left(\rho_{l}-\tilde{\rho}_{l}\right)\right|\\
 & \leq & \frac{\lambda_{max}^{2}}{\epsilon^{4}}d(d-1)|\lambda_{max}-\lambda_{min}|\left(2\epsilon+d\lambda_{max}\left(||\rho||_{\infty}+||\tilde{\rho}||_{\infty}\right)\right)||\tilde{\rho}||_{\infty}^{2}||\rho-\tilde{\rho}||_{\infty}.
\end{eqnarray*}
where we used the fact that $\forall a,b\in\mathbb{R},|a^{+}-b^{+}|\leq|a-b|$
and that in the last inequality, $\forall\epsilon,a,b\geq0,|\left(\epsilon\vee a\right)^{2}-\left(\epsilon\vee b\right)^{2}|=|\epsilon\vee a+\epsilon\vee b||\epsilon\vee a-\epsilon\vee b|\leq|2\epsilon+a+b||a-b|$.
We now replace $\rho$ by $\sum_{k=1}^{m}z_{k}w_{k}$, and $\tilde{\rho}$
by $\sum_{k=1}^{m}\tilde{z}_{k}w_{k}$. As the sequence $\left(w_{k}\right)_{k\geq1}$
belongs to $H$, by \cite[Corollary VIII.8]{Brezis}, for $k\geq1$
and $1\leq i\leq d$, $w_{ki}\in L^{\infty}(\mathbb{R})$ and the
function $(x,z)\in\mathbb{R}\times C\rightarrow\sum_{k=1}^{m}z_{k}w_{k}(x)\in\mathbb{R}^{d}$
takes values for a.e. $x\in\mathbb{R}$ in a bounded subset of $\mathbb{R}^{d}$.
Then there exists an uniform bound $B<\infty$ s.t. $\forall z,\tilde{z}\in C,$
\[
\left|\left|\Delta_{1}\left(\sum_{k=1}^{m}z_{k}w_{k},\sum_{k=1}^{m}\tilde{z}_{k}w_{k}\right)\right|\right|_{\infty}+\left|\left|\Delta_{2}\left(\sum_{k=1}^{m}z_{k}w_{k},\sum_{k=1}^{m}\tilde{z}_{k}w_{k}\right)\right|\right|_{\infty}+\left|\left|\Delta_{3}\left(\sum_{k=1}^{m}z_{k}w_{k},\sum_{k=1}^{m}\tilde{z}_{k}w_{k}\right)\right|\right|_{\infty}\leq B||z-\tilde{z}||_{\infty},
\]
so after integration against $\Theta\in L^{1}(\mathbb{R})$, 
\[
|g_{ij}(z)-g_{ij}(\tilde{z})|\leq||\Theta||_{L^{1}}B||z-\tilde{z}||_{\infty},
\]
and $g_{ij}$ is Lipschitz on $C$. Exactly in the same way, we also
obtain that the functions $\left(g_{ii}\right)_{i\in\{1,...,d\}}$,
are Lipschitz on $C$, so $K_{\epsilon}^{m}$ is locally Lipschitz
and this concludes the proof.
\end{proof}

\subsection{Proof of Lemma \ref{lem:F1F2localLip}}
\begin{proof}
Let $m\geq0$. It is sufficient to prove that $K_{\epsilon,1}^{m}$
and $K_{\epsilon,2}^{m}$ are locally Lipschitz in $z$ uniformly
in $t$. In the proof of Lemma \ref{lem:FespLocalLipschitz}, we have
shown that the function 
\[
z\in\mathbb{R}^{m}\rightarrow\int_{\mathbb{R}}\Theta A_{\epsilon,ij}\left(\left(\sum_{k=1}^{m}z_{k}w_{k}\right)^{+}\right)dx,
\]
is locally Lipschitz, uniformly for $\Theta$ in a bounded subset
of $L^{1}(\mathbb{R})$. As the family $\left(\tilde{\sigma}_{Dup}^{2}(t,\cdot)\partial_{x}w_{ik}\partial_{x}w_{jl}\right)_{1\leq i,j\leq m,1\leq k,l\leq d,t\in[0,T]}$
belongs to a bounded subset of $L^{1}(\mathbb{R})$, we then have
that $F_{\epsilon,2}$ is locally Lipschitz in $z$ uniformly in $t$
. To show the local Lipschitz property of $F_{\epsilon,1}$, it is
sufficient to prove that for any function $\Theta\in L^{1}(\mathbb{R})$,
the function 
\[
z\in\mathbb{R}^{m}\rightarrow\int_{\mathbb{R}}\Theta R_{\epsilon}\left(\left(\sum_{k=1}^{m}z_{k}w_{k}\right)^{+}\right)dx,
\]
is locally Lipschitz in $z$ uniformly in $t$, as the functions $\left(w_{jk}\partial_{x}w_{il}\right)_{1\leq i,j\leq m,1\leq k,l\leq d}$
belong to $L^{1}(\mathbb{R})$, and $\tilde{\sigma}_{Dup}$ and $\partial_{x}\tilde{\sigma}_{Dup}$
are uniformly bounded. The result is obtained since the function $\rho\rightarrow R_{\epsilon}\left(\rho^{+}\right)$
is locally Lipschitz and the functions $\left(w_{ki}\right)_{1\leq k\leq m,1\leq i\leq d}$
belong to $L^{\infty}(\mathbb{R})$, as in the proof of Lemma \ref{lem:FespLocalLipschitz}.
\end{proof}

\section{\label{sub:ConditionC}About Condition $(C)$}

In Subsection \ref{sub:DiagCase}, we give a necessary and sufficient
condition for a diagonal matrix to satisfy Condition $(C)$. We also
give a numerical procedure to check if there exists a diagonal matrix
that satisfies $(C)$. Then, in Subsection \ref{sub:d=00003D3}, we
focus on the case $d=3$, and give a simple necessary and sufficient
condition for $(C)$ to be satisfied. When $d=3$, $(C)$ is satisfied
if and only if it is satisfied by a diagonal matrix. When $d\geq4$,
we do not know if this property still holds.

\subsection{\label{sub:DiagCase}The diagonal case}

For $k\geq1$, $\delta:=\left(\delta_{1},...,\text{\ensuremath{\delta}}_{k}\right)\in\mathbb{R}^{k}$,
let us denote by $Diag(\delta)\in\mathcal{M}_{k}\left(\mathbb{R}\right)$
the diagonal matrix with coefficients $\delta_{1},...,\text{\ensuremath{\delta}}_{k}$.
\begin{prop}
\label{prop:CondCdiag}For $d\geq2$ and $\alpha:=\left(\alpha_{1},...,\alpha_{d}\right)\in\left(\mathbb{R}_{+}^{*}\right)^{d}$,
$Diag\left(\alpha\right)$ satisfies Condition $(C)$ if and only
if

\begin{equation}
\frac{2}{\alpha_{k}}+\sum_{i\neq k}\frac{1}{\alpha_{i}}>\sqrt{\sum_{i\neq k}\frac{\lambda_{i}}{\alpha_{i}}\sum_{i\neq k}\frac{1}{\lambda_{i}\alpha_{i}}},\ 1\leq k\leq d.\label{eq:diag1}
\end{equation}
\end{prop}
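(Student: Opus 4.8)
The plan is to fix $k\in\{1,\dots,d\}$, write out the quadratic form associated with $\Gamma^{(k)}$ on $e_k^\perp$ when $\Gamma=\mathrm{Diag}(\alpha)$, and reduce the positive‑definiteness requirement to a rank‑two perturbation of the identity, for which the spectrum is explicit. Note first that $\mathrm{Diag}(\alpha)\in\mathcal S_d^{++}(\mathbb R)$ is automatic since all $\alpha_i>0$, so only the conditions on the $\Gamma^{(k)}$ matter. Since $e_k^\perp=\{x:x_k=0\}$, positive‑definiteness of $\Gamma^{(k)}$ on $e_k^\perp$ is the same as positive‑definiteness of the $(d-1)\times(d-1)$ principal submatrix indexed by $\{1,\dots,d\}\setminus\{k\}$: the $k$‑th row and column are killed by $x_k=0$.

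Substituting $\Gamma_{ij}=\alpha_i1_{\{i=j\}}$ into \eqref{eq:Gammakij}, one gets, for $i,j\neq k$, $\Gamma_{ij}+\Gamma_{kk}-\Gamma_{ik}-\Gamma_{jk}=\alpha_i1_{\{i=j\}}+\alpha_k$, hence for $x\in e_k^\perp$, using $\sum_{i,j\neq k}\frac{\lambda_i+\lambda_j}{2}x_ix_j=\big(\sum_{i\neq k}\lambda_ix_i\big)\big(\sum_{j\neq k}x_j\big)$,
\[
x^*\Gamma^{(k)}x=\sum_{i\neq k}\lambda_i\alpha_i x_i^2+\alpha_k\Big(\sum_{i\neq k}\lambda_ix_i\Big)\Big(\sum_{j\neq k}x_j\Big).
\]
The first term being a positive‑definite diagonal form in $(x_i)_{i\neq k}$, I would change variables to $y_i:=\sqrt{\lambda_i\alpha_i}\,x_i$, turning the form into $|y|^2+\alpha_k(a\cdot y)(b\cdot y)$ with $a_i=\sqrt{\lambda_i/\alpha_i}$ and $b_i=1/\sqrt{\lambda_i\alpha_i}$ for $i\neq k$. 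Thus $\Gamma^{(k)}$ is positive definite on $e_k^\perp$ if and only if $I+\tfrac{\alpha_k}{2}(ab^*+ba^*)\succ0$ on $\mathbb R^{\{1,\dots,d\}\setminus\{k\}}$.

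The last step is elementary linear algebra: the symmetric matrix $\tfrac{\alpha_k}{2}(ab^*+ba^*)$ has rank at most two, and on $\mathrm{span}(a,b)$ its eigenvalues are $\tfrac{\alpha_k}{2}\big(a\cdot b\pm|a|\,|b|\big)$ (diagonalize the $2\times2$ matrix $\tfrac12\begin{pmatrix}a\cdot b&|b|^2\\|a|^2&a\cdot b\end{pmatrix}$ governing eigenvectors $\alpha a+\beta b$). Hence $I+\tfrac{\alpha_k}{2}(ab^*+ba^*)\succ0$ iff its smallest eigenvalue $1+\tfrac{\alpha_k}{2}(a\cdot b-|a|\,|b|)$ is strictly positive, i.e. iff $\tfrac{\alpha_k}{2}\big(|a|\,|b|-a\cdot b\big)<1$. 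Plugging in $a\cdot b=\sum_{i\neq k}\frac1{\alpha_i}$, $|a|^2=\sum_{i\neq k}\frac{\lambda_i}{\alpha_i}$, $|b|^2=\sum_{i\neq k}\frac1{\lambda_i\alpha_i}$ and rearranging yields exactly \eqref{eq:diag1}; doing this for each $k\in\{1,\dots,d\}$ gives the stated equivalence. The computations are all routine; the only points needing a little attention are the reduction of positive‑definiteness on $e_k^\perp$ to the $(d-1)\times(d-1)$ submatrix and keeping track of the sign so that the strict inequality comes out in the direction written in \eqref{eq:diag1}; I do not expect a genuine obstacle.
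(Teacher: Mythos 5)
Your argument is correct and follows essentially the same route as the paper's proof: reduce positive-definiteness of $\Gamma^{(k)}$ on $e_{k}^{\perp}$ to the $(d-1)\times(d-1)$ principal submatrix, factor out $Diag\left(\sqrt{\lambda_{i}\alpha_{i}}\right)$ (equivalently, your change of variables $y_{i}=\sqrt{\lambda_{i}\alpha_{i}}x_{i}$), and compute the spectrum of the rank-two symmetric perturbation $ab^{*}+ba^{*}$ with the same vectors $a$ and $b$. The resulting criterion $1+\frac{\alpha_{k}}{2}\left(a\cdot b-|a|\,|b|\right)>0$ is exactly the one the paper derives, so there is nothing to add.
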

\begin{proof}
For $1\leq k\leq d$, the symmetric matrix $D^{(k)}$ with coefficients
\[
D_{ij}^{(k)}=\frac{\lambda_{i}+\lambda_{j}}{2}\left(\alpha_{i}1_{\{i=j\}}+\alpha_{k}-\alpha_{k}1_{\{i=k\}}-\alpha_{k}1_{\{j=k\}}\right)
\]
for $1\leq i,j\leq d$, is positive definite on $e_{k}^{\perp}$ if
and only if the matrix $\tilde{D}^{(k)}$ defined as $D^{(k)}$ with
its $k$-th row and $k$-th column removed, is positive definite on
$\mathbb{R}^{d-1}$. Here we only show how to deal with the case $k=d$,
but the same arguments can be used for the indices $1\leq k\leq d-1$.
The matrix $\tilde{D}^{(d)}$ has coefficients
\[
\tilde{D}_{ij}^{(d)}=\frac{\lambda_{i}+\lambda_{j}}{2}\left(\alpha_{i}1_{\{i=j\}}+\alpha_{d}\right).
\]
for $1\leq i,j\leq d-1$. We define $\Delta:=Diag\left(\left(\sqrt{\lambda_{i}\alpha_{i}}\right)_{1\leq i\leq d-1}\right)$.
The matrix $\tilde{D}^{(d)}$ rewrites 

\begin{eqnarray*}
\tilde{D}^{(d)} & = & \Delta\Delta+\frac{\alpha_{d}}{2}\left(\left(\lambda_{i}+\lambda_{j}\right)\right)_{1\leq i,j\leq d-1}\\
 & = & \Delta\left(I_{d-1}+\frac{\alpha_{d}}{2}\left(\frac{\lambda_{i}+\lambda_{j}}{\sqrt{\lambda_{i}\alpha_{i}}\sqrt{\lambda_{j}\alpha_{j}}}\right)_{1\leq i,j\leq d-1}\right)\Delta\\
 & = & \Delta\left(I_{d-1}+\frac{\alpha_{d}}{2}\left(ab^{*}+ba^{*}\right)\right)\Delta,
\end{eqnarray*}
where $a=\left(\sqrt{\frac{\lambda_{i}}{\alpha_{i}}}\right)_{1\leq i\leq d-1}$
and $b=\left(\frac{1}{\sqrt{\lambda_{i}\alpha_{i}}}\right)_{1\leq i\leq d-1}$.
The matrix $\tilde{D}^{(d)}$ is positive definite if and only if
the matrix 
\[
\left(I_{d-1}+\frac{\alpha_{d}}{2}\left(ab^{*}+ba^{*}\right)\right),
\]
has positive eigenvalues. The columns of the matrix $ab^{*}+ba^{*}$
are linear combinations of $a$ and $b$. If $a$ and $b$ are not
colinear (resp. colinear), then the matrix $ab^{*}+ba^{*}$ has eigenvalues
$0$ with multiplicity $d-2$ and $\sum_{i=1}^{d-1}a_{i}b_{i}-\sqrt{\sum_{i=1}^{d-1}a_{i}^{2}}\sqrt{\sum_{i=1}^{d-1}b^{2}}<0$
for the eigenvector $a-\frac{\sqrt{\sum_{i=1}^{d-1}a_{i}^{2}}}{\sqrt{\sum_{i=1}^{d-1}b^{2}}}b$
(resp. $0$ with multiplicity $d-1$), and $\sum_{i=1}^{d-1}a_{i}b_{i}+\sqrt{\sum_{i=1}^{d-1}a_{i}^{2}}\sqrt{\sum_{i=1}^{d-1}b^{2}}>0$
for the eigenvector $a+\frac{\sqrt{\sum_{i=1}^{d-1}a_{i}^{2}}}{\sqrt{\sum_{i=1}^{d-1}b^{2}}}b$.
Thus, $\tilde{D}^{(d)}$ is definite positive if and only if 
\[
1+\frac{\alpha_{d}}{2}\left(\sum_{i\neq d}\frac{1}{\alpha_{i}}-\sqrt{\sum_{i\neq d}\frac{\lambda_{i}}{\alpha_{i}}\sum_{i\neq d}\frac{1}{\lambda_{i}\alpha_{i}}}\right)>0.
\]
which is equivalent to (\ref{eq:diag1}) for $k=d$. Using the same
arguments on $\tilde{D}^{(k)}$ for $1\leq k\leq d-1$, we obtain
(\ref{eq:diag1}).
\end{proof}
The choice $\alpha=\left(1,...,1\right)\in\mathbb{R}^{d}$ in Inequality
(\ref{eq:diag1}) gives a sufficient condition for the identity matrix
$I_{d}$ to satisfy Condition $(C)$. 
\begin{cor}
\label{cor:Id}If the condition

\begin{equation}
\underset{1\leq k\leq d}{\max}\ \sqrt{\sum_{i\neq k}\lambda_{i}\sum_{i\neq k}\frac{1}{\lambda_{i}}}<d+1,\label{eq:diag2}
\end{equation}
is satisfied then Condition $(C)$ is satisfied for the choice $\Gamma=I_{d}$.
In particular, if $\lambda_{1}=...=\lambda_{d}$, then $(C)$ is satisfied.
\end{cor}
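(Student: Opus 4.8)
The plan is to obtain this corollary as an immediate specialization of Proposition \ref{prop:CondCdiag}. First I would observe that $I_d = Diag(\alpha)$ for the choice $\alpha = (1,\dots,1) \in (\mathbb{R}_+^*)^d$, and that $I_d \in \mathcal{S}_d^{++}(\mathbb{R})$, so Proposition \ref{prop:CondCdiag} is applicable with this $\alpha$. Plugging $\alpha_i = 1$ for all $i$ into the characterization (\ref{eq:diag1}), the left-hand side becomes $\frac{2}{1} + \sum_{i \neq k} \frac{1}{1} = 2 + (d-1) = d+1$, while the right-hand side becomes $\sqrt{\sum_{i \neq k} \lambda_i \sum_{i \neq k} \frac{1}{\lambda_i}}$. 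Hence, for $\alpha = (1,\dots,1)$, the system (\ref{eq:diag1}) reads exactly
\[
d + 1 > \sqrt{\sum_{i\neq k}\lambda_i \sum_{i\neq k}\frac{1}{\lambda_i}}, \qquad 1 \le k \le d,
\]
which, after taking the maximum over $k$, is precisely assumption (\ref{eq:diag2}). By Proposition \ref{prop:CondCdiag}, $I_d$ therefore satisfies Condition $(C)$, which is the first claim.

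For the last assertion, I would simply check (\ref{eq:diag2}) directly when $\lambda_1 = \dots = \lambda_d =: \lambda$: for every $k$ one has $\sum_{i\neq k}\lambda_i \sum_{i\neq k}\frac{1}{\lambda_i} = (d-1)\lambda \cdot (d-1)\lambda^{-1} = (d-1)^2$, so the left-hand side of (\ref{eq:diag2}) equals $d-1$, which is strictly less than $d+1$. Thus (\ref{eq:diag2}) holds, and the first part gives that $(C)$ is satisfied with $\Gamma = I_d$.

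Since the whole argument is a substitution into the already-established Proposition \ref{prop:CondCdiag}, I do not expect any genuine obstacle; the only points worth stating explicitly are that $I_d$ is symmetric positive definite (so that the proposition applies) and the elementary arithmetic identifying (\ref{eq:diag1}) at $\alpha = (1,\dots,1)$ with (\ref{eq:diag2}).
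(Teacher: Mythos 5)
Your proposal is correct and is exactly the argument the paper intends: the corollary is obtained by substituting $\alpha=(1,\dots,1)$ into the characterization (\ref{eq:diag1}) of Proposition \ref{prop:CondCdiag}, which turns the left-hand side into $d+1$ and yields (\ref{eq:diag2}), and the equal-$\lambda_i$ case follows since the quantity under the square root is $(d-1)^2$. No gaps.
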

Moreover, for $d=2$, Inequality (\ref{eq:diag2}) is always satisfied,
as for $k=1,2$, $\sqrt{\lambda_{k}\frac{1}{\lambda_{k}}}=1<3$.
\begin{cor}
If $d=2$, then Condition $(C)$ is satisfied for the choice $\Gamma=I_{2}$.
\end{cor}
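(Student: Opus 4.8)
The plan is to deduce the statement immediately from Corollary~\ref{cor:Id}. That corollary reduces Condition~$(C)$ for $\Gamma=I_d$ to the single scalar inequality $\max_{1\le k\le d}\sqrt{\sum_{i\neq k}\lambda_i\sum_{i\neq k}\lambda_i^{-1}}<d+1$, so it suffices to evaluate the left-hand side when $d=2$. First I would observe that for each $k\in\{1,2\}$ the index set $\{i:i\neq k\}$ is a singleton $\{j\}$ with $j\neq k$, so $\sum_{i\neq k}\lambda_i\sum_{i\neq k}\lambda_i^{-1}=\lambda_j\cdot\lambda_j^{-1}=1$; hence the maximum equals $1<3=d+1$, and Corollary~\ref{cor:Id} applies. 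This is exactly the remark made just before the statement, so the proof is one line.

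Alternatively, and perhaps more transparently, I would verify Condition~$(C)$ directly for $\Gamma=I_2$. Since $I_2\in\mathcal{S}_2^{++}(\mathbb{R})$ is obvious, it remains to check, for $k=1,2$, that the matrix $\Gamma^{(k)}$ defined by~(\ref{eq:Gammakij}) is positive definite on $e_k^{\perp}$. In dimension two, $e_1^{\perp}=\mathbb{R}e_2$ and $e_2^{\perp}=\mathbb{R}e_1$ are one-dimensional, so the requirement is simply $\Gamma^{(1)}_{22}>0$ and $\Gamma^{(2)}_{11}>0$. Substituting $\Gamma_{ij}=\delta_{ij}$ into~(\ref{eq:Gammakij}) gives $\Gamma^{(1)}_{22}=\lambda_2(\delta_{22}+\delta_{11}-2\delta_{21})=2\lambda_2$ and $\Gamma^{(2)}_{11}=\lambda_1(\delta_{11}+\delta_{22}-2\delta_{12})=2\lambda_1$, both strictly positive since $\lambda_1,\lambda_2>0$ by assumption.

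There is essentially no obstacle to overcome here; the only point worth flagging is the interpretation of ``positive definite on $e_k^{\perp}$'' as a one-dimensional positivity condition when $d=2$, after which the computation is a one-line substitution. It is also worth noting, via Corollary~\ref{cor:PiA}, that this provides a concrete symmetric positive definite $\Pi$ (for instance $\Pi=J_2+\epsilon I_2$ with $\epsilon>0$ small, as in Proposition~\ref{prop:coercivity1orthogonal}) for which $\Pi A$ is uniformly coercive on $\mathcal{D}$, so that Theorems~\ref{thm:FBMmuProba} and~\ref{thm:FBMweaksolution} hold unconditionally in the two-regime case $d=2$.
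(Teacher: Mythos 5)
Your first argument is exactly the paper's: Corollary~\ref{cor:Id} reduces the claim to $\sqrt{\sum_{i\neq k}\lambda_{i}\sum_{i\neq k}\lambda_{i}^{-1}}=1<3$ for $k=1,2$, which is the remark the authors make immediately before the corollary. Your alternative direct check (that $\Gamma^{(1)}_{22}=2\lambda_{2}>0$ and $\Gamma^{(2)}_{11}=2\lambda_{1}>0$, the spaces $e_{k}^{\perp}$ being one-dimensional when $d=2$) is also correct and self-contained, but the main route is the same, so there is nothing to add.
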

From Inequality (\ref{eq:diag1}), we deduce a method to check numerically
whether there exists a diagonal matrix that satisfies Condition $(C)$.
We suppose that the values of $\lambda_{1},...,\lambda_{d}$ are not
equal, otherwise by Corollary \ref{cor:Id}, $I_{d}$ satisfies Condition
$(C)$. For $z=\left(z_{1},...,z_{d}\right)\in\left(\mathbb{R}^{2}\right)^{d}$,
let us denote by $\mathcal{C}\left(z\right)=\{x\in\mathbb{R}^{2}|\exists\mu_{1},...,\mu_{d}>0,\sum_{i=1}^{d}\mu_{i}=1,x=\sum_{i=1}^{d}\mu_{i}z_{i}\}$,
the strict convex envelope of $z$. Let $\alpha:=\left(\alpha_{1},...,\alpha_{d}\right)\in\left(\mathbb{R}_{+}^{*}\right)^{d}$,
such that $Diag\left(\alpha\right)$ satisfies Condition $(C)$. By
Proposition \ref{prop:CondCdiag}, (\ref{eq:diag1}) holds and rewrites
\begin{equation}
\left(\frac{1}{\alpha_{k}}+\sum_{i=1}^{d}\frac{1}{\alpha_{i}}\right)^{2}>\left(\sum_{i=1}^{d}\frac{\lambda_{i}}{\alpha_{i}}-\frac{\lambda_{k}}{\alpha_{k}}\right)\left(\sum_{i=1}^{d}\frac{1}{\lambda_{i}\alpha_{i}}-\frac{1}{\lambda_{k}\alpha_{k}}\right),\ 1\leq k\leq d.\label{eq:B3_1}
\end{equation}
If we define $\tilde{\lambda}=\sum_{i=1}^{d}\lambda_{i}\frac{\frac{1}{\alpha_{i}}}{\sum_{k=1}^{d}\frac{1}{\alpha_{k}}}\in\mathcal{C}\left(\left(\lambda_{i}\right)_{1\leq i\leq d}\right)$,
$\widetilde{\lambda^{-1}}=\sum_{i=1}^{d}\frac{1}{\lambda_{i}}\frac{\frac{1}{\alpha_{i}}}{\sum_{k=1}^{d}\frac{1}{\alpha_{k}}}\in\mathcal{C}\left(\left(\frac{1}{\lambda_{i}}\right)_{1\leq i\leq d}\right)$,
then Inequality (\ref{eq:B3_1}) writes 
\begin{equation}
1+\frac{\frac{1}{\alpha_{k}}}{\sum_{i=1}^{d}\frac{1}{\alpha_{i}}}\left(2+\frac{1}{\lambda_{k}}\tilde{\lambda}+\lambda_{k}\widetilde{\lambda^{-1}}\right)>\tilde{\lambda}\widetilde{\lambda^{-1}},\ 1\leq k\leq d.\label{eq:diag3}
\end{equation}
We deduce that there exists a diagonal matrix that satisfies Condition
$(C)$ if and only if there exists $\left(x,y\right)\in\mathcal{C}\left(\left(\lambda_{1},\frac{1}{\lambda_{1}}\right),...,\left(\lambda_{d},\frac{1}{\lambda_{d}}\right)\right)$
and a probability distribution $\left(p_{1},...,p_{d}\right)\in\left(\mathbb{R}_{*}^{+}\right)^{d}$
such that 
\begin{equation}
\sum_{i=1}^{d}\lambda_{i}p_{i}=x,\ \sum_{i=1}^{d}\frac{1}{\lambda_{i}}p_{i}=y,\text{ and }\ \forall k\in\{1,...,d\},\ p_{k}>\left(xy-1\right)\left(2+\frac{x}{\lambda_{k}}+\lambda_{k}y\right)^{-1}.\label{eq:conditionp}
\end{equation}
For $\left(x,y\right)\in\mathcal{C}\left(\left(\lambda_{1},\frac{1}{\lambda_{1}}\right),...,\left(\lambda_{d},\frac{1}{\lambda_{d}}\right)\right)$
, let us define

\begin{eqnarray*}
M_{0}(x,y) & = & \left(xy-1\right)\sum_{i=1}^{d}\left(2+\frac{x}{\lambda_{i}}+\lambda_{i}y\right)^{-1},\\
M_{-1}(x,y) & = & \left(xy-1\right)\sum_{i=1}^{d}\frac{1}{\lambda_{i}}\left(2+\frac{x}{\lambda_{i}}+\lambda_{i}y\right)^{-1},\\
M_{1}(x,y) & = & \left(xy-1\right)\sum_{i=1}^{d}\lambda_{i}\left(2+\frac{x}{\lambda_{i}}+\lambda_{i}y\right)^{-1},
\end{eqnarray*}
and if $M_{0}(x,y)<1$,
\[
X(x,y)=\frac{x-M_{1}(x,y)}{1-M_{0}(x,y)},\ Y(x,y)=\frac{y-M_{-1}(x,y)}{1-M_{0}(x,y)}.
\]

\begin{prop}
If $\lambda_{1},...,\lambda_{d}$ are not all equal, there exists
a diagonal matrix that satisfies Condition $(C)$ if and only if there
exists $(x,y)\in\mathcal{C}\left(\left(\lambda_{1},\frac{1}{\lambda_{1}}\right),...,\left(\lambda_{d},\frac{1}{\lambda_{d}}\right)\right)$
such that
\end{prop}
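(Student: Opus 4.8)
The plan is to convert the abstract existence question into a concrete geometric/optimization question about the weights $(p_1,\dots,p_d)$ and the pair $(x,y)$, building directly on the diagonal reformulation already obtained in Equations~\eqref{eq:diag3}--\eqref{eq:conditionp}. First I would observe that, by Proposition~\ref{prop:CondCdiag} together with the reformulation in \eqref{eq:conditionp}, the existence of a diagonal matrix satisfying Condition~$(C)$ is equivalent to the solvability of the coupled system: find $(p_1,\dots,p_d)$ in the open simplex and $(x,y)$ in the strict convex hull $\mathcal{C}$ of the points $(\lambda_i,1/\lambda_i)$ with $\sum_i\lambda_i p_i=x$, $\sum_i p_i/\lambda_i=y$, and $p_k>(xy-1)(2+x/\lambda_k+\lambda_k y)^{-1}$ for every $k$. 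Note $xy\geq1$ always holds on $\mathcal C$ by Cauchy--Schwarz (with equality only if all $\lambda_i$ coincide, a case excluded by hypothesis), so the right-hand sides are genuinely positive.

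The key step is to eliminate the $p_i$. Given a target $(x,y)$, the constraints $\sum p_i=1$, $\sum\lambda_ip_i=x$, $\sum p_i/\lambda_i=y$ together with the lower bounds $p_k>c_k(x,y):=(xy-1)(2+x/\lambda_k+\lambda_k y)^{-1}$ are simultaneously satisfiable by some probability vector if and only if one can "add" a nonnegative surplus on top of the forced masses $c_k(x,y)$ while still hitting the three moment equations; writing $p_k=c_k(x,y)+q_k$ with $q_k\ge0$, the conditions become $\sum q_k=1-M_0(x,y)$, $\sum\lambda_k q_k=x-M_1(x,y)$, $\sum q_k/\lambda_k=y-M_{-1}(x,y)$, where $M_0,M_1,M_{-1}$ are exactly the quantities defined just before the proposition. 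When $M_0(x,y)<1$ this is, after normalization by $1-M_0(x,y)$, the requirement that the point $(X(x,y),Y(x,y))$ lie in $\mathcal C\big((\lambda_i,1/\lambda_i)\big)$ — precisely because a point is a convex combination with \emph{strictly} positive weights of the $(\lambda_i,1/\lambda_i)$ iff it lies in the open convex hull. Thus the proposition should read: there exists a diagonal matrix satisfying $(C)$ iff there exists $(x,y)\in\mathcal C$ with $M_0(x,y)<1$ and $(X(x,y),Y(x,y))\in\mathcal C$. The proof then splits into the two implications: (i) from a solution $(p,\alpha)$ of $(C)$, set $x=\sum\lambda_ip_i$, $y=\sum p_i/\lambda_i$, define $q_k=p_k-c_k(x,y)>0$ and verify the three moment identities, deducing $M_0<1$ and that $(X,Y)$ is the normalized barycenter of the $q_k$, hence in the open hull; (ii) conversely, from such $(x,y)$ recover $(q_k)_{k}$ in the open simplex with the required moments, set $p_k=c_k(x,y)+q_k$, check $\sum p_k=1$ and the two moment constraints, and conclude via \eqref{eq:conditionp} and Proposition~\ref{prop:CondCdiag}.

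The main obstacle I anticipate is bookkeeping around the strictness of the inequalities and the open-versus-closed convex hull: one must be careful that $q_k>0$ strictly (not merely $\ge0$), which is what forces the use of the \emph{strict} convex hull $\mathcal C$ and the condition $M_0(x,y)<1$ rather than $\le1$, and one must check that the linear map from probability vectors $(q_k)$ to their $(\lambda,1/\lambda)$-barycenter surjects onto the open hull with positive-weight preimages — this is the standard fact that the relative interior of a polytope is the image of the open simplex, but it needs to be invoked cleanly. A secondary technical point is handling the degenerate case $xy=1$: since $\lambda_1,\dots,\lambda_d$ are assumed not all equal, every point of $\mathcal C$ satisfies $xy>1$ strictly, so all $c_k(x,y)>0$ and no division-by-zero issue arises in $2+x/\lambda_k+\lambda_k y>0$. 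Once these points are nailed down, the equivalence is a direct algebraic unwinding of \eqref{eq:diag3} and the definitions of $M_0,M_{\pm1},X,Y$, with no further analysis required.
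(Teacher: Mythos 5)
Your proposal is correct and follows essentially the same route as the paper: both directions rest on the decomposition $p_{k}=\left(xy-1\right)\left(2+\frac{x}{\lambda_{k}}+\lambda_{k}y\right)^{-1}+q_{k}$ and the observation that $\left(X(x,y),Y(x,y)\right)$ is the barycenter of the normalized surplus weights $\left(q_{k}\right)$, which lies in the strict convex envelope exactly when all $q_{k}>0$ and $M_{0}(x,y)<1$. Your preliminary remark that $xy>1$ on $\mathcal{C}$ (by Cauchy--Schwarz, strictly since the $\lambda_{i}$ are not all equal) is a point the paper leaves implicit but is fully consistent with its argument.
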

\begin{equation}
M_{0}(x,y)<1\text{ and }\left(X(x,y),Y(x,y)\right)\in\mathcal{C}\left(\left(\lambda_{i},\frac{1}{\lambda_{i}}\right)_{1\leq i\leq d}\right).\label{eq:c1}
\end{equation}

\begin{proof}
If a diagonal matrix $Diag(\alpha)$, where $\alpha:=\left(\alpha_{1},...,\alpha_{d}\right)\in\left(\mathbb{R}_{+}^{*}\right)^{d}$,
satisfies Condition $(C)$, then (\ref{eq:diag3}) holds and it is
easy to check that if we set $p_{i}=\frac{\frac{1}{\alpha_{i}}}{\sum_{k=1}^{d}\frac{1}{\alpha_{k}}}$
for $1\leq i\leq d$, $x=\sum_{i=1}^{d}\lambda_{i}p_{i}$ and $y=\sum_{i=1}^{d}\frac{p_{i}}{\lambda_{i}}$,
the conditions in (\ref{eq:conditionp}) hold so the conditions in
(\ref{eq:c1}) are satisfied, as 
\[
\left(\begin{array}{c}
X(x,y)\\
Y(x,y)
\end{array}\right)=\frac{1}{1-M_{\text{0}}(x,y)}\sum_{k=1}^{d}\left(p_{k}-\left(xy-1\right)\left(2+\frac{x}{\lambda_{k}}+\lambda_{k}y\right)^{-1}\right)\left(\begin{array}{c}
\lambda_{k}\\
\frac{1}{\lambda_{k}}
\end{array}\right).
\]
Conversely, if $\left(x,y\right)\in\mathcal{C}\left(\left(\lambda_{i},\frac{1}{\lambda_{i}}\right)_{1\leq i\leq d}\right)$
and satisfies (\ref{eq:c1}), then there exists a probability distribution
$\left(q_{1},...,q_{d}\right)\in\left(\mathbb{R}_{*}^{+}\right)^{d}$
such that $X(x,y)=\sum_{i=1}^{d}\lambda_{i}q_{i}$ and $Y(x,y)=\sum_{i=1}^{d}\frac{1}{\lambda_{i}}q_{i}$
and it is easy to check that if we set $p_{i}=\left(1-M_{0}(x,y)\right)q_{i}+\left(xy-1\right)\left(2+\frac{x}{\lambda_{i}}+\lambda_{i}y\right)^{-1}$
for $1\leq i\leq d$ then the conditions in (\ref{eq:conditionp})
are satisfied.
\end{proof}
Given a discretization parameter $n$ and the vector $\left(l_{1},...,l_{d}\right)$,
which is the nondecreasing reordering of $\left(\lambda_{1},...,\lambda_{d}\right)$,
the numerical procedure that follows builds a grid $\mathcal{G}$
that consists in $\left(d-1\right)\left(n-1\right)^{2}$ points of
$\mathcal{C}\left(\left(\lambda_{1},\frac{1}{\lambda_{1}}\right),...,\left(\lambda_{d},\frac{1}{\lambda_{d}}\right)\right)$
and returns the list $V$ of the points $(x,y)\in\mathcal{G}$ satisfying
(\ref{eq:c1}). The border of the convex $\mathcal{C}\left(\left(\lambda_{1},\frac{1}{\lambda_{1}}\right),...,\left(\lambda_{d},\frac{1}{\lambda_{d}}\right)\right)$
has a simple shape. Indeed, it is a polygon with vertices $\left(l_{i},\frac{1}{l_{i}}\right)_{1\leq i\leq d}$
and edges $\left\{ \left(l_{i},\frac{1}{l_{i}}\right),\left(l_{i+1},\frac{1}{l_{i+1}}\right)\right\} _{1\leq i\leq d}$,
where we define $\left(l_{d+1},\frac{1}{l_{d+1}}\right)=\left(l_{1},\frac{1}{l_{1}}\right)$.

\begin{algorithm}[h]
\begin{algorithmic}[1] 
\State{$V=[]$}
\For{$i=1,...,d-1$} 
\For{$k_1=1,...,n-1$}
\State{$x=l_i+(l_{i+1}-l_{i})\frac{k_1}{n}$}
\State{$y_{min} = \frac{1}{l_i} \frac{n-k_1}{n} + \frac{1}{l_{i+1}} \frac{k_1}{n}$}
\State{$y_{max} = \frac{1}{l_1} - \frac{x-l_1}{l_1 l_{d}}$}
\For{$k_2 = 1,...,n-1$} 
\State{$y = y_{min} + (y_{max} - y_{min})\frac{k_2}{n}$}
\State{$M_0 = (xy-1)\sum_{k=1}^{d} \left(2+\frac{x}{l_i} + l_i y \right)^{-1} $}
\If{$M_0 < 1$}
\State{$M_1 = (xy-1)\sum_{k=1}^{d} l_i \left(2+\frac{x}{l_i} + l_i y \right)^{-1} $}
\State{$X = \frac{x-M_1}{1-M_{0}}$}
\If{$l_1 < X < l_d$}
\State{$j = \text{Sum}(X > l)$}
\State{$z_{min} = \frac{1}{l_j} - \frac{X-l_j}{l_j l_{j+1}}$}
\State{$z_{max} = \frac{1}{l_1} - \frac{X-l_1}{l_1 l_{d}}$}
\State{$M_{-1} = (xy-1)\sum_{k=1}^{d} \frac{1}{l_i} \left(2+\frac{x}{l_i} + l_i y \right)^{-1} $}
\State{$Y = \frac{y-M_{-1}}{1-M_{0}}$}
\If{$z_{min} < Y < z_{max}$} 
\State{$V=(x,y)::V$} 
\EndIf
\EndIf
\EndIf
\EndFor
\EndFor
\EndFor
\State{\Return{V}}

\end{algorithmic}

\end{algorithm}

In Figure 1 below, we give an idea of the shape of this convex and
we illustrate the output of the numerical procedure for $n=200$,
$d=5$, $\lambda_{1}=1$, $\lambda_{2}=2$, $\lambda_{3}=3$, $\lambda_{4}=5$
and $\lambda_{5}=10$. The border of the convex envelope $\mathcal{C}\left(\left(\lambda_{1},\frac{1}{\lambda_{1}}\right),...,\left(\lambda_{5},\frac{1}{\lambda_{5}}\right)\right)$
is colored in red, and the points $(x,y)$ in $\mathcal{C}\left(\left(\lambda_{1},\frac{1}{\lambda_{1}}\right),...,\left(\lambda_{5},\frac{1}{\lambda_{5}}\right)\right)$
satisfying (\ref{eq:c1}) are colored in black. Condition $(C)$ is
thus satisfied in this situation.

\begin{figure}[H]
\begin{centering}
\includegraphics[scale=0.4]{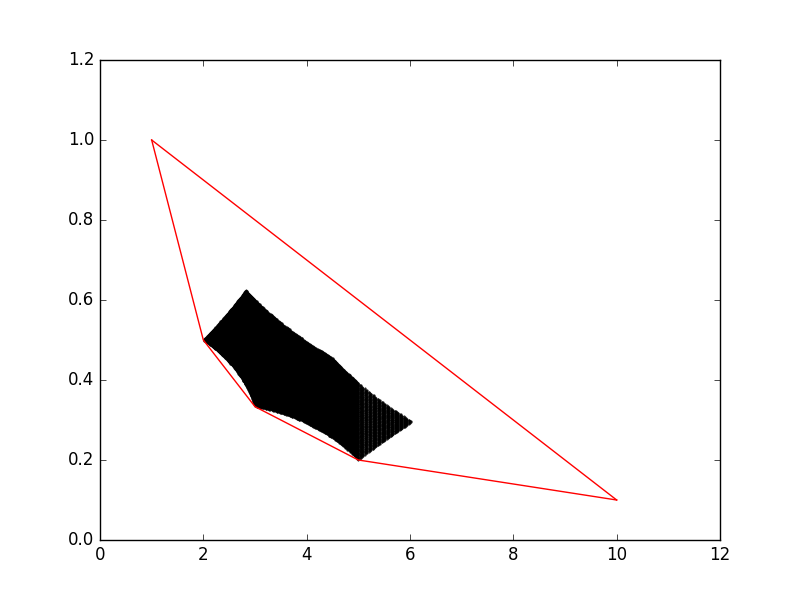}
\par\end{centering}

\caption{Condition $(C)$ is satisfied for $d=5$, $\lambda_{1}=1$, $\lambda_{2}=2$,
$\lambda_{3}=3$, $\lambda_{4}=5$ and $\lambda_{5}=10$, where $n=200$.}
\end{figure}
The advantage of such a numerical procedure is that it operates in
a bounded convex of $\mathbb{R}^{2}$ with a simple shape, instead
of a bounded convex of $\mathbb{R}^{d}$, as Inequality (\ref{eq:diag1})
would suggest.

\subsection{\label{sub:d=00003D3}The case $d=3$}

In the following, we study the case $d=3$. We recall that 
\[
r_{1}=\frac{\lambda_{3}}{\lambda_{2}}+\frac{\lambda_{2}}{\lambda_{3}}\geq2,\ r_{2}=\frac{\lambda_{3}}{\lambda_{1}}+\frac{\lambda_{1}}{\lambda_{3}}\ge2,\ r_{3}=\frac{\lambda_{1}}{\lambda_{2}}+\frac{\lambda_{2}}{\lambda_{1}}\geq2.
\]
Let us first explicit the link between the values of $r_{1},r_{2},r_{3}$.
\begin{lem}
\label{lem:linkr1r2r3}The values of $r_{1},r_{2},r_{3}$ are linked
by
\[
r_{3}\in\left\{ \frac{1}{2}\left(r_{1}r_{2}-\sqrt{\left(r_{1}^{2}-4\right)\left(r_{2}^{2}-4\right)}\right),\frac{1}{2}\left(r_{1}r_{2}+\sqrt{\left(r_{1}^{2}-4\right)\left(r_{2}^{2}-4\right)}\right)\right\} .
\]
\end{lem}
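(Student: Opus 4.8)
The plan is to reduce everything to a one-variable identity via the substitution $u := \lambda_2/\lambda_3$ and $v := \lambda_1/\lambda_3$, both positive. Then $r_1 = u + u^{-1}$, $r_2 = v + v^{-1}$, and $r_3 = \lambda_1/\lambda_2 + \lambda_2/\lambda_1 = u^{-1}v + uv^{-1}$. The only algebraic facts needed are the elementary identity $(x + x^{-1})^2 - 4 = (x - x^{-1})^2$ for $x > 0$ and the bilinear expansion of a product of two such binomials.

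First I would expand $r_1 r_2 = (u + u^{-1})(v + v^{-1}) = uv + (uv)^{-1} + (uv^{-1} + u^{-1}v)$ and recognize the last bracket as $r_3$; this gives $r_1 r_2 - r_3 = uv + (uv)^{-1}$. Next I would expand the ``minus'' product $(u - u^{-1})(v - v^{-1}) = uv + (uv)^{-1} - (uv^{-1} + u^{-1}v)$ and substitute the previous relation to obtain $(u - u^{-1})(v - v^{-1}) = r_1 r_2 - 2r_3$.

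Then I would square: since $(u - u^{-1})^2 = (u + u^{-1})^2 - 4 = r_1^2 - 4$ and likewise $(v - v^{-1})^2 = r_2^2 - 4$, we get $\bigl(r_1 r_2 - 2r_3\bigr)^2 = (r_1^2 - 4)(r_2^2 - 4)$. Solving this relation for $r_3$ yields $r_3 = \tfrac12\bigl(r_1 r_2 \pm \sqrt{(r_1^2-4)(r_2^2-4)}\bigr)$, which is exactly the claimed two-element set.

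There is no genuine obstacle here: the statement is a purely algebraic identity, and the substitution above turns it into a two-line computation. The only mild subtlety is the sign bookkeeping, i.e. noting that $\sqrt{(r_1^2-4)(r_2^2-4)} = \lvert (u - u^{-1})(v - v^{-1})\rvert$ and that the sign of $(u-u^{-1})(v-v^{-1})$ is $+$ when $\lambda_3$ is the largest or the smallest of the three values $\lambda_1,\lambda_2,\lambda_3$ and $-$ otherwise, so that both members of the set are actually attained as the $\lambda_i$ vary; but for the stated ``$\in$'' conclusion even this remark is not needed.
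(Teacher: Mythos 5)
Your proof is correct and follows essentially the same elementary route as the paper: both exploit the relation $\frac{\lambda_1}{\lambda_2}=\frac{\lambda_1}{\lambda_3}\cdot\frac{\lambda_3}{\lambda_2}$ together with the quadratic link between a positive ratio $x$ and $x+x^{-1}$. The paper enumerates the two roots $\frac12\bigl(r_i\pm\sqrt{r_i^2-4}\bigr)$ of each quadratic and multiplies candidates, whereas you derive the single identity $\bigl(r_1r_2-2r_3\bigr)^2=(r_1^2-4)(r_2^2-4)$ directly; this is only a cosmetic difference in organization, and your version is, if anything, slightly cleaner.
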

\begin{proof}
As $r_{1}=\frac{\lambda_{3}}{\lambda_{2}}+\frac{\lambda_{2}}{\lambda_{3}}\geq2$
and $r_{2}=\frac{\lambda_{1}}{\lambda_{3}}+\frac{\lambda_{3}}{\lambda_{1}}\geq2$,
we have that $\frac{\lambda_{2}}{\lambda_{3}},\frac{\lambda_{3}}{\lambda_{2}}\in\left\{ \frac{1}{2}\left(r_{1}-\sqrt{r_{1}^{2}-4}\right),\frac{1}{2}\left(r_{1}+\sqrt{r_{1}^{2}-4}\right)\right\} $
and $\frac{\lambda_{1}}{\lambda_{3}},\frac{\lambda_{3}}{\lambda_{1}}\in\left\{ \frac{1}{2}\left(r_{2}-\sqrt{r_{2}^{2}-4}\right),\frac{1}{2}\left(r_{2}+\sqrt{r_{2}^{2}-4}\right)\right\} $.
As $\frac{\lambda_{1}}{\lambda_{2}}=\frac{\lambda_{1}}{\lambda_{3}}\frac{\lambda_{3}}{\lambda_{2}}$,
we deduce the two values that $r_{3}$ can take.
\end{proof}
We now give the main result concerning the case $d=3$.
\begin{prop}
\label{prop:CondC3D}There is equivalence between:

(i) The inequality 
\begin{equation}
\frac{1}{\sqrt{\left(r_{1}-2\right)\left(r_{2}-2\right)}}+\frac{1}{\sqrt{\left(r_{2}-2\right)\left(r_{3}-2\right)}}+\frac{1}{\sqrt{\left(r_{1}-2\right)\left(r_{3}-2\right)}}>\frac{1}{4},\label{eq:ineqdiag3}
\end{equation}
holds, with the convention $\frac{1}{0}=+\infty$.

(ii) Condition $(C)$ is satisfied by a diagonal matrix.

(iii) Condition $(C)$ is satisfied.
\end{prop}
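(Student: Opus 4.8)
The plan is to prove Proposition \ref{prop:CondC3D} by establishing the chain of implications (iii) $\Rightarrow$ (i) $\Rightarrow$ (ii) $\Rightarrow$ (iii), where the last implication is trivial since (ii) is a special case of (iii). The key is to make the three-dimensional condition on $e_k^\perp$ completely explicit in terms of a single symmetric positive definite matrix $\Gamma$, and then reduce to the diagonal case.

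\textbf{From (iii) to (i).} Suppose $\Gamma\in\mathcal{S}_3^{++}(\mathbb{R})$ satisfies $(C)$. For each $k$, positive definiteness of $\Gamma^{(k)}$ on the line $e_k^\perp\subset\mathbb{R}^3$ (which is two-dimensional) is, by the computation (\ref{eq:computationGammaGammatilde1})--(\ref{eq:terme3}) that reduces the quadratic form to the indices $i,j\neq k$, equivalent to positive definiteness on $\mathbb{R}^2$ of the $2\times2$ symmetric matrix $\left(\frac{\lambda_i+\lambda_j}{2}(\Gamma_{ij}+\Gamma_{kk}-\Gamma_{ik}-\Gamma_{jk})\right)_{i,j\neq k}$. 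I would write out, for $k=1,2,3$, the three resulting $2\times2$ positivity conditions (positivity of the diagonal entries and of the determinant). The diagonal entries are automatically positive because $\Gamma$ itself is positive definite, since $\Gamma_{ii}+\Gamma_{kk}-2\Gamma_{ik}=(e_i-e_k)^*\Gamma(e_i-e_k)>0$. So the content is in the three determinant inequalities. After introducing the normalized quantities $s_{ik}:=(e_i-e_k)^*\Gamma(e_i-e_k)>0$ and noticing that $\Gamma_{ij}+\Gamma_{kk}-\Gamma_{ik}-\Gamma_{jk}=\frac{1}{2}(s_{ik}+s_{jk}-s_{ij})$, the three determinant conditions become inequalities among the six numbers $\lambda_\ell$ and the three numbers $s_{12},s_{13},s_{23}$. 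The main work is to show that, after optimizing over the admissible $(s_{12},s_{13},s_{23})$ (which range over the ``triangle inequality'' region cut out by the requirement that there be \emph{some} positive definite $\Gamma$ realizing them — here one uses that any triple of positive reals satisfying the strict triangle inequalities is realizable), the existence of a feasible $\Gamma$ is equivalent to (\ref{eq:ineqdiag3}). Concretely, I expect the optimal choice to satisfy $s_{ij}\propto\sqrt{(r_?-2)}$-type scalings, and a direct but careful algebraic elimination should collapse the three conditions to the single symmetric inequality (\ref{eq:ineqdiag3}); the convention $\frac{1}{0}=+\infty$ handles the degenerate cases where some $\lambda_i=\lambda_j$.

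\textbf{From (i) to (ii).} Here I would invoke Proposition \ref{prop:CondCdiag} with $d=3$: a diagonal matrix $\mathrm{Diag}(\alpha)$ satisfies $(C)$ if and only if (\ref{eq:diag1}) holds for $k=1,2,3$. The task is to show that (\ref{eq:ineqdiag3}) guarantees the existence of $\alpha=(\alpha_1,\alpha_2,\alpha_3)\in(\mathbb{R}_+^*)^3$ satisfying all three inequalities (\ref{eq:diag1}). I would substitute $\alpha_i=1/\beta_i$ and rewrite (\ref{eq:diag1}), using $\lambda_i/\lambda_j+\lambda_j/\lambda_i=r_\ell$, as a system of three inequalities homogeneous in $\beta=(\beta_1,\beta_2,\beta_3)$; then exhibit an explicit candidate (for instance $\beta_i$ proportional to a symmetric expression in the $r_\ell-2$) and verify that (\ref{eq:ineqdiag3}) is exactly the condition making that candidate work. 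Lemma \ref{lem:linkr1r2r3} will be used to keep the symmetric functions of $r_1,r_2,r_3$ under control while doing this elimination, since $r_3$ is determined up to a sign choice by $r_1,r_2$.

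\textbf{Main obstacle.} The genuinely hard step is the implication (iii) $\Rightarrow$ (i): one must show that allowing a \emph{general} (non-diagonal) $\Gamma$ buys nothing for $d=3$, i.e. that if any positive definite $\Gamma$ satisfies $(C)$ then the explicit scalar inequality (\ref{eq:ineqdiag3}) holds. This requires parametrizing $\Gamma\in\mathcal{S}_3^{++}(\mathbb{R})$ efficiently — the three ``edge lengths'' $s_{12},s_{13},s_{23}$ turn out to be the right coordinates up to a global scaling and congruence that do not affect the conditions — and then carrying out an optimization/elimination over these three parameters subject to the (strict) triangle inequalities, checking that the feasibility region for the three determinant conditions is nonempty precisely when (\ref{eq:ineqdiag3}) holds. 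The other two implications are essentially bookkeeping with Propositions \ref{prop:CondCdiag} and the reduction (\ref{eq:computationGammaGammatilde1})--(\ref{eq:terme3}) already available in the paper.
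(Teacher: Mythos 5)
Your overall architecture — the cycle $(iii)\Rightarrow(i)\Rightarrow(ii)\Rightarrow(iii)$ with $(ii)\Rightarrow(iii)$ trivial — is the same as the paper's (which runs $(i)\Rightarrow(ii)\Rightarrow(iii)\Rightarrow(i)$), and your key structural observation is correct and matches the paper: the conditions on $\Gamma^{(k)}$ depend on $\Gamma$ only through the three quantities $s_{ij}=(e_i-e_j)^{*}\Gamma(e_i-e_j)$ (the paper's $v_1,v_2,v_3$), via $\Gamma_{ij}+\Gamma_{kk}-\Gamma_{ik}-\Gamma_{jk}=\tfrac12(s_{ik}+s_{jk}-s_{ij})$, and the three $2\times2$ determinant conditions are then three coupled strict inequalities in $(s_{12},s_{13},s_{23})$. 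Your $(i)\Rightarrow(ii)$ sketch also matches the paper's Lemma \ref{lem:DiagSaturee} (the explicit choice $p_i\propto\sqrt{r_i-2}$), although you do not address the degenerate case $\min(r_1,r_2,r_3)=2$, where that candidate has a vanishing coordinate and the paper needs the separate Lemma \ref{lem:min=00003D2}.

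The genuine gap is in $(iii)\Rightarrow(i)$. You reduce correctly to the feasibility question "do there exist $v_1,v_2,v_3>0$ satisfying the three determinant inequalities", but then assert that "a direct but careful algebraic elimination should collapse the three conditions to the single symmetric inequality (\ref{eq:ineqdiag3})". That elimination is precisely the hard content, and it is not a direct elimination. In the variables $a_i=1/v_i$ the system reads $f(a)>\max\{\gamma_1a_1,\gamma_2a_2,\gamma_3a_3\}$ with $f(a)=2(a_1+a_2+a_3)-\tfrac{a_1a_2}{a_3}-\tfrac{a_2a_3}{a_1}-\tfrac{a_1a_3}{a_2}$ and $\gamma_i=4\tfrac{r_i-2}{r_i+2}$; showing that feasibility of some triple forces feasibility of the \emph{saturated} triple with $\gamma_1a_3=\gamma_2a_2=\gamma_3a_1$ (whence Inequality (\ref{eq:ineqMatpleineSaturee}), and then (\ref{eq:ineqdiag3}) by the quadratic-root computation of Lemma \ref{lem:equivdiagpleine}) is the paper's Lemma \ref{lem:lastonehardcore}: a three-case deformation argument building a path inside the feasible cone, using sign conditions on $\partial_{a_1}f$, $\partial_{a_2}f$, $\partial_{a_3}f$ that only hold in restricted ranges of the ordered variables, together with the preliminary reduction of Lemma \ref{lem:g3leqg1plusg2} disposing of the regime $\frac{r_3-2}{r_3+2}\geq\frac{r_1-2}{r_1+2}+\frac{r_2-2}{r_2+2}$. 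None of this is supplied or replaced by an alternative argument in your proposal; without it, the claim that the non-diagonal case "buys nothing" for $d=3$ remains unproved.
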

We now prove Proposition \ref{prop:CondC3D}. To show that $(i)\Rightarrow(ii)$,
we first study the case where there cardinality of $\{\lambda_{1},\lambda_{2},\lambda_{3}\}$
is smaller than $3$, which is equivalent to $\min(r_{1},r_{2},r_{3})=2$
and implies (\ref{eq:ineqdiag3}).
\begin{lem}
\label{lem:min=00003D2}If $\min\left(r_{1},r_{2},r_{3}\right)=2$,
then there exists a diagonal matrix that satisfies Condition $(C)$,
i.e. $(ii)$ holds.\end{lem}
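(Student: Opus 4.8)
The plan is to invoke the diagonal criterion of Proposition~\ref{prop:CondCdiag} and exhibit an explicit diagonal matrix satisfying~(\ref{eq:diag1}). First I would note that $x+\tfrac1x=2$ only for $x=1$, so the hypothesis $\min(r_{1},r_{2},r_{3})=2$ forces two of the three numbers $\lambda_{1},\lambda_{2},\lambda_{3}$ to coincide; since we are working under $\lambda_{min}<\lambda_{max}$ they cannot all three coincide, hence exactly one pair is equal. Because the system of inequalities~(\ref{eq:diag1}) is invariant under a simultaneous permutation of the indices of $(\lambda_{i})_{1\le i\le 3}$ and of $(\alpha_{i})_{1\le i\le 3}$ (reindexing the sums over $i\neq k$), I may relabel so that $\lambda_{1}=\lambda_{2}=:\lambda$ and $\lambda_{3}=:\mu\neq\lambda$.

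Then I would test the diagonal matrix $Diag(1,1,c)$ with a constant $c>0$ still to be chosen, checking the three inequalities~(\ref{eq:diag1}). For $k=3$ the right-hand side equals $\sqrt{(\lambda+\lambda)(\lambda^{-1}+\lambda^{-1})}=2$ while the left-hand side equals $2+\tfrac2c>2$, so this case holds for every $c>0$. For $k=1$ (and $k=2$, which is identical by the exchange $\alpha_{1}\leftrightarrow\alpha_{2}$), squaring both sides gives left-hand side $(3+\tfrac1c)^{2}=9+\tfrac6c+\tfrac1{c^{2}}$ and right-hand side
\[
\left(\lambda+\frac{\mu}{c}\right)\left(\frac1\lambda+\frac1{\mu c}\right)=1+\frac1{c^{2}}+\frac1c\left(\frac{\lambda}{\mu}+\frac{\mu}{\lambda}\right).
\]
The $c^{-2}$ terms cancel and~(\ref{eq:diag1}) for $k=1$ reduces to $8c>\tfrac{\lambda}{\mu}+\tfrac{\mu}{\lambda}-6$, which holds for every $c>0$ when $\tfrac{\lambda}{\mu}+\tfrac{\mu}{\lambda}\le 6$ and otherwise holds as soon as $c>\tfrac18\bigl(\tfrac{\lambda}{\mu}+\tfrac{\mu}{\lambda}-6\bigr)$. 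Choosing such a $c$, the matrix $Diag(1,1,c)$ satisfies all three inequalities of~(\ref{eq:diag1}), hence Condition~$(C)$ by Proposition~\ref{prop:CondCdiag}, which is assertion~$(ii)$.

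I do not expect a real obstacle here; the only two points deserving a line of justification are the reduction to the case $\lambda_{1}=\lambda_{2}$ (observing that permuting the $\lambda_{i}$ together with the corresponding $\alpha_{i}$ leaves~(\ref{eq:diag1}) unchanged) and the elementary cancellation in the squared $k=1$ inequality. In the write-up I would spell out these two steps in full and leave the remaining algebra as the short computation above.
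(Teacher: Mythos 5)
Your proof is correct and follows essentially the same route as the paper's: both invoke Proposition~\ref{prop:CondCdiag} and exhibit an explicit diagonal matrix that puts small weight on the index carrying the distinguished eigenvalue (your $Diag(1,1,c)$ with $c$ large is, after the substitution $p_i\propto 1/\alpha_i$, precisely the paper's choice $p_2=p_3=\frac{1-p_1}{2}$ with $p_1$ small, up to relabeling). The only point worth a remark is that you discard the case $\lambda_1=\lambda_2=\lambda_3$ by appealing to the standing assumption $\lambda_{min}<\lambda_{max}$, whereas the lemma as stated also covers that case; it is handled in one line by Corollary~\ref{cor:Id}, and in fact by your own computation with $\mu=\lambda$ (then $\frac{\lambda}{\mu}+\frac{\mu}{\lambda}=2\leq6$ and any $c>0$ works), so nothing essential is missing.
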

\begin{proof}
Let $\alpha_{1},\alpha_{2},\alpha_{3}>0$ and $p_{i}=\frac{\frac{1}{\alpha_{i}}}{\frac{1}{\alpha_{1}}+\frac{1}{\alpha_{2}}+\frac{1}{\alpha_{3}}}$,
$i=1,2,3$. Inequality (\ref{eq:B3_1}) rewrites, for $k=1$,
\[
1+2p_{1}+p_{1}^{2}>\left(\frac{\lambda_{2}}{\lambda_{3}}+\frac{\lambda_{3}}{\lambda_{2}}\right)p_{2}p_{3}+p_{2}^{2}+p_{3}^{2}.
\]
Writing $p_{1}^{2}=\left(1-p_{2}-p_{3}\right)^{2}$, and then using
the fact that $p_{1}+p_{2}+p_{3}=1$, we obtain that
\begin{equation}
4>p_{2}p_{3}\frac{\left(r_{1}-2\right)}{p_{1}}.\label{eq:ineqdiag3p1p2p3_1}
\end{equation}
With similar computations for $k=2,3$ we moreover obtain 
\begin{equation}
4>p_{1}p_{3}\frac{\left(r_{2}-2\right)}{p_{2}}\text{ and }4>p_{1}p_{2}\frac{\left(r_{3}-2\right)}{p_{3}}.\label{eq:ineqdiag3p1p2p3_2}
\end{equation}
To prove Lemma \ref{lem:min=00003D2}, it is sufficient to exhibit
a probability distribution $(p_{1},p_{2},p_{3})\in\left(\mathbb{R}_{*}^{+}\right)^{3}$
such that Inequalities (\ref{eq:ineqdiag3p1p2p3_1})- (\ref{eq:ineqdiag3p1p2p3_2})
are satisfied. In the case where $r_{1}=r_{2}=2$, we have that $\lambda_{1}=\lambda_{2}=\lambda_{3}$
so $r_{3}=2$, and the choice $p_{1}=p_{2}=p_{3}=\frac{1}{3}$ satisfies
(\ref{eq:ineqdiag3p1p2p3_1})-(\ref{eq:ineqdiag3p1p2p3_2}). In the
case where $r_{1}=2$, $r_{2}>2$ and $r_{3}>2$, we have that it
is sufficient to choose $p_{1}\in\left(0,\min\left(1,\frac{4}{r_{2}-2},\frac{4}{r_{3}-2}\right)\right)$
and $p_{2}=p_{3}=\frac{1-p_{1}}{2}$ to satisfy (\ref{eq:ineqdiag3p1p2p3_1})-(\ref{eq:ineqdiag3p1p2p3_2}).
\end{proof}
To complete the proof of $(i)\Rightarrow(ii)$, we show that in the
case $\min\left(r_{1},r_{2},r_{3}\right)>2$, if (\ref{eq:ineqdiag3})
holds, then Condition $(C)$ is satisfied by a diagonal matrix.
\begin{lem}
\label{lem:DiagSaturee} Let us assume that $\min(r_{1},r_{2},r_{3})>2$.
If Inequality (\ref{eq:ineqdiag3}) holds, then $(ii)$ holds.\end{lem}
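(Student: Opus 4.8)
The plan is to reduce the statement to Proposition~\ref{prop:CondCdiag} and then exploit a homogeneity trick. Recall that, exactly as carried out in the proof of Lemma~\ref{lem:min=00003D2}, Inequality (\ref{eq:diag1}) (equivalently (\ref{eq:B3_1})) for a diagonal matrix $Diag(\alpha)$ with $\alpha=(\alpha_1,\alpha_2,\alpha_3)\in(\mathbb{R}_+^*)^3$ rewrites, in terms of the probability vector $p=(p_1,p_2,p_3)$ defined by $p_i=\frac{1/\alpha_i}{\sum_k 1/\alpha_k}$ and of $a_i:=r_i-2$ (which are all strictly positive under the standing assumption $\min(r_1,r_2,r_3)>2$), as Inequalities (\ref{eq:ineqdiag3p1p2p3_1})--(\ref{eq:ineqdiag3p1p2p3_2}), i.e.
\[
\frac{p_2p_3}{p_1}<\frac{4}{a_1},\qquad \frac{p_1p_3}{p_2}<\frac{4}{a_2},\qquad \frac{p_1p_2}{p_3}<\frac{4}{a_3}.
\]
Since $p\mapsto(\alpha_i)_i:=(1/p_i)_i$ inverts $\alpha\mapsto p$ on the set of probability vectors with positive entries, $(ii)$ holds if and only if there exists a probability vector $p$ with positive entries satisfying these three strict inequalities. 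It therefore suffices to produce one such $p$.

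To that end I would introduce the change of variables $x_i:=\prod_{j\neq i}p_j\big/p_i$, that is $x_1=\frac{p_2p_3}{p_1}$, $x_2=\frac{p_1p_3}{p_2}$, $x_3=\frac{p_1p_2}{p_3}$, which is a bijection from $(\mathbb{R}_+^*)^3$ onto $(\mathbb{R}_+^*)^3$ with inverse $p_1=\sqrt{x_2x_3}$, $p_2=\sqrt{x_1x_3}$, $p_3=\sqrt{x_1x_2}$. Under this substitution the normalisation $p_1+p_2+p_3=1$ becomes $\Phi(x)=1$, where $\Phi(x):=\sqrt{x_1x_2}+\sqrt{x_2x_3}+\sqrt{x_1x_3}$, while the three inequalities above become simply $x_i<4/a_i$, $1\le i\le 3$. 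Thus the problem is reduced to exhibiting a point $x$ in the open box $(0,4/a_1)\times(0,4/a_2)\times(0,4/a_3)$ with $\Phi(x)=1$.

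The crucial observation is that $\Phi$ is positively homogeneous of degree $1$. Evaluating it along the ray $t\mapsto t\,(4/a_1,4/a_2,4/a_3)$ gives $\Phi\bigl(t(4/a_1,4/a_2,4/a_3)\bigr)=4t\bigl(\tfrac{1}{\sqrt{a_1a_2}}+\tfrac{1}{\sqrt{a_2a_3}}+\tfrac{1}{\sqrt{a_1a_3}}\bigr)$, and Inequality (\ref{eq:ineqdiag3}) — with no convention needed here, since all $a_i>0$ — states precisely that the bracket exceeds $1/4$, so the value at $t=1$ is strictly larger than $1$. Setting $\bar t:=\bigl(4(\tfrac{1}{\sqrt{a_1a_2}}+\tfrac{1}{\sqrt{a_2a_3}}+\tfrac{1}{\sqrt{a_1a_3}})\bigr)^{-1}\in(0,1)$ then yields a point $\bar x$ with $\Phi(\bar x)=1$ and $\bar x_i=\bar t\,4/a_i<4/a_i$ for each $i$. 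Defining $\bar p$ from $\bar x$ via the inverse map produces a probability vector with positive entries satisfying the three strict inequalities, and hence $Diag(1/\bar p_1,1/\bar p_2,1/\bar p_3)$ satisfies Condition $(C)$, which establishes $(ii)$.

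I do not expect a genuine obstacle in this argument: everything hinges on finding the substitution $x_i=\prod_{j\neq i}p_j/p_i$, after which the degree-one homogeneity of $\Phi$ combined with the threshold built into (\ref{eq:ineqdiag3}) closes the proof at once; the only point needing a little care is that the inequalities must remain strict, which is automatic because $\bar t<1$ strictly whenever the bracket is strictly greater than $1/4$.
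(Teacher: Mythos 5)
Your argument is correct and is essentially the paper's proof: both reduce, via Proposition \ref{prop:CondCdiag} and the reformulation in Lemma \ref{lem:min=00003D2}, to exhibiting a positive probability vector $p$ satisfying (\ref{eq:ineqdiag3p1p2p3_1})--(\ref{eq:ineqdiag3p1p2p3_2}), and your $\bar p$ obtained from the homogeneity argument is exactly the paper's explicit choice $p_{i}=\sqrt{r_{i}-2}\big/\sum_{j}\sqrt{r_{j}-2}$. The only difference is presentational: you derive this witness systematically through the substitution $x_{i}=\prod_{j\neq i}p_{j}/p_{i}$ and the degree-one homogeneity of $\Phi$, whereas the paper states it and verifies the three inequalities directly.
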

\begin{proof}
Using the proof of Lemma \ref{lem:min=00003D2}, it is easy to check
that that if Inequality (\ref{eq:ineqdiag3}) is satisfied then (\ref{eq:ineqdiag3p1p2p3_1})-(\ref{eq:ineqdiag3p1p2p3_2})
are satisfied for the choice $p_{i}=\frac{\sqrt{r_{i}-2}}{\sqrt{r_{1}-2}+\sqrt{r_{2}-2}+\sqrt{r_{3}-2}}>0$,
for $i=1,2,3$, so that the matrix $Diag\left(\frac{1}{p_{1}},\frac{1}{p_{2}},\frac{1}{p_{3}}\right)$
satisfies Condition $(C)$.
\end{proof}
As the relation $(ii)\Rightarrow(iii)$ is trivial, we have obtained
$(i)\Rightarrow(ii)\Rightarrow(iii)$. To prove $(iii)\Rightarrow(i)$,
by Lemma \ref{lem:min=00003D2}, it is sufficient to show that in
the case $\min\left(r_{1},r_{2},r_{3}\right)>2$, if Condition $(C)$
is satisfied then (\ref{eq:ineqdiag3}) holds. Let us remark that
we can assume without loss of generality that $r_{1}\leq r_{2}\leq r_{3}$,
so in what follows we suppose that 
\[
2<r_{1}\leq r_{2}\leq r_{3}.
\]
The next lemma deals with the case $\frac{r_{3}-2}{r_{3}+2}\geq\frac{r_{1}-2}{r_{1}+2}+\frac{r_{2}-2}{r_{2}+2}$.
\begin{lem}
\label{lem:g3leqg1plusg2}Let us assume that $\min(r_{1},r_{2},r_{3})>2$.
Then Inequality (\ref{eq:ineqdiag3}) is equivalent to 
\begin{equation}
\left\{ \sqrt{\left(r_{1}-2\right)\left(r_{2}-2\right)}\leq4\right\} \text{ or }\left\{ \sqrt{\left(r_{1}-2\right)\left(r_{2}-2\right)}>4\text{ and }r_{3}<16\left(\frac{\sqrt{r_{1}-2}+\sqrt{r_{2}-2}}{\sqrt{\left(r_{1}-2\right)\left(r_{2}-2\right)}-4}\right)^{2}+2\right\} .\label{eq:equidiag}
\end{equation}
In particular, if moreover $\frac{r_{3}-2}{r_{3}+2}\geq\frac{r_{1}-2}{r_{1}+2}+\frac{r_{2}-2}{r_{2}+2}$,
then Inequality (\ref{eq:ineqdiag3}) holds.\end{lem}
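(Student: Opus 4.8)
The plan is to clear denominators via the substitution $a:=\sqrt{r_{1}-2}$, $b:=\sqrt{r_{2}-2}$, $c:=\sqrt{r_{3}-2}$, all strictly positive since $\min(r_{1},r_{2},r_{3})>2$. With this notation the left-hand side of (\ref{eq:ineqdiag3}) equals $\frac{1}{ab}+\frac{1}{bc}+\frac{1}{ca}=\frac{a+b+c}{abc}$, so (\ref{eq:ineqdiag3}) becomes $4(a+b+c)>abc$, equivalently $c(ab-4)<4(a+b)$, viewed as an inequality in $c$ with $a,b$ fixed.

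First I would treat the case $ab\leq4$, i.e. $\sqrt{(r_{1}-2)(r_{2}-2)}\leq4$: then $c(ab-4)\leq0<4(a+b)$, so (\ref{eq:ineqdiag3}) holds automatically, which is the first alternative of (\ref{eq:equidiag}). If instead $ab>4$, dividing the inequality $c(ab-4)<4(a+b)$ by $ab-4>0$ gives $c<\frac{4(a+b)}{ab-4}$, and since both sides are positive this is equivalent to $c^{2}<16\bigl(\frac{a+b}{ab-4}\bigr)^{2}$. Translating back via $c^{2}=r_{3}-2$, $a+b=\sqrt{r_{1}-2}+\sqrt{r_{2}-2}$, $ab-4=\sqrt{(r_{1}-2)(r_{2}-2)}-4$ yields exactly the second alternative of (\ref{eq:equidiag}), completing the equivalence of (\ref{eq:ineqdiag3}) and (\ref{eq:equidiag}).

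For the ``in particular'' claim I would set $g(r):=\frac{r-2}{r+2}$; for $r>2$ one has $g(r)=\frac{(\sqrt{r-2})^{2}}{(\sqrt{r-2})^{2}+4}\in(0,1)$, so in particular $g(r_{3})<1$. Hence the hypothesis $g(r_{3})\geq g(r_{1})+g(r_{2})$ forces $g(r_{1})+g(r_{2})<1$, i.e. $\frac{a^{2}}{a^{2}+4}+\frac{b^{2}}{b^{2}+4}<1$; clearing denominators this reduces to $a^{2}b^{2}<16$, that is $ab<4$. We are then in the first alternative of (\ref{eq:equidiag}), so by the equivalence just established (\ref{eq:ineqdiag3}) holds.

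The argument is entirely elementary; the only points needing a little care are the sign of $ab-4$ when dividing it out of the inequality (hence the split into $ab\leq4$ and $ab>4$), and the observation that the seemingly different condition $g(r_{1})+g(r_{2})<1$ is in fact exactly $ab<4$, which is what connects the hypothesis of the ``in particular'' statement to the first case of (\ref{eq:equidiag}). I do not expect any genuine obstacle here.
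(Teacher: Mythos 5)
Your proof is correct and follows essentially the same route as the paper: rearranging (\ref{eq:ineqdiag3}) into an inequality linear in $\sqrt{r_{3}-2}$ and splitting on the sign of $\sqrt{(r_{1}-2)(r_{2}-2)}-4$, then for the ``in particular'' part observing that the hypothesis forces $\frac{r_{1}-2}{r_{1}+2}+\frac{r_{2}-2}{r_{2}+2}<1$, which is equivalent to $\sqrt{(r_{1}-2)(r_{2}-2)}<4$ (the paper phrases this as the contrapositive via an explicit algebraic identity, but it is the same computation). No gaps.
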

\begin{proof}
Under the assumption that $\min(r_{1},r_{2},r_{3})>2$, Inequality
(\ref{eq:ineqdiag3}) rewrites $\frac{1}{\sqrt{r_{3}-2}}\left(\frac{1}{\sqrt{r_{1}-2}}+\frac{1}{\sqrt{r_{2}-2}}\right)>\frac{1}{4}-\frac{1}{\sqrt{\left(r_{1}-2\right)\left(r_{2}-2\right)}},$
so it is equivalent to (\ref{eq:equidiag}). The term $\frac{r_{1}-2}{r_{1}+2}+\frac{r_{2}-2}{r_{2}+2}$
rewrites

\begin{eqnarray*}
\frac{r_{1}-2}{r_{1}+2}+\frac{r_{2}-2}{r_{2}+2} & = & \frac{2\sqrt{\left(r_{1}-2\right)\left(r_{2}-2\right)}\left(\sqrt{\left(r_{1}-2\right)\left(r_{2}-2\right)}-4\right)+4\left(\sqrt{r_{1}-2}+\sqrt{r_{2}-2}\right)^{2}}{\left(\sqrt{\left(r_{1}-2\right)\left(r_{2}-2\right)}-4\right)^{2}+4\left(\sqrt{r_{1}-2}+\sqrt{r_{2}-2}\right)^{2}}.
\end{eqnarray*}
If $\sqrt{\left(r_{1}-2\right)\left(r_{2}-2\right)}>4$, then $\frac{r_{1}-2}{r_{1}+2}+\frac{r_{2}-2}{r_{2}+2}>1$.
Thus, since $1>\frac{r_{3}-2}{r_{3}+2}$, if 
\[
\frac{r_{3}-2}{r_{3}+2}\geq\frac{r_{1}-2}{r_{1}+2}+\frac{r_{2}-2}{r_{2}+2},
\]
then $\sqrt{\left(r_{1}-2\right)\left(r_{2}-2\right)}\leq4$ and Inequality
(\ref{eq:ineqdiag3}) holds.
\end{proof}
Let us now suppose that Condition $(C)$ is satisfied by a matrix
$\Gamma\in\mathcal{S}_{3}^{++}\left(\mathbb{R}\right)$. We consider,
for $k=1,2,3$, the matrices $\Gamma^{(k)}$ with coefficients 
\[
\Gamma_{ij}^{(k)}=\frac{\lambda_{i}+\lambda_{j}}{2}\left(\Gamma_{ij}+\Gamma_{kk}-\Gamma_{ik}-\Gamma_{jk}\right),\ 1\leq i,j\leq3.
\]
We define 
\[
v_{1}:=\Gamma_{22}+\Gamma_{33}-2\Gamma_{23},\ v_{2}:=\Gamma_{11}+\Gamma_{33}-2\Gamma_{13},\ v_{3}:=\Gamma_{11}+\Gamma_{22}-2\Gamma_{12}.
\]
The matrix $\Gamma^{(3)}$ rewrites
\[
\Gamma^{(3)}=\left(\begin{array}{ccc}
\lambda_{1}v_{2} & \frac{\lambda_{1}+\lambda_{2}}{2}\times\frac{v_{1}+v_{2}-v_{3}}{2} & 0\\
\frac{\lambda_{1}+\lambda_{2}}{2}\times\frac{v_{1}+v_{2}-v_{3}}{2} & \lambda_{2}v_{1} & 0\\
0 & 0 & 0
\end{array}\right).
\]
We deduce that $\Gamma^{(3)}$ is positive definite on $e_{3}^{\perp}$
if and only if 
\begin{equation}
v_{1}>0,v_{2}>0\label{eq:v1v2pos}
\end{equation}
and the determinant of the matrix $\left(\Gamma^{(3)}\right)_{1\leq i,j\leq2}$
is positive, which rewrites $16v_{1}v_{2}>\left(\sqrt{\frac{\lambda_{2}}{\lambda_{1}}}+\sqrt{\frac{\lambda_{1}}{\lambda_{2}}}\right)^{2}\left(v_{1}+v_{2}-v_{3}\right)^{2}=\left(r_{3}+2\right)\left(v_{1}+v_{2}-v_{3}\right)^{2}$
and therefore
\begin{equation}
-4\frac{r_{3}-2}{r_{3}+2}v_{1}v_{2}>v_{1}^{2}+v_{2}^{2}+v_{3}^{2}-2\left(v_{1}v_{2}+v_{2}v_{3}+v_{1}v_{3}\right),\label{eq:u1u2u3_1}
\end{equation}
With similar computations for $\Gamma^{(1)}$ and $\Gamma^{(2)}$,
we obtain the additional inequalities 
\begin{eqnarray}
v_{3} & > & 0,\label{eq:v3pos}\\
-4\frac{r_{2}-2}{r_{2}+2}v_{1}v_{3} & > & v_{1}^{2}+v_{2}^{2}+v_{3}^{2}-2\left(v_{1}v_{2}+v_{2}v_{3}+v_{1}v_{3}\right),\label{eq:u1u2u3_2}\\
-4\frac{r_{1}-2}{r_{1}+2}v_{2}v_{3} & > & v_{1}^{2}+v_{2}^{2}+v_{3}^{2}-2\left(v_{1}v_{2}+v_{2}v_{3}+v_{1}v_{3}\right).\label{eq:u1u2u3_3}
\end{eqnarray}
If Condition $(C)$ is satisfied, then there exists $v_{1},v_{2},v_{3}$
satisfying (\ref{eq:v1v2pos})-(\ref{eq:u1u2u3_3}). Let us define
for $i=1,2,3$, $\gamma_{i}=4\frac{r_{i}-2}{r_{i}+2}\in(0,4)$. If
we assume that moreover, $\gamma_{3}v_{1}v_{2}=\gamma_{1}v_{2}v_{3}=\gamma_{2}v_{1}v_{3}$,
that is $\frac{\gamma_{3}}{v_{3}}=\frac{\gamma_{1}}{v_{1}}=\frac{\gamma_{2}}{v_{2}}$,
then we have that
\begin{equation}
0>\gamma_{1}^{2}+\gamma_{2}^{2}+\gamma_{3}^{2}-2\left(\gamma_{1}\gamma_{2}+\gamma_{2}\gamma_{3}+\gamma_{1}\gamma_{3}\right)+\gamma_{1}\gamma_{2}\gamma_{3}.\label{eq:ineqMatpleineSaturee}
\end{equation}

In the case $\frac{r_{3}-2}{r_{3}+2}\geq\frac{r_{2}-2}{r_{2}+2}+\frac{r_{1}-2}{r_{1}+2}$,
by Lemma \ref{lem:g3leqg1plusg2}, Inequality (\ref{eq:ineqdiag3})
holds. We show in Lemma \ref{lem:lastonehardcore} below that under
the assumption $\frac{r_{3}-2}{r_{3}+2}<\frac{r_{2}-2}{r_{2}+2}+\frac{r_{1}-2}{r_{1}+2}$,
Condition $(C)$ implies Inequality (\ref{eq:ineqMatpleineSaturee}).
To conclude the proof of $(iii)\Rightarrow(i)$ and therefore the
proof of Proposition \ref{prop:CondC3D}, we show in Lemma \ref{lem:equivdiagpleine}
below that (\ref{eq:ineqMatpleineSaturee}) implies (\ref{eq:ineqdiag3}).
\begin{lem}
\label{lem:equivdiagpleine}Let us assume that $2<r_{1}\leq r_{2}\leq r_{3}$,
then Inequality (\ref{eq:ineqMatpleineSaturee}) implies Inequality
(\ref{eq:ineqdiag3}).\end{lem}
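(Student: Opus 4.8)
The plan is to turn both (\ref{eq:ineqMatpleineSaturee}) and (\ref{eq:ineqdiag3}) into a single statement about a sum of three angles. Since $\min(r_1,r_2,r_3)>2$, each
\[
\gamma_i:=4\,\frac{r_i-2}{r_i+2}
\]
lies in $(0,4)$, so there is a unique $\theta_i\in(0,\pi/2)$ with $\gamma_i=4\sin^2\theta_i$, and one checks directly that $4-\gamma_i=4\cos^2\theta_i$ and $r_i-2=4\tan^2\theta_i$ (so $\theta_1\le\theta_2\le\theta_3$ under the standing hypothesis, although this ordering will not actually be used). Using $\sqrt{r_i-2}=2\tan\theta_i$, the left-hand side of (\ref{eq:ineqdiag3}) equals $\frac14\big(\frac{1}{\tan\theta_1\tan\theta_2}+\frac{1}{\tan\theta_2\tan\theta_3}+\frac{1}{\tan\theta_1\tan\theta_3}\big)$, so (\ref{eq:ineqdiag3}) is equivalent to $\tan\theta_1+\tan\theta_2+\tan\theta_3>\tan\theta_1\tan\theta_2\tan\theta_3$. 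The elementary identity $\tan A+\tan B+\tan C-\tan A\tan B\tan C=\frac{\sin(A+B+C)}{\cos A\cos B\cos C}$, together with $\cos\theta_i>0$ and $\theta_1+\theta_2+\theta_3\in(0,3\pi/2)$, shows that (\ref{eq:ineqdiag3}) holds \emph{if and only if} $\theta_1+\theta_2+\theta_3<\pi$.

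Next I would rewrite (\ref{eq:ineqMatpleineSaturee}). By the Heron-type factorization $A^2+B^2+C^2-2(AB+BC+CA)=-(A+B+C)(-A+B+C)(A-B+C)(A+B-C)$ applied with $(A,B,C)=(\sqrt{\gamma_1},\sqrt{\gamma_2},\sqrt{\gamma_3})$, and then $\sqrt{\gamma_i}=2\sin\theta_i$, inequality (\ref{eq:ineqMatpleineSaturee}) becomes, with $a:=\sin\theta_1$, $b:=\sin\theta_2$, $c:=\sin\theta_3$,
\[
4a^{2}b^{2}c^{2}<(a+b+c)(-a+b+c)(a-b+c)(a+b-c).
\]
Hence, combined with the two equivalences above, the lemma reduces to the implication: \emph{if $a,b,c$ are the sines of some $\theta_1,\theta_2,\theta_3\in(0,\pi/2)$ and the displayed inequality holds, then $\theta_1+\theta_2+\theta_3<\pi$.}

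I would prove this by contraposition: assuming $\theta_1+\theta_2+\theta_3\ge\pi$, I show $(a+b+c)(-a+b+c)(a-b+c)(a+b-c)\le4a^2b^2c^2$. If $a,b,c$ do not form a nondegenerate triangle, then one of the factors $-a+b+c,\,a-b+c,\,a+b-c$ is $\le0$ while the other two are $>0$ (two of them cannot be negative at once, since any two of them sum to one of $2a,2b,2c>0$), so the product is $\le0<4a^2b^2c^2$. Otherwise let $A',B',C$ be the angles, opposite to the sides of lengths $a,b,c$, of the triangle with side lengths $a,b,c$, with circumradius $R$ and area $S$; then $A'+B'+C=\pi$ and $(a+b+c)(-a+b+c)(a-b+c)(a+b-c)=16S^2=4a^2b^2\sin^2C$, while the law of sines gives $a=2R\sin A'$, $b=2R\sin B'$, $c=2R\sin C$. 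If $2R<1$, then $\sin\theta_1<\sin A'$, $\sin\theta_2<\sin B'$, $\sin\theta_3<\sin C$; since at most one of $A',B',C$ exceeds $\pi/2$ and each $\theta_i<\pi/2$, monotonicity of $\sin$ on $[0,\pi/2]$ (for the non-obtuse angles) and the bound $\theta_i<\pi/2$ (for the at most one obtuse angle) force $\theta_1<A'$, $\theta_2<B'$, $\theta_3<C$, hence $\theta_1+\theta_2+\theta_3<A'+B'+C=\pi$, a contradiction. Therefore $2R\ge1$, so $c=2R\sin C\ge\sin C$ and $4a^2b^2\sin^2C\le4a^2b^2c^2$, which is the desired inequality.

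I expect the contraposition step to be the main obstacle: one must identify the quartic product correctly with $16S^2=4a^2b^2\sin^2C$, and one must treat the auxiliary triangle with care — the crude bound $\theta_i<\pi/2$ is exactly what rescues the comparison when one of $A',B',C$ is obtuse. The remaining ingredients (the substitution $\gamma_i=4\sin^2\theta_i$ with $r_i-2=4\tan^2\theta_i$, the tangent identity, and the Heron factorization) are routine algebra, but the substitution should be checked to be consistent with the hypothesis $2<r_1\le r_2\le r_3$, which is what guarantees $\gamma_i\in(0,4)$ and hence that each $\theta_i$ is well defined in $(0,\pi/2)$.
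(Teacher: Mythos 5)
Your proof is correct, but it follows a genuinely different route from the paper's. The paper's argument is purely algebraic: it views the right-hand side of (\ref{eq:ineqMatpleineSaturee}) as a quadratic polynomial in $\gamma_{3}$ with discriminant $\gamma_{1}\gamma_{2}(4-\gamma_{1})(4-\gamma_{2})>0$, computes the two roots $z_{\pm}$ explicitly, translates $\gamma_{3}\in(z_{-},z_{+})$ into an interval for $r_{3}$ whose upper endpoint is exactly the threshold $16\left(\frac{\sqrt{r_{1}-2}+\sqrt{r_{2}-2}}{\sqrt{(r_{1}-2)(r_{2}-2)}-4}\right)^{2}+2$ appearing in the reformulation (\ref{eq:equidiag}) of (\ref{eq:ineqdiag3}), and concludes by Lemma \ref{lem:g3leqg1plusg2}. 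Your substitution $\gamma_{i}=4\sin^{2}\theta_{i}$, $r_{i}-2=4\tan^{2}\theta_{i}$ instead reduces both inequalities to the single condition $\theta_{1}+\theta_{2}+\theta_{3}<\pi$: the tangent addition identity handles (\ref{eq:ineqdiag3}), and the Heron factorization combined with the law of sines and the dichotomy $2R\geq1$ versus $2R<1$ handles (\ref{eq:ineqMatpleineSaturee}) by contraposition. I checked the delicate points: the factorization and the normalization by powers of $2$ do give $4a^{2}b^{2}c^{2}<(a+b+c)(-a+b+c)(a-b+c)(a+b-c)$ as the reduced form of (\ref{eq:ineqMatpleineSaturee}); at most one of the three short factors can be nonpositive, so the degenerate case is sound; and the crude bound $\theta_{i}<\pi/2$ does rescue the comparison with a possibly obtuse angle of the auxiliary triangle. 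What your approach buys is symmetry in $(r_{1},r_{2},r_{3})$ (the ordering is never used) and independence from Lemma \ref{lem:g3leqg1plusg2} and from any explicit root computation; what it costs is the introduction of the auxiliary triangle and a two-case geometric argument, whereas the paper's computation reuses a reformulation already needed elsewhere in the appendix.
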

\begin{proof}
We see the term on the r.h.s. of Inequality (\ref{eq:ineqMatpleineSaturee})
as a second degree polynomial in the variable $\gamma_{3}$, which
has two distinct roots $z_{-}<z_{+}$. Indeed, $\gamma_{1},\gamma_{2}\in(0,4)$,
and the discriminant of the polynomial is $\gamma_{1}\gamma_{2}\left(4-\gamma_{1}\right)\left(4-\gamma_{2}\right)>0$.
As $\gamma_{3}=4\frac{r_{3}-2}{r_{3}+2}$, Inequality (\ref{eq:ineqMatpleineSaturee})
is equivalent to $r_{3}\in\left(\frac{8+2z_{-}}{4-z_{-}},\frac{8+2z_{+}}{4-z_{+}}\right)$,
where 
\[
\frac{8+2z_{\pm}}{4-z_{\pm}}=16\left(\frac{\sqrt{r_{1}-2}\pm\sqrt{r_{2}-2}}{\sqrt{\left(r_{1}-2\right)\left(r_{2}-2\right)}\mp4}\right)^{2}+2.
\]
By Lemma \ref{lem:g3leqg1plusg2}, we conclude that Inequality (\ref{eq:ineqMatpleineSaturee})
implies Inequality (\ref{eq:ineqdiag3}).\end{proof}
\begin{lem}
\label{lem:lastonehardcore}Let us assume that $2<r_{1}\leq r_{2}\leq r_{3}$,
and that $\frac{r_{3}-2}{r_{3}+2}<\frac{r_{2}-2}{r_{2}+2}+\frac{r_{1}-2}{r_{1}+2}$.
If Condition $(C)$ is satisfied then Inequality (\ref{eq:ineqMatpleineSaturee})
holds.\end{lem}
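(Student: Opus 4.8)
The plan is to reinterpret the quadratic inequality system that Condition $(C)$ produces as a statement about the angles of a Euclidean triangle and then to read off \eqref{eq:ineqMatpleineSaturee} from a trigonometric factorisation. Keep the notation $\gamma_i=4\frac{r_i-2}{r_i+2}$, so the hypotheses become $0<\gamma_1\le\gamma_2\le\gamma_3<4$ and $\gamma_3<\gamma_1+\gamma_2$, while \eqref{eq:ineqMatpleineSaturee} reads $P(\gamma)<0$ with $P(\gamma):=\gamma_1^2+\gamma_2^2+\gamma_3^2-2(\gamma_1\gamma_2+\gamma_2\gamma_3+\gamma_1\gamma_3)+\gamma_1\gamma_2\gamma_3$. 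By the discussion preceding the lemma, Condition $(C)$ furnishes $v_1,v_2,v_3>0$ with $N(v)+\gamma_kv_iv_j<0$ for each cyclic $(i,j,k)$, where $N(v):=v_1^2+v_2^2+v_3^2-2(v_1v_2+v_2v_3+v_1v_3)$. Writing $v_i=a_i^2$ with $a_i>0$, the number $-N(v)=2\sum_{i<j}a_i^2a_j^2-\sum_i a_i^4$ is positive exactly when $a_1,a_2,a_3$ are the sides of a non-degenerate triangle; since $\gamma_kv_iv_j>0$ forces $N(v)<0$, such a triangle exists, with angles $A,B,C\in(0,\pi)$ opposite $a_1,a_2,a_3$. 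Using $-N(v)=16\,(\mathrm{Area})^2$ and $2\,\mathrm{Area}=a_1a_2\sin C$ (cyclically), the three inequalities become $\gamma_1<4\sin^2A$, $\gamma_2<4\sin^2B$, $\gamma_3<4\sin^2C$. Writing $\gamma_i=4\sin^2\phi_i$ with $\phi_i\in(0,\tfrac\pi2)$ (so $\phi_1\le\phi_2\le\phi_3$) and noting $4\sin^2\theta>\gamma_i\Leftrightarrow\theta\in(\phi_i,\pi-\phi_i)$ for $\theta\in(0,\pi)$, the constraint $A+B+C=\pi$ with $A,B,C$ in these intervals forces $\phi_1+\phi_2+\phi_3<\pi$.

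Next, set $c_i:=\cos\phi_i\in(0,1)$ and $x_i:=\sin^2\phi_i=1-c_i^2$, so $\gamma_i=4x_i$. A direct expansion gives
\[
\tfrac1{16}P(\gamma)=x_1^2+x_2^2+x_3^2-2(x_1x_2+x_2x_3+x_1x_3)+4x_1x_2x_3=(1-c_1^2-c_2^2-c_3^2)^2-4c_1^2c_2^2c_3^2=g_+g_-,
\]
where $g_\pm:=1-c_1^2-c_2^2-c_3^2\pm2c_1c_2c_3$, and moreover
\[
g_-=\sin^2\phi_2\sin^2\phi_3-(\cos\phi_1+\cos\phi_2\cos\phi_3)^2,\qquad g_+=\sin^2\phi_2\sin^2\phi_3-(\cos\phi_1-\cos\phi_2\cos\phi_3)^2 .
\]
Since $\cos\phi_1+\cos\phi_2\cos\phi_3>0$, the first identity gives $g_-<0\Leftrightarrow\cos\phi_1>-\cos(\phi_2+\phi_3)=\cos(\pi-\phi_2-\phi_3)\Leftrightarrow\phi_1+\phi_2+\phi_3<\pi$, hence $g_-<0$. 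The second gives $g_+>0\Leftrightarrow\cos(\phi_2+\phi_3)<\cos\phi_1<\cos(\phi_2-\phi_3)\Leftrightarrow|\phi_2-\phi_3|<\phi_1<\phi_2+\phi_3$; since $\phi_3=\max_i\phi_i$, the outer two inequalities hold automatically, so $g_+>0$ is equivalent to $\phi_3<\phi_1+\phi_2$.

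To conclude it suffices to derive $\phi_3<\phi_1+\phi_2$ from $\gamma_3<\gamma_1+\gamma_2$, i.e. from $\sin^2\phi_3<\sin^2\phi_1+\sin^2\phi_2$. If instead $\phi_3\ge\phi_1+\phi_2$, then $\phi_1+\phi_2\le\phi_3<\tfrac\pi2$, $\sin$ is increasing on $[0,\phi_1+\phi_2]$, and using $\sin^2(\phi_1+\phi_2)-\sin^2\phi_1-\sin^2\phi_2=2\sin\phi_1\sin\phi_2\cos(\phi_1+\phi_2)>0$ one gets $\sin^2\phi_3\ge\sin^2(\phi_1+\phi_2)>\sin^2\phi_1+\sin^2\phi_2$, a contradiction. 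Hence $g_+>0$, and together with $g_-<0$ this gives $P(\gamma)=16\,g_+g_-<0$, which is \eqref{eq:ineqMatpleineSaturee}.

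The genuinely non-routine step is the first one: recognising that the strict system coming from Condition $(C)$ says exactly that $(\gamma_1,\gamma_2,\gamma_3)$ is dominated componentwise by $(4\sin^2A,4\sin^2B,4\sin^2C)$ for some triangle, equivalently $\phi_1+\phi_2+\phi_3<\pi$; the factorisation $P(\gamma)=16g_+g_-$ then does the rest, the polynomial and trigonometric identities being routine bookkeeping. Note that the hypothesis $\gamma_3<\gamma_1+\gamma_2$ enters precisely to force $g_+>0$: without it there are feasible configurations (e.g. $\gamma_1,\gamma_2$ small and $\gamma_3$ close to $4$) with $P(\gamma)>0$, so the inequality genuinely requires this assumption.
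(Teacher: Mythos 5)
Your proof is correct, and it takes a genuinely different route from the paper's. Both arguments start from the same place: Condition $(C)$ yields $v_1,v_2,v_3>0$ with $N(v)+\gamma_k v_iv_j<0$ for the three cyclic pairs, where $N(v)=v_1^2+v_2^2+v_3^2-2(v_1v_2+v_2v_3+v_1v_3)$. From there the paper substitutes $a_i=1/v_i$, reduces to finding a point of the feasible cone where $\gamma_1a_3=\gamma_2a_2=\gamma_3a_1$, and constructs such a point by a three-case deformation argument using monotonicity of $f(a_1,a_2,a_3)=2(a_1+a_2+a_3)-\frac{a_1a_2}{a_3}-\frac{a_2a_3}{a_1}-\frac{a_1a_3}{a_2}$ in each variable on suitable ranges; the hypothesis $\gamma_3-\gamma_2<\gamma_1$ is what keeps the deformation inside the admissible region. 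You instead set $v_i=a_i^2$, recognise $-N(v)$ as $16\,(\mathrm{Area})^2$ via Heron's formula, and translate the system into $\gamma_i<4\sin^2(\text{opposite angle})$, hence $\phi_1+\phi_2+\phi_3<\pi$ for $\phi_i=\arcsin(\sqrt{\gamma_i}/2)$; the factorisation $\tfrac{1}{16}P(\gamma)=g_+g_-$ with $g_\pm=1-\sum_i\cos^2\phi_i\pm2\cos\phi_1\cos\phi_2\cos\phi_3$ then reduces everything to the sign analysis $g_-<0\Leftrightarrow\sum_i\phi_i<\pi$ and $g_+>0\Leftrightarrow\phi_3<\phi_1+\phi_2$, the latter following from $\gamma_3<\gamma_1+\gamma_2$ by your small monotonicity argument. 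I checked the algebraic identities (the expansion of $\tfrac1{16}P$ in terms of $u_i=\cos^2\phi_i$, the two product formulas for $g_\pm$, and $\sin^2(\phi_1+\phi_2)-\sin^2\phi_1-\sin^2\phi_2=2\sin\phi_1\sin\phi_2\cos(\phi_1+\phi_2)$) and they all hold; the only cosmetic slip is the phrase ``the outer two inequalities hold automatically'' in the chain $|\phi_2-\phi_3|<\phi_1<\phi_2+\phi_3$ — in fact only $\phi_1<\phi_2+\phi_3$ is automatic, while $|\phi_2-\phi_3|<\phi_1$ \emph{is} the condition $\phi_3<\phi_1+\phi_2$, which is exactly what you then prove. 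Your approach buys a shorter, case-free proof that also makes the role of each hypothesis transparent (Condition $(C)$ gives $g_-<0$, the triangle-type condition on the $\gamma_i$ gives $g_+>0$), whereas the paper's deformation argument is more elementary in the tools it uses but considerably longer.
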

\begin{proof}
Let us define the function $f:\left(\mathbb{R}_{+}^{*}\right)^{3}\rightarrow\mathbb{R}$
by $f\left(a_{1},a_{2},a_{3}\right)=2\left(a_{1}+a_{2}+a_{3}\right)-\frac{a_{1}a_{2}}{a_{3}}-\frac{a_{2}a_{3}}{a_{1}}-\frac{a_{1}a_{3}}{a_{2}}$.
Reformulating Inequalities (\ref{eq:u1u2u3_1})-(\ref{eq:u1u2u3_3})
with the change of variables $a_{i}=\frac{1}{v_{i}},i=1,2,3$, we
obtain that
\[
f(a_{1},a_{2},a_{3})>\max\{\gamma_{1}a_{1},\gamma_{2}a_{2},\gamma_{3}a_{3}\}.
\]
Under the assumption that $2<r_{1}\leq r_{2}\leq r_{3}$, we have
that $0<\gamma_{1}\leq\gamma_{2}\leq\gamma_{3}$, and let us remark
that 
\[
\max\{\gamma_{1}a_{1},\gamma_{2}a_{2},\gamma_{3}a_{3}\}\geq\max\{\gamma_{3}a_{(1)},\gamma_{2}a_{(2)},\gamma_{1}a_{(3)}\},
\]
where $\left(a_{(1)},a_{(2)},a_{(3)}\right)$ is the nondecreasing
reordering of $(a_{1},a_{2},a_{3})$. Let $\mathcal{R}$ be the set
of elements $(a_{1},a_{2},a_{3})\in\left(\mathbb{R}_{+}^{*}\right)^{3}$
such that $a_{1}\leq a_{2}\leq a_{3}$ and 
\begin{equation}
f(a_{1},a_{2},a_{3})>\max\{\gamma_{1}a_{3},\gamma_{2}a_{2},\gamma_{3}a_{1}\}.\label{eq:critere}
\end{equation}
If Condition $(C)$ is satisfied then $\mathcal{R}$ is not empty.
Let us remark first that as both sides of (\ref{eq:critere}) are
homogeneous of order 1, $\mathcal{R}$ is stable by scaling: if $(a_{1},a_{2},a_{3})\in\mathcal{R}$
then for $\zeta>0$, $(\zeta a_{1},\zeta a_{2},\zeta a_{3})\in\mathcal{R}$.
Moreover, if we assume that there exists $\left(a_{1},a_{2},a_{3}\right)\in\mathcal{R}$
such that $\gamma_{1}a_{3}=\gamma_{2}a_{2}=\gamma_{3}a_{1}=:\varDelta$,
then we can check that Inequality (\ref{eq:ineqMatpleineSaturee})
is satisfied. Indeed, we have that $a_{3}=\frac{\Delta}{\gamma_{1}},a_{2}=\frac{\Delta}{\gamma_{2}},a_{3}=\frac{\Delta}{\gamma_{1}}$,
and 
\begin{equation}
f\left(\frac{\Delta}{\gamma_{3}},\frac{\Delta}{\gamma_{2}},\frac{\Delta}{\gamma_{1}}\right)>\Delta.\label{eq:f}
\end{equation}
Multiplying both sides of (\ref{eq:f}) by $\frac{\gamma_{1}\gamma_{2}\gamma_{3}}{\Delta}$,
we obtain (\ref{eq:ineqMatpleineSaturee}).

Let $\left(q_{1},q_{2},q_{3}\right)\in\mathcal{R}$. To prove that
there exists $\left(u_{1},u_{2},u_{3}\right)\in\mathcal{R}$, such
that $\gamma_{1}u_{3}=\gamma_{2}u_{2}=\gamma_{3}u_{1}$, and conclude
with the previous argument, we construct a path included into $\mathcal{R}$
that goes from $\left(q_{1},q_{2},q_{3}\right)$ to $\left(u_{1},u_{2},u_{3}\right)$.
We now distinguish the three cases.

Case $1$: $\gamma_{3}q_{1}\leq\max\left(\gamma_{2}q_{2},\gamma_{1}q_{3}\right)=\gamma_{1}q_{3}$.
Let us remark that if $a_{1}\leq a_{2}\leq a_{3}$, the partial derivative
$\partial_{a_{1}}f$ satisfies 
\begin{equation}
\partial_{a_{1}}f(a_{1},a_{2},a_{3})=\frac{a_{2}a_{3}}{a_{1}^{2}}-\left(\sqrt{\frac{a_{2}}{a_{3}}}-\sqrt{\frac{a_{3}}{a_{2}}}\right)^{2}\geq\frac{a_{3}}{a_{2}}\left(\left(\frac{a_{2}}{a_{1}}\right)^{2}-1\right)\geq0.\label{eq:diffa1}
\end{equation}
For $x\in\left[q_{1},\min\left(q_{2},\frac{\gamma_{1}}{\gamma_{3}}q_{3}\right)\right]$,
$f(x,q_{2},q_{3})\geq f(q_{1},q_{2},q_{3})$. If $\min\left(q_{2},\frac{\gamma_{1}}{\gamma_{3}}q_{3}\right)=\frac{\gamma_{1}}{\gamma_{3}}q_{3}$,
then for $\tilde{q}_{1}=\frac{\gamma_{1}}{\gamma_{3}}q_{3}$, we have
that
\begin{equation}
f(\tilde{q}_{1},q_{2},q_{3})\geq f(q_{1},q_{2},q_{3})>\gamma_{1}q_{3}=\gamma_{3}\tilde{q}_{1}\geq\gamma_{2}q_{2}.\label{eq:gamma123_1}
\end{equation}
We scale (\ref{eq:gamma123_1}) and define $\zeta=\frac{\gamma_{1}}{\tilde{q}_{1}}=\frac{\gamma_{3}}{q_{3}}>0$
so that $\zeta\tilde{q}_{1}=\gamma_{1}$ and $\zeta q_{3}=\gamma_{3}$.
We have that 
\begin{equation}
f(\zeta\tilde{q}_{1},\zeta q_{2},\zeta q_{3})\geq f(\zeta q_{1},\zeta q_{2},\zeta q_{3})>\gamma_{3}\gamma_{1}\geq\gamma_{2}\zeta q_{2},\label{eq:gamma123_1bis}
\end{equation}
We now increase $q_{2}$ in (\ref{eq:gamma123_1bis}). For $a_{1},a_{2},a_{3}>0$,
such that $a_{1}\leq a_{2}\leq a_{3}$ and $a_{2}\leq\frac{a_{3}a_{1}}{a_{3}-a_{1}}$
we have that 
\[
\partial_{a_{2}}f(a_{1},a_{2},a_{3})=a_{1}a_{3}\left(\frac{1}{a_{2}^{2}}-\left(\frac{1}{a_{1}}-\frac{1}{a_{3}}\right)^{2}\right)\geq0.
\]
By hypothesis, 
\[
q_{2}\leq\frac{\gamma_{1}\gamma_{3}}{\zeta\gamma_{2}}\leq\frac{1}{\zeta}\frac{\gamma_{1}\gamma_{3}}{\gamma_{3}-\gamma_{1}}=\frac{q_{3}\tilde{q}_{1}}{q_{3}-\tilde{q}_{1}},
\]
so for $z\in[q_{2},\frac{\gamma_{1}\gamma_{3}}{\zeta\gamma_{2}}]$,
$f(\zeta\tilde{q}_{1},\zeta z,\zeta q_{3})\geq f(\zeta\tilde{q}_{1},\zeta q_{2},\zeta q_{3})$
and in particular for $\hat{q}_{2}=\frac{\gamma_{1}\gamma_{3}}{\zeta\gamma_{2}}$,
we have that 
\[
f(\zeta\tilde{q}_{1},\zeta\hat{q}_{2},\zeta q_{3})>\gamma_{1}\zeta q_{3}=\gamma_{3}\zeta\tilde{q}_{1}=\gamma_{2}\zeta\hat{q}_{2},
\]
so we obtain (\ref{eq:ineqMatpleineSaturee}). If $\min\left(q_{2},\frac{\gamma_{1}}{\gamma_{3}}q_{3}\right)=q_{2}$,
as the function $x\rightarrow f(x,q_{2},q_{3})$ is nondecreasing
for $x\in[q_{1},q_{2}]$, we have that 
\[
f(q_{2},q_{2},q_{3})\geq f(q_{1},q_{2},q_{3})>\gamma_{1}q_{3}\geq\gamma_{3}q_{2}\geq\gamma_{2}q_{2}.
\]
As the function $g:\mathbb{R}^{2}\rightarrow\mathbb{R}$ defined by
$g(a_{2},a_{3})=f(a_{2},a_{2},a_{3})$ satisfies $\partial_{a_{2}}g\left(a_{2},a_{3}\right)=4-2\frac{a_{2}}{a_{3}}\geq0$
if $a_{2}\leq a_{3}$, we have that for $y\in\left[q_{2},\frac{\gamma_{1}q_{3}}{\gamma_{3}}\right]$,
$f(y,y,q_{3})\geq f(q_{1},q_{2},q_{3})$. In particular for $\tilde{q}_{1}=\tilde{q}_{2}=\frac{\gamma_{1}q_{3}}{\gamma_{3}}$,
we have 
\begin{equation}
f(\tilde{q}_{1},\tilde{q}_{2},q_{3})\geq f(q_{1},q_{2},q_{3})>\gamma_{1}q_{3}=\gamma_{3}\tilde{q}_{1}\geq\gamma_{2}\tilde{q}_{2}.\label{eq:gamma123_2}
\end{equation}
and (\ref{eq:gamma123_2}) is treated in the same way as (\ref{eq:gamma123_1}).

Case $2$: $\gamma_{3}q_{1}\leq\max\left(\gamma_{2}q_{2},\gamma_{1}q_{3}\right)=\gamma_{2}q_{2}$.
Using (\ref{eq:diffa1}), as $\partial_{a_{1}}f(x,q_{2},q_{3})\geq0$
for $x\in[q_{1},q_{2}]$, we have that for $\tilde{q}_{1}=\frac{\gamma_{2}}{\gamma_{3}}q_{2}\in[q_{1},q_{2}]$,
\[
f(\tilde{q}_{1},q_{2},q_{3})\geq f(q_{1},q_{2},q_{3})>\gamma_{2}q_{2}=\gamma_{3}\tilde{q}_{1}\geq\gamma_{1}q_{3}.
\]
For $\theta=\frac{\gamma_{3}}{q_{2}}=\frac{\gamma_{2}}{\tilde{q}_{1}}$,
we have that 
\[
f(\theta\tilde{q}_{1},\theta q_{2},\theta q_{3})>\gamma_{3}\gamma_{2}\geq\gamma_{1}\theta q_{3}.
\]
As $\gamma_{3}-\gamma_{2}<\gamma_{1}$, we have that 
\[
q_{3}\leq\frac{\gamma_{2}\gamma_{3}}{\theta\gamma_{1}}<\frac{1}{\theta}\frac{\gamma_{2}\gamma_{3}}{\gamma_{3}-\gamma_{2}}=\frac{\tilde{q}_{1}q_{2}}{q_{2}-\tilde{q}_{1}},
\]
Moreover, for $0<a_{1}\leq a_{2}\leq a_{3}$ such that $a_{3}\leq\frac{a_{2}a_{1}}{a_{2}-a_{1}}$,
\begin{equation}
\partial_{a_{3}}f(a_{1},a_{2},a_{3})=a_{1}a_{2}\left(\frac{1}{a_{3}^{2}}-\left(\frac{1}{a_{1}}-\frac{1}{a_{2}}\right)^{2}\right)\geq0,\label{eq:dfa3}
\end{equation}
so for $\hat{q}_{3}=\frac{\gamma_{2}\gamma_{3}}{\theta\gamma_{1}}$,
we obtain 
\[
f(\theta\tilde{q}_{1},\theta q_{2},\theta\hat{q}_{3})\geq f(\theta\tilde{q}_{1},\theta q_{2},\theta q_{3})>\gamma_{2}\theta q_{2}=\gamma_{3}\theta\tilde{q}_{1}=\gamma_{1}\theta\hat{q}_{3},
\]
and we deduce (\ref{eq:ineqMatpleineSaturee}).

Case 3: $\gamma_{3}q_{1}>\max\left(\gamma_{2}q_{2},\gamma_{1}q_{3}\right)$.
We show that there exists $\tilde{q}_{3}\geq q_{3}$ such that 
\begin{equation}
f(q_{1},q_{2},\tilde{q}_{3})\geq f(q_{1},q_{2},q_{3})>\gamma_{3}q_{1}=\gamma_{1}\tilde{q}_{3}\geq\gamma_{2}q_{2}.\label{eq:lastsituation}
\end{equation}
Let $\eta=\frac{\gamma_{1}}{q_{1}}$ so that $\eta q_{1}=\gamma_{1}$.
We have that 
\[
f(\eta q_{1},\eta q_{2},\eta q_{3})>\gamma_{3}\gamma_{1}>\max\left(\gamma_{1}\eta q_{3},\gamma_{2}\eta q_{2}\right).
\]
As $q_{2}<\frac{\gamma_{1}\gamma_{3}}{\eta\gamma_{2}}$ and $\gamma_{3}-\gamma_{2}<\gamma_{1}$,
we have that $\frac{\gamma_{1}}{\gamma_{3}-\gamma_{2}}>1$ and 
\[
q_{3}\leq\frac{\gamma_{3}}{\eta}<\frac{1}{\eta}\frac{\gamma_{1}\gamma_{3}}{\gamma_{3}-\gamma_{2}}=\frac{1}{\eta}\frac{\gamma_{1}\frac{\gamma_{3}\gamma_{1}}{\gamma_{2}}}{\frac{\gamma_{3}\gamma_{1}}{\gamma_{2}}-\gamma_{1}}\leq\frac{q_{1}q_{2}}{q_{2}-q_{1}}.
\]
Using (\ref{eq:dfa3}), we deduce that for $y\in\left[q_{3},\frac{\gamma_{3}}{\eta}\right]$,
the function $y\rightarrow f(\eta q_{1},\eta q_{2},\eta y)$ is non
decreasing and for $\tilde{q}_{3}=\frac{\gamma_{3}}{\eta}$, we obtain
\[
f(\eta q_{1},\eta q_{2},\eta\tilde{q}_{3})\geq f(\eta q_{1},\eta q_{2},\eta q_{3})>\gamma_{3}\eta q_{1}=\gamma_{1}\eta\tilde{q}_{3}\geq\gamma_{2}\eta q_{2},
\]
which is equivalent to (\ref{eq:lastsituation}) and we conclude in
the same manner as for (\ref{eq:gamma123_1}) in Case 1.
\end{proof}

\section{\label{sec:Additional-proofs-of-Section4}Additional proofs of Section
4}

\subsection{Proof of Lemma \ref{lem:LVpositivecontinuous}}
\begin{proof}
Let $\nu\in\mathcal{P}(\mathbb{R})$ and let $u$ be a solution to
$LV(\nu)$. As $\tilde{\sigma}_{Dup}\in L^{\infty}([0,T],W^{1,\infty}(\mathbb{R}))$
and $u\in L_{loc}^{2}((0,T];H^{1}(\mathbb{R}))$, we have that $dt$-a.e.
on $(0,T]$, $\left(\tilde{\sigma}_{Dup}^{2}u\right)(t,\cdot)\in H^{1}(\mathbb{R})$
and $\tilde{\sigma}_{Dup}^{2}(t,\cdot)\partial_{x}u(t,\cdot)=\partial_{x}\left(\tilde{\sigma}_{Dup}^{2}u\right)(t,\cdot)-\left(u\partial_{x}\tilde{\sigma}_{Dup}^{2}\right)(t,\cdot)$
in the sense of distributions on $\mathbb{R}$. Then for any function
$\phi$ defined for $(t,x)\in[0,T]\times\mathbb{R}$ by $\phi(t,x):=g_{1}(t)g_{2}(x)$,
with $g_{1}\in C_{c}^{\infty}((0,T))$ and $g_{2}\in C_{c}^{\infty}(\mathbb{R})$
, the Borel measure $dm:=udxdt$ satisfies the equality:
\[
\int_{(0,T)\times\mathbb{R}}\left[\partial_{t}\phi+\frac{1}{2}\tilde{\sigma}_{Dup}^{2}\partial_{xx}^{2}\phi+\partial_{x}\left(\frac{1}{2}\tilde{\sigma}_{Dup}^{2}\right)\partial_{x}\phi+\left(r-\frac{1}{2}\tilde{\sigma}_{Dup}^{2}-\partial_{x}\left(\frac{1}{2}\tilde{\sigma}_{Dup}^{2}\right)\right)\partial_{x}\phi\right]dm=0.
\]
By density of the space spanned by the functions of type $g_{1}(t)g_{2}(x)$
in $C_{c}^{\infty}((0,T)\times\mathbb{R})$ for the norm $\phi\in C_{c}^{\infty}((0,T)\times\mathbb{R})\rightarrow||\phi||_{\infty}+||\partial_{x}\phi||_{\infty}+||\partial_{xx}^{2}\phi||_{\infty}+||\partial_{t}\phi||_{\infty}$,
the previous equality is also satisfied for any function $\phi\in C_{c}^{\infty}((0,T)\times\mathbb{R})$.
The variational formulation of the PDE (\ref{eq:DupireFP}) as defined
in \cite[equality 1.5]{Rockner} is then satisfied. 

We recall that $h_{1}(x)=\frac{1}{\sqrt{2\pi}}\exp\left(-\frac{x^{2}}{2}\right)$
and it is easy to check that $h_{1}\in H^{1}(\mathbb{R})$. As $(u(t),h_{1})_{1}\underset{t\rightarrow0}{\rightarrow}\int h_{1}d\nu>0$
and as $u$ is non negative, we obtain that for any $\tau\in(0,T)$,
$\int_{0}^{\tau}(u(t),h_{1})_{1}dt>0$, so $\underset{(0,\tau)\times\mathbb{R}}{ess\sup}\ u(t,x)>0$.
In addition, with Assumption $(B)$, the functions $\frac{1}{2}\tilde{\sigma}_{Dup}^{2}$,
$\partial_{x}\left(\frac{1}{2}\tilde{\sigma}_{Dup}^{2}\right)$ and
$\left(r-\frac{1}{2}\tilde{\sigma}_{Dup}^{2}-\partial_{x}\left(\frac{1}{2}\tilde{\sigma}_{Dup}^{2}\right)\right)$
are uniformly bounded, and $\underline{\sigma}^{2}\leq\tilde{\sigma}_{Dup}^{2}$
a.e. on $[0,T]\times\mathbb{R}$, so by \cite[Corollary 3.1]{Rockner}
we obtain that $u$ is continuous and positive on $(0,T]\times\mathbb{R}$.
\end{proof}

\subsection{Proof of Proposition \ref{prop:uniquenessandaronson}}

Let us first remark that if the initial condition $\nu$ has a density
$u_{0}\in L^{2}(\mathbb{R})$, then by energy estimates, uniqueness
holds without Assumption $(H)$ for a slightly stronger variational
formulation called $LV^{*}(u_{0})$ which is

\[
u\in L^{2}([0,T];H^{1}(\mathbb{R}))\cap L^{\infty}([0,T];L^{2}(\mathbb{R})),u\geq0
\]
\begin{eqnarray*}
\forall v\in H^{1}(\mathbb{R}),\ 0 & = & \frac{d}{dt}(v,u)_{1}-\left(\partial_{x}v,\left(r-\frac{1}{2}\tilde{\sigma}_{Dup}^{2}-\partial_{x}\left(\frac{1}{2}\tilde{\sigma}_{Dup}^{2}\right)\right)u\right)_{1}+\frac{1}{2}\left(\partial_{x}v,\tilde{\sigma}_{Dup}^{2}\partial_{x}u\right)_{1},\\
\\
 &  & \text{in the sense of distributions on \ensuremath{(0,T)}, and \ensuremath{u(0,\cdot)=u_{0}}. }
\end{eqnarray*}
We now prove Proposition \ref{prop:uniquenessandaronson}.
\begin{proof}
The main ingredient to obtain uniqueness to $LV(\nu)$ is \cite[Proposition 4.2]{Figalli}.
In the proof of Theorem \ref{thm:VFin}, we obtained existence of
a solution to $LV(\nu)$. Moreover, if $u$ is a solution to $LV(\nu)$,
then with the same arguments as in the proof of Lemma \ref{lem:LVpositivecontinuous},
we show that the Borel measure $udtdx$ solves the PDE (\ref{eq:DupireFP})
with initial condition $\nu$ in the sense of distributions, which
means that for any $\phi\in C_{c}^{\infty}(\mathbb{R})$, the equality
\begin{eqnarray}
\frac{d}{dt}\int_{\mathbb{R}}\phi(x)u(t,x)dx & = & \int_{\mathbb{R}}\left[\left(r-\frac{1}{2}\tilde{\sigma}_{Dup}^{2}(t,x)\right)\phi'(x)+\frac{1}{2}\tilde{\sigma}_{Dup}^{2}(t,x)\phi''(x)\right]u(t,x)dx,\label{eq:solvesmeasure1}
\end{eqnarray}
holds in the sense of distributions on $(0,T)$, and $u$ converges
to $\nu$ in duality with $C_{c}^{\infty}(\mathbb{R})$ as $t\rightarrow0$.
Under Assumptions $(B)$ and $(H)$, by \cite[Proposition 4.2]{Figalli},
the measure $udtdx$ is the unique solution to the PDE (\ref{eq:DupireFP})
with initial condition $\nu$ in the sense of distributions and therefore
$u$ is the unique solution to $LV(\nu)$.

It is then sufficient to exhibit a solution $udtdx$ to the PDE (\ref{eq:DupireFP})
in the distributional sense with initial condition $\nu$ and such
that $\int u^{2}(t,x)dx\leq\frac{\zeta}{\sqrt{t}}$ for a.e. $t\in(0,T)$,
where $\zeta>0$ is a constant that does not depend on $\nu$. Under
Assumptions $(B)$ and $(H)$, the martingale problem associated to
the SDE (\ref{eq:SDEFinDupireLogPrice2}) is well posed by \cite[Theorems 6.1.7 and 7.2.1]{StroockV},
and as mentioned in \cite[Paragraph 4.1]{menozzi2010}, there exists
two constants $c:=c(\underline{\sigma},\chi)$ and $C:=C(T,\underbar{\ensuremath{\sigma}},H_{0},\chi)$
such that for $y\in\mathbb{R}$, the solution $\left(X_{t}^{y}\right)_{t\geq0}$
to the SDE (\ref{eq:SDEFinDupireLogPrice2}) with initial distribution
$\delta_{y}$ has the density $p^{y}(t,x)$ that satisfies $\forall(t,x)\in(0,T]\times\mathbb{R},\ p^{y}(t,x)\leq Cu_{c}(t,x,y)$,
with $u_{c}(t,x,y):=\sqrt{\frac{c}{2\pi t}}\exp\left(-c\frac{\left(x-y\right)^{2}}{2t}\right)$
for $t\in(0,T],x,y\in\mathbb{R}$, and the function $y\rightarrow p^{y}(t,x)dx$
is measurable. For $\phi\in C_{c}^{\infty}(\mathbb{R})$, by Itô's
Lemma, 
\begin{eqnarray*}
\mathbb{E}\left[\phi(X_{t}^{y})\right] & = & \phi(y)+\int_{0}^{t}\mathbb{E}\left[\phi'(X_{s}^{y})\left(r-\frac{1}{2}\tilde{\sigma}_{Dup}^{2}(s,X_{s}^{y})\right)+\frac{1}{2}\phi''(X_{s}^{y})\tilde{\sigma}_{Dup}^{2}(s,X_{s}^{y})\right]ds.
\end{eqnarray*}
In the sense of distributions, we have 
\[
\frac{d}{dt}\int_{\mathbb{R}}\phi(x)p^{y}(t,x)dx=\int_{\mathbb{R}}\left[\left(r-\frac{1}{2}\tilde{\sigma}_{Dup}^{2}(t,x)\right)\phi'(x)+\frac{1}{2}\tilde{\sigma}_{Dup}^{2}(t,x)\phi''(x)\right]p^{y}(t,x)dx,
\]
and $\underset{t\rightarrow0}{\lim}\int_{\mathbb{R}}\phi(x)p^{y}(t,x)dx=\phi(y)$.
We set $u:=\int_{\mathbb{R}}p^{y}\nu(dy)$ and we check that $udtdx$
solves the PDE (\ref{eq:DupireFP}) with initial condition $\nu$
in the distributional sense. Indeed, for $\psi\in C_{c}^{\infty}((0,T),\mathbb{R})$,
we check, using Fubini's theorem, that

\begin{eqnarray*}
-\int_{0}^{T}\psi'(t)\int_{\mathbb{R}}\phi(x)u(t,x)dxdt & = & \int_{0}^{T}\psi(t)\int_{\mathbb{R}}\left[\left(r-\frac{1}{2}\tilde{\sigma}_{Dup}^{2}(t,x)\right)\phi'(x)+\frac{1}{2}\tilde{\sigma}_{Dup}^{2}(t,x)\phi''(x)\right]u(t,x)dxdt,
\end{eqnarray*}
and using Lebesgue's theorem, that $\underset{t\rightarrow0}{\lim}\int_{\mathbb{R}^{2}}\phi(x)p^{y}(t,x)dx\nu(dy)=\int_{\mathbb{R}}\left(\underset{t\rightarrow0}{\lim}\int_{\mathbb{R}}\phi(x)p^{y}(t,x)dx\right)\nu(dy)=\int_{\mathbb{R}}\phi(y)\nu(dy)$,
and we conclude that $u$ is the unique solution to $LV(\nu)$ and
that moreover $u$ coincides with the time marginals of the solution
to the SDE (\ref{eq:SDEFinDupireLogPrice2}) with initial distribution
$\nu$. Finally we define $\zeta:=\frac{C^{2}\sqrt{c}}{2\sqrt{\pi}}$
and for a.e. $t\in(0,T]$, by Jensen's inequality, $\int_{\mathbb{R}}u^{2}(t,x)dx\leq\int_{\mathbb{R}^{2}}\left(p^{y}(t,x)\right)^{2}\nu(dy)dx\leq\frac{cC^{2}}{2\pi t}\int_{\mathbb{R}}\exp\left(-c\frac{x^{2}}{t}\right)dx\leq\frac{\zeta}{\sqrt{t}}$.
\end{proof}

\section{\label{sec:FigGen}Proof of Theorem \ref{thm:FigalliGeneralization}}

Let us define $S:=\mathbb{R}\times\mathcal{Y}$, $\overline{b}:=\underset{i\in\{1,...,d\}}{\max}||b_{i}||_{\infty},\ \overline{a}:=\underset{i\in\{1,...,d\}}{\max}||a_{i}||_{\infty}$
and $\overline{q}:=\underset{i,j\in\{1,...,d\}}{\max}||q_{ij}||_{\infty}$.
For $(x,y)\in\mathcal{S}$ we denote by $\delta_{x,y}$ the Dirac
distribution on $\{(x,y)\}$. Lemma \ref{lem:LemmaMPPDE} below is
a consequence of the constant expectation of a martingale combined
with Fubini's theorem. 
\begin{lem}
\label{lem:LemmaMPPDE}Let $\{\nu_{x,y}\}_{(x,y)\in\mathcal{S}}$
be a measurable family of probability measures on $E$ such that for
$\mu_{0}-a.e.$ $(x,y)$, $\nu_{x,y}$ is a martingale solution to
the $SDE$ (\ref{eq:SDEJumpFig}) with initial distribution $\delta_{x,y}$.
Define the measure $\mu_{i}(t,\cdot)$, for $1\leq i\leq d$ and $t\in(0,T]$
by 
\[
\int_{\mathbb{R}}\psi(x)\mu_{i}(t,dx):=\int_{E\times\mathcal{S}}\psi(X_{t})1_{\{Y_{t}=i\}}d\nu_{x,y}(X,Y)\mu_{0}(dx,dy)
\]
for any function $\psi\in C_{b}^{2}(\mathbb{R})$. Then $(\mu_{1}(t,\cdot),...,\mu_{d,t}(t,\cdot))_{t\in(0,T]}$
is a solution to the PDS (\ref{eq:PDSJump1})-(\ref{eq:PDSJump2}).
\end{lem}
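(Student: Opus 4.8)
Here is the plan. The statement is the ``easy direction'' of the Figalli-type equivalence, and the proof will be a direct consequence of the fact that a martingale has constant expectation, once one averages over the initial condition against $\mu_{0}$. First I would introduce the mixture $\bar{\nu}:=\int_{\mathcal{S}}\nu_{x,y}\,\mu_{0}(dx,dy)$, a probability measure on $E$ (well defined because $(x,y)\mapsto\nu_{x,y}$ is a measurable family and $\mu_{0}$ is a probability measure). By construction $\mu_{i}(t,\cdot)$ is the image of $1_{\{Y_{t}=i\}}\,d\bar{\nu}$ by the map $(X,Y)\mapsto X_{t}$, so that the defining relation $\int_{\mathbb{R}}\psi\,\mu_{i}(t,dx)=\int_{E}\psi(X_{t})1_{\{Y_{t}=i\}}\,d\bar{\nu}$, a priori stated for $\psi\in C_{b}^{2}(\mathbb{R})$, holds in fact for every bounded Borel $\psi$; I shall use it in that generality, and note that $\mu_{i}(t,\mathbb{R})\le1$.

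For the distributional equation, I would fix $\phi$ with $\phi(\cdot,i)\in C_{c}^{\infty}(\mathbb{R})$ for all $i$ and set $(\mathcal{L}_{s}\phi)(z,i):=\frac{1}{2}a_{i}(s,z)\partial_{xx}^{2}\phi(z,i)+b_{i}(s,z)\partial_{x}\phi(z,i)+\sum_{l}q_{il}(s,z)\phi(z,l)$, which is bounded uniformly in $(s,z,i)$ since the coefficients are bounded and $\phi(\cdot,i)\in C_{c}^{\infty}(\mathbb{R})$. For $\mu_{0}$-a.e.\ $(x,y)$, the process in the definition of a martingale solution to (\ref{eq:SDEJumpFig}) is a $\nu_{x,y}$-martingale started from $0$, so taking expectations yields
\[
\int_{E}\phi(X_{t},Y_{t})\,d\nu_{x,y}=\phi(x,y)+\int_{0}^{t}\!\!\int_{E}(\mathcal{L}_{s}\phi)(X_{s},Y_{s})\,d\nu_{x,y}\,ds.
\]
Integrating against $\mu_{0}(dx,dy)$ and applying Fubini's theorem (legitimate: bounded integrands, finite time interval, finite $\mu_{0}$, measurable family), then using the extended defining relation for $\psi=\phi(\cdot,i),\partial_{x}\phi(\cdot,i),\partial_{xx}^{2}\phi(\cdot,i)$, gives
\[
\int_{\mathbb{R}}\sum_{i=1}^{d}\phi(x,i)\,\mu_{i}(t,dx)=\int_{\mathcal{S}}\phi\,d\mu_{0}+\int_{0}^{t}\Big(\sum_{i=1}^{d}\int_{\mathbb{R}}(\mathcal{L}_{s}\phi)(x,i)\,\mu_{i}(s,dx)\Big)ds.
\]
Relabelling the double sum $\sum_{i,l}\int q_{il}(s,x)\phi(x,l)\,\mu_{i}(s,dx)=\sum_{i,j}\int q_{ji}(s,x)\phi(x,i)\,\mu_{j}(s,dx)$ shows that the $s$-integrand is exactly the right-hand side of (\ref{eq:distributionsolution}); since it is bounded and measurable in $s$, the left-hand side is absolutely continuous in $t$ and differentiation for a.e.\ $t$ gives (\ref{eq:distributionsolution}) in the distributional sense on $(0,T)$.

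For the time regularity and the initial condition, I would run the same computation with the particular test function $\phi(z,j):=\psi(z)1_{\{j=i\}}$, $\psi\in C_{b}^{2}(\mathbb{R})$, which is admissible since $\phi(\cdot,j)\in C_{b}^{2}(\mathbb{R})$ for every $j$. This gives, for $\mu_{0}$-a.e.\ $(x,y)$, $\int_{E}\psi(X_{t})1_{\{Y_{t}=i\}}\,d\nu_{x,y}=\psi(x)1_{\{y=i\}}+\int_{0}^{t}\!\int_{E}g_{s}^{x,y}\,d\nu_{x,y}\,ds$ with $|g_{s}^{x,y}|\le C:=\frac{1}{2}\overline{a}\|\psi''\|_{\infty}+\overline{b}\|\psi'\|_{\infty}+\overline{q}\|\psi\|_{\infty}$; integrating over $\mu_{0}$ and using dominated convergence, $t\mapsto\int_{\mathbb{R}}\psi(x)\,\mu_{i}(t,dx)$ is $C$-Lipschitz on $[0,T]$, hence continuous on $(0,T]$, and converges as $t\to0$ to $\int_{\mathcal{S}}\psi(x)1_{\{y=i\}}\,\mu_{0}(dx,dy)=\int_{\mathbb{R}}\psi(x)\,\mu_{0}(dx,\{i\})$. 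Together with the previous paragraph this verifies every requirement in the definition of a solution to (\ref{eq:PDSJump1})--(\ref{eq:PDSJump2}).

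I do not expect a genuine obstacle: this is precisely the routine half of the martingale-problem/Fokker--Planck correspondence. The only points needing a line of care are the appeal to Fubini and joint measurability (covered by the measurability hypothesis on the family), the extension of the defining relation of $\mu_{i}(t,\cdot)$ to bounded Borel test functions, and the index bookkeeping identifying the averaged jump term with the one in (\ref{eq:distributionsolution}); note in particular that this route, using the generator representation, makes continuity in $t$ automatic and so never requires controlling the (countably many) jump times of $Y$.
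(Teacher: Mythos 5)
Your proposal is correct and follows exactly the route the paper indicates for this lemma (which it dispatches in one sentence as "a consequence of the constant expectation of a martingale combined with Fubini's theorem"): average the zero-expectation identity for the martingale under $\nu_{x,y}$ against $\mu_{0}$, apply Fubini, and read off both the distributional equation and, via the test functions $\psi(z)1_{\{j=i\}}$, the Lipschitz continuity in $t$ and the initial condition. The extra care you take with the extension to bounded Borel test functions and the index relabelling of the jump term is sound and fills in exactly the details the paper omits.
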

We now prove Theorem \ref{thm:FigalliGeneralization}.

Step 1: We first establish the result for a regularized version of
the PDS (\ref{eq:PDSJump1})-(\ref{eq:PDSJump2}). Let $\rho_{X}$
and $\rho_{T}$ be convolution kernels defined by $\rho_{X}(x)=\rho_{T}(x)=C_{0}e^{-\sqrt{1+x^{2}}}$,
for $x\in\mathbb{R}$, and where $C_{0}=\left(\int_{\mathbb{R}^{2}}e^{-\sqrt{1+x^{2}}}dx\right)^{-1}$.
For $\epsilon>0$ and $\left(t,x\right)\in\mathbb{R}^{2}$, we define
the functions $\rho_{X}^{\epsilon}(x):=\frac{1}{\epsilon}\rho_{X}(\frac{x}{\epsilon})$,
$\rho_{T}^{\epsilon}:=\frac{1}{\epsilon}\rho_{T}(\frac{t}{\epsilon})$
and $\rho^{\epsilon}(t,x):=\rho_{T}^{\epsilon}(t)\rho_{X}^{\epsilon}(x)$.
For $1\leq i\leq d$, we extend the definition of $t\rightarrow\mu_{i}(t,\cdot)$
to $\mathbb{R}$, by setting for $t\leq0$, $\mu_{i}(t,\cdot)=\mu_{0}(\cdot,\{i\})$
and for $t>T$, $\mu_{i}(t,\cdot)=\mu_{i}(T,\cdot)$. As a consequence,
given $\psi\in C_{b}^{2}(\mathbb{R})$ and $1\leq i\leq d$, the extended
function $t\rightarrow\int_{\mathbb{R}}\psi(x)\mu_{i}(t,dx)$ is now
continuous and bounded on $\mathbb{R}$. For $1\leq i\leq d$, let
us also extend the definitions of the functions $\left(a_{i}\right)_{1\leq i\leq d}$
on $\mathbb{R}^{2}$ by setting $a_{i}(t,\cdot)=0$ if $t\notin[0,T]$.
In the same way, we extend the functions $\left(b_{i}\right)_{1\leq i\leq d}$
and $\left(q_{ij}\right)_{1\leq i,j\leq d}$. Then the family $(\mu_{1}(t,\cdot),...,\mu_{d,t}(t,\cdot))_{t\in\mathbb{R}}$
satisfies the equality (\ref{eq:distributionsolution}) in the distributional
sense on $\mathbb{R}$. We define for $1\leq i\leq d$ and $\left(t,x\right)\in\mathbb{R}^{2}$,
\[
\mu_{i}^{\epsilon}(t,x):=\mu_{i}*\rho^{\epsilon}(t,x)=\int_{\mathbb{R}^{2}}\rho^{\epsilon}(t-s,x-y)\mu_{i}(s,dy)ds.
\]
We define $a_{i}^{\epsilon}=\frac{(a_{i}\mu_{i})*\rho^{\epsilon}}{\mu_{i}^{\epsilon}},\ b_{i}^{\epsilon}=\frac{(b_{i}\mu_{i})*\rho^{\epsilon}}{\mu_{i}^{\epsilon}},$
$q_{ij}^{\epsilon}=\frac{(q_{ij}\mu_{i})*\rho^{\epsilon}}{\mu_{i}^{\epsilon}}$
for $1\leq i,j\leq d$. The family $\left(\mu_{1}^{\epsilon}(t,\cdot),...,\mu_{d}^{\epsilon}(t,\cdot)\right)_{t\in\mathbb{R}}$
is a smooth solution of the following PDS, denoted by $(PDS)_{\epsilon}$,
where for $1\leq i\leq d$,

\begin{eqnarray}
\partial_{t}\mu_{i}^{\epsilon}+\partial_{x}(b_{i}^{\epsilon}\mu_{i}^{\epsilon})-\frac{1}{2}\partial_{xx}^{2}(a_{i}^{\epsilon}\mu_{i}^{\epsilon})-\sum_{j=1}^{d}q_{ji}^{\epsilon}\mu_{i}^{\epsilon} & = & 0\nonumber \\
\mu_{i}^{\epsilon}(0) & = & \mu_{i}*\rho^{\epsilon}(0,\cdot).\label{eq:PDSepsilonCI}
\end{eqnarray}
Without loss of generality, we suppose that for $1\leq i\leq d$,
$\mu_{i}^{\epsilon}$ has a positive density on $\mathbb{R}^{2}$.
If not, then $\mu_{i}(t,\mathbb{R})$ is equal to zero for all $t\in(0,T]$.
In that case, it is sufficient to consider the PDS (\ref{eq:PDSJump1})-(\ref{eq:PDSJump2})
without the state $i$. Under this assumption, the functions $a_{i}^{\epsilon},b_{i}^{\epsilon},q_{ij}^{\epsilon}$
for $1\leq i,j\leq d$ are well defined and for $i,j\in\{1,...,d\}$,
$||a_{i}^{\epsilon}||_{\infty}\leq||a_{i}||_{\infty},$ $||b_{i}^{\epsilon}||_{\infty}\leq||b_{i}||_{\infty}$,
$||q_{ji}^{\epsilon}||_{\infty}\leq||q_{ji}||_{\infty}$. It is easy
to check that for $x\in\mathbb{R}$, and $k\geq1$, there exists constants
$C_{X,k}>0$ s.t. $\left|\frac{\partial^{k}\rho_{X}}{\partial x^{k}}(x)\right|\leq C_{X,k}|\rho_{X}(x)|$,
so $a_{i}^{\epsilon},b_{i}^{\epsilon},q_{ij}^{\epsilon}$ are continuous
and bounded on $\mathbb{R}^{2}$, as well as their derivatives. Let
us denote by $(SDE)_{\epsilon}$, the SDE 
\[
dX_{t}^{\epsilon}=b_{Y_{t}^{\epsilon}}^{\epsilon}(t,X_{t}^{\epsilon})dt+\sqrt{a_{Y_{t}^{\epsilon}}^{\epsilon}(t,X_{t}^{\epsilon})}dW_{t},
\]
where $Y_{t}^{\epsilon}$ is a stochastic process with values in $\mathcal{Y}$,
and that satisfies 
\[
\text{\ensuremath{\mathbb{P}}}\left(Y_{t+dt}^{\epsilon}=j|\left(X_{s}^{\epsilon},Y_{s}^{\epsilon}\right)_{0\leq s\leq t}\right)=q_{Y_{t}^{\epsilon}j}^{\epsilon}(t,X_{t}^{\epsilon})dt,
\]
for $j\neq Y_{t}^{\epsilon}$. As the functions $\left(a_{i}^{\epsilon}\right)_{1\leq i\leq d},\left(b_{i}^{\epsilon}\right)_{1\leq i\leq d},\left(q_{ij}^{\epsilon}\right)_{1\leq i,j\leq d}$
are continuous, Lipschitz and bounded, by \cite[Theorem 5.3]{Menaldi}
and the Kunita-Watanabe theorem, for any $(x,y)\in\mathcal{S}$ there
exists a unique martingale solution $\nu_{x,y}$ to $(SDE)_{\epsilon}$
with initial distribution $\delta_{(x,y)}$ and by \cite[Proposition 5.52]{Menaldi},
the function $(x,y)\rightarrow\nu_{x,y}^{\epsilon}$ is measurable.
We define $\nu^{\epsilon}:=\sum_{i=1}^{d}\int_{\mathbb{R}}\nu_{x,i}^{\epsilon}\left(\mu_{i}*\rho^{\epsilon}\right)(0,x)dx$.
For $t\in(0,T]$, $1\leq i\leq d$, we define the measure $\tilde{\mu}_{i}^{\epsilon}(t,\cdot)$
by 
\[
\int_{\mathbb{R}}\psi(x)\tilde{\mu}_{i}^{\epsilon}(t,dx)=\int_{E}\psi(X_{t})1_{\{Y_{t}=i\}}d\nu^{\epsilon}(X,Y),
\]
for $\psi\in C_{b}^{2}(\mathbb{R})$. By Lemma \ref{lem:LemmaMPPDE},
$(\tilde{\mu}_{i}^{\epsilon})_{1\leq i\leq d}$ solves $(PDS)_{\epsilon}$
with initial condition $(\mu_{i}*\rho^{\epsilon}(0,\cdot))_{1\leq i\leq d}$.
Since $(PDS)_{\epsilon}$ has a unique solution by Proposition \ref{prop:RegularizedUniqueness}
below, we obtain that for $t\in(0,T]$, $1\leq i\leq d$, $\tilde{\mu}_{i}^{\epsilon}(t,\cdot)=\mu_{i}^{\epsilon}(t,\cdot)$,
and for $\psi\in C_{b}^{2}(\mathbb{R})$, 
\begin{equation}
\int_{\mathbb{R}}\psi(x)\mu_{i}^{\epsilon}(t,dx)=\int_{E}\psi(X_{t})1_{\{Y_{t}=i\}}d\nu^{\epsilon}(X,Y).\label{eq:egaliteMUepsilonNUepsilon}
\end{equation}
Step 2: Let $\left(\epsilon_{n}\right)_{n\geq0}$ be a positive sequence
decreasing to 0, we check the family of measures $\left(\nu^{\epsilon_{n}}\right)_{n\geq0}$
has a converging subsequence. For $n\geq0$, as we might not have
$\sum_{i=1}^{d}\int_{\mathbb{R}}\mu_{i}*\rho^{\epsilon}(0,x)dx=1$,
we define $\overline{\nu}^{\epsilon_{n}}:=\frac{\nu^{\epsilon_{n}}}{\sum_{i=1}^{d}\int_{\mathbb{R}}\mu_{i}*\rho^{\epsilon_{n}}(0,x)dx}$,
so that $\overline{\nu}^{\epsilon_{n}}$ is a probability measure.
We use Aldous' criterion to show the tightness of the family $\left(\overline{\nu}^{\epsilon_{n}}\right)_{n\geq0}$.
Let us denote by $\left(X^{\epsilon_{n}},Y^{\epsilon_{n}}\right)$
the solution to $(SDE)_{\epsilon_{n}}$ where the initial law satisfies,
for a non negative and measurable function $\psi$ on $\mathbb{R}$:
\[
\mathbb{E}\left[\psi(X_{0}^{\epsilon_{n}})1_{\{Y_{0}^{\epsilon_{n}}=i\}}\right]=\int_{\mathbb{R}}\psi(x)\frac{\mu_{i}*\rho^{\epsilon_{n}}(0,x)}{\sum_{i=1}^{d}\int_{\mathbb{R}}\mu_{i}*\rho^{\epsilon_{n}}(0,y)dy}dx,
\]
for $1\leq i\leq d$. The process $\left(X^{\epsilon_{n}},Y^{\epsilon_{n}}\right)$
has the law $\overline{\nu}^{\epsilon_{n}}$. First we check that
for any $\eta>0$, there exists a constant $K_{\eta}>0$ such that
$\forall n\geq0,\mathbb{P}(\sup_{t\in[0,T]}|X_{t}^{\epsilon_{n}}|+|Y_{t}^{\epsilon_{n}}|>K_{\eta})\leq\eta$.
For $K>0$, $n\geq0$, 
\begin{eqnarray*}
\mathbb{P}\left(\sup_{t\in[0,T]}\left(|X_{t}^{\epsilon_{n}}|+|Y_{t}^{\epsilon_{n}}|\right)>K\right) & \leq & \mathbb{P}\left(|X_{0}^{\epsilon_{n}}|+\sup_{t\in[0,T]}|X_{t}^{\epsilon_{n}}-X_{0}^{\epsilon_{n}}|+\sup_{t\in[0,T]}|Y_{t}^{\epsilon_{n}}|>K\right)\\
 & \leq & \mathbb{P}\left(\sup_{t\in[0,T]}|Y_{t}^{\epsilon_{n}}|>K/3\right)+\mathbb{P}\left(\sup_{t\in[0,T]}|X_{t}^{\epsilon_{n}}-X_{0}^{\epsilon_{n}}|>K/3\right)+\mathbb{P}\left(|X_{0}^{\epsilon_{n}}|>K/3\right).
\end{eqnarray*}
In the second line, for the first term of the r.h.s., since $Y^{\epsilon_{n}}$
only takes values in $\{1,...,d\}$, we have that $\forall n\geq0,\forall K>3d$,
$\mathbb{P}(\sup_{t\in[0,T]}|Y_{t}^{\epsilon_{n}}|\geq K/3)=0$. For
the second term, by Markov's inequality and Lemma \ref{lem:Moments}
below, we have that for $n\geq0$, $\mathbb{P}\left(\sup_{t\in[0,T]}|X_{t}^{\epsilon_{n}}-X_{0}^{\epsilon_{n}}|\geq K/3\right)\leq\frac{3}{K}\mathbb{E}\left(\sup_{t\in[0,T]}|X_{t}^{\epsilon_{n}}-X_{0}^{\epsilon_{n}}|\right)\leq\frac{3}{K}\left(T\overline{b}+2\sqrt{T}\overline{a}^{1/2}\right)$.
For $K\geq K_{1}:=\frac{6}{\eta}\left(T\overline{b}+2\sqrt{T}\overline{a}^{1/2}\right)$,
$\mathbb{P}\left(\sup_{t\in[0,T]}|X_{t}^{\epsilon_{n}}-X_{0}^{\epsilon_{n}}|\geq K/3\right)\leq\frac{\eta}{2}$.
To study the last term, let us prove that for $t\in\mathbb{R}$, $1\leq i\leq d$
and $\phi\in C_{b}^{2}(\mathbb{R})$, 
\[
\int_{\mathbb{R}}\phi(x)\mu_{i}^{\epsilon_{n}}(t,dx)\underset{n\rightarrow\infty}{\rightarrow}\int_{\mathbb{R}}\phi(x)\mu_{i}(t,dx).
\]
We study the difference: 
\begin{eqnarray*}
\left|\int_{\mathbb{R}}\phi(x)\mu_{i}^{\epsilon_{n}}(t,dx)-\int_{\mathbb{R}}\phi(x)\mu_{i}(t,dx)\right| & = & \left|\int_{\mathbb{R}^{2}}\rho_{T}^{\epsilon_{n}}(t-s)\left(\left(\phi*\rho_{X}^{\epsilon_{n}}(x)\right)\mu_{i}(s,dx)-\phi(x)\mu_{i}(t,dx)\right)ds\right|\\
 & \leq & \left|\int_{\mathbb{R}^{2}}\rho_{T}^{\epsilon_{n}}(t-s)\phi(x)\left(\mu_{i}(s,dx)-\mu_{i}(t,dx)\right)ds\right|\\
 & \ + & \left|\int_{\mathbb{R}^{2}}\rho_{T}^{\epsilon_{n}}(t-s)\left(\phi*\rho_{X}^{\epsilon_{n}}(x)-\phi(x)\right)\mu_{i}(s,dx)ds\right|.
\end{eqnarray*}
The first term of the r.h.s converges to $0$ as $n\rightarrow\infty$
as the function $t\rightarrow\int\phi(x)\mu_{i}(t,dx)$ is continuous
and bounded. For the last term, as $\phi\in C_{b}^{2}(\mathbb{R})$,
$\phi$ is globally lipschitz and we observe that for $x\in\mathbb{R}$,
\begin{eqnarray*}
\left|\phi*\rho_{X}^{\epsilon_{n}}(x)-\phi(x)\right| & \leq & \int_{\mathbb{R}}|\phi(y)-\phi(x)|\rho_{X}^{\epsilon_{n}}(x-y)dy\leq||\phi'||_{\infty}\int_{\mathbb{R}}|y-x|\rho_{X}^{\epsilon_{n}}(x-y)dy\\
 & \leq & \epsilon_{n}C_{0}||\phi'||_{\infty}\left(\int_{\mathbb{R}}|y|\exp\left(-\sqrt{1+y^{2}}\right)dy\right),
\end{eqnarray*}
so $\phi*\rho_{X}^{\epsilon_{n}}$ converges uniformly to $\phi$
and as $\mu(t,\mathbb{R})\leq B$ for $t\in\mathbb{R}$, we conclude
that for $t\in[0,T]$ and $i\in\{1,...,d\}$, 
\begin{equation}
\left|\int_{\mathbb{R}}\phi(x)\mu_{i}^{\epsilon_{n}}(t,dx)-\int_{\mathbb{R}}\phi(x)\mu_{i}(t,dx)\right|\underset{n\rightarrow\infty}{\rightarrow}0.\label{eq:convergencemeasuresMuEpsMu}
\end{equation}
In particular, as the random variable $X_{0}^{\epsilon_{n}}$ has
the density $\frac{\sum_{i=1}^{d}\mu_{i}*\rho^{\epsilon_{n}}(0,x)}{\sum_{i=1}^{d}\int_{\mathbb{R}}\mu_{i}*\rho^{\epsilon_{n}}(0,x)dx}dx$,
the convergence (\ref{eq:convergencemeasuresMuEpsMu}) implies that
for $\psi\in C_{b}^{2}(\mathbb{R})$, 
\[
\mathbb{E}\left[\psi\left(X_{0}^{\epsilon_{n}}\right)\right]\rightarrow\sum_{i=1}^{d}\int_{\mathbb{R}}\psi(x)\mu_{0}(dx,\{i\}),
\]
so there exists $K_{2}>0$ such that $\mathbb{P}\left(|X_{0}^{\epsilon_{n}}|\geq K_{2}/3\right)\leq\frac{\eta}{2}.$
As a consequence, for $K_{\eta}:=\max\left(3d,K_{1},K_{2}\right)$
and $\forall n\geq0$, 
\[
\mathbb{P}\left(\sup_{t\in[0,T]}\left(|X_{t}^{\epsilon_{n}}|+|Y_{t}^{\epsilon_{n}}|\right)>K_{\eta}\right)\leq\eta.
\]
We now check that for $\zeta>0,K>0$, there exists $\delta_{\zeta,K}>0$
such that $\forall n\geq0$,
\[
\underset{\underset{0\leq\delta\leq\delta_{\zeta,K}}{\tau\in\mathcal{T}_{[0,T]}^{f}}}{\sup}\mathbb{P}\left(\max\left(|X_{\tau+\delta}^{\epsilon_{n}}-X_{\tau}^{\epsilon_{n}}|,|Y_{\tau+\delta}^{\epsilon_{n}}-Y_{\tau}^{\epsilon_{n}}|\right)>K\right)<\zeta,
\]
where $\mathcal{T}_{[0,T]}^{f}$ denotes the set of stopping times
taking values in a finite subset of $[0,T]$. For $K>\text{0}$, $\delta\geq0$,
$n\geq0$ and $\tau\in\mathcal{T}_{[0,T]}^{f}$, we have the following
inequality,
\[
\mathbb{P}\left(\max\left(|X_{\tau+\delta}^{\epsilon_{n}}-X_{\tau}^{\epsilon_{n}}|,|Y_{\tau+\delta}^{\epsilon_{n}}-Y_{\tau}^{\epsilon_{n}}|\right)>K\right)\leq\mathbb{P}\left(|X_{\tau+\delta}^{\epsilon_{n}}-X_{\tau}^{\epsilon_{n}}|>K\right)+\mathbb{P}\left(|Y_{\tau+\delta}^{\epsilon_{n}}-Y_{\tau}^{\epsilon_{n}}|>K\right)=:P_{1}+P_{2}.
\]
Using Markov's inequality on $P_{1}$, we have
\begin{eqnarray*}
P_{1} & \leq & \frac{1}{K^{2}}\mathbb{E}\left[\left|\int_{\tau}^{\tau+\delta}b_{Y_{s}^{\epsilon_{n}}}^{\epsilon_{n}}(s,X_{s}^{\epsilon_{n}})ds+\int_{\tau}^{\tau+\delta}\sqrt{a_{Y_{s}^{\epsilon_{n}}}^{\epsilon_{n}}(s,X_{s}^{\epsilon_{n}})}dW_{s}\right|^{2}\right]\\
 & \leq & \frac{2}{K^{2}}\mathbb{E}\left[\left|\int_{\tau}^{\tau+\delta}b_{Y_{s}^{\epsilon_{n}}}^{\epsilon_{n}}(s,X_{s}^{\epsilon_{n}})ds\right|^{2}+\left|\int_{\tau}^{\tau+\delta}\sqrt{a_{Y_{s}^{\epsilon_{n}}}^{\epsilon_{n}}(s,X_{s}^{\epsilon_{n}})}dW_{s}\right|^{2}\right].
\end{eqnarray*}
On the one hand $\mathbb{E}\left[\left|\int_{\tau}^{\tau+\delta}b_{Y_{s}^{\epsilon_{n}}}^{\epsilon_{n}}(s,X_{s}^{\epsilon_{n}})ds\right|^{2}\right]\leq\delta\mathbb{E}\left[\int_{\tau}^{\tau+\delta}\left(b_{Y_{s}^{\epsilon_{n}}}^{\epsilon_{n}}\right)^{2}(s,X_{s}^{\epsilon_{n}})ds\right]\leq\delta^{2}\overline{b}^{2}$
by the Cauchy-Schwarz inequality, and on the other hand $\mathbb{E}\left[\left|\int_{\tau}^{\tau+\delta}\sqrt{a_{Y_{s}^{\epsilon_{n}}}^{\epsilon_{n}}(s,X_{s}^{\epsilon_{n}})}dW_{s}\right|^{2}\right]=\mathbb{E}\left[\left|\int_{0}^{T}1_{\{s\in[\tau,\left(\tau+\delta\right)\land T]\}}\sqrt{a_{Y_{s}^{\epsilon_{n}}}^{\epsilon_{n}}(s,X_{s}^{\epsilon_{n}})}dW_{s}\right|^{2}\right]\leq\delta\overline{a}$
by Ito's isometry. For $P_{2}$ we notice that 
\[
P_{2}\leq1-\mathbb{P}\left(\forall s\in[\tau,\tau+\delta],Y_{\tau}^{\epsilon_{n}}=Y_{\tau+s}^{\epsilon_{n}}\right)\leq1-e^{-\overline{q}\delta}.
\]
We gather the upper bounds on $P_{1}$ and $P_{2}$, and to satisfy
Aldous' criterion, it is sufficient to choose $\delta_{\zeta,K}$
small enough so that 
\[
\frac{2}{K^{2}}\delta_{\zeta,K}\left(\delta_{\zeta,K}\overline{b}^{2}+\overline{a}\right)+1-e^{-\overline{q}\delta_{\zeta,K}}\leq\epsilon.
\]
We have then shown that the family of processes $\left(X^{\epsilon_{n}},Y^{\epsilon_{n}}\right)_{n\geq0}$
is tight so the family of measures $\left(\overline{\nu}^{\epsilon_{n}}\right)_{n\geq0}$
is tight. As $E$ is Polish, by Prohorov's theorem there exists a
measure $\nu$ which is a limit point of $\left(\overline{\nu}^{\epsilon_{n}}\right)_{n\geq0}$.
Let us remark that by (\ref{eq:convergencemeasuresMuEpsMu}), $\sum_{i=1}^{d}\int_{\mathbb{R}}\mu_{i}*\rho^{\epsilon_{n}}(0,x)dx\rightarrow1$,
so $\nu$ is also a limit point of $\left(\nu^{\epsilon_{n}}\right)_{n\geq0}$.
For notational simplicity, we suppose in what follows that $\left(\nu^{\epsilon_{n}}\right)_{n\geq0}$
and $\left(\overline{\nu}^{\epsilon_{n}}\right)_{n\geq0}$ converge
to $\nu$ as $n\rightarrow\infty$. By \cite[Lemma 7.7 and Theorem 7.8]{EthierKurtz},
$D(\nu):=\{t\in[0,T],\nu\left(\left(X_{t^{-}},Y_{t^{-}}\right)=\left(X_{t},Y_{t}\right)\right)=1\}$
has a complement in $[0,T]$ which is at most countable. Moreover,
for any $k\geq1$, and $t_{1},t_{2},...,t_{k}\in D(\nu)$, the sequence
$\left(\left(X_{t_{1}}^{\epsilon_{n}},Y_{t_{1}}^{\epsilon_{n}}\right),...,\left(X_{t_{k}}^{\epsilon_{n}},Y_{t_{k}}^{\epsilon_{n}}\right)\right)_{n\geq0}$
converges to $\left(\left(X_{t_{1}},Y_{t_{1}}\right),...,\left(X_{t_{k}},Y_{t_{k}}\right)\right)$
in distribution as $n\rightarrow\infty$. Consequently, for $t\in D(\nu)$,
$1\leq i\leq d$, and $\phi\in C_{c}^{\infty}(\mathbb{R})$, 
\[
\int_{E}\phi(X_{t})1_{\{Y_{t}=i\}}d\nu^{\epsilon_{n}}(X,Y)\underset{n\rightarrow\infty}{\rightarrow}\int_{E}\phi(X_{t})1_{\{Y_{t}=i\}}d\nu(X,Y).
\]
Moreover, by (\ref{eq:egaliteMUepsilonNUepsilon}) and (\ref{eq:convergencemeasuresMuEpsMu})
we obtain the following equality, for $t\in D(\nu)$: 
\[
\int_{\mathbb{R}}\phi(x)\mu_{i}(t,dx)=\int_{E}\phi(X_{t})1_{\{Y_{t}=i\}}d\nu(X,Y),
\]
As the function $t\rightarrow\int_{E}\phi(X_{t})1_{\{Y_{t}=i\}}d\nu(X,Y)$
is right-continuous and the function $t\rightarrow\int_{\mathbb{R}}\phi(x)\mu_{i}(t,dx)$
is continuous, the previous equality holds for $t\in(0,T]$.

\[
\]
Step 3: we check that $\nu$ is a martingale solution of the SDE (\ref{eq:SDEJumpFig})
with initial distribution $\mu_{0}$. We define 
\[
L_{t}\phi(x,i):=b_{i}(t,x)\partial_{x}\phi(x,i)+\frac{1}{2}a_{i}(t,x)\partial_{xx}^{2}\phi(x,i)+\sum_{j=1}^{d}q_{ij}(t,x)\phi(x,j),
\]
and for $n\geq0$, 
\[
L_{t}^{\epsilon_{n}}\phi(x,i):=b_{i}^{\epsilon_{n}}(t,x)\partial_{x}\phi+\frac{1}{2}a_{i}^{\epsilon_{n}}(t,x)\partial_{xx}^{2}\phi+\sum_{j=1}^{d}q_{ij}^{\epsilon_{n}}(t,x)\phi(x,j).
\]
Let $s\in[0,T]$, $p\in\mathbb{N}^{*}$, $0\leq s_{1}\leq...\leq s_{p}\leq s$
and let $\psi_{1},...,\psi_{p}$ be bounded and continuous functions
on $\mathcal{S}$, with $||\psi_{i}||_{\infty}\leq1$ for $1\leq i\leq d$.
Since for $n\geq0$, $\overline{\nu}^{\epsilon_{n}}$ is a martingale
solution to $(SDE)_{\epsilon_{n}}$, and $\nu^{\epsilon_{n}}=\left(\sum_{i=1}^{d}\int_{\mathbb{R}}\mu_{i}*\rho^{\epsilon_{n}}(0,x)dx\right)\overline{\nu}^{\epsilon_{n}}$,
we have that for $t\geq s$, 
\[
\int_{E}\left[\phi(X_{t},Y_{t})-\phi(X_{s},Y_{s})-\int_{s}^{t}L_{u}^{\epsilon_{n}}\phi(X_{u},Y_{u})du\right]\prod_{i=1}^{p}\psi_{i}(X_{s_{i}},Y_{s_{i}})d\nu^{\epsilon_{n}}(X,Y)=0.
\]
Let $\tilde{b}_{i}:\mathbb{R}_{+}\times\mathbb{R}\rightarrow\mathbb{R}$,
$\tilde{a}_{i}:\mathbb{R}_{+}\times\mathbb{R}\rightarrow\mathbb{R}$,
$\tilde{q}_{ij}:\mathbb{R}_{+}\times\mathbb{R}\rightarrow\mathbb{R}$
be bounded and continuous functions for $1\leq i,j\leq d$. Let us
also define

\[
\tilde{L}_{t}\phi(x,i):=\tilde{b}_{i}(t,x)\partial_{x}\phi+\frac{1}{2}\tilde{a}_{i}(t,x)\partial_{xx}^{2}\phi+\sum_{j=1}^{d}\tilde{q}_{ij}(t,x)\phi(x,j),
\]
and for $n\geq0$, 
\[
\tilde{L}_{t}^{\epsilon_{n}}\phi(x,i):=\tilde{b}_{i}^{\epsilon_{n}}(t,x)\partial_{x}\phi+\frac{1}{2}\tilde{a}_{i}^{\epsilon_{n}}(t,x)\partial_{xx}^{2}\phi+\sum_{j=1}^{d}\tilde{q}_{ij}^{\epsilon_{n}}(t,x)\phi(x,j),
\]
where $\tilde{b}_{i}^{\epsilon_{n}},\tilde{a}_{i}^{\epsilon_{n}},\tilde{q}_{ij}^{\epsilon_{n}}$
are built analogously to $b_{i}^{\epsilon_{n}},a_{i}^{\epsilon_{n}},q_{ij}^{\epsilon_{n}}$.
Then, recalling that $||\psi_{i}||_{\infty}\leq1$ for $1\leq i\leq d$,
we get
\begin{eqnarray}
 &  & \left|\int_{E}\left[\phi(X_{t},Y_{t})-\phi(X_{s},Y_{s})-\int_{s}^{t}\tilde{L}_{u}^{\epsilon_{n}}\phi(X_{u},Y_{u})du\right]\prod_{i=1}^{p}\psi_{i}(X_{s_{i}},Y_{s_{i}})d\nu^{\epsilon_{n}}(X,Y)\right|\label{eq:inequalitylimit1}\\
 &  & \ \leq\int_{E}\left[\int_{s}^{t}\left|\left(L_{u}^{\epsilon_{n}}-\tilde{L}_{u}^{\epsilon_{n}}\right)\phi(X_{u},Y_{u})\right|du\right]d\nu^{\epsilon_{n}}(X,Y)\nonumber \\
 &  & \ \leq\frac{1}{2}\sum_{i=1}^{d}\int_{s}^{t}\int_{\mathcal{\mathbb{R}}}\left|\left(\frac{(a_{i}\mu_{i})*\rho^{\epsilon_{n}}}{\mu_{i}^{\epsilon_{n}}}-\frac{(\tilde{a}_{i}\mu_{i})*\rho^{\epsilon_{n}}}{\mu_{i}^{\epsilon_{n}}}\right)\partial_{xx}^{2}\phi(x,i)\right|\mu_{i}^{\epsilon_{n}}(u,dx)du\nonumber \\
 &  & \ +\sum_{i=1}^{d}\int_{s}^{t}\int_{\mathcal{\mathbb{R}}}\left|\left(\frac{(b_{i}\mu_{i})*\rho^{\epsilon_{n}}}{\mu_{i}^{\epsilon_{n}}}-\frac{(\tilde{b}_{i}\mu_{i})*\rho^{\epsilon_{n}}}{\mu_{i}^{\epsilon_{n}}}\right)\partial_{x}\phi(x,i)\right|\mu_{i}^{\epsilon_{n}}(u,dx)du\nonumber \\
 &  & \ +\sum_{i,j=1}^{d}\int_{s}^{t}\int_{\mathcal{\mathbb{R}}}\left|\left(\frac{(q_{ij}\mu_{i})*\rho^{\epsilon_{n}}}{\mu_{i}^{\epsilon_{n}}}-\frac{(\tilde{q}_{ij}\mu_{i})*\rho^{\epsilon_{n}}}{\mu_{i}^{\epsilon_{n}}}\right)\phi(x,j)\right|\mu_{i}^{\epsilon_{n}}(u,dx)du\nonumber \\
 &  & \ \leq\frac{1}{2}\sum_{i=1}^{d}\int_{s}^{t}\left(\int_{\mathcal{\mathbb{R}}^{2}}\rho_{T}^{\epsilon_{n}}(u-v)|a_{i}(v,x)-\tilde{a}_{i}(v,x)|\left(\mbox{\ensuremath{\rho}}_{X}^{\epsilon_{n}}*|\partial_{xx}^{2}\phi|(x,i)\right)\mu_{i}(v,dx)dv\right)du\label{eq:inequalitylimit2}\\
 &  & \ +\sum_{i=1}^{d}\int_{s}^{t}\left(\int_{\mathcal{\mathbb{R}}^{2}}\rho_{T}^{\epsilon_{n}}(u-v)|b_{i}(v,x)-\tilde{b}_{i}(v,x)|\left(\mbox{\ensuremath{\rho}}_{X}^{\epsilon_{n}}*|\partial_{x}\phi|(x,i)\right)\mu_{i}(v,dx)dv\right)du\label{eq:inequalitylimit3}\\
 &  & \ +\sum_{i,j=1}^{d}\int_{s}^{t}\left(\int_{\mathcal{\mathbb{R}}^{2}}\rho_{T}^{\epsilon_{n}}(u-v)|q_{ij}(v,x)-\tilde{q}_{ij}(v,x)|\left(\mbox{\ensuremath{\rho}}_{X}^{\epsilon_{n}}*|\phi|(x,i)\right)\mu_{i}(v,dx)dv\right)du\label{eq:inequalitylimit4}
\end{eqnarray}
Our goal now is to let $n\rightarrow\infty$ in the terms (\ref{eq:inequalitylimit1})-(\ref{eq:inequalitylimit4}),
and obtain that for any $s_{1},...,s_{p},s,t\in[0,T]$, such that
$0\leq s_{1}\leq...\leq s_{p}\le s\leq t$, 
\begin{eqnarray}
 &  & \left|\int_{E}\left[\phi(X_{t},Y_{t})-\phi(X_{s},Y_{s})-\int_{s}^{t}\tilde{L}_{u}\phi(X_{u},Y_{u})du\right]\prod_{i=1}^{p}\psi_{i}\left(X_{s_{i}},Y_{s_{i}}\right)d\nu(X,Y)\right|\label{eq:finalinequality1}\\
 &  & \ \leq\frac{1}{2}\sum_{i=1}^{d}\int_{s}^{t}\int_{\mathcal{\mathbb{R}}}|a_{i}(u,x)-\tilde{a}_{i}(u,x)||\partial_{xx}^{2}\phi(x,i)|\mu_{i}(u,dx)du\label{eq:finalinequality2}\\
 &  & \ +\sum_{i=1}^{d}\int_{s}^{t}\int_{\mathcal{\mathbb{R}}}|b_{i}(u,x)-\tilde{b}_{i}(u,x)||\partial_{x}\phi(x,i)|\mu_{i}(u,dx)du\label{eq:finalinequality3}\\
 &  & \ +\sum_{i,j=1}^{d}\int_{s}^{t}\int_{\mathcal{\mathbb{R}}}|q_{ij}(u,x)-\tilde{q}_{ij}(u,x)||\partial_{x}\phi(x,i)|\mu_{i}(u,dx)du.\label{eq:finalinequality4}
\end{eqnarray}
Here we explain how to conclude the proof of Theorem \ref{thm:FigalliGeneralization},
if we suppose that (\ref{eq:finalinequality1})-(\ref{eq:finalinequality4})
hold. Let us notice that the term 
\[
\left|\int_{E}\left[\int_{s}^{t}\left(\tilde{L}_{u}-L_{u}\right)\phi(X_{u},Y_{u})du\right]\prod_{i=1}^{p}\psi_{i}\left(X_{s_{i}},Y_{s_{i}}\right)d\nu(X,Y)\right|,
\]
is also dominated by the sum of the terms in the lines (\ref{eq:finalinequality2})-(\ref{eq:finalinequality4}).
By \cite[Theorem 3.45]{etaconvergenceLaw}, for $1\leq i\leq d$,
we can choose sequences of continuous functions $\left(\tilde{a}_{i}^{k}\right)_{k\geq0},\left(\tilde{b}_{i}^{k}\right)_{k\ge0},$
and $\left(\tilde{q}_{ij}^{k}\right)_{k\geq0}$ for $1\leq j\leq d$
converging respectively to $a_{i},b_{i},q_{ij}$ in $L^{1}([0,T]\times\mathbb{R},\eta_{i})$,
with $\eta_{i}:=\mu_{i}(t,\cdot)dt$, to finally obtain that 
\[
\int_{E}\left[\phi(X_{t},Y_{t})-\phi(X_{s},Y_{s})-\int_{s}^{t}L_{u}\phi(X_{u},Y_{u})du\right]\prod_{i=1}^{p}\psi_{i}\left(X_{s_{i}},Y_{s_{i}}\right)d\nu(X,Y)=0.
\]
This is enough to conclude that $\nu$ is a martingale solution to
the SDE (\ref{eq:SDEJumpFig}) with initial condition $\mu_{0}$ and
end the proof of Theorem \ref{thm:FigalliGeneralization}.

In what follows, we show how to obtain (\ref{eq:finalinequality1})-(\ref{eq:finalinequality4}).
As the function
\[
\left(s_{1},...,s_{k},s,t\right)\rightarrow\int_{E}\left[\phi(X_{t},Y_{t})-\phi(X_{s},Y_{s})-\int_{s}^{t}\tilde{L}_{u}\phi(X_{u},Y_{u})du\right]\prod_{i=1}^{p}\psi_{i}\left(X_{s_{i}},Y_{s_{i}}\right)d\nu(X,Y),
\]
is right-continuous, and as the terms in (\ref{eq:finalinequality2})-(\ref{eq:finalinequality4})
are continuous in the variables $(s,t)$, it is sufficient to show
that the inequality in the lines (\ref{eq:finalinequality1})-(\ref{eq:finalinequality4})
holds if we moreover assume that $s_{1},...,s_{p},s,t\in D(\nu)$.
Let us show that for $s_{1},...,s_{p},s,t\in D(\nu)$,

\begin{eqnarray}
 &  & \int_{E}\left[\phi(X_{t},Y_{t})-\phi(X_{s},Y_{s})-\int_{s}^{t}\tilde{L}_{u}^{\epsilon_{n}}\phi(X_{u},Y_{u})du\right]\prod_{i=1}^{p}\psi_{i}\left(X_{s_{i}},Y_{s_{i}}\right)d\nu^{\epsilon_{n}}(X,Y)\nonumber \\
 &  & \ \rightarrow\int_{E}\left[\phi(X_{t},Y_{t})-\phi(X_{s},Y_{s})-\int_{s}^{t}\tilde{L}_{u}\phi(X_{u},Y_{u})du\right]\prod_{i=1}^{p}\psi_{i}\left(X_{s_{i}},Y_{s_{i}}\right)d\nu(X,Y).\label{eq:CV101}
\end{eqnarray}
By convergence of the finite dimensional distributions of $\left(\overline{\nu}^{\epsilon_{n}}\right)_{n\geq0}$
and the fact that $\nu^{\epsilon_{n}}=$$\left(\sum_{i=1}^{d}\int_{\mathbb{R}}\mu_{i}*\rho^{\epsilon_{n}}(0,x)dx\right)\overline{\nu}^{\epsilon_{n}}$
with $\sum_{i=1}^{d}\int_{\mathbb{R}}\mu_{i}*\rho^{\epsilon_{n}}(0,x)dx\underset{n\rightarrow\infty}{\rightarrow}1$,
the following convergence holds:
\begin{eqnarray*}
 &  & \int_{E}\left[\phi(X_{t},Y_{t})-\phi(X_{s},Y_{s})\right]\prod_{i=1}^{p}\psi_{i}\left(X_{s_{i}},Y_{s_{i}}\right)d\nu^{\epsilon_{n}}(X,Y)\\
 &  & \ \rightarrow\int_{E}\left[\phi(X_{t},Y_{t})-\phi(X_{s},Y_{s})\right]\prod_{i=1}^{p}\psi_{i}\left(X_{s_{i}},Y_{s_{i}}\right)d\nu(X,Y).
\end{eqnarray*}
With the same argument, we have that for $u\in D(\nu)\cap(s,t)$,
\begin{equation}
\int_{E}\tilde{L}_{u}\phi(X_{u},Y_{u})\prod_{i=1}^{p}\psi_{i}\left(X_{s_{i}},Y_{s_{i}}\right)d\nu^{\epsilon_{n}}(X,Y)\rightarrow\int_{E}\tilde{L}_{u}\phi(X_{u},Y_{u})\prod_{i=1}^{p}\psi_{i}\left(X_{s_{i}},Y_{s_{i}}\right)d\nu(X,Y).\label{eq:uparu}
\end{equation}
Then by Fubini's theorem,
\begin{eqnarray}
 &  & \int_{s}^{t}\left|\int_{E}\tilde{L}_{u}^{\epsilon_{n}}\phi(X_{u},Y_{u})\prod_{i=1}^{p}\psi_{i}\left(X_{s_{i}},Y_{s_{i}}\right)d\nu^{\epsilon_{n}}(X,Y)-\int_{E}\tilde{L}_{u}\phi(X_{u},Y_{u})\prod_{i=1}^{p}\psi_{i}\left(X_{s_{i}},Y_{s_{i}}\right)d\nu(X,Y)\right|du\nonumber \\
 &  & \leq\int_{s}^{t}\left|\int_{E}\tilde{L}_{u}\phi(X_{u},Y_{u})\prod_{i=1}^{p}\psi_{i}\left(X_{s_{i}},Y_{s_{i}}\right)d\nu^{\epsilon_{n}}(X,Y)-\int_{E}\tilde{L}_{u}\phi(X_{u},Y_{u})\prod_{i=1}^{p}\psi_{i}\left(X_{s_{i}},Y_{s_{i}}\right)d\nu(X,Y)\right|du\label{eq:ineqpart1}\\
 &  & \ +\int_{s}^{t}\int_{E}|\tilde{L}_{u}^{\epsilon_{n}}\phi(X_{u},Y_{u})-\tilde{L}_{u}\phi(X_{u},Y_{u})|\prod_{i=1}^{p}|\psi_{i}\left(X_{s_{i}},Y_{s_{i}}\right)|d\nu^{\epsilon_{n}}(X,Y)du\label{eq:ineqpart2}
\end{eqnarray}
The term on line (\ref{eq:ineqpart1}) converges to zero as $n\rightarrow\infty$,
by (\ref{eq:uparu}) and Lebesgue's theorem. For the term (\ref{eq:ineqpart2}),
we write 
\[
\int_{s}^{t}\int_{E}|\tilde{L}_{u}^{\epsilon_{n}}\phi(X_{u},Y_{u})-\tilde{L}_{u}\phi(X_{u},Y_{u})|\prod_{i=1}^{p}|\psi_{i}\left(X_{s_{i}},Y_{s_{i}}\right)|d\nu^{\epsilon_{n}}(X,Y)du\leq\sum_{k=1}^{d}\int_{s}^{t}\int_{\mathbb{R}}|\tilde{L}_{u}^{\epsilon_{n}}\phi(x,k)-\tilde{L}_{u}\phi(x,k)|\mu_{k}^{\epsilon_{n}}(u,x)dxdu.
\]
As the terms in $\left(\tilde{L}_{u}^{\epsilon_{n}}\phi(\cdot,k)\right)_{1\leq k\leq d}$
and $\left(\tilde{L}_{u}\phi(\cdot,k)\right)_{1\leq k\leq d}$ are
continuous and bounded, it is sufficient to show that for $1\leq k\leq d$,
$z_{k}:\mathbb{R}^{2}\rightarrow\mathbb{R}$ a continuous and bounded
function and $z_{k}^{\epsilon}:=\frac{\left(z_{k}\mu_{k}\right)*\rho^{\epsilon}}{\mu_{k}^{\epsilon}}$,
for $\epsilon>0$, the following convergence holds:
\[
\int_{s}^{t}\int_{\mathbb{R}}|z_{k}^{\epsilon}(u,x)-z_{k}(u,x)|\mu_{k}^{\epsilon}(u,x)dxdu\underset{\epsilon\rightarrow0}{\rightarrow}0.
\]
The previous term rewrites:
\begin{eqnarray}
 &  & \int_{s}^{t}\int_{\mathbb{R}}|z_{k}^{\epsilon}(u,x)-z_{k}(u,x)|\mu_{k}^{\epsilon}(u,x)dxdu=\int_{s}^{t}\int_{\mathbb{R}}|\left(z_{k}\mu_{k}\right)*\rho^{\epsilon}(u,x)-z_{k}(u,x)\mu_{k}^{\epsilon}(u,x)|dxdu\nonumber \\
 &  & \leq\int_{s}^{t}\int_{\mathbb{R}}\rho_{T}^{\epsilon}(u-v)\left(\int_{\mathbb{R}}\left(\int_{\mathbb{R}}\rho_{X}^{\epsilon}(x-y)|z_{k}(v,y)-z_{k}(u,x)|dx\right)\mu_{k}(v,dy)\right)dvdu.\label{eq:CV}
\end{eqnarray}
Let us notice that $\underset{M\rightarrow\infty}{\lim}\underset{t\in\mathbb{R}}{\sup}\ \mu_{k}(t,\mathbb{R}\backslash(-M,M))=\underset{M\rightarrow\infty}{\lim}\underset{t\in[0,T]}{\sup}\ \mu_{k}(t,\mathbb{R}\backslash[-M,M])=0$.
Indeed, if there exists $\delta>0$, and a sequence $(t_{n})_{n\geq1}$
in the compact $[0,T]$ converging to $t\in[0,T]$ such that $\forall n\geq1,\mu_{k}(t_{n},\mathbb{R}\backslash(-n,n))>\delta$,
then by continuity, $\forall n\geq1,\mu_{k}(t,\mathbb{R}\backslash(-n,n))>\delta$,
which is impossible. 

For $\zeta>0$, let $M>0$ be such that $\underset{t\in\mathbb{R}}{\sup}\ \mu_{k}(t,\mathbb{R}\backslash[-M,M])\leq\zeta$,
and let $\eta\in(0,1)$ such that $|z(v,y)-z(u,x)|\leq\zeta$ if $v,u\in[s-1,t+1]$,
$x,y\in[-M-1,M+1]$, and $\max(|x-y|,|u-v|)\leq\eta$. Then, as 
\[
1\leq1_{\{|x-y|>\eta\}}+1_{\{|u-v|>\eta\}}+1_{\{|x-y|\leq\eta,|u-v|\leq\eta,\left(x,y\right)\in[-M-1,M+1]^{2}\}}+1_{\{|x-y|\leq\eta,|u-v|\leq\eta,\left(x,y\right)\notin[-M-1,M+1]^{2}\}},
\]
we have that $\forall u\in[s,t]$,
\begin{eqnarray*}
 &  & \int_{\mathbb{R}}\rho_{T}^{\epsilon}(u-v)\left(\int_{\mathbb{R}}\left(\int_{\mathbb{R}}\rho_{X}^{\epsilon}(x-y)|z(v,y)-z(u,x)|dx\right)\mu_{i}(v,dy)\right)dv\\
 &  & \ \leq2||z||_{\infty}B\left(\int_{|u-v|>\eta}\rho_{T}^{\epsilon}(u-v)dv+\int_{|x-y|>\eta}\rho_{T}^{\epsilon}(x-y)dx\right)+\zeta B+2||z||_{\infty}\zeta\leq\left(4||z||_{\infty}+B\right)\zeta,
\end{eqnarray*}
for $\epsilon$ small enough. Then we obtain that the term in line
(\ref{eq:CV}) converges to $0$ as $\epsilon\rightarrow0$. We now
analyze the term on the line (\ref{eq:inequalitylimit2}). We define
the function $g:\mathbb{R}\rightarrow\mathbb{R}$ by 
\[
g(v)=\int_{\mathbb{R}}|a_{i}(v,x)-\tilde{a}_{i}(v,x)||\partial_{xx}^{2}\phi(x,i)|\mu_{i}(v,dx),
\]
and for $\epsilon>0$, the function $g^{\epsilon}:\mathbb{R}\rightarrow\mathbb{R}$
by 
\[
g^{\epsilon}(v)=\int_{\mathbb{R}}|a_{i}(v,x)-\tilde{a}_{i}(v,x)|\left(\mbox{\ensuremath{\rho}}_{X}^{\epsilon}*|\partial_{xx}^{2}\phi(x,i)|\right)\mu_{i}(v,dx).
\]
We study, for $c>0$, 
\begin{eqnarray*}
\int_{s}^{t}\left(\int_{\mathcal{\mathbb{R}}}\rho_{T}^{\epsilon}(u-v)g^{\epsilon}(v)dv\right)du & = & \int_{s}^{t}\left(\int_{\mathcal{\mathbb{R}}}1_{\{|u-v|>c\epsilon\}}\rho_{T}^{\epsilon}(u-v)g^{\epsilon}(v)dv\right)du\\
 & \ + & \int_{s}^{t}\left(\int_{\mathcal{\mathbb{R}}}1_{\{|u-v|\leq c\epsilon\}}\rho_{T}^{\epsilon}(u-v)g^{\epsilon}(v)dv\right)du.
\end{eqnarray*}
For the first term of the r.h.s., as there exists $\gamma>0$ such
that $\underset{\epsilon>0}{\sup}||g^{\epsilon}||_{\infty}\leq\gamma$
, the change of variables $w:=\frac{u-v}{\epsilon}$ gives 
\[
\int_{s}^{t}\left(\int_{\mathcal{\mathbb{R}}}1_{\{|u-v|>c\epsilon\}}\rho_{T}^{\epsilon}(u-v)g^{\epsilon}(v)dv\right)du\leq\gamma(t-s)\int_{\mathbb{R}}1_{\{|w|>c\}}\rho_{T}(w)dw.
\]
For the second term of the r.h.s, 
\[
\int_{s}^{t}\left(\int_{\mathcal{\mathbb{R}}}1_{\{|u-v|\leq c\epsilon\}}\rho_{T}^{\epsilon}(u-v)g^{\epsilon}(v)dv\right)du\leq\int_{s-c\epsilon}^{t+c\epsilon}g^{\epsilon}(v)\left(\int_{s}^{t}\rho_{T}^{\epsilon}(u-v)du\right)dv\leq\int_{s-c\epsilon}^{t+c\epsilon}g^{\epsilon}(v)dv.
\]
We then set $c=\frac{1}{\sqrt{\epsilon}}$, and obtain: 
\[
\int_{s}^{t}\left(\int_{\mathcal{\mathbb{R}}}\rho_{T}^{\epsilon}(u-v)g^{\epsilon}(v)dv\right)du\leq\gamma(t-s)\int_{\mathbb{R}}1_{\{|w|>\frac{1}{\sqrt{\epsilon}}\}}\rho_{T}(w)+\int_{s-\sqrt{\epsilon}}^{t+\sqrt{\epsilon}}g^{\epsilon}(u)du,
\]
where the r.h.s of the inequality converges to $\int_{s}^{t}g(u)du$
as $\epsilon\rightarrow0$, since $\mbox{\ensuremath{\rho}}_{X}^{\epsilon_{n}}*|\partial_{xx}^{2}\phi|\rightarrow|\partial_{xx}^{2}\phi|$
uniformly. A similar argument applies to the two last terms of Inequality
(\ref{eq:inequalitylimit3})-(\ref{eq:inequalitylimit4}), so that
(\ref{eq:finalinequality1})-(\ref{eq:finalinequality4}) holds.

\begin{lem}
\label{lem:Moments}Let $\epsilon>0$, and let $(X^{\epsilon},Y^{\epsilon})$
be the solution to $(SDE)_{\epsilon}$. Then,
\[
\mathbb{E}[\sup_{t\in[0,T]}|X_{t}^{\epsilon}-X_{0}^{\epsilon}|]\leq T\overline{b}+2\sqrt{T}\overline{a}^{1/2}.
\]
\end{lem}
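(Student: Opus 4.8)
The plan is to decompose the increment $X_t^\epsilon-X_0^\epsilon$ into its drift part and its stochastic-integral part, and to bound each separately using the uniform bounds $\|b_i^\epsilon\|_\infty\le\overline b$ and $\|a_i^\epsilon\|_\infty\le\overline a$ obtained in Step~1 of the proof of Theorem~\ref{thm:FigalliGeneralization}.

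First I would write, from the definition of $(SDE)_\epsilon$,
\[
X_t^\epsilon-X_0^\epsilon=\int_0^t b_{Y_s^\epsilon}^\epsilon(s,X_s^\epsilon)\,ds+\int_0^t\sqrt{a_{Y_s^\epsilon}^\epsilon(s,X_s^\epsilon)}\,dW_s,
\]
so that
\[
\sup_{t\in[0,T]}|X_t^\epsilon-X_0^\epsilon|\le\int_0^T\bigl|b_{Y_s^\epsilon}^\epsilon(s,X_s^\epsilon)\bigr|\,ds+\sup_{t\in[0,T]}\left|\int_0^t\sqrt{a_{Y_s^\epsilon}^\epsilon(s,X_s^\epsilon)}\,dW_s\right|.
\]
The first term on the right-hand side is deterministically bounded by $T\overline b$.

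For the second term, I would observe that since $a_{Y_s^\epsilon}^\epsilon$ is bounded by $\overline a$, the process $M_t:=\int_0^t\sqrt{a_{Y_s^\epsilon}^\epsilon(s,X_s^\epsilon)}\,dW_s$ is a genuine (not merely local) square-integrable continuous martingale, with $\mathbb{E}\bigl[\int_0^T a_{Y_s^\epsilon}^\epsilon(s,X_s^\epsilon)\,ds\bigr]\le T\overline a<\infty$. Then the Cauchy--Schwarz inequality, Doob's $L^2$ maximal inequality and Itô's isometry give
\[
\mathbb{E}\!\left[\sup_{t\in[0,T]}|M_t|\right]\le\left(\mathbb{E}\!\left[\sup_{t\in[0,T]}|M_t|^2\right]\right)^{1/2}\le\bigl(4\,\mathbb{E}[|M_T|^2]\bigr)^{1/2}=2\left(\mathbb{E}\!\left[\int_0^T a_{Y_s^\epsilon}^\epsilon(s,X_s^\epsilon)\,ds\right]\right)^{1/2}\le 2\sqrt{T}\,\overline a^{1/2}.
\]
Taking expectations in the previous display and adding the two bounds yields the claimed inequality $\mathbb{E}[\sup_{t\in[0,T]}|X_t^\epsilon-X_0^\epsilon|]\le T\overline b+2\sqrt T\,\overline a^{1/2}$.

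I do not expect any real obstacle here: the argument is a routine application of standard martingale estimates. The only point deserving a word of care is the passage to a true martingale, which is immediate from the uniform boundedness of the coefficients $a_i^\epsilon$, so that Itô's isometry may be applied without a localization argument.
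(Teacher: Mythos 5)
Your proof is correct and follows essentially the same route as the paper: the same drift/martingale decomposition, the deterministic bound $T\overline{b}$ on the drift, and the bound $2\sqrt{T}\,\overline{a}^{1/2}$ on the supremum of the stochastic integral (which the paper attributes compactly to "Doob's inequality" and you correctly unpack as Cauchy--Schwarz, Doob's $L^{2}$ maximal inequality and It\^o's isometry). No gaps.
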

\begin{proof}
For $t\in[0,T]$, 
\begin{eqnarray*}
|X_{t}^{\epsilon}-X_{0}^{\epsilon}| & \leq & \left|\int_{0}^{t}b_{Y_{s}^{\epsilon}}^{\epsilon}(X_{s})ds\right|+\left|\int_{0}^{t}\sqrt{a_{Y_{s}^{\epsilon}}^{\epsilon}(X_{s}^{\epsilon})}dW_{s}\right|\\
 & \leq & \int_{0}^{t}|b_{Y_{s}}^{\epsilon}(X_{s}^{\epsilon})|ds+\underset{u\leq t}{\sup}\left|\int_{0}^{u}\sqrt{a_{Y_{s}}^{\epsilon}(X_{s}^{\epsilon})}dW_{s}\right|\\
 & \leq & T\overline{b}+\underset{u\leq T}{\sup}\left|\int_{0}^{u}\sqrt{a_{Y_{s}^{\epsilon}}(X_{s}^{\epsilon})}dW_{s}\right|.
\end{eqnarray*}
Taking the supremum on $t\in[0,T]$ and then the expectation, we obtain: 

\[
\mathbb{E}\left[\underset{t\in[0,T]}{\sup}|X_{t}^{\epsilon}-X_{0}^{\epsilon}|\right]\leq\left(T\overline{b}+\mathbb{E}\left[\underset{u\leq T}{\sup}\left|\int_{0}^{u}\sqrt{a_{Y_{s}}^{\epsilon}(X_{s})}dW_{s}\right|\right]\right)
\]
Finally, according to Doob's inequality, 
\[
\mathbb{E}\left[\underset{u\leq T}{\sup}\left|\int_{0}^{u}\sqrt{a_{Y_{s}}^{\epsilon}(X_{s}^{\epsilon})}dW_{s}\right|\right]\leq2\sqrt{T}\overline{a}^{1/2},
\]
and this concludes the proof.
\end{proof}

\begin{prop}
\label{prop:RegularizedUniqueness}For $\epsilon>\text{0}$, $(PDS)_{\epsilon}$
has a unique solution.\end{prop}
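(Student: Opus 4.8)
The plan is to establish uniqueness by a duality argument against the backward Kolmogorov system of $(SDE)_\epsilon$. By linearity of $(PDS)_\epsilon$ it suffices to show that if $(\nu_1(t,\cdot),\dots,\nu_d(t,\cdot))_{t\in(0,T]}$ is a family of signed Borel measures with $\sup_{t\in(0,T]}\sum_{i=1}^d|\nu_i|(t,\mathbb{R})<\infty$, solving the homogeneous version of $(PDS)_\epsilon$ with vanishing initial datum, then $\nu_i\equiv 0$ for all $i$. First I would record the regularity of the mollified coefficients already used in the proof of Theorem~\ref{thm:FigalliGeneralization}: from $a_i^\epsilon=\frac{(a_i\mu_i)*\rho^\epsilon}{\mu_i^\epsilon}$ (and the analogous formulas for $b_i^\epsilon,q_{ij}^\epsilon$), the bound $|\partial_x^k\rho_X|\le C_{X,k}\rho_X$ and its counterpart in $t$, and the mass bound on $\mu_i$, one checks that $a_i^\epsilon,b_i^\epsilon,q_{ij}^\epsilon\in C^\infty(\mathbb{R}^2)$ with all partial derivatives bounded, that $a_i^\epsilon\ge 0$, and (since $\|\partial_{xx}^2 a_i^\epsilon\|_\infty<\infty$) that $\sqrt{a_i^\epsilon}$ is globally Lipschitz. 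In the situations to which Theorem~\ref{thm:FigalliGeneralization} is applied the $a_i$ are moreover bounded below by a positive constant, hence so are the $a_i^\epsilon$ and the system is uniformly parabolic.

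Next, for $T'\in(0,T]$ and $g=(g_1,\dots,g_d)$ with $g_i\in C_c^\infty(\mathbb{R})$, I would solve the backward linear system
\[
\partial_t u_i+b_i^\epsilon\partial_x u_i+\tfrac12 a_i^\epsilon\partial_{xx}^2 u_i+\sum_{j=1}^d q_{ij}^\epsilon u_j=0\ \text{ on }(0,T')\times\mathbb{R},\qquad u_i(T',\cdot)=g_i,
\]
producing $u=(u_1,\dots,u_d)$ with $u_i\in C^{1,2}_b([0,T']\times\mathbb{R})$ and $\max_i\|u_i\|_\infty\le e^{CT'}\max_i\|g_i\|_\infty$ by the maximum principle. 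When the system is uniformly parabolic this follows from classical parabolic theory; in the genuinely degenerate case one would instead use the Feynman--Kac representation via the strong solution of $(SDE)_\epsilon$ — whose martingale problem is well posed, as recalled in the proof of Theorem~\ref{thm:FigalliGeneralization} through \cite[Theorem 5.3]{Menaldi} and the Kunita--Watanabe theorem — after a further mollification $a_i^\epsilon\rightsquigarrow a_i^\epsilon+\delta$. Testing the homogeneous $(PDS)_\epsilon$ against the time-dependent $u$, and extending the admissible test functions from $\bigl(C_c^\infty(\mathbb{R})\bigr)^d$ to $C^{1,2}_b$ by a cut-off argument using the uniform mass bound on $\sum_i|\nu_i|(t,\cdot)$ and the continuity of $t\mapsto\int\psi\,d\nu_i(t)$ built into the definition of solution, one gets $\frac{d}{dt}\sum_{i=1}^d\int_{\mathbb{R}}u_i(t,x)\,\nu_i(t,dx)=0$ in the distributional sense on $(0,T')$ — the backward system has been chosen exactly so that the generator and coupling terms cancel. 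Integrating from $0$ to $T'$ and using $\nu_i(0,\cdot)=0$ yields $\sum_{i=1}^d\int g_i\,d\nu_i(T',\cdot)=0$; since $g$ is arbitrary, $\nu_i(T',\cdot)=0$ for every $T'\in(0,T]$ and every $i$, which is the claim.

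The main obstacle is the regularity input of the second paragraph. When $a_i^\epsilon$ is uniformly positive everything is standard, but if one only assumes $a_i^\epsilon\ge 0$ the backward solution is merely Lipschitz and the duality identity must be obtained through the $\delta$-mollification, which forces one to control the residual term $\tfrac{\delta}{2}\int_0^{T'}\sum_i\int\partial_{xx}^2 u_i^\delta\,d\nu_i\,dt$ as $\delta\downarrow 0$; this is delicate because $\|\partial_{xx}^2 u_i^\delta\|_\infty$ may grow like $1/\delta$, so in that case I would instead bypass the PDE altogether and argue via the superposition principle for the well-posed martingale problem of $(SDE)_\epsilon$ — i.e. represent any bounded-mass measure solution of $(PDS)_\epsilon$ as the time marginals of a martingale solution and invoke uniqueness in law. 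Everything else (the bookkeeping of the cancellation in the duality identity, the density argument extending the class of test functions, and the passage to the limit in the continuity statement) is routine given the uniform mass bound.
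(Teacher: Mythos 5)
Your proof is correct and follows essentially the same route as the paper: a duality argument testing the (difference of) solutions of $(PDS)_{\epsilon}$ against a bounded $C^{1,2}$ solution of the backward Kolmogorov system, whose generator and coupling terms are chosen so that the time derivative of the pairing vanishes, followed by a cut-off/density argument to admit $C_b^{1,2}$ test functions. The paper obtains the backward solution exactly as your Feynman--Kac alternative, namely as $\mathcal{T}_t\phi(s,x,y)=\mathbb{E}[\phi(X_t(s,x,y),Y_t(s,x,y))]$ for the well-posed regime-switching SDE, and settles the regularity issue you flag (possibly degenerate $a_i^{\epsilon}$) by citing \cite[Theorem 5.2]{YinRegimeSwitchingProperties} for the $C_b^{1,2}$ smoothness of $\mathcal{T}_t\phi$ and the backward equation $\partial_s(\mathcal{T}_t\phi)+L^{\epsilon}(\mathcal{T}_t\phi)=0$.
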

\begin{proof}
For $0<s\leq t\leq T$, let us define $\left(X_{t}(s,x,y),Y_{t}(s,x,y)\right)$
the value at $t$ of the solution to the SDE (\ref{eq:SDEJumpFig})
starting at $s$ from $\left(x,y\right)\in\mathcal{S}$. For $\phi:\mathcal{S}\rightarrow\mathbb{R}$
such that for $i\in\{1,...,d\}$, $\phi(\cdot,i)\in C_{c}^{\infty}(\mathbb{R})$,
we define $\mathcal{T}_{t}\phi(s,x,y)=\mathbb{E}\left[\phi\left(X_{t}(s,x,y),Y_{t}(s,x,y)\right)\right]$,
and for $f:[0,t]\times\mathcal{S}\rightarrow\mathbb{R}$, such that
for $1\leq i\leq d$, $f(\cdot,\cdot,i)\in C_{b}^{1,2}([0,t]\times\mathbb{R}),$
we define 
\[
L^{\epsilon}f(s,x,i):=b_{i}^{\epsilon}(s,x)\partial_{x}f(s,x,i)+\frac{1}{2}a_{i}^{\epsilon}(s,x)\partial_{xx}^{2}f(s,x,i)+\sum_{j=1}^{d}q_{ij}^{\epsilon}(s,x)f(s,x,j).
\]
By \cite[Theorem 5.2]{YinRegimeSwitchingProperties}, for $1\leq y\leq d$,
the function $\left(s,x\right)\rightarrow\left(\mathcal{T}_{t}\phi\right)(s,x,y)$
belongs to $C_{b}^{1,2}([0,t]\times\mathbb{R})$, and the function
$s,x,y\rightarrow\mathcal{T}_{t}\phi(s,x,y)$ satisfies the backward
Kolmogorov equation $\partial_{s}\left(\mathcal{T}_{t}\phi\right)+L^{\epsilon}\left(\mathcal{T}_{t}\phi\right)=0$.
Now, let $\left(\hat{\mu}_{1}^{\epsilon}(t,\cdot),...,\hat{\mu}_{d}^{\epsilon}(t,\cdot)\right)_{t\in(0,T]}$
and $\left(\check{\mu}_{1}^{\epsilon}(t,\cdot),...,\check{\mu}_{d}^{\epsilon}(t,\cdot)\right)_{t\in(0,T]}$
be two solutions to $(PDS)_{\epsilon}$ with the same initial condition,
and let us define for $1\leq i\leq d$ and $t\in(0,T]$, $z_{i}(t,\cdot):=\hat{\mu}_{i}^{\epsilon}(t,\cdot)-\check{\mu}_{i}^{\epsilon}(t,\cdot)$.
It is sufficient to prove 
\begin{eqnarray}
\frac{d}{ds}\sum_{i=1}^{d}\int\left(\mathcal{T}_{t}\phi\right)(s,x,i)z_{i}(s,dx) & = & \sum_{i=1}^{d}\int\left[\partial_{s}\left(\mathcal{T}_{t}\phi\right)(s,x,i)+L^{\epsilon}\left(\mathcal{T}_{t}\phi\right)(s,x,i)\right]z_{i}(s,dx)\label{eq:Tt}\\
 & = & 0.\label{eq:egal0}
\end{eqnarray}
to obtain uniqueness. Indeed, $s\rightarrow\sum_{i=1}^{d}\int\left(\mathcal{T}_{t}\phi\right)(s,x,i)z_{i}(s,dx)$
would be constant on $(0,t)$, and moreover, for $s\in(0,t)$, 
\begin{eqnarray*}
\sum_{i=1}^{d}\int\left(\mathcal{T}_{t}\phi\right)(s,x,i)z_{i}(s,dx) & = & \sum_{i=1}^{d}\int\left|\left(\mathcal{T}_{t}\phi\right)(s,x,i)-\left(\mathcal{T}_{t}\phi\right)(0,x,i)\right|z_{i}(s,dx)+\sum_{i=1}^{d}\int\left(\mathcal{T}_{t}\phi\right)(0,x,i)z_{i}(s,dx)\\
 & \leq & 2Bs\sum_{i=1}^{d}||\partial_{s}\mathcal{T}_{t}(\cdot,\cdot,i)||_{\infty}+\sum_{i=1}^{d}\int\left(\mathcal{T}_{t}\phi\right)(0,x,i)z_{i}(s,dx),
\end{eqnarray*}
so that 
\[
\sum_{i=1}^{d}\int\left(\mathcal{T}_{t}\phi\right)(s,x,i)z_{i}(s,dx)\underset{s\rightarrow0}{\rightarrow}0,
\]
as $\hat{\mu}_{i}^{\epsilon}$ and $\check{\mu}_{i}^{\epsilon}$ satisfy
the same initial conditions. Moreover, at the limit $s\rightarrow t$,
we obtain that
\[
\sum_{i=1}^{d}\int\phi(x,i)z_{i}(t,dx)=0,
\]
for any function $\phi:\mathcal{S}\rightarrow\mathbb{R}$ such that
for $i\in\{1,...,d\}$, $\phi(\cdot,i)\in C_{c}^{\infty}(\mathbb{R})$.
Thus, we can conclude that $z_{i}(t,\cdot)=0$ for $1\leq i\leq d$
and $t\in(0,T]$, hence the uniqueness. 

We now prove equality (\ref{eq:Tt}) by a density argument. Let $\kappa\in C_{c}^{\infty}((0,t))$,
and let us show that:
\[
\int_{0}^{t}-\kappa'(s)\sum_{i=1}^{d}\int\left(\mathcal{T}_{t}\phi\right)(s,x,i)z_{i}(s,dx)ds=\int_{0}^{t}\kappa(s)\sum_{i=1}^{d}\int\left[\partial_{s}\left(\mathcal{T}_{t}\phi\right)(s,x,i)+L^{\epsilon}\left(\mathcal{T}_{t}\phi\right)(s,x,i)\right]z_{i}(s,dx)ds.
\]
For $\psi,\varphi$ two real valued functions defined on $\mathcal{S}$
such that for $i\in\{1,...,d\}$, $\varphi(\cdot,i)\in C_{c}^{\infty}((0,t))$
and $\psi(\cdot,i)\in C_{c}^{\infty}(\mathbb{R})$, we have in the
sense of distributions, 
\[
\frac{d}{ds}\sum_{i=1}^{d}\int\varphi(s,i)\psi(x,i)z_{i}(s,dx)=\sum_{i=1}^{d}\int\left[\varphi'(s,i)\psi(x,i)+L^{\epsilon}\left(\varphi\psi\right)(s,x,i)\right]z_{i}(s,dx).
\]
Let $h\in C_{c}^{\infty}(\mathbb{R})$ be such that $0\leq h\leq1$,
$h(0)=1$, and $h(x)=0$ if $x\notin(-1,1)$. For $M>0$, we define
$h_{M}\in C_{c}^{\infty}(\mathbb{R})$ such that $h_{M}(x)=1$ if
$x\in[-M,M]$, $h_{M}(x)=0$ if $x\notin[-M-1,M+1]$, $h_{M}(x)=h(x+M)$
if $x\in[-M-1,-M]$, and $h_{M}(x)=h(x-M)$ if $x\in[M,M+1]$. Let
us remark that the family $\left(h_{M}T_{t}\phi\right)_{M>0}$ has
uniform in $M$ bounds. Let $\eta_{1}>0$, and choose $M$ such that
$\sum_{i=1}^{d}\int_{0}^{t}\int_{\mathbb{R}\backslash[-M,M]}\hat{\mu}_{1}^{\epsilon}(s,dx)<\eta_{1}$,
and $\sum_{i=1}^{d}\int_{0}^{t}\int_{\mathbb{R}\backslash[-M,M]}\check{\mu}_{1}^{\epsilon}(s,dx)<\eta_{1}$.
For $\eta_{2}>0$, there exists $p\in\mathbb{N}^{*}$, a family $\left(\psi_{k}\right)_{1\leq k\leq p}$
of functions defined on $(0,t)\times\{1,...,d\}$, such that for $1\leq i\leq d$,
and $1\leq k\leq p$, $\psi_{k}(\cdot,i)\in C_{c}^{\infty}((0,t))$
and a family $\left(\varphi_{k}\right)_{1\leq k\leq p}$ of functions
defined on $\mathcal{S}$ such that for $1\leq i\leq d$, and $1\leq k\leq p$,
$\varphi_{k}(\cdot,i)\in C_{c}^{\infty}(\mathbb{R})$, and moreover,
$\sum_{k=0}^{p}\psi_{k}(\cdot,i)\varphi_{k}(\cdot,i)$ has support
in $(0,t)\times[-M-2;M+2]$, and such that the function $\sum_{k=0}^{p}\psi_{k}\varphi_{k}-h_{M}\mathcal{T}_{t}\phi$
and its derivatives are smaller than $\eta_{2}$. Using the decomposition
$\mathcal{T}_{t}\phi=\left(h_{M}+1-h_{M}\right)\mathcal{T}_{t}\phi$,
it is easy to check that 
\[
\left|\sum_{i=1}^{d}\int_{0}^{t}-\kappa'(s)\int_{\mathbb{R}}\left(\sum_{k=0}^{p}\psi_{k}\varphi_{k}-\mathcal{T}_{t}\phi\right)(s,x,i)z_{i}(s,dx)ds\right|\leq K_{1}\left(\eta_{1}+\eta_{2}\right),
\]

\begin{eqnarray*}
\left|\sum_{i=1}^{d}\int_{0}^{t}\kappa(s)\int_{\mathbb{R}}\left(\sum_{k=0}^{p}\left(\psi_{k}^{'}\varphi_{k}+L^{\epsilon}\left(\psi_{k}\varphi_{k}\right)\right)-\left(\partial_{s}\left(\mathcal{T}_{t}\phi\right)+L^{\epsilon}\left(\mathcal{T}_{t}\phi\right)\right)\right)(s,x,i)z_{i}(s,dx)ds\right| & \leq & K_{2}\left(\eta_{1}+\eta_{2}\right),
\end{eqnarray*}
where $K_{j},j=1,2,$ are positive values that do not depend on $M$
or $p$, and this concludes the proof.
\end{proof}
\bibliographystyle{plain}
\bibliography{RSLV_final}

\end{document}